\newtheorem{theorem}{Theorem}[section]
\newtheorem{lemma}[theorem]{Lemma}
\newtheorem{prop}[theorem]{Proposition}
\newtheorem{problem}[theorem]{Problem}
\theoremstyle{definition}
\newtheorem{defn}[theorem]{Definition}
\newtheorem{cor}[theorem]{Corollary}
\newtheorem{rem}[theorem]{Remark}
\DeclareMathOperator*\ox{e_{1}}
\DeclareMathOperator*\oy{e_{2}}
\DeclareMathOperator*\ang{angle}
\DeclareMathOperator*\dist{dist}
\DeclareMathOperator*\hdim{dim_{H}}
\DeclareMathOperator*\s{\Sigma}
\DeclareMathOperator*\Om{\Omega}
\DeclareMathOperator*\diam{diam}
\DeclareMathOperator*\lip{Lip}
\DeclareMathOperator*\ess{ess\,sup}
\newcommand*\diff{\mathop{}\, d}
\newcommand\numberthis{\addtocounter{equation}{1}\tag{\theequation}}
\newcommand\restr[2]{\ensuremath{\left.#1\right|_{#2}}}
\numberwithin{equation}{section}
\numberwithin{figure}{section}
\title{Regularity for the planar optimal $p$-compliance problem}
\author{Bohdan Bulanyi\footnote{LJLL UMR 7598, Universit\'e de Paris, France. e-mail: bulanyi@math.univ-paris-diderot.fr} \and Antoine Lemenant\footnote{IECL UMR 7502, Universit\'e de Lorraine, Nancy, France. e-mail: antoine.lemenant@univ-lorraine.fr}}
\date{\today}
\begin{document}

\maketitle


\begin{abstract}
In this paper we prove a partial $C^{1,\alpha}$ regularity result in dimension $N=2$ for the  optimal $p$-compliance problem, extending for $p\not = 2$ some of the results obtained by A. Chambolle, J. Lamboley, A. Lemenant, E. Stepanov (2017). Because of the lack of good monotonicity estimates for the $p$-energy when $p\not = 2$, we employ an alternative technique based on a compactness argument leading to a $p$-energy decay at any flat point. We finally obtain that every optimal set has no loop, is Ahlfors regular, and  is $C^{1,\alpha}$ at $\mathcal{H}^{1}$-a.e. point for every $p \in (1 ,+\infty)$. 
\end{abstract}

\tableofcontents

\section{Introduction}

For an open set $U\subset \mathbb{R}^{2}$ and $p\in (1,+\infty)$ denote by $W^{1,p}_{0}(U)$ the closure of $C^{\infty}_{0}(U)$ in the Sobolev space $W^{1,p}(U)$, where $C^{\infty}_{0}(U)$ is the space of functions in $C^{\infty}(U)$ with compact support in $U$. Let $\Om$ be an open and bounded subset of $\mathbb{R}^{2}$, and let  $p \in (1, +\infty)$. For each $u \in W^{1,p}_{0}(\Om)$, we define
\[
E_{p}(u)= \frac 1p \int_{\Om} |\nabla u|^{p}\diff x - \int_{\Om} fu \diff x .
\]

Thanks to the Sobolev inequalities (see \cite[Theorem 7.10]{PDE}) the functional $E_{p}$ is finite on $W^{1,p}_{0}(\Omega)$ when $f \in L^{q_{0}}(\Om)$ with $q_{0}=q_{0}(p)$ such that 
\begin{equation}
q_{0}=\frac{2p}{3p-2}\,\ \text{if $1<p<2$}, \qquad q_{0}>1 \,\ \text{if $p=2$}, \qquad q_{0}=1 \,\ \text{if} \,\ p>2. \label{(0.1)}
\end{equation}

 It is classical that for each closed proper subset $\Sigma$ of $\overline{\Omega}$ the functional $E_{p}$ admits a unique minimizer $u_{\s}$ over $W^{1,p}_{0}(\Omega\backslash \Sigma)$, which is the solution of the Dirichlet problem
\begin{equation} \label{(1.1)}
\begin{cases}
-\Delta_{p}u & =\,\ f \,\ \text{in}\,\ \Omega\backslash \Sigma \\
\qquad u & =\,\ 0 \,\ \text{on}\,\ \Sigma \cup \partial\Omega
\end{cases}
\end{equation}
in the weak sense, which means that
\begin{equation} \label{(1.2)}
\int_{\Om} |\nabla u_{\s}|^{p-2} \nabla u_{\s} \nabla \varphi \diff x= \int_{\Om}f\varphi \diff x
\end{equation}
for all $\varphi \in W^{1,p}_{0}(\Om \backslash \s)$.

Following \cite{opt}, we can interpret $\Omega$ as a membrane which is attached along $\s \cup \partial \Om$ to some fixed base (where $\s$ can be interpreted as a ``glue line") and subjected to a given force $f$. Then $u_{\s}$ is the displacement of the membrane. The rigidity of the membrane is measured through the $p$-compliance functional, which is defined as 
\[
C_{p}(\s)=-E_{p}(u_{\s})=\frac {1}{p^{\prime}} \int_{\Om} |\nabla u_{\s}|^{p}\diff x = \frac {1}{p^{\prime}} \int_{\Om}f u_{\s}\diff x.
\]
We study the following shape optimization problem.
\begin{problem} \label{problemMain}
Given $\lambda>0$, find a set $\s \subset \overline{\Om}$ minimizing the functional $\mathcal{F}_{\lambda,p}$ defined by
\[
\mathcal{F}_{\lambda,p}({\s}^{\prime})=C_{p}({\s}^{\prime})+ \lambda \mathcal{H}^{1}({\s}^{\prime})
\]
among all sets $\Sigma^{\prime} \in \mathcal{K}(\Omega)$, where $\mathcal{K}(\Omega)$ is the class of all closed connected proper subsets of $\overline{\Om}$.
\end{problem}

The physical interpretation of this problem may be the following: we are trying to find the best location $\Sigma$ for the glue to put on the membrane $\Omega$ in order to maximize the rigidity of the latter, subject to the force $f$, while the penalization by $\lambda \mathcal{H}^{1}$ takes into account the quantity (or cost) of the glue.

Without loss of generality, we assume that the force field $f$ is nonzero, because otherwise for any $\Sigma \in \mathcal{K}(\Omega)$ we would have $C_{p}(\Sigma)=0$ and then every solution of Problem~\ref{problemMain} would be either a point $x \in \overline{\Omega}$ or the empty set.

In this paper, we prove some regularity properties about minimizers of Problem~\ref{problemMain}. In particular, we prove that a minimizer  has no loop (Theorem~\ref{thm: 8.1}), is Ahlfors regular (Theorem~\ref{thm: 5.3}) and, furthermore, we establish some $C^{1,\alpha}$ regularity properties.

Most of our results will hold under some integrability condition on the second member~$f$. Namely we define 
\begin{eqnarray}
q_1=
\left\{
\begin{array}{cl}
\frac{2p}{2p-1} & \text{ if }\,\ 2\leq p<+\infty \\
\frac{2p}{3p-3} & \text{ if }\,\ 1<p < 2,
\end{array}
\right. \label{qrestrict}
\end{eqnarray}
and we notice that $q_1\geq q_0$.  As  will be shown later (see Lemma~\ref{lemma 3.3}), asking $f\in L^{q_1}(\Omega)$ for $2\leq p<+\infty$ is natural, since it seems to be the right exponent which implies an estimate of the type $\int_{B_{r}(x_{0})} |\nabla u|^p \diff x \leq C r$ for the solution  $u$ of the Dirichlet problem
\[
-\Delta_p u=f \,\ \text{in} \,\ B_{r}(x_{0}),\,\ u \in W^{1,p}_{0}(B_{r}(x_{0})),
\]
which is the kind of estimate that we are looking for to establish regularity properties on a minimizer $\s$ of Problem~\ref{problemMain}.

The main regularity result of this paper is the following.

\begin{theorem} \label{thm: 9.16} Let $\Om \subset \mathbb{R}^{2}$ be an open bounded set,\,\ $ p \in (1,+\infty),\, f \in L^{q}(\Omega)$ with    $q> q_1$, where $q_{1}$ is defined in \eqref{qrestrict}.   
Let $\s \subset \overline{\Omega}$ be a minimizer of Problem~\ref{problemMain}. Then there is a constant $\alpha \in (0,1)$ such that for $\mathcal{H}^{1}$-a.e. point $x \in \s \cap \Omega $ one can find a radius $r_{0}>0$, depending on $x$, such that $\s \cap \overline{B}_{r_{0}}(x)$ is a $C^{1,\alpha}$ regular curve.
\end{theorem}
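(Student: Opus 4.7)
The plan is to combine two ingredients: (i) an $\varepsilon$-regularity theorem at ``flat points,'' stating that if $\Sigma$ is sufficiently close to a line at some scale and the normalized $p$-energy is sufficiently small, then $\Sigma$ is a $C^{1,\alpha}$ curve in a smaller ball; and (ii) the fact that the hypotheses of (i) are satisfied at $\mathcal{H}^{1}$-almost every point of $\Sigma \cap \Omega$. For (ii) I would rely on the prior structural results announced in the paper: since $\Sigma$ has no loops (Theorem~\ref{thm: 8.1}) and is Ahlfors regular (Theorem~\ref{thm: 5.3}), it is $1$-rectifiable, hence it admits an approximate tangent line at $\mathcal{H}^{1}$-a.e.\ point; moreover, the integrability hypothesis $f \in L^{q}(\Omega)$ with $q>q_{1}$ would, combined with Lemma~\ref{lemma 3.3}, force the normalized $p$-energy $r^{1-p}\int_{B_{r}(x)\setminus \Sigma}|\nabla u_{\Sigma}|^{p}\diff x$ to tend to $0$ as $r\to 0$ for $\mathcal{H}^{1}$-a.e.\ $x\in\Sigma$, via an absolute-continuity/density argument for $L^{q}$ functions.

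\textbf{Decay lemma via compactness.} The core step is a flatness-improvement statement: for each $\theta\in(0,1)$ there exist $\varepsilon_{0},\eta_{0}>0$ such that, writing $\beta_{\Sigma}(x,r)$ for the one-sided or bilateral normalized distance of $\Sigma\cap B_{r}(x)$ to its best approximating line and $\omega_{p}(x,r)$ for the scaled $p$-energy, one has
$$\beta_{\Sigma}(x,\theta r)\le \tfrac{1}{2}\beta_{\Sigma}(x,r)+C\,\omega_{p}(x,r)^{\gamma}$$
whenever $\beta_{\Sigma}(x,r)\le\varepsilon_{0}$ and $\omega_{p}(x,r)\le\eta_{0}$. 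Because monotonicity is unavailable for $p\ne 2$, I would argue by contradiction and compactness: assume a sequence $(\Sigma_{n},x_{n},r_{n})$ of minimizers with $\beta_{n},\omega_{p,n}\to 0$ violating the inequality; after rescaling $B_{r_{n}}(x_{n})$ to $B_{1}(0)$ and extracting a Hausdorff subsequential limit $\Sigma_{\infty}$, one uses the minimality of each $\Sigma_{n}$ against competitors obtained by replacing $\Sigma_{n}\cap B_{1}$ by the projection onto the approximating line (plus short reconnection arcs) and controls the defect $C_{p}(\Sigma_{n})-C_{p}(\text{competitor})$ via $p$-Poincaré estimates driven by $\omega_{p,n}\to 0$. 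The outcome is that $\Sigma_{\infty}$ is a diameter of $B_{1}(0)$ and that $\Sigma_{n}\to\Sigma_{\infty}$ in Hausdorff distance uniformly on $\overline{B}_{\theta}(0)$, which contradicts the assumed failure of the decay.

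\textbf{Iteration, $C^{1,\alpha}$ conclusion, and main obstacle.} Once the decay lemma is available, I would iterate it at a point $x$ where the hypotheses hold for some $r_{0}$ and where, in addition, $\omega_{p}(x,r)=O(r^{\delta})$ for some $\delta>0$ (which Lemma~\ref{lemma 3.3} delivers under $q>q_{1}$). The recursion yields $\beta_{\Sigma}(x,\theta^{k}r_{0})\lesssim \theta^{\alpha k}$ for some $\alpha=\alpha(\theta,\delta)\in(0,1)$; comparing best-approximating lines at consecutive dyadic scales produces a summable Hölder estimate on their directions, so $\Sigma$ is locally a Lipschitz graph with $C^{\alpha}$ slope, i.e.\ a $C^{1,\alpha}$ curve in some $\overline{B}_{r_{0}}(x)$. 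Combining with (i)--(ii), the $C^{1,\alpha}$ conclusion holds at $\mathcal{H}^{1}$-a.e.\ $x\in\Sigma\cap\Omega$. The hard part is certainly the compactness step: in the absence of a monotonicity formula one must design admissible competitors robust enough to survive the passage to the limit in a nonlinear ($p$-Laplacian) framework, and quantify their energy cost using only $f\in L^{q}$ with $q>q_{1}$, where the exponent $q_{1}$ is exactly what makes the $p$-energy scale subcritically against the length term $\lambda\mathcal{H}^{1}$.
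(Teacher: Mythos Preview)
Your overall strategy (rectifiability gives a.e.\ tangent lines; an $\varepsilon$-regularity statement with flatness and energy small; iteration to power decay) is the right picture, but there is a genuine gap in how you obtain the energy smallness, and the architecture you propose differs from the paper's in a way that matters.

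\textbf{The gap.} You claim that $\omega_{p}(x,r)=\tfrac{1}{r}\int_{B_{r}(x)}|\nabla u_{\Sigma}|^{p}\diff y=O(r^{\delta})$ follows from Lemma~\ref{lemma 3.3} together with an absolute-continuity/density argument for $f\in L^{q}$. It does not. Lemma~\ref{lemma 3.3} concerns solutions of $-\Delta_{p}u=f$ in $U$ with $u\in W^{1,p}_{0}(U)$, i.e.\ with zero boundary data on all of $\partial U$; the restriction of $u_{\Sigma}$ to $B_{r}(x)$ has no such property. Nor does a Lebesgue-density argument help: $|\nabla u_{\Sigma}|^{p}$ is only in $L^{1}(\Omega)$, and there is no general reason why $\tfrac{1}{r}\int_{B_{r}(x)}g\diff y\to 0$ for $\mathcal{H}^{1}$-a.e.\ $x$ on a $1$-set when $g\in L^{1}$. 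In the paper the decay of the normalized energy is not an input but an output: it is obtained \emph{under a flatness assumption} by comparing $u_{\Sigma}$ with its $p$-harmonic replacement in $B_{r}(x)\setminus\Sigma$ (Lemma~\ref{lem: 7.11}), and then invoking a compactness lemma (Lemma~\ref{lem: 7.7}) for $p$-harmonic functions vanishing on sets Hausdorff-close to a line; this is iterated into Lemma~\ref{Good_decay}. Without this, your iteration of the flatness-decay inequality has no fuel.

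\textbf{Structural difference.} Two further points explain why the paper is organized differently from your sketch. First, the relevant energy quantity is not $\omega_{p}(x,r)$ for $u_{\Sigma}$ but the supremum $w^{\tau}_{\Sigma}(x,r)$ over competitors $\Sigma'$ (see~\eqref{9.5}); this is because any comparison argument produces a defect of minimality bounded by $\int_{B_{2r}}|\nabla u_{\Sigma'}|^{p}\diff y$ for a competitor $\Sigma'$ (Corollary~\ref{cor: 7.3}), not for $\Sigma$ itself. Controlling this sup is what Proposition~\ref{prop: 9.6} does, again using Lemma~\ref{Good_decay}. Second, the paper's compactness argument is run at the level of $p$-harmonic functions in $B_{1}\setminus\Sigma_{n}$ with $\Sigma_{n}\to$ line (where Mosco convergence of the Sobolev spaces is available), not at the level of rescaled minimizers $\Sigma_{n}$; your proposed blow-up on minimizers would have to track the different scalings of the compliance and length terms and still quantify the defect for competitors, which is precisely what the paper avoids. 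The logical order is therefore: flatness $\Rightarrow$ energy decay (Lemma~\ref{Good_decay}) $\Rightarrow$ decay of $w^{\tau}_{\Sigma}$ (Proposition~\ref{prop: 9.6}) $\Rightarrow$ density and height estimates via competitors (Proposition~\ref{prop: 9.8}, Lemma~\ref{lem: 9.11}) $\Rightarrow$ flatness power-decay (Propositions~\ref{prop: 9.13}--\ref{prop: 9.14}) $\Rightarrow$ $C^{1,\alpha}$ (Corollary~\ref{cor: 9.15}); the final proof then only checks that at $\mathcal{H}^{1}$-a.e.\ tangent point both $\beta_{\Sigma}(x,r)$ and $w^{\tau}_{\Sigma}(x,r)$ fall below the threshold, the latter precisely via Proposition~\ref{prop: 9.6}.
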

 Notice that Theorem~\ref{thm: 9.16} is interesting only in the case when $\diam(\Sigma)>0$, which happens to be true at least for some small enough values of $\lambda$ (see Proposition~\ref{prop: 2.16}). Furthermore, we have proved that every minimizer $\Sigma$ of Problem~\ref{problemMain} cannot contain quadruple points in $\Omega$ (see Proposition~\ref{remark about quadruple points}), i.e., there is no point $x \in \Sigma \cap \Omega$ such that for some sufficiently small radius $r>0$ the set $\Sigma \cap \overline{B}_{r}(x)$ is a union of four distinct $C^{1}$ arcs, each of which meets at point $x$ exactly one of the other three at an angle of $180^{\circ}$ degrees, and each of the other two at an angle of $90^{\circ}$ degrees.


 Problem~\ref{problemMain}  was studied earlier in the particular case $p=2$ in \cite{opt} for which a full regularity result was proved. 
 It is worth mentioning that our result generalizes some of the results of  \cite{opt} for $p\not =2$, but contains also better results in the special case $p=2$ as well. Indeed, our integrability condition $q>q_1$ on the second member $f$ for the particular case $p=2$ yields $q>\frac{4}{3}$ for the $\varepsilon$-regularity result to hold, which is slightly better than the one  in \cite{opt} for which $q>2$ was required. According to our Ahlfors-regularity result (see Theorem~\ref{thm: 5.3}), it holds under the mild integrability assumption $q=\frac{2p}{2p-1}$ and is proved up to the boundary (for a Lipschitz domain $\Omega$), which for the particular case $p=2$ generalizes the earlier result in \cite{opt}. We shall explain later more in detail the main technical differences between the case $p\not =2$ with respect to the case $p=2$. 
 
 Now let us emphasize that our partial $C^{1,\alpha}$ regularity result is internal; therefore  in Theorem~\ref{thm: 9.16} we do not require any regularity for $\partial \Omega$. On the other hand, as mentioned earlier, to prove our Ahlfors-regularity result, we required $\Omega$ to be a Lipschitz domain. We do not know whether the restriction on Lipschitz domains is needed to prove the Ahlfors regularity of minimizers of Problem~\ref{problemMain} which have at least two points. However, according to the proof of Theorem~\ref{thm: 5.3}, for each open set $\Omega^{\prime}\subset\subset \Omega$, there exist $C_{0}=C_{0}(p,q_{0},\|f\|_{(2p)^{\prime}},\lambda)>0$ and $r_{0}=r_{0}(\Omega^{\prime}, \Omega)>0$ such that if $\Sigma$  is a minimizer of Problem~\ref{problemMain}, then $\mathcal{H}^{1}(\Sigma\cap B_{r}(x))\leq C_{0}r$ whenever $x \in \Sigma\cap \overline{\Omega^{\prime}}$ and $0<r\leq r_{0}$.

 In the limit $p\to +\infty$, Problem~\ref{problemMain} in some sense converges to the so-called \emph{average distance problem} (see \cite[Theorem 3]{Butazzo-Santambrogio}) which was also widely studied in the literature and for which it is known that minimizers may not be $C^1$ regular (see \cite{MR3165284}). Our result can therefore be considered as making a link between  $p=2$ and $p=+\infty$, although it actually works for any $p\in (1,+\infty)$.

A constrained variant of the same problem was also studied in \cite{Butazzo-Santambrogio, MR3063566,MR3195349} for $p\not =2$ in dimension 2 and greater, but focusing on different type of questions. In particular, no regularity results were available before with $p\not =2$.

As a matter of fact, even if the present paper is restricted to dimension 2 only, the same problem can be defined in higher dimension, provided that $p\in (N-1, +\infty)$, still with a penalization with the one dimensional Hausdorff measure. This    instance of the problem in higher dimensions seems to be very original, leading to a free-boundary  type  problem  with a high  co-dimensional free boundary set $\Sigma$.  Due to the low dimension of the ``free-boundary'' in dimension $N>2$, most of the usual competitors are no more valid and some new ideas and new tools have to be used. 
 
 The present paper can therefore be seen as a preliminary step toward the regularity in any dimensions, focusing on the particular case of dimension 2. This approach is pertinent because   in dimension 2 only, the ``free boundary'' $\Sigma$ is of codimension 1, thus many standard arguments and competitors are  available. Let us highlight three places where we have taken the advantage of working in dimension 2, which does not extend in a trivial manner in higher dimensions. Firstly, in our proof of Ahlfors-regularity, we use (in the ``internal case") the set $(\Sigma\backslash B_{r}(x))\cup \partial B_r(x)$ as a competitor for $\Sigma$. But in dimension $N>2$ we cannot effectively use such a competitor, because $\partial B_{r}(x)$ has infinite $\mathcal{H}^{1}$-measure. Secondly, in the proof of Lemma~\ref{lem: 7.4},  we use a reflection technique to estimate a $p$-harmonic function in $B_{1}\backslash ((-1,1)\times \{0\})$ that vanishes on $(-1,1)\times \{0\}$, which is no more valid for a $p$-harmonic function in $B_{1}\backslash ((-1,1)\times \{0\}^{N-1})$ that vanishes on $(-1,1)\times \{0\}^{N-1}$ if $N>2$. Thirdly, in the density estimate in Proposition~\ref{prop: 9.8}, when $\Sigma$ is $\varepsilon r$-close, in a ball $\overline{B}_{r}(x_{0})$ and in the Hausdorff distance, to a diameter $[a,b]$ of $\overline{B}_{r}(x_{0})$, we  use as a competitor the set $\Sigma^{\prime}=(\Sigma\setminus B_{r}(x_{0})) \cup [a,b] \cup W$, where 
 \[
 W=\partial B_{r}(x_{0})\cap \{y: \dist(y, [a,b])\leq \varepsilon r\}.
 \]
 But in dimension $N> 2$ we cannot effectively use the above competitor because it has infinite $\mathcal{H}^{1}$-measure. 
 
However, we believe that some techniques developed in this paper could be useful to prove a  similar result in higher dimensions as well. This will be the purpose of a forthcoming work. 

Nevertheless, even in dimension 2, we have to face several technical new difficulties with $p\not =2$ compared to the  work for $p=2$ in \cite{opt}   that we shall try to explain now.

 One of the most difficulty is the lack of good monotonicity estimates for the $p$-energy. Indeed, the monotonicity of energy is one of the main tool in the case $p=2$ in  \cite{opt} which does not work anymore for $p\not =2$. A big part of the work in \cite{opt} relies on blow-up techniques from the Mumford-Shah functional which cannot be used anymore in our context, without a good monotonicity formula. This is why, even if we expect the minimizer, as for $p=2$, to be a finite union of $C^{1,\alpha}$ curves, we prove only  $C^{1,\alpha}$ regularity at $\mathcal{H}^{1}$-a.e. point.

\noindent{\bf Comments about the proof.}  In the proof of $C^{1,\alpha}$ regularity, as many other free boundary or free discontinuity problems, one of the main point is to prove a decay estimate on the local energy around a flat point. In other words we need to prove that the ``normalized" energy
  $$r\mapsto \frac{1}{r}\int_{B_r(x_{0})}|\nabla u_\Sigma|^p \diff x$$
  converges to zero sufficiently fast at a point $x_{0}\in \Sigma$, like a power of the radius, and this is where our proof differs from the case $p=2$.

    In the case $p=2$, the decay on the ``normalized" energy is obtained using a so-called \emph{monotonicity formula} that was inspired by the one of A. Bonnet on the Mumford-Shah functional \cite{Bonnoet}. This monotonicity formula is also the key tool in the classification of blow-up limits.

  For $p\not = 2$, an analogous monotonicity formula can still be established for the $p$-energy,  but  the resulting power of $r$ in that monotonicity formula is not large enough for our purposes and thus cannot be  used to prove $C^{1,\alpha}$ estimates. Consequently, we also miss a great tool which prevents us to establish the classification of blow-up limits. As the $p$-monotonicity is not strong enough to get $C^{1,\alpha}$ regularity we therefore use another strategy, arguing by contradiction and compactness: we know that $\int_{B_{r}}|\nabla u|^p \diff x$ behaves like  $Cr^{2}$ for $r\in (0,1/2]$ if $u$ is a $p$-harmonic function in $B_{1}\backslash P$ vanishing on $P\cap B_{1}$, where $P$ is an affine line passing through the origin, thus by compactness $\int_{B_{r}} |\nabla u|^{p}\diff x$ still has a similar behavior when $u$ is a $p$-harmonic function in $B_{1}\backslash \Sigma$ vanishing on $\Sigma \cap B_{1}$, when $\Sigma$ locally stays $\varepsilon$-close to a line.  
 
 Actually, as the compliance is a min-max type problem,  the true quantity to control  is not exactly $\int_{B_r(x_{0})}|\nabla u_\Sigma|^p \diff x$, but rather this other variant, as already defined and denoted by $\omega_\Sigma(x_0,r)$ in \cite{opt},

\begin{equation}
\omega_\Sigma(x_0,r)=\sup_{ \s^{\prime} \in \mathcal{K}(\Omega); \s^{\prime}\Delta \s \subset \overline{B}_{r}(x_{0}) } \frac{1}{r} \int_{B_{r}(x_{0})}|\nabla u_{\s^{\prime}}|^{p}\diff x.  \notag
\end{equation}

It can be shown that the quantity $\omega_\Sigma(x_0,r)$ controls, in many circumstances,  the square of the flatness, leading to some $C^{1,\alpha}$ estimates when $\omega_\Sigma(x_{0},r)$ decays fast enough. 

In \cite{opt} the decay of the above quantity was still obtained by use of the monotonicity formula, applied to the function $u_{\Sigma'}$, where $\Sigma'$ is a maximizer in the definition of $\omega_\Sigma(x_0,r)$.

As a consequence of our compactness argument, which provides a decay only for a closed connected set $\Sigma'$ staying $\tau$-close to a line, we need to introduce and work  with the following slightly more complicated quantity

\begin{equation}
w^{\tau}_{\s}(x_{0},r)=\sup_{\substack{\s^{\prime} \in \mathcal{K}(\Omega),\, \s^{\prime}\Delta \s \subset \overline{B}_{r}(x_{0}), \\\mathcal{H}^{1}(\Sigma^{\prime})\leq 100 \mathcal{H}^{1}(\Sigma),\,  \beta_{\s^{\prime}}(x_{0},r)\leq \tau}}  \frac{1}{r} \int_{B_{r}(x_{0})}|\nabla u_{\s^{\prime}}|^{p}\diff x, \notag
\end{equation}
where $\beta_{\Sigma^{\prime}}(x_{0},r)$ is the flatness defined by 

\begin{equation*}
\beta_{\Sigma^{\prime}}(x_{0},r)=\inf_{P\ni x_{0}} \frac{1}{r}d_{H}(\Sigma^{\prime}\cap \overline{B}_{r}(x_{0}), P\cap \overline{B}_{r}(x_{0})),
\end{equation*}
(the infimum being taken over the set of all affine lines $P$ passing through $x_{0}$),  where $d_{H}$ is the Hausdorff distance that for any nonempty sets $A,\, B\subset \mathbb{R}^{2}$ is defined by
\[
d_{H}(A,B)=\max\biggl\{\sup_{x \in A} \dist(x, B),\, \sup_{x\in B}\dist(x,A)\biggr\}.
\]
We also agree that for a nonempty set $A\subset \mathbb{R}^{2}$, $d_{H}(A,\emptyset)=d_{H}(\emptyset,A)=+\infty$ and that $d_{H}(\emptyset, \emptyset)=0$. Notice that the assumption $\mathcal{H}^{1}(\Sigma^{\prime}) \leq 100\mathcal{H}^{1}(\Sigma)$ in the definition of $w^{\tau}_{\Sigma}(x_{0},r)$ is rather optional, however, it guarantees  that if $\Sigma^{\prime}$ is a maximizer in the definition of $w^{\tau}_{\Sigma}(x_{0},r)$, then $\Sigma^{\prime}$ is arcwise connected.
 
We indeed obtain a decay of $\omega^\tau_\Sigma(x_0,r)$ provided that $\beta_\Sigma(x_{0},r)$ stays under control, which finally leads to the desired $C^{1,\alpha}$ result, and the same kind of estimate is also used to prove the absence of loops.

\section{Preliminaries}

\subsection{Definitions}

\begin{defn} \label{def. 2.1} \textit{ Let $U$ be a bounded open set in $\mathbb{R}^{2}$ and let $p \in (1,+\infty)$. We say that $u \in W^{1,p}(U)$ is a weak solution of the $p$-Laplace equation in $U$, if
\[
\int_{U} |\nabla u|^{p-2}\nabla u \nabla \varphi \diff x=0
\]
for each $\varphi \in W^{1,p}_{0}(U)$.}  
\end{defn}

We recall the following basic result for weak solutions (see \cite[Theorem 2.7]{Lindqvist}).
\begin{theorem} \label{thm: 2.2}
Let $U$ be a bounded open set in $\mathbb{R}^{2}$ and let $u \in W^{1,p}(U)$. The following two assertions are equivalent.
\begin{enumerate}[label=(\roman*)]
\item $u$ is minimizing:
\[
\int_{U} |\nabla u|^{p}\diff x \leq \int_{U}|\nabla v|^{p}\diff x, \,\ \text{when}\,\ v-u \in W^{1,p}_{0}(U);
\]
\item the first variation vanishes:
\[
\int_{U} |\nabla u|^{p-2}\nabla u \nabla \zeta \diff x=0,\,\ \text{when}\,\ \zeta \in W^{1,p}_{0}(U).
\]
\end{enumerate}
\end{theorem}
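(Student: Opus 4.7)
The plan is to derive both implications from the convexity of the map $F: \mathbb{R}^{2} \to \mathbb{R}$ defined by $F(\xi) = |\xi|^{p}$, whose gradient is $\nabla F(\xi) = p|\xi|^{p-2}\xi$ (with the convention $\nabla F(0) = 0$). Both directions then reduce to standard variational arguments once this convexity is in hand.

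For the direction (i) $\Rightarrow$ (ii), I would fix an arbitrary $\zeta \in W^{1,p}_{0}(U)$ and consider the real-valued function $\Phi(t) = \int_{U}|\nabla u + t \nabla \zeta|^{p}\diff x$ for $t \in \mathbb{R}$. Since $(u + t\zeta) - u = t\zeta \in W^{1,p}_{0}(U)$, condition (i) forces $\Phi$ to attain its minimum at $t = 0$, so $\Phi'(0) = 0$ as soon as $\Phi$ is differentiable at $0$. A direct differentiation under the integral sign yields $\Phi'(0) = p\int_{U}|\nabla u|^{p-2}\nabla u \nabla \zeta\diff x$, which is exactly (ii). To justify the interchange of derivative and integral, I would rely on the mean-value-type bound $\bigl||\xi + t\eta|^{p} - |\xi|^{p}\bigr|/|t| \leq p(|\xi| + |\eta|)^{p-1}|\eta|$, valid for $|t| \leq 1$, and then apply Young's inequality to extract an $L^{1}(U)$-dominant expressed in terms of $|\nabla u|^{p}$ and $|\nabla \zeta|^{p}$; Lebesgue's dominated convergence theorem then allows the passage to the limit.

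For the converse (ii) $\Rightarrow$ (i), I would exploit the convexity inequality
\[
|\eta|^{p} \geq |\xi|^{p} + p|\xi|^{p-2}\xi \cdot (\eta - \xi), \qquad \xi, \eta \in \mathbb{R}^{2},
\]
which is nothing but the statement that the graph of $F$ lies above each of its tangent hyperplanes. Given $v \in W^{1,p}(U)$ with $v - u \in W^{1,p}_{0}(U)$, I would apply this inequality pointwise with $\xi = \nabla u(x)$ and $\eta = \nabla v(x)$ and integrate over $U$; invoking (ii) with the admissible test function $\zeta = v - u$ to annihilate the linear term yields $\int_{U}|\nabla v|^{p}\diff x \geq \int_{U}|\nabla u|^{p}\diff x$, which is (i).

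The only step that I expect to require genuine care is the justification of differentiation under the integral in the first direction, since one must produce an $L^{1}$-integrable majorant uniform in $t$. This is essentially a routine application of Young's inequality but is the only spot in the argument where the $L^{p}$-integrability of $\nabla u$ and $\nabla \zeta$ enters in a non-trivial way.
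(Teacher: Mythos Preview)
Your argument is correct and is the standard proof of this classical equivalence. The paper, however, does not supply its own proof of Theorem~\ref{thm: 2.2}: it merely recalls the result and refers the reader to \cite[Theorem~2.7]{Lindqvist}. So there is nothing to compare against on the paper's side; what you have written is essentially the textbook argument (and is presumably very close to what one finds in the cited reference).
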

Now we introduce the notion of the Bessel capacity (see e.g. \cite{Potential}, \cite{Ziemer}) which is crucial in the investigation of the pointwise behavior of Sobolev functions and in describing the appropriate class of negligible sets with respect to the appropriate Lebesgue measure.
\begin{defn} \label{def 2.3} \textit{ For $p\in (1,+\infty)$, the Bessel $(1,p)$-capacity of a set $E\subset \mathbb{R}^{2}$ is defined as
\[
{\rm Cap}_{p}(E)=\inf \{\|f\|^{p}_{p} :\, g*f \geq 1\,\ \text{on}\,\ E,\,\ f \geq 0\},
\]
where the Bessel kernel $g$ is defined as that function whose Fourier transform is}
\[
\hat{g}(\xi)=(2\pi)^{-1}(1+|\xi|^{2})^{-1/2}.
\]
\end{defn}
We say that a property holds $p$-quasi everywhere (abbreviated as $p$-q.e.) if it holds except on a set $A$ where ${\rm Cap}_{p}(A)=0$. 

It is worth mentioning that by \cite[Corollary 2.6.8]{Potential} for each $p\in (1,+\infty)$ the notion of the Bessel capacity ${\rm Cap}_{p}$ is equivalent to the following 
 \[
 \widetilde{{\rm Cap}_{p}}(E)=\inf_{u \in W^{1,p}(\mathbb{R}^{2})}\biggl\{\int_{\mathbb{R}^{2}}|\nabla u|^{p}\diff x + \int_{\mathbb{R}^{2}}|u|^{p}\diff x : u \geq 1 \,\ \text{on some neighborhood of $E$}\biggr\}
 \]
 in the sense that there is a constant $C=C(p)>0$ such that for any set $E\subset \mathbb{R}^{2}$ one has
 \[
 \frac{1}{C}\widetilde{{\rm Cap}_{p}}(E) \leq {\rm Cap}_{p}(E) \leq C\widetilde{{\rm Cap}_{p}}(E).
 \]

The next theorems and propositions are stated here for convenience.

\begin{theorem} \label{thm: 2.4} If $p \in (1, 2]$, then ${\rm Cap}_p(E)=0$ if $\mathcal{H}^{2-p}(E)<+\infty$. Conversely, if ${\rm Cap}_p(E)=0$, then $\mathcal{H}^{2-p+\varepsilon}(E)=0$ for every $\varepsilon>0$.
\end{theorem}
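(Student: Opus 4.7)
The statement is a classical result in nonlinear potential theory, and my plan is to follow the standard proofs (see Ziemer, \emph{Weakly Differentiable Functions}, Theorem~2.6.16, or Adams--Hedberg, \emph{Function Spaces and Potential Theory}, Chapter~5). The two implications use genuinely different tools and I would treat them separately.

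\textbf{Forward direction} ($\mathcal{H}^{2-p}(E)<+\infty \Rightarrow {\rm Cap}_{p}(E)=0$). The borderline $p=2$ is immediate: $\mathcal{H}^{0}$ is counting measure, so $\mathcal{H}^{0}(E)<+\infty$ forces $E$ to be finite, and points in $\mathbb{R}^{2}$ are Bessel $(1,2)$-polar. For $p\in(1,2)$, the key input is the scaling ${\rm Cap}_{p}(B_{r}(x))\leq Cr^{2-p}$, which one obtains by testing with the cutoff $\varphi(y)=\max(1-|y-x|/(2r),0)$ and computing $\|\varphi\|_{W^{1,p}}^{p}\lesssim r^{2-p}$. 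Given $\mathcal{H}^{2-p}(E)<+\infty$, for each $k\geq 1$ I would select (via Vitali) an almost-disjoint cover $\{B_{r_{i,k}}(x_{i,k})\}_{i}$ of $E$ with $r_{i,k}\leq 2^{-k}$ and $\sum_{i}r_{i,k}^{2-p}\leq M:=\mathcal{H}^{2-p}(E)+1$, and set $f_{k}=c\sum_{i}r_{i,k}^{-1}\chi_{B_{r_{i,k}}(x_{i,k})}$. A short calculation using $g_{1}(z)\gtrsim |z|^{-1}$ near $0$ yields $g_{1}*f_{k}\geq 1$ on $E$ and $\|f_{k}\|_{p}^{p}\leq CM$. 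Following the scheme of Adams--Hedberg, I would then combine these $f_{k}$'s in a weighted superposition, exploiting that the support measures $|\mathrm{supp}\,f_{k}|\lesssim 2^{-kp}$ are summable and $p>1$, to produce a single $L^{p}$ function $f$ with $g_{1}*f\equiv+\infty$ on $E$. This is the standard ``polar set'' characterization of ${\rm Cap}_{p}(E)=0$: rescaling $f/\lambda$ with $\lambda\to\infty$ drives the capacity to zero.

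\textbf{Converse direction} (${\rm Cap}_{p}(E)=0 \Rightarrow \mathcal{H}^{2-p+\varepsilon}(E)=0$). I would argue by contrapositive. Suppose $\mathcal{H}^{2-p+\varepsilon}(E)>0$ for some $\varepsilon>0$. Frostman's lemma provides a positive Radon measure $\mu$ supported on $E$ with $\mu(E)>0$ and $\mu(B_{r}(y))\leq C r^{2-p+\varepsilon}$ for all $y,r$. By the dual characterization
\[
{\rm Cap}_{p}(E)^{1/p}=\sup\bigl\{\mu(E):\mu\in M^{+}(E),\ \|g_{1}*\mu\|_{p'}\leq 1\bigr\},
\]
it suffices to show $\|g_{1}*\mu\|_{p'}<+\infty$. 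For $p\in(1,2)$ the cleanest tool is Wolff's inequality,
\[
\|g_{1}*\mu\|_{p'}^{p'}\leq C\int W^{\mu}_{1,p}\,d\mu,\qquad W^{\mu}_{1,p}(x):=\int_{0}^{1}\Bigl(\frac{\mu(B_{r}(x))}{r^{2-p}}\Bigr)^{p'-1}\frac{dr}{r},
\]
combined with the Frostman bound, which gives $W^{\mu}_{1,p}(x)\leq C\int_{0}^{1}r^{\varepsilon(p'-1)-1}\,dr<+\infty$ uniformly in $x$. Hence $\|g_{1}*\mu\|_{p'}^{p'}\leq C\mu(E)<+\infty$, contradicting ${\rm Cap}_{p}(E)=0$. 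For $p=2$ the Wolff potential reduces to the classical logarithmic potential, and the estimate becomes the layer-cake bound on $\iint\log(1/|x-y|)\,d\mu(x)d\mu(y)$.

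\textbf{Main obstacle.} The forward direction is the subtler part: upgrading the uniform bound ${\rm Cap}_{p}(E)\leq CM$ (immediate from countable subadditivity) to the vanishing statement is where $p>1$ really enters, through the multi-scale superposition and the summability of $\sum_{k} k^{-p}$. The converse is more conceptual: the extra $\varepsilon$ in the Hausdorff exponent is exactly what makes the Wolff (or logarithmic) potential integrable at the origin, and hence $g_{1}*\mu\in L^{p'}$.
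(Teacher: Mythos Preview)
The paper's own proof is nothing more than a citation of Adams--Hedberg, Theorems 5.1.9 and 5.1.13, so you are supplying considerably more detail than the paper, and you are sketching precisely the arguments behind those references. Your treatment of the converse direction (Frostman measure plus Wolff's inequality, with the key convergence $\int_0^1 r^{\varepsilon(p'-1)-1}\,dr<\infty$) is correct and essentially complete.

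For the forward direction you correctly locate the crux in the superposition step, but your sketch does not close it. With $f_k$ as you construct them (even after a Vitali refinement making the balls disjoint), one has $g_1*f_k\geq 1$ on $E$ and $\|f_k\|_p^p\leq CM$, but the supports $A_k$ all contain $E$ and hence overlap for every $k$. The naive weighted sum $F=\sum_k k^{-1}f_k$ is therefore not obviously in $L^p$: the triangle inequality gives $\|F\|_p\leq C\sum k^{-1}=\infty$, while the Hölder estimate $F^p\leq(\sum k^{-p'})^{p/p'}\sum f_k^p$ gives $\|F\|_p^p\leq C\sum_k\|f_k\|_p^p=\infty$. The small-support information $|A_k|\lesssim 2^{-kp}$ is suggestive but does not by itself repair either bound, and the hint ``summability of $\sum k^{-p}$'' does not correspond to an inequality that controls $\|F\|_p$ here.

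A clean way to finish (and this is closer in spirit to how the cited references actually proceed) is dual rather than primal. If ${\rm Cap}_p(E)>0$ there is a nonzero $\mu\in M^+(E)$ with $\|g_1*\mu\|_{p'}<\infty$; by the Wolff inequality $W^\mu_{1,p}(x)<\infty$ for $\mu$-a.e.\ $x$, and finiteness of $\sum_k\bigl(\mu(B_{2^{-k}}(x))/2^{-k(2-p)}\bigr)^{p'-1}$ forces $\mu(B_r(x))/r^{2-p}\to 0$ for such $x$. The density-comparison theorem for Hausdorff measures (e.g.\ Mattila, \emph{Geometry of Sets and Measures}, Theorem~6.9) then yields $\mu(E)\leq C\varepsilon\,\mathcal{H}^{2-p}(E)$ for every $\varepsilon>0$, hence $\mu(E)=0$ precisely because $\mathcal{H}^{2-p}(E)<\infty$ --- this is where the finiteness hypothesis actually bites. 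That contradiction gives ${\rm Cap}_p(E)=0$.
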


\begin{proof} For the proof of the fact that  ${\rm Cap}_p(E)=0$ if $\mathcal{H}^{2-p}(E)<+\infty$ we refer the reader to \cite[Theorem 5.1.9]{Potential}. The fact that ${\rm Cap}_{p}(E)=0$ implies $\mathcal{H}^{2-p+\varepsilon}(E)=0$ for every $\varepsilon>0$ is the direct consequence of \cite[Theorem 5.1.13]{Potential}. 
\end{proof}

\begin{rem} \label{rem: 2.6} Let $p \in (2,+\infty)$. Then there is a constant $C=C(p)>0$ such that  if $E \not = \emptyset $, then ${\rm Cap}_p(E)\geq C$. In fact, one can take $C=  {\rm Cap}_{p}(\{(0,0)\})$ which is positive by \cite[Proposition 2.6.1 (a)]{Potential} and use the fact that the Bessel $(1,p)$-capacity is an invariant under translations and is nondecreasing with respect to set inclusion.
\end{rem}

Recall that for all $E\subset \mathbb{R}^{2}$ the number
\[
\hdim(E)=\sup\{s \in \mathbb{R}^{+}: \mathcal{H}^{s}(E)=+\infty\}=\inf\{t\in \mathbb{R}^{+}: \mathcal{H}^{t}(E)=0\}
\]
is called the Hausdorff dimension of $E$.
\begin{cor} \label{cor: 2.5} \textit{ Let $p \in (1,+\infty)$ and let $M \subset \mathbb{R}^{2}$ be a set with $\dim_{H}(M)=1$. Then ${\rm Cap}_p(M)>0$.}
\end{cor}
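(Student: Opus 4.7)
The plan is to split into the two ranges $p \in (1, 2]$ and $p \in (2, +\infty)$ on which the preceding results behave differently, and to handle each range directly.

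For the supercritical range $p > 2$, Remark~\ref{rem: 2.6} already gives ${\rm Cap}_p(E) \geq C(p) > 0$ for every nonempty set $E$. Since $\hdim(M) = 1$ forces $M$ to be nonempty (otherwise $\mathcal{H}^s(M) = 0$ for every $s \geq 0$ and the Hausdorff dimension could not equal $1$), I would simply invoke Remark~\ref{rem: 2.6} to conclude ${\rm Cap}_p(M) \geq C(p) > 0$.

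For the range $p \in (1, 2]$ I would argue by contradiction. Suppose ${\rm Cap}_p(M) = 0$. Then the converse implication of Theorem~\ref{thm: 2.4} gives $\mathcal{H}^{2-p+\varepsilon}(M) = 0$ for every $\varepsilon > 0$. Unwinding the definition of $\hdim$ recalled just above, this forces $\hdim(M) \leq 2 - p + \varepsilon$ for every $\varepsilon > 0$, hence $\hdim(M) \leq 2 - p$. Since $p > 1$, we have $2 - p < 1$, which contradicts the hypothesis $\hdim(M) = 1$. Therefore ${\rm Cap}_p(M) > 0$.

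I do not expect any real obstacle; the corollary is essentially bookkeeping that combines the two preceding statements. The only subtlety worth flagging is that at the endpoint $p = 2$ one cannot use the first implication of Theorem~\ref{thm: 2.4} (as $2 - p = 0$ and asking $\mathcal{H}^0(M) < +\infty$ would require $M$ to be finite), so one must invoke the converse implication, which applies uniformly on the whole interval $p \in (1, 2]$.
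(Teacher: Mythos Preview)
Your proof is correct and follows essentially the same approach as the paper: both split into the cases $p>2$ (invoking Remark~\ref{rem: 2.6}) and $p\in(1,2]$ (contradiction via the converse implication of Theorem~\ref{thm: 2.4}). The only cosmetic difference is that the paper picks a single explicit $\varepsilon=(p-1)/2$ with $2-p+\varepsilon<1$, whereas you let $\varepsilon\to 0$ to conclude $\hdim(M)\leq 2-p<1$; both reach the same contradiction.
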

\begin{proof}[Proof of Corollary \ref{cor: 2.5}] If $p>2$ by Remark~\ref{rem: 2.6} and since $\hdim(M)=1$, ${\rm Cap}_{p}(M)>0$. Assume by contradiction that ${\rm Cap}_{p}(M)=0$ for some $p \in (1,2]$. Taking $\varepsilon= (p-1)/2$  so that $2-p+\varepsilon<1$, by Theorem~\ref{thm: 2.4} we get $\mathcal{H}^{2-p+\varepsilon}(M)=0$, but this leads to a contradiction with the fact that $\hdim(M)=1$.
\end{proof}
\begin{defn} Let the function $u$ be defined $p$-q.e. on $\mathbb{R}^{2}$ or on some open subset. Then $u$ is said to be $p$-quasi continuous if for every $\varepsilon>0$ there is an open set $A$ with ${\rm Cap}_{p}(A)<\varepsilon$ such that the restriction of $u$ to the complement of $A$ is continuous in the induced topology.
\end{defn}
\begin{theorem} Let $Y\subset \mathbb{R}^{2}$ be an open set and $p \in (1,+\infty)$. Then for each $u \in W^{1,p}(Y)$ there exists a $p$-quasi continuous function $\widetilde{u} \in W^{1,p}(Y)$, which is uniquely defined up to a set of ${\rm Cap}_{p}$-capacity zero and $u=\widetilde{u}$ a.e. in $Y$.
\end{theorem}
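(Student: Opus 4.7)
The plan is to construct $\widetilde{u}$ by approximation by smooth functions and then use a weak-type capacity inequality to extract a quasi-everywhere convergent subsequence. First, I would invoke density of $C^{\infty}(Y)\cap W^{1,p}(Y)$ in $W^{1,p}(Y)$ to choose a sequence $(u_{n})$ of smooth representatives converging to $u$ in the $W^{1,p}$-norm, and then pass to a subsequence (still denoted $u_{n}$) satisfying $\|u_{n+1}-u_{n}\|_{W^{1,p}(Y)}\leq 4^{-n}$.

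The key analytic step is the capacity-level estimate: for any continuous $w\in W^{1,p}(Y)$ and $\lambda>0$, the open superlevel set $\{|w|>\lambda\}$ admits the admissible function $|w|/\lambda$ in the definition of $\widetilde{{\rm Cap}_{p}}$, yielding
\[
{\rm Cap}_{p}(\{|w|>\lambda\})\leq C\lambda^{-p}\|w\|_{W^{1,p}(Y)}^{p}
\]
via the equivalence of the two capacities mentioned right before the theorem. Applying this to $w=u_{n+1}-u_{n}$ with $\lambda=2^{-n}$ gives an open set $A_{n}\supset\{|u_{n+1}-u_{n}|>2^{-n}\}$ with ${\rm Cap}_{p}(A_{n})\leq C\,2^{-np}$. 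Setting $E_{k}=\bigcup_{n\geq k}A_{n}$, the subadditivity of capacity (valid for the Bessel capacity) gives ${\rm Cap}_{p}(E_{k})\to 0$ as $k\to\infty$, and on $Y\setminus E_{k}$ the telescoping series $\sum(u_{n+1}-u_{n})$ is majorized by $\sum 2^{-n}$, so $(u_{n})$ converges uniformly to a continuous limit there. Defining $\widetilde{u}$ as this pointwise limit on $Y\setminus N$ where $N=\bigcap_{k}E_{k}$ (which has zero $p$-capacity) produces a function that is $p$-quasi continuous by construction (take $A=E_{k}$ for $k$ large), and $\widetilde{u}=u$ almost everywhere because $W^{1,p}$-convergence gives a.e. convergence along a subsequence. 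Since $\widetilde{u}=u$ a.e. and $u\in W^{1,p}(Y)$, we also have $\widetilde{u}\in W^{1,p}(Y)$.

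For uniqueness up to a set of $p$-capacity zero, suppose $\widetilde{u}_{1}$ and $\widetilde{u}_{2}$ are two $p$-quasi continuous representatives of $u$. Given $\varepsilon>0$, pick open sets $B_{1},B_{2}$ with ${\rm Cap}_{p}(B_{i})<\varepsilon$ such that each $\widetilde{u}_{i}$ is continuous on $Y\setminus B_{i}$. Then $\widetilde{u}_{1}-\widetilde{u}_{2}$ is continuous on the open set $Y\setminus(B_{1}\cup B_{2})$ and vanishes almost everywhere there, hence vanishes identically on this open set. Therefore $\{\widetilde{u}_{1}\neq\widetilde{u}_{2}\}\subset B_{1}\cup B_{2}$, whose capacity is at most $2\varepsilon$ by subadditivity; letting $\varepsilon\to 0$ concludes uniqueness.

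The main obstacle, which is not a serious one but needs care, is the capacity inequality for open superlevel sets and the use of the equivalent definition $\widetilde{{\rm Cap}_{p}}$: it is this formulation that makes $|w|/\lambda$ manifestly admissible (since $w$ is continuous, $\{|w|>\lambda\}$ is open and $|w|/\lambda\geq 1$ on a neighborhood of it). Once this inequality is in hand, the Borel--Cantelli–type argument for capacities and the uniqueness step are essentially formal.
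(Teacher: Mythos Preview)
Your approach is the standard construction of quasi-continuous representatives on $\mathbb{R}^{N}$, but it has a genuine gap when $Y$ is an arbitrary open set. The weak-type estimate
\[
{\rm Cap}_{p}(\{|w|>\lambda\})\leq C\lambda^{-p}\|w\|_{W^{1,p}(Y)}^{p}
\]
requires the competitor $|w|/\lambda$ to lie in $W^{1,p}(\mathbb{R}^{2})$, since both ${\rm Cap}_{p}$ and the equivalent $\widetilde{{\rm Cap}_{p}}$ are defined via test functions on all of $\mathbb{R}^{2}$. Your approximants $u_{n}\in C^{\infty}(Y)\cap W^{1,p}(Y)$ come from Meyers--Serrin density, and for a general open $Y$ (no extension property is assumed) there is no reason $u_{n+1}-u_{n}$ extends to an element of $W^{1,p}(\mathbb{R}^{2})$ with comparable norm. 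So $|w|/\lambda$ is \emph{not} ``manifestly admissible'' in the definition of $\widetilde{{\rm Cap}_{p}}$, contrary to what you write, and the key inequality is unjustified.

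The paper's proof addresses exactly this point by localization: it fixes cutoffs $\varphi_{i}\in C_{c}^{\infty}(Y)$ with $\varphi_{i}=1$ on an exhaustion $Y_{i}\Subset Y$, so that $u\varphi_{i}\in W^{1,p}(\mathbb{R}^{2})$, applies the known $\mathbb{R}^{2}$ result to obtain quasi-continuous $v_{i}$, checks that $v_{i}=v_{j}$ $p$-q.e.\ on $Y_{i}$ for $j>i$, and patches the $v_{i}$ together. Your argument can be repaired in the same spirit (multiply by a cutoff before invoking the capacity estimate, then exhaust $Y$), but as written the Borel--Cantelli step does not go through.

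A smaller issue: in your uniqueness argument you call $Y\setminus(B_{1}\cup B_{2})$ ``open''. It is closed in $Y$, since the $B_{i}$ are open; so ``continuous and zero a.e.\ there implies identically zero there'' is not immediate. One needs the standard lemma that a $p$-quasi continuous function which vanishes a.e.\ vanishes $p$-q.e.\ (this is what the paper simply cites from Adams--Hedberg).
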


\begin{proof} Let  $x_{0} \in Y$ and let $\{\varphi_{i}: i \in \mathbb{N}, \, i \geq 1\}$ be a sequence of $C^{\infty}_{0}(Y)$ functions such that $\varphi_{i}=1$ in $Y_{i}=\{x \in Y: \dist(x, \partial Y)>\frac{1}{i}\} \cap B_{i}(x_{0})$. Observe that $u\varphi_{i}$ belongs to $W^{1,p}(\mathbb{R}^{2})$ and $u\varphi_{i}=u$ in $Y_{i}$. Then by \cite[Proposition 6.1.2]{Potential} there exist $p$-quasi continuous functions $v_{i} \in W^{1,p}(\mathbb{R}^{2})$ such that $v_{i}=u\varphi_{i}$ a.e. in $\mathbb{R}^{2}$. Notice that if $j>i$, then $v_{i}$ and $v_{j}$ coincide a.e. in $Y_{i}$, but this implies (see \cite[Theorem 6.1.4]{Potential}) that they coincide $p$-q.e. in $Y_{i}$. Now fix an arbitrary $\varepsilon>0$ and let $V_{i} \subset \mathbb{R}^{2}$ be such that $v_{i}$ restricted to $\mathbb{R}^{2} \backslash V_{i}$ is continuous and ${\rm Cap}_{p}(V_{i})<2^{-i}\varepsilon.$ Set $\widetilde{u}(x)=v_{i}(x)$ for every $x \in Y$, where $i \in \mathbb{N},\, i\geq 1$ is the smallest number with $x \in B_{i}(x_{0})$ and $\dist(x, \partial Y)>\frac{1}{i}$. We deduce that $\widetilde{u}=u$ a.e. in $Y$, $\widetilde{u}$ restricted to $Y\backslash \bigcup_{i}V_{i}$ is continuous and using \cite[Proposition 2.3.6]{Potential}, we get
\[
{\rm Cap}_{p}\biggl(\bigcup_{i} V_{i}\biggr) \leq \sum_{i}{\rm Cap}_{p}(V_{i}) \leq \varepsilon.
\]
Thus $\widetilde{u}$ is a $p$-quasi continuous representative for $u$, which by \cite[Theorem 6.1.4]{Potential} is uniquely defined up to a set of ${\rm Cap}_{p}$-capacity zero. This concludes the proof.
\end{proof}

\begin{rem} \label{rem: 2.8} Notice that $u \in W^{1,p}(\mathbb{R}^{2})$ belongs to $W^{1,p}_{0}(Y)$ if and only if its $p$-quasi continuous representative $\widetilde{u}$ vanishes $p$-q.e. on $\mathbb{R}^{2}\backslash Y $ (see \cite[Theorem 4]{BAGBY} and \cite[Lemma~4]{Hedberg}). Thus, if $Y^{\prime}$ is an open subset of $Y$ and $u \in W^{1,p}_{0}(Y)$ such that $\widetilde{u}=0$ $p$-q.e. on $Y\backslash Y^{\prime}$, then the restriction of $u$ to $ Y^{\prime}$ belongs to $W^{1,p}_{0}(Y^{\prime})$ and conversely, if we extend a function $u \in W^{1,p}_{0}(Y^{\prime})$ by zero in $Y\backslash Y^{\prime}$, then $u \in W^{1,p}_{0}(Y)$. Note that if $\Sigma \subset \overline{Y}$ and ${\rm Cap}_{p}(\s)=0$, then $W^{1,p}_{0}(Y) = W^{1,p}_{0}(Y \backslash \s)$. Indeed, $u \in W^{1,p}_{0}(Y)$ if and only if $u \in W^{1,p}(\mathbb{R}^{2})$ and $\widetilde{u}=0$ $p$-q.e. on $\mathbb{R}^{2} \backslash Y$ that is equivalent to say $u\in W^{1,p}(\mathbb{R}^{2})$ and $\widetilde{u}=0$ $p$-q.e. on $(\mathbb{R}^{2} \backslash Y) \cup \s$ or $u \in W^{1,p}_{0}(Y \backslash \s)$. In the sequel we shall always identify $u \in W^{1,p}(Y)$ with its $p$-quasi continuous representative $\widetilde{u}$.
\end{rem}

\begin{prop} \label{prop: 2.7} Let $D\subset  \mathbb{R}^{2}$ be a bounded extension domain and let $u \in W^{1,p}(D)$. Consider $E=\overline{D} \cap \{x: u(x)=0\}$. If ${\rm Cap}_p(E)>0$, then there exists $C=C(p,D)>0$ such that 
\[
\int_{D}|u|^{p}\diff x \leq C ({\rm Cap}_p(E))^{-1} \int_{D} |\nabla u|^{p}\diff x.
\]
\end{prop}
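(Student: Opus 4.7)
The plan is to combine the classical Poincar\'e--Wirtinger inequality on the bounded extension domain $D$ with the equivalent capacity characterization $\widetilde{{\rm Cap}_{p}}$ recalled in the excerpt. Set $\bar{u}=\frac{1}{|D|}\int_{D}u\diff x$. Since $D$ is a bounded extension domain, one has
\[
\|u-\bar{u}\|_{L^{p}(D)}\leq C(D)\|\nabla u\|_{L^{p}(D)}. \tag{$\ast$}
\]
Note that since $E\subset\overline{D}$ is bounded, ${\rm Cap}_{p}(E)\leq {\rm Cap}_{p}(\overline{D})=:C_{0}(D)<+\infty$, so in particular $1\leq C_{0}(D)({\rm Cap}_{p}(E))^{-1}$, a fact we will use to absorb constants into the capacity factor.

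If $\bar{u}=0$, then ($\ast$) reads $\|u\|_{L^{p}(D)}\leq C(D)\|\nabla u\|_{L^{p}(D)}$ and the conclusion follows immediately by the above remark. Assume henceforth $\bar{u}\neq 0$ and set $w=-(u-\bar{u})/\bar{u}\in W^{1,p}(D)$. Using the identification of $u$ with its $p$-quasi continuous representative (see Remark~\ref{rem: 2.8} of the excerpt), the hypothesis that $u=0$ on $E$ gives $w=1$ $p$-q.e. on $E$. Applying the extension operator of $D$, one obtains $W\in W^{1,p}(\mathbb{R}^{2})$ with $W=w$ on $D$ and $\|W\|_{W^{1,p}(\mathbb{R}^{2})}\leq C(D)\|w\|_{W^{1,p}(D)}$; the $p$-quasi continuous representative $\widetilde{W}$ still equals $1$ $p$-q.e. on $E$. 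Plugging $W$ into the equivalent capacity $\widetilde{{\rm Cap}_{p}}$ and combining with $(\ast)$ yields
\[
{\rm Cap}_{p}(E)\leq C\,\widetilde{{\rm Cap}_{p}}(E)\leq C(D)\|w\|_{W^{1,p}(D)}^{p}=C(D)|\bar{u}|^{-p}\|u-\bar{u}\|_{W^{1,p}(D)}^{p}\leq C(D)|\bar{u}|^{-p}\|\nabla u\|_{L^{p}(D)}^{p}.
\]
Rearranging, $|\bar{u}|^{p}{\rm Cap}_{p}(E)\leq C(D)\|\nabla u\|_{L^{p}(D)}^{p}$. Writing $u=(u-\bar{u})+\bar{u}$, applying $(\ast)$, and using the elementary bound $1\leq C_{0}(D)({\rm Cap}_{p}(E))^{-1}$, one then concludes
\[
\int_{D}|u|^{p}\diff x\leq 2^{p-1}\bigl(\|u-\bar{u}\|_{L^{p}(D)}^{p}+|\bar{u}|^{p}|D|\bigr)\leq C(D)({\rm Cap}_{p}(E))^{-1}\int_{D}|\nabla u|^{p}\diff x.
\]

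The main technicality is that the definition of $\widetilde{{\rm Cap}_{p}}$ given in the excerpt requires test functions to satisfy $u\geq 1$ on an \emph{open neighborhood} of $E$, whereas my candidate $W$ only satisfies $\widetilde{W}=1$ $p$-q.e.\ on $E$. The standard way around this (available in \cite[Chapter 6]{Potential}) is to use the equivalent characterization of Bessel capacity in which competitors are $W^{1,p}(\mathbb{R}^{2})$ functions with $\widetilde{W}\geq 1$ $p$-q.e.\ on $E$; alternatively, one can replace $W$ by $\max(W,1-\varepsilon)/(1-\varepsilon)$ and truncate-and-mollify to obtain continuous representatives strictly exceeding $1$ on open neighborhoods of $E$, then let $\varepsilon\to 0$. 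Either path is classical, so beyond this point the argument reduces to the computations above.
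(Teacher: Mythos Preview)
Your argument is correct and is in fact the standard proof of this Poincar\'e-type inequality. The paper does not give its own proof of Proposition~\ref{prop: 2.7}; it simply refers to \cite[Corollary~4.5.3]{Ziemer}, and the proof there follows exactly the strategy you outline: subtract the mean, use Poincar\'e--Wirtinger on the extension domain, and then use $(u-\bar u)/\bar u$ (after extension) as a competitor in the variational characterization of capacity to bound $|\bar u|^{p}$ by $({\rm Cap}_{p}(E))^{-1}\|\nabla u\|_{L^{p}}^{p}$.

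Your handling of the technical point---that the competitor in $\widetilde{{\rm Cap}_{p}}$ must be $\geq 1$ on a neighborhood rather than merely $p$-q.e.\ equal to $1$ on $E$---is the right one: the equivalence between the ``neighborhood'' and ``$p$-q.e.'' formulations of the $W^{1,p}$ capacity is classical (see e.g.\ \cite[Proposition~6.1.2 and Corollary~6.3.13]{Potential}), so invoking it is legitimate. The alternative truncation-and-mollification route you sketch also works, though one must be a bit more careful than written (one typically regularizes after truncation and uses quasi-continuity to control the exceptional set).
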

\begin{proof} For the proof we refer to \cite[Corollary 4.5.3, p. 195]{Ziemer}.
\end{proof}
 
Finally, since in this paper the notion of the Hausdorff distance is used, we recall the following well-known fact. If $X$ is a compact set in $\mathbb{R}^{2}$ and $(K_{n})_{n}$ is a sequence of compact subsets of $X$, then $K_{n}$ converge to $K$ in the Hausdorff distance if and only if the following two properties hold (this is also known as convergence in the sense of Kuratowski):
\begin{align*}
&\text{any}\,\ x \in K\,\ \text{is the limit of a sequence}\,\ (x_{n})_{n} \,\ \text{with}  \,\ x_{n}\in  K_{n}; \tag{P.1} \label{Property 1}\\
&\text{if} \,\ x_{n} \in K_{n}, \,\ \text{any limit point of}\,\ (x_{n})_{n} \,\ \text{belongs to} \,\ K. \tag{P.2} \label{Property 2}
\end{align*}


\subsection{Lower bound for capacities}
\begin{lemma} \label{lem: 2.10} Let $\Sigma$ be a set in $\mathbb{R}^{2}$ such that  
$\Sigma \cap \partial B_{r} \not = \emptyset \,\ \text{for every}\,\ r \in [1/2, 1].
$
If $p \in (1,2]$, then there is a constant $C=C(p)>0$ such that
\[
{\rm Cap}_p([0,1/2]\times \{0\}) \leq C {\rm Cap}_p(\Sigma).
\]
\end{lemma}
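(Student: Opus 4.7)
The plan is to reduce the inequality to a quantitative uniform lower bound $\widetilde{{\rm Cap}_p}(\Sigma)\ge c(p)>0$, which suffices because ${\rm Cap}_p([0,1/2]\times\{0\})$ is itself a fixed positive finite constant depending only on $p$: finiteness is clear, and positivity follows from Theorem~\ref{thm: 2.4} applied with any $\varepsilon\in(0,p-1)$, since then $2-p+\varepsilon<1$ and $\mathcal{H}^{2-p+\varepsilon}([0,1/2]\times\{0\})=+\infty$. Via the equivalence ${\rm Cap}_p\sim\widetilde{{\rm Cap}_p}$ recalled after Definition~\ref{def 2.3}, the desired inequality then follows with constant $C$ depending only on $p$.

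Let $u\in W^{1,p}(\mathbb{R}^{2})$ be any nonnegative admissible function for $\widetilde{{\rm Cap}_p}(\Sigma)$, i.e.\ $u\ge 1$ on an open neighborhood $V\supset\Sigma$. In polar coordinates $(r,\theta)$, the Fubini-type slicing theorem for Sobolev functions combined with the one-dimensional Sobolev embedding $W^{1,p}(S^{1})\hookrightarrow C^{0}(S^{1})$ (valid because $p>1$) shows that for a.e.\ $r>0$ the map $\theta\mapsto u(r\cos\theta,r\sin\theta)$ belongs to $W^{1,p}(S^{1})$ and, after possibly redefining on a negligible set, is continuous on $S^{1}$. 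Since $V$ is open in $\mathbb{R}^{2}$ and $\Sigma\cap\partial B_{r}\ne\emptyset$ for every $r\in[1/2,1]$, the slice $V\cap\partial B_{r}$ is a nonempty open subset of $\partial B_{r}$, so the continuous trace of $u$ attains a value $\ge 1$ somewhere on $\partial B_{r}$. The fundamental theorem of calculus along $\partial B_{r}$ then yields, for a.e.\ $r\in[1/2,1]$ and every $\theta\in[0,2\pi]$,
\[
1\ \le\ u(r\cos\theta,r\sin\theta)\ +\ \int_{0}^{2\pi}\bigl|\partial_{\varphi}u(r\cos\varphi,r\sin\varphi)\bigr|\,d\varphi.
\]

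Averaging in $\theta$, applying H\"older's inequality to both summands, raising to the $p$-th power via $(a+b)^{p}\le 2^{p-1}(a^{p}+b^{p})$, then multiplying by $r$ and integrating over $r\in[1/2,1]$, one uses the polar Jacobian $dx=r\,d\theta\,dr$ together with the pointwise bound $|\partial_{\varphi}u|^{p}\le r^{p}|\nabla u|^{p}$ coming from $|\nabla u|^{2}=|\partial_{r}u|^{2}+r^{-2}|\partial_{\theta}u|^{2}$, as well as $r\le 1$ on the annulus $B_{1}\setminus B_{1/2}$, to produce an estimate of the form
\[
\frac{3}{8}\ =\ \int_{1/2}^{1} r\,dr\ \le\ C(p)\int_{\mathbb{R}^{2}}\bigl(|u|^{p}+|\nabla u|^{p}\bigr)\,dx.
\]
Taking the infimum over admissible $u$ gives $\widetilde{{\rm Cap}_p}(\Sigma)\ge 3/(8C(p))=:c(p)$, which closes the argument.

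The main technical point is the pointwise step: justifying that the admissible $u$ actually attains a value $\ge 1$ on a.e.\ circle $\partial B_{r}$, even though $u$ is only defined up to a two-dimensional null set and each $\partial B_{r}$ is two-dimensionally negligible. This is handled by a Fubini argument identifying, on a.e.\ circle, the trace of $u$ with its $p$-quasi continuous representative, combined with the 1D Sobolev embedding which upgrades this a.e.\ identification to genuine continuity of the trace on $S^{1}$. All subsequent manipulations are routine H\"older-type inequalities, and the restriction $p\in(1,2]$ plays no essential role in the argument itself (the case $p>2$ being trivial by Remark~\ref{rem: 2.6}).
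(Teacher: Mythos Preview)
Your proof is correct but follows a genuinely different route from the paper's. The paper's argument is a two-line application of a general result from potential theory: define the $1$-Lipschitz radial projection $\Phi(x)=(|x|,0)$, observe that the hypothesis $\Sigma\cap\partial B_r\ne\emptyset$ for all $r\in[1/2,1]$ forces $\Phi(\Sigma\cap(\overline{B}_1\setminus B_{1/2}))=[1/2,1]\times\{0\}$, and then invoke the behavior of Bessel capacity under Lipschitz maps (Adams--Hedberg, Theorem~5.2.1) to get ${\rm Cap}_p([1/2,1]\times\{0\})\le C\,{\rm Cap}_p(\Sigma)$ directly, with a translation finishing the job.

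By contrast, you bypass the Lipschitz-map theorem entirely and instead prove a uniform lower bound $\widetilde{{\rm Cap}_p}(\Sigma)\ge c(p)$ from scratch, via circular slicing, the one-dimensional Sobolev embedding on each $\partial B_r$, and an elementary Poincar\'e-type estimate. This is longer and requires care with the Fubini/trace identification (which you handle correctly), but it is fully self-contained and avoids quoting the nontrivial contraction property of capacity under Lipschitz maps. The paper's approach is more conceptual and immediately generalizes to other images of $\Sigma$ under Lipschitz maps; your approach is more elementary and makes explicit why the geometry (intersecting every sphere in an annulus) forces a quantitative capacity lower bound. Both yield constants depending only on $p$, and, as you note, neither argument genuinely needs the restriction $p\le 2$.
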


\begin{proof} Let us associate every point $x$ in $\Sigma \cap (\overline{B}_{1}\backslash B_{1/2})$ with the point $\Phi(x)=(|x|,0)$ in $[1/2, 1]\times \{0\}$. Since $\s \cap \partial B_{r} \not =\emptyset$ for every $r \in [1/2,1]$, we have that 
	\[
	[1/2,1]\times\{0\} =\Phi(\Sigma\cap (\overline{B}_{1}\backslash B_{1/2}))
	\]
	and hence
\begin{equation} \label{2.1}
{\rm Cap}_p([1/2, 1]\times \{0\}) = {\rm Cap}_p(\Phi(\Sigma\cap (\overline{B}_{1}\backslash B_{1/2}))).
\end{equation}
Since $\Phi$ is a 1-Lipschitz map, by the behavior of ${\rm Cap}_p$-capacity with respect to a Lipschitz map (see e.g. \cite[Theorem 5.2.1]{Potential}), there is a constant $C=C(p)>0$ such that 
\begin{equation} \label{2.2}
{\rm Cap}_p(\Phi(\Sigma\cap (\overline{B}_{1}\backslash B_{1/2}))) \leq C {\rm Cap}_p(\Sigma\cap (\overline{B}_{1}\backslash B_{1/2})).
\end{equation}
Thus, using (\ref{2.1}), (\ref{2.2}) and the facts that ${\rm Cap}_p$-capacity is an invariant under translations and is nondecreasing with respect to set inclusion, we recover the desired inequality.
\end{proof}

\begin{cor} \label{cor: 2.11} \textit { Let $\Sigma \subset \mathbb{R}^{ 2}$,\ $\xi \in \mathbb{R}^{ 2}$ and $r>0$ be such that $\Sigma \cap \partial B_{s}(\xi) \not = \emptyset$ for every $s\in [r,2r]$. Let $p \in (1,+\infty)$ and $u \in W^{1,p}(B_{2r}(\xi))$ satisfy $u=0\,\ p$-q.e. on $\Sigma \cap \overline{B}_{2r}(\xi)$. Then there is a constant $C>0$, which depends only on $p$, such that }
\[
\int_{B_{2r}(\xi)} |u|^{p} \diff x \leq C r^{p} \int_{B_{2r}(\xi)} |\nabla u|^{p} \diff x.
\]
\textit{Proof of Corollary \ref{cor: 2.11}.} Let us define $v(y)=u(\xi+2ry),\, y \in B_{1}$. Then $v \in W^{1,p}(B_{1})$,\ $v=0\,\ p$-q.e. on $(\frac{1}{2r}(\Sigma - \xi)) \cap \overline{B}_{1}$ and $(\frac{1}{2r}(\Sigma-\xi))\cap \partial B_{s}\not = \emptyset$ for every $s \in [1/2, 1]$. Next, if $p\in (1,2]$, by Lemma~\ref{lem: 2.10} and by Proposition~\ref{prop: 2.7}, for some $C=C(p)>0$ we get 
\[
\int_{B_{1}}|v|^{p}\diff y \leq C({\rm Cap}_p([0,1/2]\times \{0\}))^{-1} \int_{B_{1}} |\nabla v|^{p}\diff y. 
\]
If $p\in (2, +\infty)$, by Remark~\ref{rem: 2.6}, ${\rm Cap}_{p}((\frac{1}{2r}(\Sigma-\xi))\cap B_{1})\geq {\rm Cap}_{p}(\{(0,0)\})$. Next, using Proposition~\ref{prop: 2.7}, we get
\[
\int_{B_{1}}|v|^{p}\diff y \leq C({\rm Cap}_{p}(\{(0,0)\}))^{-1} \int_{B_{1}} |\nabla v|^{p}\diff y.
\]
Then, changing the variables, we recover the desired inequality.
\qed
\end{cor}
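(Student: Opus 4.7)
The natural strategy is to reduce the statement to the unit ball by scaling and then invoke the weighted Poincar\'e inequality of Proposition~\ref{prop: 2.7}, with the zero set $E$ being $\Sigma$ itself. The key preliminary work is to produce a lower bound on ${\rm Cap}_p(\Sigma \cap \overline{B}_1)$ that does not depend on $\Sigma$, and this is exactly where the hypothesis that $\Sigma$ meets every sphere $\partial B_s(\xi)$ for $s\in[r,2r]$ comes in.

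\textbf{Step 1: Rescaling.} Define $v(y) := u(\xi + 2ry)$ for $y \in B_1$; then $v \in W^{1,p}(B_1)$ with
\[
\int_{B_1}|v|^{p}\diff y = (2r)^{-2}\int_{B_{2r}(\xi)}|u|^{p}\diff x, \qquad \int_{B_1}|\nabla v|^{p}\diff y = (2r)^{p-2}\int_{B_{2r}(\xi)}|\nabla u|^{p}\diff x.
\]
Let $\widetilde{\Sigma} := \tfrac{1}{2r}(\Sigma-\xi)$. The hypothesis on $\Sigma$ translates to $\widetilde{\Sigma}\cap\partial B_s \neq \emptyset$ for every $s\in[1/2,1]$, and $v=0$ $p$-q.e. on $\widetilde{\Sigma}\cap\overline{B}_1$ by the quasi continuous identification of Remark~\ref{rem: 2.8}.

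\textbf{Step 2: Capacity lower bound, split in two cases.} I would separate $p\in(1,2]$ from $p>2$, since the behavior of ${\rm Cap}_p$ on small sets is qualitatively different. For $p \in (1,2]$, Lemma~\ref{lem: 2.10} applied to $\widetilde{\Sigma}$ gives
\[
{\rm Cap}_p([0,1/2]\times\{0\}) \leq C(p)\,{\rm Cap}_p(\widetilde{\Sigma}\cap\overline{B}_1),
\]
and by Corollary~\ref{cor: 2.5} the left-hand side is a positive constant depending only on $p$. For $p>2$, since $\widetilde{\Sigma}\cap\overline{B}_1$ is nonempty (it meets each sphere), Remark~\ref{rem: 2.6} gives
\[
{\rm Cap}_p(\widetilde{\Sigma}\cap\overline{B}_1) \geq {\rm Cap}_p(\{(0,0)\}) > 0,
\]
again a positive constant depending only on $p$.

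\textbf{Step 3: Poincar\'e on $B_1$ and undoing the scaling.} Applying Proposition~\ref{prop: 2.7} with $D=B_1$ and $E=\widetilde{\Sigma}\cap\overline{B}_1$, and using the lower capacity bound of Step~2, yields a constant $C=C(p)>0$ with
\[
\int_{B_1}|v|^{p}\diff y \leq C \int_{B_1}|\nabla v|^{p}\diff y.
\]
Substituting back the change-of-variables identities of Step~1 absorbs the factor $(2r)^p$ and produces the claim.

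\textbf{Main obstacle.} The only genuinely nontrivial point is providing a uniform (i.e.\ $\Sigma$-independent) lower bound on ${\rm Cap}_p(\widetilde{\Sigma}\cap\overline{B}_1)$, which is delicate because for $p\leq 2$ individual points are $p$-polar while for $p>2$ they are not. The case split in Step~2 is what resolves this, using Lemma~\ref{lem: 2.10} to convert the radial transversality of $\Sigma$ into a lower bound by the capacity of a fixed segment when $p\leq 2$, and using the universal positivity of single-point capacity when $p>2$.
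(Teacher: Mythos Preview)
Your proposal is correct and follows essentially the same approach as the paper: rescale to the unit ball, split into the cases $p\in(1,2]$ and $p>2$ to obtain a uniform lower bound on ${\rm Cap}_p(\widetilde{\Sigma}\cap\overline{B}_1)$ via Lemma~\ref{lem: 2.10} and Remark~\ref{rem: 2.6} respectively, then apply Proposition~\ref{prop: 2.7} and scale back. Your presentation is slightly more explicit (you write out the change-of-variables identities and invoke Corollary~\ref{cor: 2.5} for positivity of the segment capacity), but the argument is the same.
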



\subsection{Uniform boundedness of potentials}
 In this short subsection we establish a boundedness result, uniformly with respect to $\s$ for the potential $u_{\s}$. Let us emphasize that the estimate \eqref{2.3} will never be used in the sequel, but we find it interesting enough to keep it in the present paper. On the other hand, the estimate \eqref{2.4} will be used several times.  Let $\Om$ be a bounded open set in $ \mathbb{R}^{2}$ and let $p \in (1,+\infty)$. If $f \in L^{q_{0}}(\Om)$, where $q_{0}$ is the exponent defined in (\ref{(0.1)}) and $\s$ is a closed proper subset of $\overline{\Om}$, then it is well known that there is a unique function $u_{\s}$ that minimizes $E_{p}$ over $W^{1,p}_{0}(\Om\backslash \s)$. Let us extend $u_{\s}$ by zero outside $\Om \backslash \s$ to an element that belongs to $W^{1,p}(\mathbb{R}^{ 2})$. We shall use the same notation for this extension as for $u_{\s}$. 
\begin{prop} \label{prop: 2.12} Let $f \in L^{q_{0}}(\Om)$ with $q_{0}$  defined in (\ref{(0.1)}).  Then there is a constant $C>0$, possibly depending only on $p$ and $q_{0}$, such that
\begin{equation}
\int_{\Om}|\nabla u_{\Sigma}|^{p}\diff x \leq C |\Om|^{\alpha} \|f\|^{\beta}_{L^{q_{0}}(\Om)}, \label{2.4}
\end{equation}
where 
\begin{equation} \label{(2.5)}
(\alpha, \beta)=
\begin{cases}
(0, p^{\prime}) & \text{if} \,\ 1<p<2\\
(\frac{2}{q^{\prime}_{0}}, 2) & \text{if} \,\  p=2\\
(\frac{p-2}{2(p-1)}, p^{\prime}) & \text{if} \,\ 2<p<+\infty.
\end{cases}
\end{equation}
Moreover, if $f\in L^{q}(\Omega)$ with $q>\frac{2}{p}$ if $p \in (1,2]$ and $q=1$ if $2<p<+\infty$, then there is a constant $C=C(p,q,\|f\|_{L^{q}(\Omega)},|\Om|)>0$ such that 
\begin{equation}
\|u_{\s}\|_{L^{\infty}(\mathbb{R}^{2})} \leq C. \label{2.3}
\end{equation} 
\end{prop}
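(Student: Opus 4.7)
We handle \eqref{2.4} and \eqref{2.3} separately, both based on the energy / Euler--Lagrange formulation combined with the Sobolev-type embedding appropriate to each range of $p$.

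\emph{For \eqref{2.4}.} Minimality of $u_\Sigma$ against the trial function $0$ gives $E_p(u_\Sigma)\le E_p(0)=0$, i.e.
\[
\frac{1}{p}\int_\Omega|\nabla u_\Sigma|^p\diff x\le\int_\Omega f\,u_\Sigma\diff x\le \|f\|_{L^{q_0}(\Omega)}\|u_\Sigma\|_{L^{q_0'}(\Omega)}.
\]
Extending $u_\Sigma$ by zero to $\mathbb{R}^{2}$, it suffices to bound $\|u_\Sigma\|_{L^{q_0'}(\Omega)}$ in terms of $\|\nabla u_\Sigma\|_{L^{p}(\Omega)}$ through the appropriate planar embedding, and then to solve the resulting inequality for $\|\nabla u_\Sigma\|_{p}^{p}$. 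A short computation shows that for $1<p<2$ one has $q_0'=p^\ast:=\tfrac{2p}{2-p}$, so sharp Sobolev in $\mathbb R^2$ gives $\|u_\Sigma\|_{p^\ast}\le C\|\nabla u_\Sigma\|_p$ with no $|\Omega|$ factor, producing $\alpha=0$; for $p=2$, Poincar\'e/Gagliardo--Nirenberg in a bounded planar domain yields $\|u_\Sigma\|_{q_0'}\le C|\Omega|^{1/q_0'}\|\nabla u_\Sigma\|_2$, producing $\alpha=2/q_0'$; for $p>2$, $q_0=1$ so $\|u_\Sigma\|_{q_0'}=\|u_\Sigma\|_\infty$, and the Morrey embedding for compactly supported $W^{1,p}$ functions gives $\|u_\Sigma\|_\infty\le C|\Omega|^{1/2-1/p}\|\nabla u_\Sigma\|_p$, producing $\alpha=(p-2)/(2(p-1))$. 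In each case $\beta=p'$ (resp. $\beta=2$ when $p=2$) comes out automatically.

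\emph{For \eqref{2.3}.} When $p>2$, the embedding $W^{1,p}_{0}(\Omega)\hookrightarrow C^{0,1-2/p}(\overline\Omega)$ combined with \eqref{2.4} already yields the bound. For $1<p\le 2$ we run a Stampacchia truncation on \eqref{(1.2)}: given $k>0$, set $v_k:=(u_\Sigma-k)_+\in W^{1,p}_{0}(\Omega\setminus\Sigma)$ and $A(k):=\{u_\Sigma>k\}$; testing \eqref{(1.2)} against $v_k$ gives $\int_{A(k)}|\nabla u_\Sigma|^p\diff x=\int_{A(k)}f\,v_k\diff x$. Combining H\"older with exponents $q,q'$, Sobolev $\|v_k\|_{p^\ast}\le C\|\nabla v_k\|_p$ (if $p=2$, any sufficiently large finite exponent replaces $p^\ast$), and the interpolation $\|v_k\|_{q'}\le\|v_k\|_{p^\ast}|A(k)|^{1/q'-1/p^\ast}$ leads to $\|\nabla v_k\|_p^{p-1}\le C\|f\|_q|A(k)|^{1/q'-1/p^\ast}$, and then, via $(h-k)^{p^\ast}|A(h)|\le\|v_k\|_{p^\ast}^{p^\ast}\le C\|\nabla v_k\|_{p}^{p^\ast}$ for $h>k$, to the reverse iteration
\[
|A(h)|\le\frac{M}{(h-k)^{p^\ast}}|A(k)|^{\gamma},\qquad \gamma:=\frac{p^\ast/q'-1}{p-1}.
\]
A direct algebraic check identifies the hypothesis $q>2/p$ with the condition $\gamma>1$; Stampacchia's standard iteration lemma then produces $k^\ast=k^\ast(p,q,\|f\|_q,|\Omega|)$ with $|A(k^\ast)|=0$. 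Running the same argument on $-u_\Sigma$ closes \eqref{2.3}.

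\emph{Main obstacle.} The subtle point is the case-by-case bookkeeping: the exponent $q_0$ of \eqref{(0.1)} and the thresholds $q>2/p$ (respectively $q=1$) of the $L^\infty$ statement are not ad hoc but are precisely what closes the H\"older--Sobolev chain with the announced $(\alpha,\beta)$ and what ensures $\gamma>1$ in the truncation. In particular the borderline $p=2$ (where $p^\ast$ becomes formally infinite) and the transition to $p>2$ (where Morrey's embedding makes the iteration superfluous) each require their own Sobolev inequality, and these three regimes have to be handled separately throughout.
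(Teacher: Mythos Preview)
Your argument for \eqref{2.4} is essentially the paper's: both start from $\int_\Omega|\nabla u_\Sigma|^p\le\|f\|_{q_0}\|u_\Sigma\|_{q_0'}$ (the paper gets equality by testing \eqref{(1.2)} with $u_\Sigma$, you use $E_p(u_\Sigma)\le0$; the conclusion is the same up to a harmless factor) and then split into the three regimes of $p$ via the matching Sobolev/Morrey embedding, recovering the stated $(\alpha,\beta)$.

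For \eqref{2.3} you take a genuinely different (and correct) route. The paper simply invokes Lemma~\ref{lem: A.2} in the appendix, which is a Moser-type iteration: one tests the equation with $v=G(u^++k)$ where $G'=|H'|^p$ and $H(s)=s^\beta-k^\beta$, then iterates on the power $\beta\mapsto\chi\beta$ with $\chi>1$ to drive the $L^r$ norm up to $r=\infty$. You instead run Stampacchia's level-set truncation, testing with $(u_\Sigma-k)_+$ and iterating on the level $k$; your identification of the hypothesis $q>2/p$ with the condition $\gamma=\frac{p^\ast/q'-1}{p-1}>1$ is exactly right. Both methods are standard and close under the same integrability assumption on $f$. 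The Stampacchia approach makes the threshold $q>2/p$ appear more transparently, while the Moser route is self-contained and avoids invoking a named iteration lemma. For $p>2$ both agree that Morrey combined with \eqref{2.4} already yields the $L^\infty$ bound, so no iteration is needed there.
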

\begin{proof} The estimate (\ref{2.3}) follows from Lemma~\ref{lem: A.2} applied for $U=\Omega\backslash \Sigma$ and from the fact that the constant $C$ in (\ref{A.5}) is increasing with respect to $|U|$. Now let $f \in L^{q_{0}}(\Om)$. Using $u_{\s}$ as the test function in (\ref{(1.2)}), we get
\begin{align*}
\int_{\Om} |\nabla u_{\s}|^{p}\diff x &= \int_{\Om} fu_{\s} \diff x\\
& \leq \|f\|_{L^{q_{0}}(\Om)} \|u_{\s}\|_{L^{q_{0}^{\prime}}(\Om)}. \numberthis \label{2.5}
\end{align*}
Next, recalling that by the Sobolev inequalities (see \cite[Theorem 7.10]{PDE}) there exists $C=C(p)>0$ such that 
\begin{equation}  \label{2.6}
\|u_{\s}\|_{L^{q_{0}^{\prime}}(\Om)} \leq 
\begin{cases}                                
C \|\nabla u_{\s}\|_{L^{p}(\Om)} & \text{if} \,\ 1<p<2 \\ 
C |\Omega|^{\frac{1}{2}-\frac{1}{p}} \|\nabla u_{\s}\|_{L^{p}(\Om)}&  \text{if} \,\ 2<p<+\infty
\end{cases}
\end{equation}
and using (\ref{2.5}), we recover (\ref{2.4}) when $p\neq 2$. If $p=2$ and $q_{0}\in (1,2]$, setting $\varepsilon=\frac{4}{q^{\prime}_{0}+2}$ (note that $\frac{1}{q^{\prime}_{0}}=\frac{1}{2-\varepsilon}-\frac{1}{2}$ and $2-\varepsilon\geq 1$), we get
\begin{align*}
\|u_{\s}\|_{L^{q^{\prime}_{0}}(\Omega)} &\leq C \|\nabla u_{\s}\|_{L^{2-\varepsilon}(\Omega)} \,\ (\text{by the Sobolev inequality})\\
& \leq C|\Om|^{\frac{1}{q^{\prime}_{0}}} \|\nabla u_{\s}\|_{L^{2}(\Om)} \,\ (\text{by H\"{o}lder's inequality}),   \numberthis \label{2.7}
\end{align*}
where $C=C(q_{0})>0$. Using (\ref{2.7}) together with (\ref{2.5}), we obtain (\ref{2.4}) in the case when $p=2$ and $q_{0}\in (1,2]$. Finally, assume that $p=2$ and $q_{0}>2$. We observe that $1\leq q_{0}^{\prime}<2$. Then, using H\"{o}lder's inequality and (\ref{2.7}), we get
\[
\|u_{\Sigma}\|_{L^{q^{\prime}_{0}}(\Omega)}\leq |\Omega|^{\frac{1}{q^{\prime}_{0}}-\frac{1}{2}}\|u_{\Sigma}\|_{L^{2}(\Omega)}\leq C|\Omega|^{\frac{1}{q^{\prime}_{0}}-\frac{1}{2}}|\Omega|^{\frac{1}{2}}\|\nabla u_{\Sigma}\|_{L^{2}(\Omega)}=C|\Omega|^{\frac{1}{q^{\prime}_{0}}}\|\nabla u_{\Sigma}\|_{L^{2}(\Omega)}.
\]
The last estimate, together with (\ref{2.5}), yields (\ref{2.4}) in the case when $p=2$ and $q_{0}>2$. This completes the proof of Proposition~\ref{prop: 2.12}.
\end{proof}


\subsection{Existence}
\begin{theorem} \label{thm: 2.13}  Let $\Om$ be an open and bounded set in $\mathbb{R}^{2}$ and $p\in (1,+\infty)$, and let $f \in L^{q_{0}}(\Om)$, with $q_{0}$ defined in (\ref{(0.1)}). Let $({\s}_{n})_{n}$ be a sequence of closed connected proper subsets of $\overline{\Om}$, converging to a closed connected proper subset $\s$ of $\overline{\Om}$ with respect to the Hausdorff distance. Then
\[
u_{\s_{n}} \underset{n \to +\infty}{\longrightarrow} u_{\s}\,\ \text{strongly in}\,\ W^{1,p}(\Om).
\]
\end{theorem}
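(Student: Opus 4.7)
The argument follows the standard scheme for stability of Dirichlet problems under Hausdorff convergence of the obstacle: I would extract a weak limit, identify it as $u_\Sigma$, and then upgrade to strong convergence via the energy.

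\textbf{Step 1: Uniform bound and weak limit.} By Proposition~\ref{prop: 2.12} the sequence $(u_{\Sigma_n})_n$ is bounded in $W^{1,p}(\mathbb{R}^2)$ (each function is extended by zero outside $\Omega\setminus\Sigma_n$). Up to a subsequence, there exists $u\in W^{1,p}(\mathbb{R}^2)$ with $u_{\Sigma_n}\rightharpoonup u$ weakly in $W^{1,p}$, and by Rellich--Kondrachov $u_{\Sigma_n}\to u$ strongly in $L^{q_0'}(\Omega)$. Since every $u_{\Sigma_n}$ vanishes outside $\Omega$, so does $u$.

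\textbf{Step 2: $u\in W^{1,p}_0(\Omega\setminus\Sigma)$.} This is the delicate point, and in my view the main obstacle. By Remark~\ref{rem: 2.8} it is enough to show that the $p$-quasi continuous representative $\widetilde u$ vanishes $p$-q.e.\ on $\Sigma\cup(\mathbb{R}^2\setminus\Omega)$; the second part is automatic. For the first part I would use the standard Mosco-type trick: pick test functions of the form $\zeta\circ d_{\Sigma}$, where $d_\Sigma$ is the distance to $\Sigma$ and $\zeta$ is a suitable cutoff, and exploit that by Hausdorff convergence $d_{\Sigma_n}\to d_\Sigma$ uniformly on compact sets, together with the fact that $u_{\Sigma_n}=0$ on $\Sigma_n$. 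Equivalently, one fixes an arbitrary open neighborhood $U$ of $\Sigma$; by \eqref{Property 1} and \eqref{Property 2} one has $\Sigma_n\subset U$ for $n$ large, hence $u_{\Sigma_n}\in W^{1,p}_0(\Omega\setminus\overline{U\cap \Sigma_n})\subset W^{1,p}_0(\mathbb{R}^2\setminus\Sigma_n)$. Passing to the limit in a Mosco sense (using weak closedness of $W^{1,p}_0(\Omega\setminus V)$ for any open $V$), one deduces that $\widetilde u$ vanishes $p$-q.e.\ on $\Sigma$. Together with $u=0$ outside $\Omega$, this gives $u\in W^{1,p}_0(\Omega\setminus\Sigma)$.

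\textbf{Step 3: Identification $u=u_\Sigma$.} Let $\varphi\in C^\infty_c(\Omega\setminus\Sigma)$. Since $K=\mathrm{supp}(\varphi)$ is compact and disjoint from $\Sigma$, the Hausdorff convergence forces $K\cap \Sigma_n=\emptyset$ for $n$ large, hence $\varphi\in W^{1,p}_0(\Omega\setminus\Sigma_n)$. By minimality,
\[
E_p(u_{\Sigma_n})\leq E_p(\varphi).
\]
Passing to the liminf, using weak lower semicontinuity of $v\mapsto\int|\nabla v|^p\,dx$ and the strong $L^{q_0'}$ convergence for the linear term, gives $E_p(u)\leq E_p(\varphi)$. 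By density of $C^\infty_c(\Omega\setminus\Sigma)$ in $W^{1,p}_0(\Omega\setminus\Sigma)$ (recall that the capacity of a closed connected set containing more than one point is positive by Corollary~\ref{cor: 2.5}; the one-point case is handled by $\mathrm{Cap}_p$-negligibility and Remark~\ref{rem: 2.8}), this extends to all admissible $\varphi$, so $u=u_\Sigma$ by uniqueness of the minimizer. Uniqueness of the limit also removes the need for subsequence extraction.

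\textbf{Step 4: Strong convergence.} Using $u_{\Sigma_n}$ as a test function in \eqref{(1.2)} gives
\[
\int_\Omega |\nabla u_{\Sigma_n}|^p\,dx=\int_\Omega f\,u_{\Sigma_n}\,dx\;\longrightarrow\;\int_\Omega f\,u_\Sigma\,dx=\int_\Omega |\nabla u_\Sigma|^p\,dx,
\]
where the convergence of the right-hand side is by the strong $L^{q_0'}$ convergence of Step~1. Combined with the weak convergence $\nabla u_{\Sigma_n}\rightharpoonup\nabla u_\Sigma$ in $L^p(\Omega)$ and the uniform convexity of $L^p$ for $p\in(1,+\infty)$, this yields $\nabla u_{\Sigma_n}\to\nabla u_\Sigma$ strongly in $L^p(\Omega)$. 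Strong $L^p$ convergence of $u_{\Sigma_n}$ itself follows either from strong $L^{q_0'}$ convergence and a Poincaré-type inequality (Corollary~\ref{cor: 2.11} applied on a large ball containing $\Omega$, using that $\Sigma\neq\overline{\Omega}$ is still hit on suitable annuli, or simply working in $W^{1,p}(\mathbb{R}^2)$ where the weak limit is identified), concluding the strong $W^{1,p}(\Omega)$ convergence. The trickiest step, as noted, remains Step~2: the identification of the trace of the weak limit on the limiting set $\Sigma$ in the $p$-capacitary sense.
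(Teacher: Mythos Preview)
The paper does not actually prove this theorem: it simply cites \v{S}ver\'ak for $p=2$ and Bucur--Trebeschi for general $p$. Your outline (weak compactness, identification of the limit as admissible, minimality, then norm convergence) is exactly the skeleton of those proofs, and your Steps~1, 3 and~4 are correct.

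The genuine gap is Step~2, and it is not a minor one: this step \emph{is} the theorem. Your sketch does not use the connectedness of the $\Sigma_n$, and without it the conclusion is false. The classical counterexample: for $p\le 2$ take $\Sigma_n$ to be $n$ equally spaced points on $[0,1]\times\{0\}$; then $\mathrm{Cap}_p(\Sigma_n)=0$, so $u_{\Sigma_n}=u_\emptyset$ for every $n$, yet $\Sigma_n\to [0,1]\times\{0\}$ in Hausdorff distance and $u_{[0,1]\times\{0\}}\neq u_\emptyset$. Your two proposed mechanisms both fail on this example. The ``open neighborhood $U$ of $\Sigma$'' argument is circular: for large $n$ one has $\Sigma_n\subset U$, hence $\overline{U\cap\Sigma_n}=\Sigma_n$, and you are just restating $u_{\Sigma_n}\in W^{1,p}_0(\Omega\setminus\Sigma_n)$; you never land in a \emph{fixed} space $W^{1,p}_0(\Omega\setminus V)$ to which weak closedness could apply. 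The distance-cutoff idea has the same problem: multiplying by $1-\zeta(d_\Sigma)$ kills the function near $\Sigma$ but does nothing about $\Sigma_n$, and multiplying by $1-\zeta(d_{\Sigma_n})$ changes with $n$.

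What actually makes Step~2 work in the cited references is a \emph{uniform capacity density} estimate for closed connected sets in $\mathbb{R}^2$: there exists $c=c(p)>0$ such that for every closed connected $K$ with $\mathrm{diam}(K)\ge r$ and every $x\in K$, $\mathrm{Cap}_p(K\cap \overline{B}_r(x), B_{2r}(x))\ge c\,\mathrm{Cap}_p(\overline{B}_r(x), B_{2r}(x))$ (this is essentially Lemma~\ref{lem: 2.10} in the paper). From this one derives, via a Wiener-type or Poincar\'e-type argument, a quantitative decay of averages $\fint_{B_r(x)}|u_{\Sigma_n}|^p\,dy$ as $r\to 0$, uniformly in $n$, at every $x\in\Sigma$; this is what forces the quasi-continuous representative of the weak limit to vanish on $\Sigma$. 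Alternatively one can phrase it as: connected complements in $\mathbb{R}^2$ form a class for which Hausdorff convergence implies $\gamma_p$-convergence (Mosco convergence of the Sobolev spaces), which is exactly the statement in \cite{Bucur} that the paper invokes. Either way, connectedness enters through this uniform density bound, and your Step~2 needs to incorporate it explicitly.
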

\begin{proof} For a proof, see \cite{sverak} for the case $p=2$ and \cite{Bucur} for the general case.
	\end{proof}
\begin{rem} \label{rem: 2.14} As in \cite{Bucur} we recall that a sequence $(\Om_{n})_{n}$ of open subsets of a fixed ball $B$ $\gamma_{p}$-converges to $\Om$ if for any $f \in W^{-1, p^{\prime}}(B)$,  where $W^{-1, p^{\prime}}(B)$ is the dual space of $W^{1,p}_{0}(B)$, the solutions of the Dirichlet problem
\[
-\Delta_{p} u_{n} = f \,\ \text{in} \,\ {\Om}_{n}, \,\ u_{n} \in W^{1,p}_{0}({\Om}_{n})
\]
converge strongly in $W^{1,p}_{0}(B)$, as $n\to +\infty$, to the solution of the corresponding problem in $\Om$. It can be shown that the $\gamma_{p}$-convergence is equivalent to the convergence in the sense of Mosco of the associated Sobolev spaces (see \cite{Bucur}).
\end{rem}

\begin{prop} \label{prop: 2.15}
Problem~\ref{problemMain} admits a minimizer.
\end{prop}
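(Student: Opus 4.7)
The plan is to apply the direct method of the calculus of variations. I would start by taking a minimizing sequence $(\Sigma_{n})_{n}\subset \mathcal{K}(\Omega)$ so that $\mathcal{F}_{\lambda,p}(\Sigma_{n}) \to \inf_{\mathcal{K}(\Omega)}\mathcal{F}_{\lambda,p}$. The infimum is finite since any single point in $\overline{\Omega}$ is an admissible competitor, and the non-negativity of $C_{p}$ combined with $\lambda>0$ yields a uniform upper bound $\mathcal{H}^{1}(\Sigma_{n})\leq M/\lambda$ for some constant $M$.

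Since each $\Sigma_{n}$ is a closed connected subset of the compact set $\overline{\Omega}$, Blaschke's selection theorem produces a subsequence (not relabeled) converging in the Hausdorff distance to some compact set $\Sigma\subset\overline{\Omega}$. A Hausdorff limit of a sequence of connected compacta lying in a fixed compact set is itself connected (this is a direct consequence of (P.1)--(P.2)), so $\Sigma$ is connected.

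I would then invoke Go\l\k{a}b's theorem, which states that $\mathcal{H}^{1}$ is lower semicontinuous with respect to Hausdorff convergence on the class of compact connected subsets of $\mathbb{R}^{2}$. This yields $\mathcal{H}^{1}(\Sigma)\leq \liminf_{n}\mathcal{H}^{1}(\Sigma_{n}) <+\infty$, hence in particular $|\Sigma|=0$, so $\Sigma$ is a proper subset of $\overline{\Omega}$ (assuming $|\Omega|>0$, otherwise the problem is trivial) and $\Sigma\in \mathcal{K}(\Omega)$.

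Finally, Theorem~\ref{thm: 2.13} provides the continuity of the potential: $u_{\Sigma_{n}}\to u_{\Sigma}$ strongly in $W^{1,p}(\Omega)$, and hence $C_{p}(\Sigma_{n})\to C_{p}(\Sigma)$. Combining this with Go\l\k{a}b's theorem gives
\[
\mathcal{F}_{\lambda,p}(\Sigma) \leq \liminf_{n} \mathcal{F}_{\lambda,p}(\Sigma_{n}) = \inf_{\Sigma^{\prime}\in\mathcal{K}(\Omega)} \mathcal{F}_{\lambda,p}(\Sigma^{\prime}),
\]
so $\Sigma$ is a minimizer. The main point to watch is the closure of $\mathcal{K}(\Omega)$ under Hausdorff limits, in particular connectedness and properness of $\Sigma$; both are handled above, and potential degeneracies (e.g.\ $\Sigma$ reducing to a single point or to the empty set) are compatible with the definition of $C_{p}$ and with the continuity statement of Theorem~\ref{thm: 2.13}.
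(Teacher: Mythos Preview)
Your proof is correct and follows essentially the same approach as the paper: minimizing sequence, Blaschke's theorem for compactness in the Hausdorff distance, Go\l\k{a}b's lower semicontinuity of $\mathcal{H}^{1}$, and Theorem~\ref{thm: 2.13} for the continuity of the compliance. You are somewhat more explicit than the paper about connectedness and properness of the limit, while the paper instead explicitly separates off the possibility that the empty set is a minimizer before applying Blaschke's theorem; both handle the same issues.
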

\begin{proof} Let $({\s}_{n})_{n}$ be a minimizing sequence for Problem~\ref{problemMain}. We can assume that $\Sigma_{n}\neq \emptyset$ and $C_{p}(\Sigma_{n})+\lambda \mathcal{H}^{1}(\Sigma_{n})\leq C_{p}(\emptyset)$ for all $n\in \mathbb{N}$ or at least for a subsequence still denoted by $n$, because otherwise the empty set would be a minimizer. Then, using Blaschke's theorem (see \cite[Theorem 6.1]{APD}), we can find a compact connected proper subset $\s$ of $\overline{\Om}$ such that up to a subsequence, still denoted by the same index, ${\s}_{n}$ converges to $\s$ with respect to the Hausdorff distance as $n \to +\infty$. Then, by Theorem~\ref{thm: 2.13}, $u_{{\s_{n}}}$ converges to $u_{\s}$ strongly in $W^{1,p}_{0}(\Om)$ and thanks to the lower semicontinuity of $\mathcal{H}^{1}$ with respect to the topology generated by the Hausdorff distance, we deduce that $\s$ is a minimizer of Problem~\ref{problemMain}.
\end{proof}

Before starting the study of the regularity and qualitative properties satisfied by a minimizer, we verify that, at least for some range of  values of $\lambda$, a minimizer $\Sigma$ is actually not trivial. This is the purpose of the following proposition.

\begin{prop} \label{prop: 2.16}  Let $\Om \subset \mathbb{R}^{2}$ be open and bounded. Let $p \in (1,+\infty)$ and $f \in L^{q_{0}}(\Om)$, $f\neq 0$, with $q_{0}$ defined in (\ref{(0.1)}).  Then there exists  $\lambda_{0} =\lambda_{0} (p, f, \Om)>0$ such that   if   $\lambda \in (0, \lambda_{0}]$, then every solution $\s$ of Problem~\ref{problemMain} has positive $\mathcal{H}^{1}$-measure.
\end{prop}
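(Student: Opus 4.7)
The plan is to produce an explicit nontrivial connected competitor $\Sigma_{0}\in\mathcal K(\Omega)$ with $\mathcal H^{1}(\Sigma_{0})>0$ whose cost $\mathcal F_{\lambda,p}(\Sigma_{0})$ is, for all sufficiently small $\lambda$, strictly smaller than the cost of every ``length-zero'' candidate (the empty set and every singleton $\{x\}\subset\overline\Omega$). Since any closed connected subset of $\overline\Omega$ with $\mathcal H^{1}=0$ is of this form, this will force $\mathcal H^{1}(\Sigma)>0$ for every minimizer.

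First I would fix the right benchmark to beat. Strict convexity of $t\mapsto|t|^{p}$ yields uniqueness of the minimizer of $E_p$ on any admissible Sobolev space and the monotonicity $\Sigma_{1}\subset\Sigma_{2}\Rightarrow C_p(\Sigma_{1})\geq C_p(\Sigma_{2})$; in particular $C_p(\{x\})\leq C_p(\emptyset)$ for every $x\in\overline\Omega$. Applying Theorem~\ref{thm: 2.13} to sequences $\{x_n\}\to\{x\}$ makes $x\mapsto C_p(\{x\})$ continuous on the compact set $\overline\Omega$, so $C_{p}^{*}:=\min_{x\in\overline\Omega}C_p(\{x\})$ is attained at some $x^{*}$. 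It suffices to produce $\Sigma_{0}\supset\{x^{*}\}$, connected, of positive length, with $C_p(\Sigma_{0})<C_{p}^{*}$: for $\lambda_{0}:=(C_{p}^{*}-C_p(\Sigma_{0}))/(2\mathcal H^{1}(\Sigma_{0}))>0$ and $\lambda\in(0,\lambda_{0}]$, every length-zero candidate $\Sigma$ satisfies $\mathcal F_{\lambda,p}(\Sigma)\geq C_{p}^{*}>\mathcal F_{\lambda,p}(\Sigma_{0})$, so no minimizer can be of this form.

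The construction of $\Sigma_{0}$ rests on the observation that $u_{\{x^{*}\}}$ is not identically zero. Indeed, if it were, then $0$ would minimize $E_p$ on $W^{1,p}_{0}(\Omega\setminus\{x^{*}\})$, forcing $\int f\varphi\,dx=0$ for every $\varphi$ in that space; for $p\leq 2$, Theorem~\ref{thm: 2.4} gives that $\{x^{*}\}$ has zero $p$-capacity and hence by Remark~\ref{rem: 2.8} this space equals $W^{1,p}_{0}(\Omega)$, making $f$ vanish distributionally. For $p>2$, the subspace has codimension at most one in $W^{1,p}_{0}(\Omega)$, so $f$ would have to be a distributional multiple of $\delta_{x^{*}}$, which is impossible for an $L^{q_{0}}$ function. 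Letting $\tilde u_{\{x^{*}\}}$ denote the $p$-quasi continuous representative (cf.\ Section~2.1) and $A:=\{\tilde u_{\{x^{*}\}}\neq 0\}$, the set $A\subset\Omega$ has positive Lebesgue measure. A Fubini slicing inside a small rectangle around a Lebesgue density point of $A$ produces a line segment $S\subset\Omega$ with $\mathcal H^{1}(S\cap A)>0$. I would then take $\Sigma_{0}$ to be a polygonal path inside $\overline\Omega$ that starts at $x^{*}$, ends in $S$ and contains all of $S$; such a path is easy to construct after restricting, if necessary, to the connected component of $\Omega$ where $f\not\equiv 0$, and the nontriviality argument above places $x^{*}$ and $S$ in the same component.

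Finally I would check $C_p(\Sigma_{0})<C_{p}^{*}$. Since $S\cap A\subset\Sigma_{0}$ has positive one-dimensional Hausdorff measure, Corollary~\ref{cor: 2.5} endows it with positive $p$-capacity; as $\tilde u_{\{x^{*}\}}\neq 0$ on $S\cap A$, the potential $u_{\{x^{*}\}}$ does not vanish $p$-q.e.\ on $\Sigma_{0}$, and Remark~\ref{rem: 2.8} then gives $u_{\{x^{*}\}}\notin W^{1,p}_{0}(\Omega\setminus\Sigma_{0})$. Uniqueness of the minimizer of $E_p$ now forces $u_{\Sigma_{0}}\neq u_{\{x^{*}\}}$, and strict convexity yields $E_p(u_{\Sigma_{0}})>E_p(u_{\{x^{*}\}})$, that is $C_p(\Sigma_{0})<C_{p}^{*}$. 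The principal subtlety lies in converting ``$u_{\{x^{*}\}}$ is nonzero somewhere in $\Omega$'' into ``$u_{\{x^{*}\}}$ is nonzero on a subset of $\Sigma_{0}$ of positive $p$-capacity''; for $p\leq 2$, where $u_{\{x^{*}\}}$ need not be classically continuous, the quasi continuous representative together with the Fubini selection of $S$ is the natural workaround.
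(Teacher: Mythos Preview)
Your argument is correct and shares the paper's strategy: exhibit a connected $\Sigma_0$ of positive length with $C_p(\Sigma_0)$ strictly below the best singleton compliance $C_p^*$, then set $\lambda_0=(C_p^*-C_p(\Sigma_0))/(2\mathcal H^1(\Sigma_0))$. The execution differs. The paper argues indirectly and splits into cases: for $p\le 2$ singletons have zero capacity so $C_p^*=C_p(\emptyset)$, and if no good $\Sigma_0$ existed then uniqueness would force $u_\emptyset$ to vanish $p$-q.e.\ on every closed connected set, hence a.e., contradicting $f\neq 0$; for $p>2$ it first locates the minimizing point $\bar x$ inside a component where $f\not\equiv 0$ and runs the same contradiction there. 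Your route is constructive and unified: from $u_{\{x^*\}}\not\equiv 0$ you extract, via Fubini, a segment $S$ on which the quasi-continuous representative is nonzero on a set of positive $\mathcal H^1$-measure (hence positive capacity by Corollary~\ref{cor: 2.5}), and join $x^*$ to $S$ by a polygonal arc. The paper's indirect argument is shorter; yours gives an explicit $\Sigma_0$ and avoids the case split, at the price of bookkeeping with quasi-continuous representatives. One point to tighten: for $p>2$ your remark that ``the nontriviality argument places $x^*$ and $S$ in the same component'' needs the observation that $x^*$ must lie in a component $U$ with $f|_U\neq 0$ (else a point in such a component would beat it), after which your codimension-one argument applied within $U$ gives $u_{\{x^*\}}|_U\neq 0$ and allows $S\subset U$.
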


\begin{proof} Case 1:\,\ $p \in (1,2]$. By Theorem~\ref{thm: 2.4}, for all point $x \in \overline{\Om}$ one has ${\rm Cap}_{p}(\{x\})=0$ and this implies that $W^{1,p}_{0}(\Omega)=W^{1,p}_{0}(\Omega\backslash \{x\})$ (see Remark~\ref{rem: 2.8}). We claim that there is a closed connected set $\s_0 \subset \overline{\Om}$ such that $0<\mathcal{H}^{1}(\s_0) <+\infty$ and $C_{p}(\s_0)<C_{p}(\emptyset)$. Otherwise, for any closed connected set  $\s$, since the functional $C_{p}(\cdot)$ is nonincreasing with respect to set inclusion, we would have that $C_{p}(\s)=C_{p}(\emptyset)$, that thanks to the uniqueness of $u_{\emptyset}$ and to the fact that $u_{\s} \in W^{1,p}_{0}(\Om)$, implies that $u_{\s}= u_{\emptyset}$. Thus, $u_{\emptyset}=u_{\s}=0$ $p$-q.e. on $\s$ and varying $\s$ in $\overline{\Om}$ we deduce that $u_{\emptyset}=0$ as an element of $W^{1,p}_{0}(\Omega)$. Then, by using the weak formulation of the $p$-Poisson equation which defines $u_{\emptyset}$, we get
\[
0=\int_{\Om}|\nabla u_{\emptyset}|^{p-2}\nabla u_{\emptyset} \nabla \varphi \diff y = \int_{\Omega}f \varphi \diff y \,\ \text{for all} \,\ \varphi \in C^{\infty}_{0}(\Omega),
\]
but this implies that $f=0$ and leads to a contradiction. Thus, taking $\lambda_{0}=\frac{C_{p}(\emptyset)-C_{p}(\s_0)}{2\mathcal{H}^{1}(\s_0)}$, for any $\lambda \in (0, \lambda_{0}]$ we get $C_{p}(\s_0)+\lambda \mathcal{H}^{1}(\s_0)<C_{p}(\emptyset)$ and therefore each minimizer of Problem~1.1 defined for such $\lambda$ should have positive $\mathcal{H}^{1}$-measure.

Case 2:\,\ $2<p<+\infty$. In this case the empty set will not be a minimizer of Problem~\ref{problemMain}. In fact, assume by contradiction that there exists $\lambda>0$ such that the empty set is a minimizer of Problem~\ref{problemMain}. Then for an arbitrary point $x_{0} \in \Om$, we have that $C_{p}(\{x_{0}\})=C_{p}(\emptyset)$, since $\emptyset$ is a minimizer and $C_{p}(\cdot)$ is nonincreasing. But by the uniqueness of $u_{\emptyset}$ and since $u_{\{x_{0}\}} \in W^{1,p}_{0}(\Om)$, the fact that $C_{p}(\{x_{0}\})=C_{p}(\emptyset)$ implies that $u_{\{x_{0}\}}=u_{\emptyset}$. Recalling that by the embedding theorem of Morrey, $W^{1,p}_{0}(\Om) \subset C^{0, \alpha}(\Omega)$, where $\alpha=1-2/p$, we get $u_{\{x_{0}\}}(x_{0})=u_{\emptyset}(x_{0})=0$. Varying $x_{0}$ in $\Omega$ we deduce that $u_{\emptyset}=0$, that, as in Case 1, contradicts the fact that $f \neq 0$ in $L^{q_{0}}(\Om)$.  Thus any minimizer $\s$ contains at least one point.

Next, let us consider the minimization problem $$\widetilde{(P)} \,\ \min_{x \in \overline{\Om}} C_{p}(\{x\}).$$ 

It is easy to check that a minimizer for $\widetilde{(P)}$ exists.  Indeed, taking a minimizing sequence $(x_{n})_{n}$, since $\overline{\Om}$ is compact, there exists $\overline{x} \in \overline{\Om}$ such that $x_{n}\to \overline{x}$ and then, by Theorem~\ref{thm: 2.13}, $C_{p}(\overline{x})= \min_{x \in \overline{\Om}} C_{p}(\{x\})$. We claim that $\overline{x} \in \Om$ and, actually, it belongs to a connected open component $U$ of $\Omega$ such that $\partial U \subset \partial \Om$ and $\restr f U \neq 0$ in $L^{q_{0}}(U)$. Indeed, if $\overline{x}$ would lie on $\partial \Om$, then $C_{p}(\{\overline{x}\})=C_{p}(\emptyset)$ and since $\overline{x}$ is a minimizer for $\widetilde{(P)}$ and $C_{p}(\cdot)$ is nonincreasing, $C_{p}(\emptyset)=C_{p}(\{x_{0}\})$ for all $x_{0} \in \Om$ that as before would contradict the fact that $f \neq 0$ in $L^{q_{0}}(\Om)$. Now, assume that $\restr f U=0$ in $L^{q_{0}}(U)$. Since $U$ is an open connected component of $\Om$, $\partial U \subset \partial \Om$, we have that $u_{\emptyset} \in W^{1,p}_{0}(U)$ and using the weak formulation of the $p$-Poisson equation which defines $u_{\emptyset}$, we get
\[
\int_{U}|\nabla u_{\emptyset}|^{p}\diff y = \int_{U}f u_{\emptyset} \diff y =0
\]
and hence $u_{\emptyset}= 0$ on $U$. Thus, $u_{\emptyset} \in W^{1,p}_{0}(\Om \backslash \{\overline{x}\})$ and since $C_{p}(\{\overline{x}\}) \leq C_{p}(\emptyset)$, we deduce that $C_{p}(\{\overline{x}\})=C_{p}(\emptyset)$, but this, as before, contradicts the fact that $f \neq 0$ in $L^{q_{0}}(\Om)$. Finally, we claim that there exists a closed connected set $\s_0 \subset \overline{U}$ such that $\overline{x} \in \s_0$, $0<\mathcal{H}^{1}(\s_0)<+\infty$ and $C_{p}(\s_0)<C_{p}(\{\overline{x}\})$. Because otherwise, we would have for all such $\s$ that $C_{p}(\s)=C_{p}(\{\overline{x}\})$ that would lead to the fact that $u_{\{\overline{x}\}}=0$ $p$-q.e. on $\s$ and since U is arcwise connected, because open and connected, varying $\s$ in $U$, one would obtain $u_{\{\overline{x}\}}=0$ in $U$, but this would contradict the fact that $\restr f U \neq 0 $ in $L^{q_{0}}(U)$. Thus, taking $\lambda_{0}= \frac{C_{p}(\{\overline{x}\}) - C_{p}(\s_0)}{2\mathcal{H}^{1}(\s_0)}$, for any $\lambda \in (0, \lambda_{0}]$ we get $C_{p}(\s_0) + \lambda \mathcal{H}^{1}(\s_0)<C_{p}(\{\overline{x}\})$. This shows that each minimizer of Problem~\ref{problemMain} defined for such $\lambda$ should have positive $\mathcal{H}^{1}$-measure.
\end{proof}



\subsection{Dual formulation}
\begin{prop} \label{prop: 3.1} Let $\Om \subset\mathbb{R}^{2}$ be open and bounded. Let $p\in (1,+\infty)$ and $f \in L^{q_{0}}(\Om)$ with $q_{0}$ defined in (\ref{(0.1)}). Then Problem~\ref{problemMain}  is equivalent to the minimization problem 
\begin{equation} \label{3.1}
(\mathcal{P}^{*}) \qquad \min_{(\sigma, \Sigma) \in \mathcal{B}} \frac{1}{p^{\prime}} \displaystyle\int_{\Omega} |\sigma|^{p^{\prime}} \diff x +  \lambda \mathcal{H}^{1}(\Sigma)
\end{equation}
where 
\begin{align*}
\mathcal{B}:=\{ (\sigma, \Sigma): \Sigma \in \mathcal{K}(\Om)\,\ \text{and}\,\ \sigma \in L^{p^{\prime}}(\Omega; \mathbb{R}^{ 2}), -div(\sigma)=f \,\ \text{in} \,\ \mathcal{D}^{\prime}(\Omega\backslash \Sigma) \}
\end{align*}
in the sense that the minimum value of the latter is equal to that of Problem~1.1, and once $(\overline{\sigma},\overline{\Sigma})\in \mathcal{B}$ is a minimizer for $(\mathcal{P}^{*})$, then $\overline{\Sigma}$ solves Problem~1.1. Moreover, for a given closed proper subset $\s$ of  $\overline{\Om}$, the choice $\sigma= |\nabla u_{\s}|^{p-2}\nabla u_{\s}$ solves
\[
\min_{\sigma \in L^{p^{\prime}}(\Omega; \mathbb{R}^{2})}\biggl\{ \frac{1}{p^{\prime}} \int_{\Omega} |\sigma|^{p^{\prime}} \diff x : - div(\sigma)=f \,\ \text{in}\,\ \mathcal{D}^{\prime}(\Om\backslash \s) \biggr\}.
\]
\end{prop}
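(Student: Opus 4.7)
My plan is to prove the proposition in two steps: first reduce the inner problem over $\sigma$ (for fixed $\Sigma$) to $C_p(\Sigma)$, which is essentially a Fenchel duality between the $p$-Dirichlet energy and the $p'$-norm, and then deduce that the joint minimum matches Problem~\ref{problemMain}. The natural candidate for the optimal flux is $\sigma_\Sigma := |\nabla u_\Sigma|^{p-2}\nabla u_\Sigma$, since $|\sigma_\Sigma|^{p'} = |\nabla u_\Sigma|^{p}$, so that $\sigma_\Sigma \in L^{p'}(\Omega;\mathbb{R}^2)$, and since the weak $p$-harmonic equation \eqref{(1.2)} gives $-\mathrm{div}(\sigma_\Sigma)=f$ in $\mathcal{D}'(\Omega\setminus\Sigma)$. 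Moreover
\[
\frac{1}{p'}\int_\Omega |\sigma_\Sigma|^{p'}\,dx \;=\; \frac{1}{p'}\int_\Omega |\nabla u_\Sigma|^{p}\,dx \;=\; C_p(\Sigma),
\]
which already provides admissibility and the upper bound in the inner minimization.

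For the matching lower bound, I would take an arbitrary admissible $\sigma \in L^{p'}(\Omega;\mathbb{R}^2)$ with $-\mathrm{div}(\sigma)=f$ in $\mathcal{D}'(\Omega\setminus\Sigma)$, and extend this distributional identity from $\varphi \in C_0^\infty(\Omega\setminus\Sigma)$ to the test function $\varphi = u_\Sigma$. Since $W^{1,p}_0(\Omega\setminus\Sigma)$ is by definition the $W^{1,p}$-closure of $C_0^\infty(\Omega\setminus\Sigma)$ and $\sigma \in L^{p'}$, $f u_\Sigma \in L^1$, the linear map $\varphi \mapsto \int_\Omega \sigma\cdot\nabla\varphi\,dx - \int_\Omega f\varphi\,dx$ is continuous on $W^{1,p}_0(\Omega\setminus\Sigma)$ and vanishes on a dense subset, hence everywhere. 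This yields
\[
\int_\Omega \sigma\cdot \nabla u_\Sigma\,dx \;=\; \int_\Omega f u_\Sigma\,dx \;=\; p'\,C_p(\Sigma).
\]
Applying Young's inequality $\sigma\cdot\nabla u_\Sigma \leq \frac{1}{p'}|\sigma|^{p'} + \frac{1}{p}|\nabla u_\Sigma|^{p}$ pointwise and integrating gives
\[
p'\,C_p(\Sigma) \;\leq\; \frac{1}{p'}\int_\Omega |\sigma|^{p'}\,dx + \frac{1}{p}\int_\Omega |\nabla u_\Sigma|^{p}\,dx \;=\; \frac{1}{p'}\int_\Omega |\sigma|^{p'}\,dx + \frac{p'}{p}\,C_p(\Sigma),
\]
and since $p'-p'/p = 1$ we obtain $C_p(\Sigma) \leq \frac{1}{p'}\int_\Omega |\sigma|^{p'}\,dx$, completing the inner duality.

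With the inner equality in hand, the outer step is immediate: for every $\Sigma \in \mathcal{K}(\Omega)$ we have
\[
\min\Big\{\tfrac{1}{p'}\!\!\int_\Omega |\sigma|^{p'}dx : \sigma \in L^{p'}(\Omega;\mathbb{R}^2),\, -\mathrm{div}\,\sigma = f \text{ in } \mathcal{D}'(\Omega\setminus\Sigma)\Big\} = C_p(\Sigma),
\]
so minimizing $(\mathcal{P}^*)$ over $\mathcal{B}$ reduces to minimizing $C_p(\Sigma)+\lambda\mathcal{H}^1(\Sigma)$ over $\mathcal{K}(\Omega)$, which is Problem~\ref{problemMain}. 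Consequently, any minimizer $(\overline\sigma,\overline\Sigma)$ of $(\mathcal{P}^*)$ produces a minimizer $\overline\Sigma$ of Problem~\ref{problemMain}, and by inspection of the Young inequality equality case, $\overline\sigma = |\nabla u_{\overline\Sigma}|^{p-2}\nabla u_{\overline\Sigma}$. The only genuinely technical point is the density extension used in the lower bound; this is unproblematic given the paper's definition of $W^{1,p}_0$ as a $W^{1,p}$-closure, so beyond carefully tracking the integrability pairings there is no serious obstacle.
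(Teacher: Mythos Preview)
Your proof is correct. The paper defers to Lemma~A.3, whose proof is organized as a minimax argument: it first writes $\frac{1}{p}\int|\nabla v|^{p}=\max_{\sigma}\bigl(\int\nabla v\cdot\sigma-\frac{1}{p'}\int|\sigma|^{p'}\bigr)$ via the Fenchel transform, then exchanges $\max_{w}$ and $\min_{\sigma}$, observing that the inner supremum in $w$ forces the divergence constraint and that the candidate $|\nabla u|^{p-2}\nabla u$ saturates both problems. Your route is more direct: you skip the saddle-point structure entirely, test the admissible $\sigma$ against $u_{\Sigma}$ by density, and apply Young's inequality pointwise to obtain the lower bound in one stroke. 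The two arguments rest on the same convex duality, but yours is shorter and avoids the max--min exchange; the paper's version, on the other hand, makes the full duality identity \eqref{A.14} explicit, which is conceptually cleaner if one wants the complete primal--dual picture rather than just the value equality needed here.
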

\begin{proof} The proof is the direct consequence of Lemma A.3 and the uniqueness of $u_{\Sigma}$ and the minimizer $\sigma$.
\end{proof}

\section{Ahlfors regularity}
We recall that a set $\s \subset \mathbb{R}^{2}$ is said to be Ahlfors regular  of dimension 1, if there exist some constants $c>0$, $r_{0}>0$ and $C>0$ such that for every $r \in(0, r_{0})$ and for every $x \in \s$ the following holds
\[
cr\leq \mathcal{H}^{1}(\s \cap B_{r}(x)) \leq Cr.\numberthis \label{5.1}
\]
 The notion of Ahlfors regularity is a quantitative and scale-invariant version of having Hausdorff dimension one. It is known that Ahlfors regularity of a closed connected set $\s$ implies \textit{uniform rectifiability} of $\s$, which provides several useful analytical properties of $\s$, see for example \cite{David-Semmes}.

Note that for a closed connected nonempty set $\s$ the lower bound in (\ref{5.1}) is trivial: indeed, for all $x \in \s$ and for all $r\in (0,\diam(\s)/2)$ we have: $\s \cap \partial B_{r}(x) \not = \emptyset$, and then
\begin{equation}
\mathcal{H}^{1}(\s \cap B_{r}(x)) \geq r.  \label{A}
\end{equation}
In order to prove the Ahlfors regularity for such $\s$ it suffices to show that there is $r_{0}>0$, independent of $x$, such that the upper bound in (\ref{5.1}) holds for all $x \in \s$ and for all $r \in (0, r_{0})$.

Before starting to prove the Ahlfors regularity of $\s$, let us focus  on the following basic question: to which class $L^{q}(U)$ should the function $f$ belong so that the solution $u$ of the Dirichlet problem
\[
-\Delta_{p}u=f  \,\ \text{in} \,\ U  \subset  r[-a,a]\times [-b, b], \,\ u \in W^{1,p}_{0}(U)
\] 
satisfies $\int_{U}|\nabla u|^{p}\diff x \leq C r $, where  $C=C(a,b, p, q_{0},q,\|f\|_{q}) $ with $q_{0}$ defined (\ref{(0.1)})? Using Proposition~\ref{prop: 2.12}, we can state that it is enough to take $q=\frac{2p}{2p-1}=(2p)^{\prime}$, as explained in the following lemma, which will also appear in the proof of Theorem~\ref{thm: 5.3}. 

\begin{lemma} \label{lemma 3.3}Let $a,b,r>0$ and $U \subset r[-a,a]\times [-b, b]$ be an open set. Let $p \in (1,+\infty)$ and $f \in L^{(2p)^{\prime}}(U)$, and let $u$ be the weak solution of the Dirichlet problem:
\begin{equation*}
-\Delta_{p} u = f \,\ \text{in} \,\ U,\,\ u \in W^{1,p}_{0}(U)
\end{equation*}
which means that 
\begin{equation}
\int_{U}|\nabla u|^{p-2}\nabla u \nabla \varphi \diff x= \int_{U}f\varphi \diff x \,\ \text{for all} \,\ \varphi \in W^{1,p}_{0}(U). \label{(3.5)}
\end{equation}
Then there exists a constant $C=C(a, b, p, q_{0}, \|f\|_{(2p)^{\prime}})>0$, where $q_{0}$ is defined in (\ref{(0.1)}), such that 
\begin{equation}
\int_{U}|\nabla u|^{p}\diff x \leq C r. \label{(3.6)}
\end{equation}
\end{lemma}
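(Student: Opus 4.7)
The plan is to apply Proposition~\ref{prop: 2.12} and then upgrade from $L^{q_{0}}$ to the smaller exponent $L^{(2p)'}$ using H\"older's inequality, tracking the powers of $|U|$ carefully. The key observation is that $U \subset r([-a,a]\times[-b,b])$ yields $|U|\leq 4abr^{2}$, so $|U|^{1/2}\leq 2\sqrt{ab}\,r$; the exponent $(2p)'$ is precisely the value for which all exponents of $|U|$ arising from the chain of inequalities combine to exactly $1/2$, giving a linear-in-$r$ estimate.

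Concretely, I would first check that $(2p)'\geq q_{0}$ in each of the three regimes of \eqref{(0.1)}, so that by H\"older's inequality on the bounded set $U$ one has $f\in L^{q_{0}}(U)$ with
\[
\|f\|_{L^{q_{0}}(U)}\leq |U|^{1/q_{0}-1/(2p)'}\,\|f\|_{L^{(2p)'}(U)}.
\]
I would then apply Proposition~\ref{prop: 2.12} to $u$, with $\Omega$ replaced by $U$, obtaining $\int_{U}|\nabla u|^{p}\,dx\leq C\,|U|^{\alpha}\,\|f\|^{\beta}_{L^{q_{0}}(U)}$ with $(\alpha,\beta)$ given by \eqref{(2.5)}. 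Raising the H\"older estimate to the power $\beta$ and inserting it into this inequality yields
\[
\int_{U}|\nabla u|^{p}\,dx\leq C\,|U|^{\alpha+\beta(1/q_{0}-1/(2p)')}\,\|f\|^{\beta}_{L^{(2p)'}(U)}.
\]

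The remaining step is a short case check showing that the total exponent of $|U|$ equals $1/2$ in each regime: for $1<p<2$ one has $\alpha=0$, $\beta=p'$, and $1/q_{0}-1/(2p)'=(p-1)/(2p)$, so the sum is $p'(p-1)/(2p)=1/2$; for $p=2$, choosing $q_{0}=(2p)'=4/3$ (legal since $q_{0}>1$) makes the H\"older factor trivial and $\alpha=2/q'_{0}=1/2$; for $p>2$ one has $\alpha=(p-2)/(2(p-1))$, $\beta=p'$, $q_{0}=1$, and $\beta(1-1/(2p)')=p'/(2p)=1/(2(p-1))$, so the sum equals $(p-2)/(2(p-1))+1/(2(p-1))=1/2$. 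Consequently $\int_{U}|\nabla u|^{p}\,dx\leq C\,|U|^{1/2}\,\|f\|^{\beta}_{L^{(2p)'}(U)}\leq 2C\sqrt{ab}\,\|f\|^{\beta}_{L^{(2p)'}(U)}\,r$, which is \eqref{(3.6)}. The main (and only mild) obstacle is managing the three cases of Proposition~\ref{prop: 2.12} simultaneously; once that proposition is invoked, the rest is bookkeeping of H\"older exponents, all of which conspire to sum to $1/2$ precisely because $(2p)'$ is the critical integrability threshold.
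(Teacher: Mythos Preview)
Your proposal is correct and follows essentially the same approach as the paper's proof: apply the a priori bound of Proposition~\ref{prop: 2.12}, use H\"older's inequality to pass from $L^{q_{0}}$ to $L^{(2p)'}$, and then verify that the resulting exponent of $|U|$ equals $1/2$ in each of the three regimes. The only cosmetic difference is that the paper writes the bound with a generic $q\geq q_{0}$ and then \emph{solves} $2\bigl(\alpha+\beta(1/q_{0}-1/q)\bigr)=1$ for $q$, arriving at $q=(2p)'$, whereas you start from $q=(2p)'$ and check the exponent equals $1/2$ case by case; the arithmetic is identical.
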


\begin{proof} Assume that $f \in L^{q}(U)$ with $q\geq q_{0}$, where $q_{0}$ is defined in (\ref{(0.1)}). Then $u$ is well defined. By (\ref{2.4}) with $u_{\s}$ replaced by $u$ and $\Om$ by $U$, there exists $C=C(p, q_{0})>0$ such that
\begin{equation*} 
\int_{U}|\nabla u|^{p}\diff x \leq C |U|^{\alpha}\|f\|_{L^{q_{0}}(U)}^{\beta},
\end{equation*}
where $(\alpha, \beta)$ is defined in (\ref{(2.5)}). Using H\"{o}lder's inequality and  the fact that $U$ is a subset of $r[-a,a]\times[-b, b]$, we get
\begin{equation*}
\int_{U}|\nabla u|^{p}\diff x \leq C (4 a b r^{2})^{\alpha + \beta\bigl(\frac{1}{q_{0}}-\frac{1}{q}\bigr)} \|f\|^{\beta}_{L^{q}(U)}.
\end{equation*}
Thus, in order for the estimate (\ref{(3.6)}) to hold, one should take the exponent $q$ such that $2(\alpha+\beta(\frac{1}{q_{0}}-\frac{1}{q}))=1$. Having carefully performed the calculations, one gets $q=\frac{2p}{2p-1}$.
\end{proof}

To prove that $\s$ is Ahlfors regular ``near''  $\partial \Om$, we shall assume some Lipschitz regularity on $\Om$. Here is a precise definition.

\begin{defn} \label{defn 3.2} A bounded domain $\Omega \subset \mathbb{R}^{2}$ and its boundary $\partial \Om$ are locally Lipschitz if there exists a radius $r_{\partial \Om}$ and a constant $\delta>0$ such that for every point $x \in \partial \Omega$ and every radius $s \in (0, r_{\partial \Om})$ up to a rotation of coordinates, it holds
\begin{align*}
\Omega \cap B_{s}(x) =\{(y_{1}, y_{2}) \in \mathbb{R}^{2}: y_{2}>\varphi(y_{1})\}\cap B_{s}(x)
\end{align*}
for some Lipschitz function $\varphi: \mathbb{R} \to \mathbb{R}$ satisfying $\|\nabla \varphi\|_{L^{\infty}(\mathbb{R})}\leq \delta$.
\end{defn}

One deduces that for every radius $s \in (0, r_{\partial \Om})$ in the above definition the set $\partial \Om \cap \overline{B}_{s}(x)$ up to a rotation of coordinates is contained in the double cone
\[
K_{\delta}=\{y \in \mathbb{R}^{2}: y=0 \,\ \text{or}\,\ \ang(y, \ox) \in [0, \arctan(\delta)]\cup [\pi- \arctan(\delta), \pi]\}.
\]

\begin{theorem} \label{thm: 5.3} Let $\Om \subset \mathbb{R}^{2}$ be a bounded domain with locally Lipschitz boundary (see Definition~\ref{defn 3.2}), $p\in (1,+\infty)$, and  $f\in L^{\frac{2p}{2p-1}}(\Om)$. Let  $\Sigma$ be a solution of Problem~\ref{problemMain} with $\diam(\s)>0$. Then $\s$   is Ahlfors regular.
\end{theorem}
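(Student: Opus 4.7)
The lower bound $\mathcal{H}^{1}(\Sigma\cap B_{r}(x))\geq r$ for $r\leq \diam(\Sigma)/2$ is already recorded in \eqref{A}, so the task is the matching upper bound. Also, for $r\geq \diam(\Sigma)/2$ we have $\mathcal{H}^{1}(\Sigma\cap B_{r}(x))\leq \mathcal{H}^{1}(\Sigma)\leq \bigl(2\mathcal{H}^{1}(\Sigma)/\diam(\Sigma)\bigr) r$, so it suffices to show that there exist $r_{0}>0$ and $C>0$ such that $\mathcal{H}^{1}(\Sigma\cap B_{r}(x))\leq Cr$ for every $x\in\Sigma$ and every $r\in(0,r_{0})$. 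The whole plan is a competitor argument based on the dual formulation (Proposition~\ref{prop: 3.1}), split into an internal case and a boundary case.

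\textbf{Internal case ($\overline{B}_{r}(x)\subset\Omega$).} Choose $r<\diam(\Sigma)/2$, so that by connectedness $\Sigma\cap\partial B_{r}(x)\neq\emptyset$, and set
\[
\Sigma' := (\Sigma\setminus B_{r}(x))\cup \partial B_{r}(x)\in\mathcal{K}(\Omega).
\]
A direct computation partitioning $\Sigma$ into the three disjoint pieces inside, on, and outside $\partial B_{r}(x)$ gives $\mathcal{H}^{1}(\Sigma')-\mathcal{H}^{1}(\Sigma)\leq 2\pi r-\mathcal{H}^{1}(\Sigma\cap B_{r}(x))$. Minimality $\mathcal{F}_{\lambda,p}(\Sigma)\leq \mathcal{F}_{\lambda,p}(\Sigma')$ rearranges to
\[
\lambda\mathcal{H}^{1}(\Sigma\cap B_{r}(x))\leq 2\pi\lambda r + \bigl(C_{p}(\Sigma')-C_{p}(\Sigma)\bigr),
\]
so I need $C_{p}(\Sigma')-C_{p}(\Sigma)\leq Cr$. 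Using Proposition~\ref{prop: 3.1}, I construct a competitor flux: set $\sigma':=|\nabla u_{\Sigma}|^{p-2}\nabla u_{\Sigma}$ on $\Omega\setminus \overline{B}_{r}(x)$ and $\sigma':=|\nabla w|^{p-2}\nabla w$ on $B_{r}(x)$, where $w\in W^{1,p}_{0}(B_{r}(x))$ solves $-\Delta_{p}w=f$ in $B_{r}(x)$. Since $\Omega\setminus\Sigma'$ splits into the two open pieces $B_{r}(x)$ and $(\Omega\setminus\overline{B}_{r}(x))\setminus\Sigma$, the distributional identity $-\mathrm{div}(\sigma')=f$ holds on each, so $\sigma'$ is admissible for $\Sigma'$. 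By minimality of $u_{\Sigma}$ in the dual formulation,
\[
C_{p}(\Sigma')-C_{p}(\Sigma)\leq \tfrac{1}{p'}\!\int_{B_{r}(x)}\!\!|\nabla w|^{p}\diff x-\tfrac{1}{p'}\!\int_{B_{r}(x)}\!\!|\nabla u_{\Sigma}|^{p}\diff x\leq \tfrac{1}{p'}\!\int_{B_{r}(x)}\!\!|\nabla w|^{p}\diff x.
\]
Since $B_{r}(x)\subset r[-1,1]\times[-1,1]$ up to translation and $f\in L^{(2p)'}(\Omega)$, Lemma~\ref{lemma 3.3} yields $\int_{B_{r}(x)}|\nabla w|^{p}\diff x\leq Cr$ with $C=C(p,q_{0},\|f\|_{(2p)'})$. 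Plugging back gives $\mathcal{H}^{1}(\Sigma\cap B_{r}(x))\leq (2\pi+C/(\lambda p'))\,r$.

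\textbf{Boundary case and obstacle.} When $B_{r}(x)$ meets $\partial\Omega$, the above competitor leaves $\overline{\Omega}$. I will fix $r_{0}=r_{0}(r_{\partial\Omega},\delta)$ small enough that, thanks to Definition~\ref{defn 3.2}, for every $x\in\overline{\Omega}$ the set $\Omega\cap B_{r}(x)$ is connected, is contained in a rectangle $r[-a,a]\times[-b,b]$ with $a,b$ depending only on $\delta$, and $\partial B_{r}(x)\cap\overline{\Omega}$ is a connected arc of length at most $2\pi r$. Then the modified competitor $\Sigma':=(\Sigma\setminus B_{r}(x))\cup(\partial B_{r}(x)\cap\overline{\Omega})$ lies in $\mathcal{K}(\Omega)$, and the flux construction is repeated with $B_{r}(x)$ replaced by $\Omega\cap B_{r}(x)$; Lemma~\ref{lemma 3.3} still applies (the constant now depends on $\delta$ as well). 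The same rearrangement closes the estimate. The main technical obstacle lies precisely in this boundary case: one has to check, uniformly in $x$ and for a single universal $r_{0}$, the connectedness of $\Omega\cap B_{r}(x)$ and of $\partial B_{r}(x)\cap\overline{\Omega}$, so that the competitor is admissible and Lemma~\ref{lemma 3.3} yields an $O(r)$ bound with constants independent of $x$. These are standard consequences of the local graph representation in Definition~\ref{defn 3.2}, but they dictate the final dependence of $r_{0}$ and $C$.
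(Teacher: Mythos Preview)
Your internal case is essentially identical to the paper's: same competitor $(\Sigma\setminus B_r(x))\cup\partial B_r(x)$, same flux built from the Dirichlet solution on $B_r(x)$, same appeal to Lemma~\ref{lemma 3.3}.

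The boundary case, however, has a genuine gap. You assert that one can choose $r_0=r_0(r_{\partial\Omega},\delta)$ so that $\partial B_r(x)\cap\overline{\Omega}$ is a single arc and $\Omega\cap B_r(x)$ is connected, calling these ``standard consequences'' of Definition~\ref{defn 3.2}. They are not. Take $\varphi(y_1)=(\delta/\omega)\cos(\omega y_1)$ (which is $\delta$-Lipschitz for every $\omega>0$) and $x=(0,r)$ with $\delta/\omega<r<r_{\partial\Omega}$. Writing $\theta=-\pi/2+s$ on the circle $\partial B_r(x)$, membership in $\overline{\Omega}$ near the bottom reads $rs^2/2\geq(\delta/\omega)\cos(\omega rs)$, which changes sign roughly $\sqrt{\delta\omega r}$ times as $s$ ranges over $|s|\lesssim\sqrt{\delta/(\omega r)}$. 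Hence for $\omega r\gg 1/\delta$ the set $\partial B_r(x)\cap\overline{\Omega}$ has many components. Since $\omega$ is arbitrary for fixed $\delta$, no threshold depending only on $(r_{\partial\Omega},\delta)$ can enforce connectedness; and if $\Sigma$ misses one of these arc components, your competitor $\Sigma'$ is disconnected and not in $\mathcal{K}(\Omega)$, while discarding the unmet components would break the topological splitting of $\Omega\setminus\Sigma'$ that your flux construction needs.

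The paper avoids this issue by \emph{not} using $\partial B_r(x)\cap\overline{\Omega}$ at all near the boundary. Instead it projects $x$ to $x_{\partial\Omega}\in\partial\Omega$, works in the graph coordinates of Definition~\ref{defn 3.2}, and replaces $\Sigma$ inside an explicit axis-aligned rectangle $\mathcal{R}\supset B_{2r}(x_{\partial\Omega})\supset B_r(x)$ by a three-segment polygonal arc $\gamma_r\subset\overline{\Omega}$ joining the two points where the graph exits $\mathcal{R}$. By construction $\gamma_r\cup(\partial\Omega\cap\mathcal{R})$ is a simple closed curve equal to $\partial(\Omega\cap\mathcal{R})$, so connectedness of the competitor and the splitting of $\Omega\setminus\Sigma_r$ into $\Omega\cap\mathcal{R}$ and its complement are immediate, with $\mathcal{H}^1(\gamma_r)\leq C(\delta)r$. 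The dual/flux argument and Lemma~\ref{lemma 3.3} then run exactly as in the internal case. Your overall strategy is right; what is missing is precisely this robust geometric replacement for the boundary competitor.
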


\begin{rem} By Proposition~\ref{prop: 2.16}  we know that the assumption $\diam(\s)>0$ is fulfilled  at least when $\lambda \in (0, \lambda_{0}]$, where $\lambda_0=\lambda_0(p, f,\Omega)$.
\end{rem}

\begin{rem} \label{remark 3.1} Every closed and connected set $\Sigma \subset \mathbb{R}^{2}$ satisfying $\mathcal{H}^{1}(\Sigma)<+\infty$ is arcwise connected (see, for instance, \cite[Corollary 30.2, p. 186]{David}).
\end{rem}

\begin{proof}[Proof of Theorem \ref{thm: 5.3}] Let $r_{\partial \Om}$ and $\delta$ be positive constants as in Definition~\ref{defn 3.2}. We set $$r_{0}=\min\{r_{\partial \Om}/3\sqrt{1+\delta^{2}},\diam(\s)/2\}$$ and let $x \in \s$ and $r\in (0,r_{0})$. Consider the next two cases.\\
Case 1: $B_{r}(x) \subset \Om$. As mentioned in Remark~\ref{remark 3.1}, $\s$ is arcwise connected. Then the set
\begin{equation}
\Sigma_{r}=(\s \backslash B_{r}(x)) \cup \partial B_{r}(x) \label{3.6}
\end{equation}
is a closed arcwise connected proper subset of $\overline{\Omega}$, that is a competitor for $\s$.  Let us now recall that $(\sigma, \Sigma)=(|\nabla u_{\Sigma}|^{p-2}\nabla u_{\s}, \s)$ is a minimizer for  problem $(\mathcal{P}^{*})$ in the formulation~(\ref{3.1}). Consider the pair $(\sigma_{r}, \Sigma_{r})$, where
\begin{equation*}
\sigma_{r}=
\begin{cases}
|\nabla u_{\Sigma}|^{p-2} \nabla u_{\Sigma} \,\ \text{in $\Omega\backslash (\Sigma_{r} \cup B_{r}(x))$}, \\
|\nabla u|^{p-2} \nabla u \,\ \text{in} \,\ B_{r}(x),\,\ u\in W^{1,p}_{0}(B_{r}(x)) \,\ \text{solves} -\Delta_{p} u= f.
\end{cases}
\end{equation*}
Notice that for any function $\varphi \in C^{\infty}_{0}(\Omega \backslash \s_{r})$ the support of $\varphi$ is contained in the union of two disjoint open sets $\Omega \backslash (\Sigma_{r} \cup B_{r}(x))$ and $B_{r}(x)$, and then, we can represent $\varphi$ as $\varphi=\varphi_{1}+\varphi_{2}$ with $\varphi_{1}\in C^{\infty}_{0}(\Omega \backslash (\Sigma_{r} \cup B_{r}(x)))$ and $\varphi_{2} \in C^{\infty}_{0}(B_{r}(x))$ which are test functions for the weak formulations of the $p$-Poisson equations that define $u_{\s}$ and $u$ respectively. Thus, we deduce that
\begin{equation*}
\langle -div(\sigma_{r}), \varphi \rangle= \langle |\nabla u_{\s}|^{p-2} \nabla u_{\s}, \nabla \varphi_{1}\rangle + \langle |\nabla u|^{p-2}\nabla u, \nabla \varphi_{2} \rangle = \langle f, \varphi_{1} \rangle + \langle f, \varphi_{2} \rangle = \langle f, \varphi \rangle.
\end{equation*}
Therefore $(\sigma_{r}, \Sigma_{r})$ is a competitor for $(\sigma, \Sigma)$. By the optimality of $(\sigma, \Sigma)$,
\begin{equation*}
\begin{split}
\frac{1}{p^{\prime}} \int_{\Omega}|\nabla u_{\Sigma}|^{p}\diff y + \lambda \mathcal{H}^{1} (\Sigma)& \leq \frac{1}{p^{\prime}} \int_{\Omega} |\sigma_{r}|^{p^{\prime}}\diff y + \lambda \mathcal{H}^{1} (\Sigma_{r}) \\ &\leq \frac{1}{p^{\prime}} \int_{\Omega\backslash \overline{B}_{r}(x)} |\nabla u_{\Sigma}|^{p} \diff y + \frac{1}{p^{\prime}} \int_{B_{r}(x)} |\nabla u|^{p} \diff y \\ &  \,\ + \lambda \mathcal{H}^{1} (\Sigma\backslash B_{r}(x)) + \lambda\mathcal{H}^{1}(\partial B_{r}(x)).
\end{split}
\end{equation*}
Then
\begin{align*} \label{5.2}
\lambda \mathcal{H}^{1} (\Sigma\cap B_{r}(x)) \leq  2\lambda\pi r + \frac{1}{p^{\prime}} \int_{B_{r}(x)} |\nabla u|^{p} \diff y.
\end{align*}
So, recalling that by Lemma~\ref{lemma 3.3} one has $$\int_{B_{r}(x)}|\nabla u|^{p}\diff y \leq \widetilde{C}r,$$ where $\widetilde{C}=\widetilde{C}(p, q_{0}, \|f\|_{(2p)^{\prime}})>0$ with $q_{0}$ defined in (\ref{(0.1)}), we deduce that 
\begin{equation}
\mathcal{H}^{1}(\s \cap B_{r}(x)) \leq Cr, \label{3.7}
\end{equation}
where $C=C(p, q_{0}, \|f\|_{(2p)^{\prime}}, \lambda)>0$. 
\\
Case 2: $B_{r}(x) \cap \partial \Om \neq \emptyset$. In this case we use the fact that locally $\partial \Om$ is a graph of a $\delta$-Lipschitz function. Let $x_{\partial \Om}$ be an arbitrary projection of $x$ to $\partial \Om$. Recalling that $r<r_{\partial \Om}/3\sqrt{1+\delta^{2}}$, up to a rotation of coordinates one has
\begin{equation} \label{(3.7)}
\Om \cap B_{3\sqrt{1+\delta^{2}}r}(x_{\partial \Om})=\{(y_{1}, y_{2}) \in \mathbb{R}^{2}: y_{2}> \varphi(y_{1})\}\cap B_{3\sqrt{1+\delta^{2}}r}(x_{\partial \Om})
\end{equation}
for some Lipschitz function $\varphi: \mathbb{R} \to \mathbb{R}$ satisfying $\|\nabla \varphi\|_{L^{\infty}(\mathbb{R})}\leq \delta$. In addition, the set $\partial \Om \cap \overline{B}_{3\sqrt{1+\delta^{2}}r}(x_{\partial \Om})$ is contained in the double cone
\[
K_{\delta}=\{y \in \mathbb{R}^{2}: y=0 \,\ \text{or} \,\ \ang(y, \ox) \in [0, \arctan(\delta)]\cup [\pi-\arctan(\delta), \pi]\}.
\]
Notice that the ball $B_{2r}(x_{\partial \Om})$ in the $(y_{1}, y_{2})$ coordinates is represented as $B_{2r}(0)$. Let us define $\xi^{-}=\varphi(-2r)$ and $\xi^{+}=\varphi(2r)$. Now we need to distinguish between two further cases.\\
\text{Case 2a:} $\delta\in (0,1]$. Define the points $h^{-}$ and $h^{+}$ by $h^{-}=2r(\oy-\ox)$ and $h^{+}=2r(\ox+\oy)$.\\
\text{Case 2b:} $\delta>1$. Define $h^{-}$ and $h^{+}$ by $h^{-}=2r(\delta\oy - \ox)$ and $h^{+}=2r(\ox+\delta \oy)$.

At this point observe that the open rectangle $\mathcal{R}$ with vertices $-h^{+}, h^{-}, h^{+}$ and $-h^{-}$ contains the ball $B_{2r}(0)$. Furthermore, by (\ref{(3.7)}) and since $\partial \Om \cap \overline{B}_{3\sqrt{1+\delta^{2}}r}(x_{\partial \Om})\subset K_{\delta}$,  the union of the segments $$\gamma_{r}=[\xi^{-}, h^{-}]\cup [h^{-}, h^{+}] \cup [\xi^{+}, h^{+}]$$ is a curve lying in $\overline{\Omega}$ such that $\gamma_{r} \cup (\partial \Om \cap \mathcal{R})$ is a closed simple curve (i.e., homeomorphic image of $\mathcal{S}^{1}$ into $\mathbb{R}^{2}$) lying in $\overline{\Omega}$ and $\partial(\mathcal{R}\cap \Omega)=\gamma_{r} \cup (\partial \Om \cap \mathcal{R})$. Thus, it is clear that 
\[
\Sigma_{r}=(\s\backslash \mathcal{R}) \cup \gamma_{r}
\]
is closed arcwise connected proper subset of $\overline{\Omega}$, namely, it is a competitor for $\s$. Let us now recall that $(\sigma, \Sigma)=(|\nabla u_{\Sigma}|^{p-2}\nabla u_{\s}, \s)$ is a minimizer for the problem $(\mathcal{P}^{*})$ in the formulation (\ref{3.1}). Then, consider the pair $(\sigma_{r}, \Sigma_{r})$, where 
\begin{equation*}
\sigma_{r}=
\begin{cases}
|\nabla u_{\Sigma}|^{p-2} \nabla u_{\Sigma} \,\ \text{in $\Omega\backslash (\Sigma_{r} \cup \mathcal{R})$}, \\
|\nabla u|^{p-2} \nabla u \,\ \text{in} \,\ \mathcal{R} \cap \Omega,\,\ u\in W^{1,p}_{0}(\mathcal{R}\cap \Omega) \,\ \text{solves} -\Delta_{p} u= f.
\end{cases}
\end{equation*}
Observe that if $\varphi \in C^{\infty}_{0}(\Om \backslash \s_{r})$, then because $\gamma_{r} \cup (\partial \Omega \cap \mathcal{R})$ is a closed simple curve, the support of $\varphi$ is contained in the union of two open disjoint sets $\Omega \backslash (\Sigma_{r}\cup \mathcal{R})$ and  $\mathcal{R}\cap \Omega$, and then we can write $\varphi=\varphi_{1}+\varphi_{2},$ where $\varphi_{1} \in C^{\infty}_{0}(\Omega \backslash (\Sigma_{r}\cup \mathcal{R}))$ and $\varphi_{2} \in C^{\infty}_{0}(\mathcal{R}\cap \Omega)$. Thus, we have that
\[
\langle-div(\sigma_{r}), \varphi\rangle=\langle |\nabla u_{\s}|^{p-2} \nabla u_{\s}, \nabla \varphi_{1} \rangle + \langle |\nabla u|^{p-2}\nabla u, \nabla \varphi_{2} \rangle =\langle f, \varphi_{1}\rangle + \langle f, \varphi_{2} \rangle = \langle f , \varphi \rangle,
\]
where we have used that $\varphi_{1}$ and $\varphi_{2}$ are test functions for the weak formulations of the $p$-Poisson equations that define $u_{\s}$ and $u$ respectively. Therefore $(\sigma_{r}, \Sigma_{r})$ is a competitor for the minimizer $(\sigma, \Sigma)$. Moreover, since $\partial \Om \cap B_{r}(x) \neq \emptyset$, one has $|x-x_{\partial \Om}|<r$ and then $B_{r}(x)\subset B_{2r}(x_{\partial \Om})\subset \mathcal{R}$. Thus, by the optimality of $(\sigma, \Sigma)$,
\begin{equation*}
\begin{split}
\frac{1}{p^{\prime}} \int_{\Omega}|\nabla u_{\Sigma}|^{p}\diff z + \lambda \mathcal{H}^{1} (\Sigma)& \leq \frac{1}{p^{\prime}} \int_{\Omega} |\sigma_{r}|^{p^{\prime}}\diff z + \lambda \mathcal{H}^{1} (\Sigma_{r}) \\& \leq \frac{1}{p^{\prime}} \int_{\Omega\backslash \overline{\mathcal{R}}} |\nabla u_{\Sigma}|^{p} \diff z + \frac{1}{p^{\prime}} \int_{\Omega\cap \mathcal{R}} |\nabla u|^{p} \diff z \\ &  \,\ + \lambda \mathcal{H}^{1} (\Sigma\backslash B_{r}(x)) + \lambda\mathcal{H}^{1}(\gamma_{r}),
\end{split}
\end{equation*}
where we have used that $B_{r}(x) \subset \mathcal{R}$. Notice that $\mathcal{H}^{1}(\gamma_{r}) \leq 4r+8\max\{1, \delta\}r$. Then we deduce that
\begin{equation*}
\lambda\mathcal{H}^{1}(\s \cap B_{r}(x)) \leq 4\lambda r+ 8 \lambda \max\{1, \delta\}r + \frac{1}{p^{\prime}}\int_{\Omega\cap \mathcal{R}}|\nabla u|^{p}\diff z
\end{equation*}
and recalling that by Lemma~\ref{lemma 3.3}, $\int_{\Omega\cap \mathcal{R}}|\nabla u|^{p}\diff z \leq \widetilde{C} r$ for some positive constant $\widetilde{C}$ depending only on $\delta, p, q_{0}, \|f\|_{(2p)^{\prime}}$, we finally get the estimate
\[
\mathcal{H}^{1}(\s \cap B_{r}(x)) \leq Cr
\]
where $C=C(\delta, p, q_{0}, \|f\|_{(2p)^{\prime}}, \lambda)>0$. This together with (\ref{A}) and (\ref{3.7}) implies the Ahlfors regularity of $\Sigma$.
\end{proof}


\section{Decay for the potential $u_{\s}$}
In this section, we establish the desired decay for the potential $u_{\s}$ at those points around which $\s$ is flat.
\begin{lemma} \label{lem: 7.1} Let $\Omega$ be a bounded open set in $\mathbb{R}^{2}$ and $p \in (1,+\infty)$, and let $f\in L^{q_{0}}(\Omega)$ with $q_{0}$ defined in (\ref{(0.1)}). Let $\s$ and $\s^{\prime}$ be closed proper subsets of $\overline{\Omega}$ and $x_{0}\in \mathbb{R}^{2}$. We consider $0<r_{0}<r_{1}$ and assume that $\s^{\prime} \Delta \s \subset \overline{B}_{r_{0}}(x_{0})$. Then for any $\varphi \in \lip(\mathbb{R}^{2})$ such that $\varphi=1$ over $B^{c}_{r_{1}}(x_{0}),\,\ \varphi=0$ over $B_{r_{0}}(x_{0})$, and $\|\varphi\|_{\infty}\leq 1$ on $\mathbb{R}^{2}$, one has
\begin{equation*}
\begin{split}
E_{p}(u_{\s})-E_{p}(u_{\s^{\prime}})\leq \frac{2^{p-1}}{p}\int_{ B_{r_{1}}(x_{0}) } |\nabla u_{\s^{\prime}}|^{p}\diff x &+ \frac{2^{p-1}}{p}\int_{B_{r_{1}}(x_{0}) } | u_{\s^{\prime}}|^{p}|\nabla \varphi|^{p}\diff x \\+ & \int_{B_{r_{1}}(x_{0})}fu_{\s^{\prime}}(1-\varphi)\diff x.
\end{split}
\end{equation*}
\end{lemma}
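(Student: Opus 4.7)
The plan is to use $v := \varphi\, u_{\Sigma'}$ as a test competitor for $u_{\Sigma}$ and then compare energies. First I would verify that $v \in W^{1,p}_{0}(\Omega \setminus \Sigma)$. Since $\varphi \in \mathrm{Lip}(\mathbb{R}^{2})$ is bounded and $u_{\Sigma'} \in W^{1,p}_{0}(\Omega \setminus \Sigma') \subset W^{1,p}(\mathbb{R}^{2})$ (after extension by zero), the product is in $W^{1,p}(\mathbb{R}^{2})$. Working with $p$-quasi continuous representatives as in Remark~\ref{rem: 2.8}, $v$ vanishes $p$-q.e.\ on $\mathbb{R}^{2}\setminus \Omega$ and on $\Sigma$: on $\Sigma \cap B_{r_{0}}(x_{0})$ we have $\varphi = 0$, while on $\Sigma \setminus B_{r_{0}}(x_{0}) = \Sigma' \setminus B_{r_{0}}(x_{0})$ (by the symmetric-difference hypothesis) we have $u_{\Sigma'} = 0$ $p$-q.e. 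Hence $v \in W^{1,p}_{0}(\Omega \setminus \Sigma)$.

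Next I would invoke minimality of $u_{\Sigma}$ (Theorem~\ref{thm: 2.2}) to obtain $E_{p}(u_{\Sigma}) \leq E_{p}(v)$, and then compute
\[
E_{p}(u_{\Sigma}) - E_{p}(u_{\Sigma'}) \leq \frac{1}{p}\int_{\Omega} \bigl( |\nabla(\varphi u_{\Sigma'})|^{p} - |\nabla u_{\Sigma'}|^{p} \bigr)\diff x + \int_{\Omega} f u_{\Sigma'} (1-\varphi)\diff x.
\]
The crucial observation is that on $\Omega \setminus B_{r_{1}}(x_{0})$ one has $\varphi \equiv 1$, so $\nabla(\varphi u_{\Sigma'}) = \nabla u_{\Sigma'}$ and $1-\varphi = 0$ there, so all integrals collapse to integrals over $B_{r_{1}}(x_{0})$.

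On $B_{r_{1}}(x_{0})$ I would expand $\nabla(\varphi u_{\Sigma'}) = \varphi \nabla u_{\Sigma'} + u_{\Sigma'}\nabla \varphi$ and apply the elementary convexity inequality $(a+b)^{p} \leq 2^{p-1}(a^{p}+b^{p})$ for $a,b \geq 0$ together with $|\varphi|\leq 1$ to get
\[
|\nabla(\varphi u_{\Sigma'})|^{p} \leq 2^{p-1}\bigl(|\nabla u_{\Sigma'}|^{p} + |u_{\Sigma'}|^{p}|\nabla \varphi|^{p}\bigr).
\]
Finally, simply discarding the nonpositive remainder $-\frac{1}{p}\int_{B_{r_{1}}(x_{0})}|\nabla u_{\Sigma'}|^{p}\diff x$ produces exactly the stated bound.

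The only genuinely delicate point is the admissibility of $v = \varphi u_{\Sigma'}$ as an element of $W^{1,p}_{0}(\Omega \setminus \Sigma)$; everything after that is a direct energy comparison and an application of a pointwise inequality. No monotonicity, capacitary lower bound, or regularity information on $\Sigma$ is needed, which is why the lemma holds in full generality for closed proper subsets.
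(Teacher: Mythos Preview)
Your proposal is correct and follows essentially the same route as the paper: take $v=\varphi u_{\Sigma'}$ as a competitor for $u_{\Sigma}$, split the energy difference over $B_{r_1}(x_0)$ and its complement, apply $(a+b)^p\le 2^{p-1}(a^p+b^p)$ with $|\varphi|\le 1$, and discard the negative remainder. You even justify the admissibility $v\in W^{1,p}_0(\Omega\setminus\Sigma)$ more carefully than the paper does (it simply asserts this); one minor remark is that the minimality you invoke is that of $u_\Sigma$ for the functional $E_p$ itself, not Theorem~\ref{thm: 2.2}, which concerns the pure Dirichlet energy.
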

\begin{proof} Since $u_{\s^{\prime}}\varphi\in W^{1,p}_{0}(\Omega\backslash \s)$ and $u_{\s}$ is a minimizer of $E_{p}$ over $ W^{1,p}_{0}(\Omega\backslash \s)$, then $E_{p}(u_{\s})\leq E_{p}(u_{\s^{\prime}}\varphi)$, and hence,
\begin{align*}
E_{p}(u_{\s})-E_{p}(u_{\s^{\prime}}) & \leq E_{p}(u_{\s^{\prime}}\varphi)-E_{p}(u_{\s^{\prime}}) = \frac{1}{p}\int_{\Omega}|\nabla u_{\s^{\prime}}\varphi +u_{\s^{\prime}}\nabla\varphi|^{p}\diff x \\
& \qquad \qquad -\int_{\Omega}fu_{\s^{\prime}}\varphi\diff x -\frac{1}{p}\int_{\Omega}|\nabla u_{\s^{\prime}}|^{p}\diff x +\int_{\Omega}fu_{\s^{\prime}}\diff x \\
& = \frac{1}{p}\int_{B_{r_{1}}(x_{0})}|\nabla u_{\s^{\prime}}\varphi+u_{\s^{\prime}}\nabla\varphi|^{p}\diff x + \frac{1}{p}\int_{ B^{c}_{r_{1}}(x_{0})}|\nabla u_{\s^{\prime}}|^{p}\diff x \\ 
& \qquad \qquad +\int_{ B_{r_{1}}(x_{0})}fu_{\s^{\prime}}(1-\varphi)\diff x-\frac{1}{p}\int_{\Omega}|\nabla u_{\s^{\prime}}|^{p}\diff x \\ 
& \leq \frac{2^{p-1}}{p}\int_{B_{r_{1}}(x_{0})}|\nabla u_{\s^{\prime}}|^{p}|\varphi|^{p}\diff x+\frac{2^{p-1}}{p}\int_{B_{r_{1}}(x_{0})}|u_{\s^{\prime}}|^{p}|\nabla\varphi|^{p}\diff x \\ 
& \qquad \qquad -\frac{1}{p}\int_{B_{r_{1}}(x_{0}) }|\nabla u_{\s^{\prime}}|^{p}\diff x+\int_{ B_{r_{1}}(x_{0})}fu_{\s^{\prime}}(1-\varphi)\diff x \\
& \leq \frac{2^{p-1}}{p} \int_{B_{r_{1}}(x_{0}) }|\nabla u_{\s^{\prime}}|^{p}\diff x +\frac{2^{p-1}}{p}\int_{ B_{r_{1}}(x_{0})}|u_{\s^{\prime}}|^{p}|\nabla\varphi|^{p}\diff x \\ 
& \qquad  \qquad +\int_{ B_{r_{1}}(x_{0})}fu_{\s^{\prime}}(1-\varphi)\diff x,
\end{align*}
which concludes the proof.
\end{proof}

\begin{lemma} \label{lem: 7.2} Let $\Omega$ be a bounded open set in $\mathbb{R}^2$ and $p\in (1,+\infty)$, and let $f\in L^{q}(\Omega)$ with $q\geq q_{0}$, where $q_{0}$ is defined in (\ref{(0.1)}). Let  $\s$ be a closed arcwise connected proper subset of $\overline{\Omega}$ and $x_{0}\in \mathbb{R}^2$, and let $0<2r_{0}\leq r_{1}\leq 1$ satisfy 
\begin{equation}
\s\cap B_{r_{0}}(x_{0})\not =\emptyset,\,\ \s\backslash B_{r_{1}}(x_{0}) \not = \emptyset. \label{7.1}
\end{equation}
Then for any $r\in [r_{0}, r_{1}/2]$, for any $\varphi \in \lip(\mathbb{R}^{2})$ such that $\|\varphi\|_{\infty}\leq 1$ and $\varphi=1$ over $B^{c}_{2r}(x_{0}), \,\ \varphi= 0$ over $B_{r}(x_{0})$ and $\|\nabla \varphi\|_{\infty}\leq 1/r$, the following assertions hold.
\begin{enumerate}[label=(\roman*)]
\item There exists $C>0$ depending only on $p$, such that:
\begin{equation}
\int_{B_{2r}(x_{0})}|u_{\s}|^{p}|\nabla \varphi|^{p}\diff x \leq C\int_{B_{2r}(x_{0})}|\nabla u_{\s}|^{p}\diff x. \label{7.2}
\end{equation}
\item There exists $C>0$ depending only on $p$, $q_{0}$, $q$ and $\|f\|_{q}$ such that
\begin{equation}
\int_{B_{2r}(x_{0})} fu_{\s}(1-\varphi)\diff x \leq C\int_{B_{2r}(x_{0})}|\nabla u_{\Sigma}|^{p}\diff x + C r^{2+p^{\prime}-\frac{2p^{\prime}}{q}}. \label{7.3}
\end{equation}
\end{enumerate}
\end{lemma}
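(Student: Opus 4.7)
The arcwise connectedness of $\s$, the condition \eqref{7.1}, and the bounds $r_{0} \leq r$ and $2r \leq r_{1}$ together ensure that $\s \cap \partial B_{s}(x_{0}) \neq \emptyset$ for every $s \in [r, 2r]$: indeed, any arc in $\s$ joining a point of $\s \cap B_{r_{0}}(x_{0}) \subset B_{r}(x_{0})$ to a point of $\s \setminus B_{r_{1}}(x_{0}) \subset \mathbb{R}^{2} \setminus \overline{B}_{2r}(x_{0})$ must cross each intermediate sphere by continuity. Since $u_{\s} \in W^{1,p}_{0}(\Om \setminus \s)$, its $p$-quasi continuous representative vanishes $p$-q.e.\ on $\s$ by Remark~\ref{rem: 2.8}, hence on $\s \cap \overline{B}_{2r}(x_{0})$. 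Corollary~\ref{cor: 2.11} applied with $\xi = x_{0}$ then yields $\int_{B_{2r}(x_{0})} |u_{\s}|^{p} \diff x \leq C r^{p} \int_{B_{2r}(x_{0})} |\nabla u_{\s}|^{p} \diff x$ with $C = C(p)$, and \eqref{7.2} follows at once from $|\nabla \varphi| \leq 1/r$.

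\textbf{Part (ii), strategy.} Since $0 \leq 1 - \varphi \leq 1$, Hölder's inequality gives
\[
\int_{B_{2r}(x_{0})} f u_{\s} (1 - \varphi) \diff x \leq \|f\|_{L^{q}(\Om)}\, \|u_{\s}\|_{L^{q'}(B_{2r}(x_{0}))}.
\]
The core step is a scaled Poincaré--Sobolev estimate
\[
\|u_{\s}\|_{L^{q'}(B_{2r}(x_{0}))} \leq C\, r^{1 + 2/q' - 2/p}\, \|\nabla u_{\s}\|_{L^{p}(B_{2r}(x_{0}))},
\]
with $C = C(p, q_{0}, q)$. I would establish it by rescaling: setting $v(y) = u_{\s}(x_{0} + 2ry)$ for $y \in B_{1}$, the function $v$ lies in $W^{1,p}(B_{1})$ and vanishes $p$-q.e.\ on a subset of $\overline{B}_{1}$ that meets every sphere $\partial B_{s}$ with $s \in [1/2, 1]$. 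On $B_{1}$, a standard Poincaré--Sobolev argument, combining the Poincaré estimate of Corollary~\ref{cor: 2.11} with the appropriate Sobolev embedding ($W^{1,p} \hookrightarrow L^{p^{*}}$ for $p \in (1,2)$ composed with Hölder to pass to $L^{q'}$, which is licit since $q \geq q_{0}$ forces $q' \leq q_{0}' = p^{*}$; $W^{1,2} \hookrightarrow L^{q'}$ for $p = 2$; and Morrey's embedding $W^{1,p} \hookrightarrow L^{\infty}$ for $p > 2$), gives $\|v\|_{L^{q'}(B_{1})} \leq C \|\nabla v\|_{L^{p}(B_{1})}$. Rescaling via $\|v\|_{L^{q'}(B_{1})} = (2r)^{-2/q'} \|u_{\s}\|_{L^{q'}(B_{2r}(x_{0}))}$ and $\|\nabla v\|_{L^{p}(B_{1})} = (2r)^{1-2/p} \|\nabla u_{\s}\|_{L^{p}(B_{2r}(x_{0}))}$ then delivers the claimed inequality.

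\textbf{Conclusion and main obstacle.} Applying Young's inequality $ab \leq \frac{1}{p} a^{p} + \frac{1}{p'} b^{p'}$ with $a = \|\nabla u_{\s}\|_{L^{p}(B_{2r}(x_{0}))}$ and $b = C \|f\|_{L^{q}(\Om)}\, r^{1 + 2/q' - 2/p}$ yields
\[
\int_{B_{2r}(x_{0})} f u_{\s} (1-\varphi) \diff x \leq \frac{1}{p} \int_{B_{2r}(x_{0})} |\nabla u_{\s}|^{p} \diff x + C' r^{p'(1 + 2/q' - 2/p)},
\]
with $C' = C'(p, q_{0}, q, \|f\|_{q})$. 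A direct calculation using $1/q + 1/q' = 1$ and $1/p + 1/p' = 1$ gives $p'(1 + 2/q' - 2/p) = 2 + p' - 2p'/q$, which is exactly the exponent appearing in \eqref{7.3}. The main obstacle is verifying the Poincaré--Sobolev inequality with the sharp scaled exponent $1 + 2/q' - 2/p$ in a manner that is uniform across the three regimes $p < 2$, $p = 2$, and $p > 2$; once this is in hand, the remaining Hölder/Young manipulations are entirely routine.
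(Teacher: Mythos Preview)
Your proof is correct and follows essentially the same approach as the paper: part (i) is identical, and part (ii) combines the Poincar\'e inequality of Corollary~\ref{cor: 2.11} with Sobolev embeddings and then Young's inequality. The only organizational difference is that you rescale to $B_{1}$ and apply H\"older directly with exponents $q,q'$, whereas the paper works on $B_{2r}(x_{0})$, first applies H\"older with $q_{0},q_{0}'$ and a case-by-case Sobolev estimate $\|u_{\s}\|_{L^{q_{0}'}(B_{2r})}\leq Cr^{\beta}\|\nabla u_{\s}\|_{L^{p}(B_{2r})}$, and then uses a second H\"older step $\|f\|_{L^{q_{0}}(B_{2r})}\leq |B_{2r}|^{1/q_{0}-1/q}\|f\|_{L^{q}}$ to reach the same exponent $3-2/p-2/q$; both routes are equivalent and your rescaling packages the $r$-dependence slightly more cleanly.
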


\begin{proof} 
In this proof we write $u$ instead of $u_{\s}$ to lighten the notation. Due to (\ref{7.1}), $\Sigma  \cap \partial B_{s}(x_{0}) \neq \emptyset$ for all $s \in [r,2r]$ and then, since $u=0$\, $p$-q.e. on $\Sigma$ and $u \in W^{1,p}(B_{2r}(x_{0}))$, by Corollary~\ref{cor: 2.11}, there is a constant $C=C(p)>0$ such that
\begin{equation}
\int_{B_{2r}(x_{0})}|u|^{p}\diff x\leq Cr^{p} \int_{B_{2r}(x_{0})}|\nabla u |^{p}\diff x. \label{7.4}
\end{equation}
Therefore, 
\begin{align*}
\int_{B_{2r}(x_{0})} |u|^{p}|\nabla \varphi|^{p}\diff x & \leq \frac{1}{r^{p}} \int_{B_{2r}(x_{0})} |u |^{p} \diff x \\
& \leq C \int_{B_{2r}(x_{0})} |\nabla u|^{p} \diff x,
\end{align*}
which proves \eqref{7.2}.

Then let us prove  (\ref{7.3}). First, notice that due to (\ref{7.4}) and the fact that $2r\leq 1$, there is a constant  $C_{0}=C_{0}(p)>0$ such that
\begin{equation}
\|u\|_{W^{1,p}(B_{2r}(x_{0}))}\leq C_{0} \|\nabla u\|_{L^{p}(B_{2r}(x_{0}))}. \label {P1}
\end{equation}
Using the Sobolev embeddings (see \cite[Theorem 7.26]{PDE}) together with (\ref{P1}), we deduce that there is a constant $\widetilde{C}=\widetilde{C}(p, q_{0})>0$ such that
\begin{equation} \label{(E10)}
\|u\|_{L^{q^{\prime}_{0}}(B_{2r}(x_{0}))} \leq \widetilde{C} r^{\beta}\|\nabla u\|_{L^{p}(B_{2r}(x_{0}))},
\end{equation}
where
\begin{equation} \label{(E20)}
\beta=1-\frac{2}{p} \,\  \text{if} \,\ 2<p<+\infty , \,\ \,\ \beta=\frac{2}{q^{\prime}_{0}} \,\ \text{if} \,\ p=2, \,\ \,\ \beta=0 \,\ \text{if} \,\  1<p<2,
\end{equation}
and it is worth noting that in the case $2<p<+\infty$ we have used that $u(\xi)=0$ for some $\xi \in \Sigma  \cap B_{2r}(x_{0})$ yielding the following: for all $x \in B_{2r}(x_{0})$ we have 
\[
|u(x)|=|u(x)-u(\xi)|\leq C_{1} r^{1-\frac{2}{p}}\|u\|_{W^{1,p}(B_{2r}(x_{0}))}
\]
for some $C_{1}=C_{1}(p)>0$.
Thus, using the fact that $\|1-\varphi\|_{\infty}\leq 1$ and H\"{o}lder's inequality, we get
\begin{align*}
\int_{B_{2r}(x_{0})}fu(1-\varphi)\diff x& \leq \|f\|_{L^{q_{0}}(B_{2r}(x_{0}))}\|u\|_{L^{q^{\prime}_{0}}(B_{2r}(x_{0}))} \leq |B_{2r}(x_{0})|^{\frac{1}{q_{0}}-\frac{1}{q}}\|f\|_{L^{q}(\Omega)}\|u\|_{L^{q^{\prime}_{0}}(B_{2r}(x_{0}))}\\
&\leq Cr^{2(\frac{1}{q_{0}}-\frac{1}{q})+\beta}\|\nabla u\|_{L^{p}(B_{2r}(x_{0}))}  \,\ (\text{by using (\ref{(E10)}}))\\
& = Cr^{3-\frac{2}{p}-\frac{2}{q}}\|\nabla u\|_{L^{p}(B_{2r}(x_{0}))} \,\ (\text{by using (\ref{(0.1)}) and (\ref{(E20)}}))\\
&\leq C  r^{2+p^{\prime}-\frac{2p^{\prime}}{q}} +  C\|\nabla u\|_{L^{p}(B_{2r}(x_{0}))}^p \,\ (\text{by Young's inequality}), 
\end{align*}
where $C=C(p,q_{0},q, \|f\|_{L^{q}(\Omega)})>0$.   This achieves the proof of Lemma~\ref{lem: 7.2}. \end{proof}

The following corollary  follows directly from Lemma~\ref{lem: 7.1} and Lemma~\ref{lem: 7.2}, so we omit the proof.

\begin{cor}\label{cor: 7.3} \textit{  Let $\Omega$ be a bounded open set in $\mathbb{R}^2$ and $p\in (1,+\infty)$, and let $f\in L^{q}(\Omega)$ with $q\geq q_{0}$, where $q_{0}$ is defined in (\ref{(0.1)}). Let $\s$ and $\s^{\prime}$ be closed arcwise connected proper subsets of $\overline{\Omega}$, and let $x_{0}\in \mathbb{R}^2$. Suppose that $0<2r_{0}\leq r_{1}\leq 1$, $\s^{\prime} \Delta \s \subset \overline{B}_{r_{0}}(x_{0})$ and}
\[
\Sigma^{\prime}\cap B_{r_{0}}(x_{0})\not =\emptyset,\,\ \Sigma^{\prime}\backslash B_{r_{1}}(x_{0}) \not = \emptyset.
\]
\textit{Then for any $r \in [r_{0}, r_{1}/2]$ we have:
\begin{equation}
E_{p}(u_{\s}) - E_{p}(u_{\Sigma^{\prime}}) \leq C \int_{B_{2r}(x_{0})} |\nabla u_{\Sigma^{\prime}}|^{p}\diff x + Cr^{2+p^{\prime}-\frac{2p^{\prime}}{q}}\label{7.5}
\end{equation}
for some constant $C>0$ depending only on $p$, $q_{0}$, $q$ and $\|f\|_{q}$.}
\end{cor}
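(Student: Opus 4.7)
The plan is to combine Lemma~\ref{lem: 7.1} and Lemma~\ref{lem: 7.2} by choosing a single cutoff function $\varphi$ that is admissible for both. Fix $r\in[r_0,r_1/2]$ and pick $\varphi\in\lip(\mathbb{R}^2)$ with $\|\varphi\|_\infty\leq 1$, $\varphi\equiv 0$ on $B_r(x_0)$, $\varphi\equiv 1$ on $B_{2r}^c(x_0)$ and $\|\nabla\varphi\|_\infty\leq 1/r$; such a function exists (e.g.\ $\varphi(x)=\min\{1,\max\{0,(|x-x_0|-r)/r\}\}$).

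First I would apply Lemma~\ref{lem: 7.1} to the pair $(\Sigma,\Sigma')$ with the roles of the two radii played by $r$ and $2r$ respectively. The hypothesis $\Sigma'\Delta\Sigma\subset\overline{B}_{r_0}(x_0)\subset\overline{B}_r(x_0)$ is satisfied because $r\geq r_0$, and $\varphi$ has the required support properties, so Lemma~\ref{lem: 7.1} yields
\begin{equation*}
E_p(u_\Sigma)-E_p(u_{\Sigma'})\leq \frac{2^{p-1}}{p}\int_{B_{2r}(x_0)}|\nabla u_{\Sigma'}|^p\diff x + \frac{2^{p-1}}{p}\int_{B_{2r}(x_0)}|u_{\Sigma'}|^p|\nabla\varphi|^p\diff x + \int_{B_{2r}(x_0)}fu_{\Sigma'}(1-\varphi)\diff x.
\end{equation*}

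Next I would apply Lemma~\ref{lem: 7.2} to $\Sigma'$ (which is arcwise connected by hypothesis) with the same radii $r_0,r_1$: the assumptions $\Sigma'\cap B_{r_0}(x_0)\neq\emptyset$ and $\Sigma'\setminus B_{r_1}(x_0)\neq\emptyset$ are exactly \eqref{7.1} for $\Sigma'$, and $r\in[r_0,r_1/2]$ is within the allowed range. Part (i) of Lemma~\ref{lem: 7.2} bounds the second term above by $C\int_{B_{2r}(x_0)}|\nabla u_{\Sigma'}|^p\diff x$, and part (ii) bounds the third term by $C\int_{B_{2r}(x_0)}|\nabla u_{\Sigma'}|^p\diff x + Cr^{2+p'-2p'/q}$, with constants depending only on $p$, $q_0$, $q$ and $\|f\|_q$.

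Adding these three contributions and collecting constants yields \eqref{7.5}, so the result follows. There is no genuine obstacle here: the only care needed is to verify that the cutoff $\varphi$ is simultaneously admissible for the two lemmas (which forces the choice of radii $r$ and $2r$ in Lemma~\ref{lem: 7.1}, rather than $r_0$ and $r_1$) and to check that the containment $\Sigma'\Delta\Sigma\subset\overline{B}_r(x_0)$ still holds after enlarging $r_0$ to $r$; both are immediate from $r\geq r_0$.
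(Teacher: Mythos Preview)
Your proof is correct and follows exactly the approach the paper indicates: the paper omits the proof of Corollary~\ref{cor: 7.3}, stating only that it follows directly from Lemma~\ref{lem: 7.1} and Lemma~\ref{lem: 7.2}, which is precisely what you do by choosing a single cutoff $\varphi$ admissible for both lemmas with radii $r$ and $2r$.
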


We now start to prove some decay estimates on the $p$-energy. We begin with  the simple case of a weak solution of the $p$-Laplace equation vanishing on a line, for which we can argue by reflection.

\begin{lemma} \label{lem: 7.4}
Let $p\in (1,+\infty)$.   Then there is a constant $C=C(p)>0$ such that for all   $u\in W^{1,p}(B_{1}),\,\ u=0$ $p$-q.e. on $[-1,1]\times \{0\}$ being a weak solution of the $p$-Laplace equation in $B_{1}\backslash ([-1,1]\times \{0\})$,
\begin{equation*}
\ess_{B_{1/2}} |\nabla u|^{p} \leq C \int_{B_{1}} |\nabla u|^{p} \diff x. 
\end{equation*}
\end{lemma}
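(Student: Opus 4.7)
My approach is the reflection technique already advertised in the introduction. Since $[-1,1]\times\{0\}$ cuts $B_1$ into its two half-disks $B_1^{+}=B_1\cap\{x_2>0\}$ and $B_1^{-}=B_1\cap\{x_2<0\}$, the function $u$ is a weak solution of the $p$-Laplace equation separately in each half, and it vanishes $p$-q.e.\ on the common boundary segment. Concretely, I would define
\[
\tilde u(x_1,x_2)=\begin{cases} u(x_1,x_2) & \text{if } x_2\geq 0,\\ -u(x_1,-x_2) & \text{if } x_2<0,\end{cases}
\]
and claim that $\tilde u\in W^{1,p}(B_1)$ is itself a weak solution of $-\Delta_p\tilde u=0$ on the full ball~$B_1$. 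The $W^{1,p}$ membership is standard once one knows that the trace of $u\restriction_{B_1^{+}}$ on the diameter is zero; this follows from the $p$-q.e.\ vanishing of $u$ on $[-1,1]\times\{0\}$ together with Remark~\ref{rem: 2.8} and a routine cut-off argument showing that $u\restriction_{B_1^{+}}$ lies in $W^{1,p}_{0}$ of a slightly opened half-disk.

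The central step is proving that $\tilde u$ satisfies the $p$-Laplace equation across the segment. Given $\varphi\in C^{\infty}_{0}(B_1)$, I would split $\varphi=\varphi_{\mathrm{e}}+\varphi_{\mathrm{o}}$ into its even and odd parts with respect to $x_2$. A direct calculation using $\tilde u(x_1,-x_2)=-\tilde u(x_1,x_2)$ shows that $|\nabla\tilde u|^{p-2}\nabla\tilde u\cdot\nabla\varphi_{\mathrm{e}}$ is odd in $x_2$, so the integral over $B_1$ vanishes by symmetry, while $|\nabla\tilde u|^{p-2}\nabla\tilde u\cdot\nabla\varphi_{\mathrm{o}}$ is even in $x_2$, and thus
\[
\int_{B_1}|\nabla\tilde u|^{p-2}\nabla\tilde u\cdot\nabla\varphi_{\mathrm{o}}\diff x=2\int_{B_1^{+}}|\nabla u|^{p-2}\nabla u\cdot\nabla\varphi_{\mathrm{o}}\diff x.
\]
Now $\varphi_{\mathrm{o}}$ is odd in $x_2$, hence vanishes on $[-1,1]\times\{0\}$, and since it also lies in $C^{\infty}_{0}(B_1)$, its restriction to $B_1^{+}$ belongs to $W^{1,p}_{0}(B_1\setminus([-1,1]\times\{0\}))$ (by Remark~\ref{rem: 2.8} applied to the continuous function $\varphi_{\mathrm{o}}$, which vanishes on the segment). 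Thus testing the equation for $u$ against $\varphi_{\mathrm{o}}$ gives the right-hand side $=0$.

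With $\tilde u$ now a weak solution of the $p$-Laplace equation on all of $B_1$, I invoke the classical local Lipschitz estimate for $p$-harmonic functions of Uhlenbeck--DiBenedetto--Tolksdorf--Lewis, namely
\[
\ess_{B_{1/2}}|\nabla\tilde u|^{p}\leq C(p)\int_{B_1}|\nabla\tilde u|^{p}\diff x.
\]
Since $\nabla\tilde u=\nabla u$ on $B_{1/2}^{+}$ and since $\int_{B_1}|\nabla\tilde u|^{p}=2\int_{B_1^{+}}|\nabla u|^{p}\leq 2\int_{B_1}|\nabla u|^{p}$, this yields the desired bound on $B_{1/2}^{+}$. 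Repeating the construction with the roles of $B_1^{+}$ and $B_1^{-}$ interchanged gives the same bound on $B_{1/2}^{-}$, and the two combine (with a factor $2$ absorbed in $C$) into the inequality claimed in the lemma.

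The only delicate point, and the one I would treat with some care, is the splitting argument in the middle step: one must be sure that the odd test function $\varphi_{\mathrm{o}}\restriction_{B_1^{+}}$ really is an admissible test function for the equation satisfied by $u$ in $B_1^{+}$ despite having support that touches the segment. This is exactly where the $p$-quasi continuity and the characterization of $W^{1,p}_{0}$ via $p$-q.e.\ vanishing (Remark~\ref{rem: 2.8}) are used. Everything else is bookkeeping plus a standard gradient-regularity citation for $p$-harmonic functions.
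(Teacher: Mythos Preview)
Your proof is correct and follows essentially the same approach as the paper: odd reflection of $u$ across the diameter to produce a $p$-harmonic function on the full ball, then invoking the interior Lipschitz estimate for $p$-harmonic functions (the paper cites DiBenedetto specifically). Your even/odd decomposition of the test function is just a repackaging of the paper's direct change-of-variables computation, which arrives at the test function $\psi(x_1,x_2)=\varphi(x_1,x_2)-\varphi(x_1,-x_2)=2\varphi_{\mathrm{o}}(x_1,x_2)$; the paper then observes, as you do, that $\psi\in W^{1,p}_0(B_1^{+})$ so that the integral over $B_1^{+}$ vanishes.
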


\begin{proof} Consider the restrictions of $u$ on $B^{+} = B_{1} \cap \{x_{2} \geq 0\}$ and on $B^{-} =B_{1} \cap \{x_{2} \leq 0\}$ and extend them on $B_{1}$ using the Schwarz reflection. We show that each of the obtained functions is a weak solution of the corresponding $p$-Laplace equation in $B_{1}$. Thus we define
\begin{equation*}
\begin{split}
\tilde{u}(x_{1},x_{2})&=
\begin{cases}
u(x_{1},x_{2}) &  \text{if} \,\ (x_{1},x_{2}) \in B^{+}\\
-u(x_{1},-x_{2}) & \text{if} \,\ (x_{1},x_{2}) \in B^{-}
\end{cases} 
\\
\overline{u}(x_{1},x_{2})&=
\begin{cases}
-u(x_{1},-x_{2}) & \text{if} \,\ (x_{1},x_{2}) \in B^{+}\\
u(x_{1},x_{2}) & \text{if} \,\ (x_{1},x_{2}) \in B^{-}.
\end{cases}
\end{split}
\end{equation*}
It is clear that $\tilde{u}, \overline{u} \in W^{1,p}(B_{1})$ and $\tilde{u}=\overline{u}=0$ $ p$-q.e. on $[-1,1]\times \{0\}$. We claim that $\tilde{u}$ and $\overline{u}$ are weak solutions in $B_{1}$. Indeed, denoting $B_{1} \cap \{x_{2}>0\}$ by $int(B^{+})$ and $B_{1}\cap \{x_{2}<0\}$ by $int(B^{-})$, for an arbitrary test function $\varphi \in C^{\infty}_{0}(B_{1})$ we have
\begin{align*}
\int_{B_{1}}|\nabla \tilde{u}|^{p-2} \nabla \tilde{u} \nabla \varphi\diff x &= \int_{int(B^{+})} |\nabla u|^{p-2} \nabla u \nabla \varphi \diff x\\
+\int_{int(B^{-})}|\nabla u(& x_{1},- x_{2})|^{p-2}\langle(-\partial_{1}u(x_{1}, -x_{2}), \partial_{2}u(x_{1},-x_{2})), \nabla \varphi(x_{1},x_{2})\rangle \diff x_{1} \diff x_{2} \\
&= \int_{int(B^{+})} |\nabla u|^{p-2} \nabla u \nabla \varphi \diff x\\
-\int_{int(B^{+})}|\nabla u(& x_{1}, x_{2})|^{p-2}\langle \nabla u(x_{1},x_{2}), (\partial_{1}\varphi(x_{1},-x_{2}),-\partial_{2}\varphi(x_{1},-x_{2})) \rangle \diff x_{1} \diff x_{2} \\
&= \int_{int(B^{+})}|\nabla u|^{p-2} \nabla u \nabla \psi\diff x, \numberthis \label{7.7} 
\end{align*}
where   $\psi(x_{1},x_{2})=\varphi(x_{1},x_{2})-\varphi(x_{1},-x_{2}),\,\ (x_{1},x_{2}) \in int(B^{+})$. Since $\left.\tilde{u}\right|_{int(B^{+})}\equiv \left.u\right|_{int(B^{+})}$  is a weak solution in $int(B^{+})$ and since $\psi \in W^{1,p}_{0}(int(B^{+}))$, using (\ref{7.7}), we get that 
\begin{equation*}
\int_{B_{1}}|\nabla \tilde{u}|^{p-2}\nabla \tilde{u}\nabla \varphi\diff x=0. 
\end{equation*}
As $\varphi \in C^{\infty}_{0}(B_{1})$ was arbitrarily chosen, we deduce that $\tilde{u}$ is a weak solution in $B_{1}$. The proof of the fact that $\overline{u}$ is a weak solution in $B_{1}$ is similar. Thus by  \cite[Proposition 3.3]{DIBENEDETTO1983827} there is $C=C(p)>0$ such that
\begin{align*}
\ess_{B_{1/2}} |\nabla \tilde{u}|^{p} & \leq C \int_{B_{1}} |\nabla \tilde{u}|^{p} \diff x \\
\ess_{B_{1/2}} |\nabla \overline{u}|^{p} & \leq C \int_{B_{1}} |\nabla \overline{u}|^{p} \diff x.
\end{align*}
Therefore, 
\begin{align*}
\ess_{B_{1/2}} |\nabla u|^{p} & \leq \ess_{B_{1/2}} |\nabla \tilde{u}|^{p} + \ess_{B_{1/2}} |\nabla \overline{u}|^{p} \\
& \leq C \bigl(\int_{B_{1}} |\nabla \tilde{u}|^{p} \diff x +  \int_{B_{1}} |\nabla \overline{u}|^{p} \diff x \bigr) \\
& \leq 2C \int_{B_{1}} |\nabla u|^{p} \diff x.
\end{align*}
This completes the proof of Lemma~\ref{lem: 7.4}.
\end{proof}

\begin{cor} \label{cor: 7.5} Let $u$ be a weak solution of the $p$-Laplace equation in $B_{1}\backslash ([-1,1]\times \{0\})$ and let $u=0$ $p$-q.e. on $[-1,1]\times \{0\}$. Then $u$ is Lipschitz continuous on $B_{1/2}$.
\end{cor}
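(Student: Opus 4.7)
The plan is to derive the corollary directly from Lemma~\ref{lem: 7.4}. That lemma gives
\[
\ess_{B_{1/2}} |\nabla u|^p \leq C \int_{B_1} |\nabla u|^p \diff x,
\]
which is finite since $u \in W^{1,p}(B_1)$. Setting $M := \bigl(C \int_{B_1} |\nabla u|^p \diff x\bigr)^{1/p}$, we have $|\nabla u| \leq M$ almost everywhere on $B_{1/2}$, so $\nabla u \in L^\infty(B_{1/2})$.

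The next and essentially only remaining step is the classical passage from an essentially bounded gradient to Lipschitz continuity on a convex set. Since $B_{1/2}$ is convex and $u \in W^{1,p}(B_{1/2})$, I would approximate $u$ by standard mollifications $u_\varepsilon = u * \rho_\varepsilon$, which are smooth on $B_{1/2-\delta}$ for $\varepsilon<\delta$ and inherit the uniform gradient bound $\|\nabla u_\varepsilon\|_{L^\infty(B_{1/2-\delta})} \leq M$ from the pointwise bound on $\nabla u$ and convolution. For any two points $x,y \in B_{1/2-\delta}$, the segment $[x,y]$ lies in $B_{1/2-\delta}$, and the fundamental theorem of calculus yields
\[
|u_\varepsilon(x) - u_\varepsilon(y)| \leq \int_{0}^{1} |\nabla u_\varepsilon(y + t(x-y))| \cdot |x-y| \diff t \leq M|x-y|.
\]
Passing $\varepsilon \to 0$ along a sequence giving pointwise a.e.\ convergence, and then letting $\delta \to 0$, one obtains the same $M$-Lipschitz estimate for the precise representative $\widetilde u(x) := \lim_{r \to 0} \frac{1}{|B_r(x)|} \int_{B_r(x)} u \diff z$ at every pair of Lebesgue points of $u$. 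Since Lebesgue points form a dense subset of $B_{1/2}$, the map $\widetilde u$ extends uniquely to an $M$-Lipschitz function on $B_{1/2}$ which agrees with $u$ almost everywhere, and this is the desired Lipschitz representative.

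The main analytical content is already contained in Lemma~\ref{lem: 7.4}, namely the reflection argument that upgrades the weak solution with vanishing trace on the segment to one on the full ball and then applies DiBenedetto's interior $L^\infty$ gradient estimate. The present corollary is merely the routine bookkeeping step converting an $\ess\sup$ bound on $\nabla u$ into pointwise Lipschitz continuity via convexity of the ball, and I do not anticipate any genuine obstacle.
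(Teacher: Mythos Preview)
Your proposal is correct and matches the paper's intent: the paper gives no explicit proof for this corollary, treating it as an immediate consequence of Lemma~\ref{lem: 7.4}, and your argument correctly supplies the routine step from an $L^\infty$ gradient bound to Lipschitz continuity on the convex set $B_{1/2}$.
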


\begin{cor} \label{cor: 7.6} \textit{ There is a constant $C_{0}=C_{0}(p)>2$ such that if $u$ is a weak solution of the $p$-Laplace equation in $B_{1}\backslash ([-1,1]\times \{0\})$ and $ u=0$ $p$-q.e. on $[-1,1]\times \{0\}$, then}
\begin{equation*}
\int_{B_{r}} |\nabla u|^{p}\diff x \leq C_{0}r^{2} \int_{B_{1}} |\nabla u|^{p} \diff x\,\ \text{for all $r \in (0, 1/2]$}. 
\end{equation*}
\end{cor}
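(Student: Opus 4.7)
The statement follows almost immediately from Lemma~\ref{lem: 7.4}, so the plan is very short. First, I would invoke Lemma~\ref{lem: 7.4} to obtain a constant $C=C(p)>0$ such that
\[
\ess_{B_{1/2}}|\nabla u|^{p} \leq C \int_{B_{1}}|\nabla u|^{p}\diff x.
\]
Then, for any $r\in (0,1/2]$, since $B_{r}\subset B_{1/2}$, I would estimate the integral by the essential supremum times the measure of $B_{r}$:
\[
\int_{B_{r}}|\nabla u|^{p}\diff x \leq |B_{r}|\cdot \ess_{B_{1/2}}|\nabla u|^{p} \leq \pi r^{2}\cdot C\int_{B_{1}}|\nabla u|^{p}\diff x.
\]
Finally, I would set $C_{0}=\max\{\pi C, 3\}$ so that $C_{0}>2$ as required, concluding the proof.

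There is no real obstacle: the content is entirely carried by Lemma~\ref{lem: 7.4}, and the only task is to pass from the sup estimate to an integral estimate by multiplying by $|B_{r}|=\pi r^{2}$, which produces the desired quadratic decay in $r$.
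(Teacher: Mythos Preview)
Your proposal is correct and follows essentially the same approach as the paper: invoke Lemma~\ref{lem: 7.4} to bound $\ess_{B_{1/2}}|\nabla u|^{p}$, then multiply by $|B_{r}|=\pi r^{2}$ to obtain the integral estimate. The only cosmetic difference is that the paper observes $C>1$ from Lemma~\ref{lem: 7.4} so that $\pi C>2$ automatically, whereas you enforce $C_{0}>2$ by taking a maximum.
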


\begin{proof}[Proof of Corollary \ref{cor: 7.6}] By Lemma~\ref{lem: 7.4} we know that for some $C=C(p)>1$,
\begin{equation*}
\ess_{B_{1/2}} |\nabla u|^{p} \leq C \int_{B_{1}} |\nabla u|^{p} \diff x. 
\end{equation*}
We deduce that for $r \leq 1/2$,
$$\int_{B_{r}}|\nabla u|^p\diff x\leq \left(\ess_{B_{1/2}} |\nabla u|^{p}\right) \pi r^2\leq \pi C r^2 \int_{B_1}|\nabla u|^p \diff x.$$
\end{proof}

 Next we use a compactness argument to derive a similar estimate for a weak solution of the $p$-Laplace equation vanishing on a set $\Sigma$ which is close enough to a line, in the Hausdorff distance.

\begin{lemma} \label{lem: 7.7} Let $p\in (1,+\infty)$ and let $C_{0}$ be a constant as in Corollary~\ref{cor: 7.6}. Then for every $\varrho \in (0,1/2]$ there is $\varrho_{0} \in (0, \varrho)$ such that the following holds. Let $\s \subset \mathbb{R}^{2}$ be a closed set such that $(\s \cap B_{r}(x_{0})) \cup \partial B_{r}(x_{0})$ is connected and there is an affine line $P$, passing through $x_{0}$, such that $d_{H}(\s\cap \overline{B}_{r}(x_{0}), P\cap \overline{B}_{r}(x_{0}))\leq \varrho_{0}r$. Then for any weak solution $u$ of the $p$-Laplace equation in $B_{r}(x_{0})\backslash\s$, vanishing $p$-q.e. on $\s\cap \overline{B}_{r}(x_{0})$, the following estimate holds
\begin{equation*}
\int_{B_{\varrho r}(x_{0})} |\nabla u|^{p}\diff x\leq (C_{0} \varrho)^{2}\int_{B_{r}(x_{0})} |\nabla u|^{p} \diff x.
\end{equation*}
\end{lemma}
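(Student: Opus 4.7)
\medskip
\noindent\textbf{Proof plan for Lemma~\ref{lem: 7.7}.} The plan is to argue by contradiction and compactness, reducing the general situation to the line case already handled by Corollary~\ref{cor: 7.6}. Fix $\varrho\in(0,1/2]$ and suppose the statement fails. Then there are sequences $\varrho_{0,n}\to 0^{+}$, closed sets $\Sigma_{n}$ satisfying the connectedness hypothesis, points $x_{n}$, radii $r_{n}>0$, affine lines $P_{n}\ni x_{n}$ with $d_{H}(\Sigma_{n}\cap\overline{B}_{r_{n}}(x_{n}),P_{n}\cap\overline{B}_{r_{n}}(x_{n}))\le\varrho_{0,n}r_{n}$, and weak solutions $u_{n}$ of the $p$-Laplace equation in $B_{r_{n}}(x_{n})\setminus\Sigma_{n}$ vanishing $p$-q.e.\ on $\Sigma_{n}$ such that the conclusion fails. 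After translating, rotating, rescaling, and normalising, I may assume $x_{n}=0$, $r_{n}=1$, $P_{n}=P:=[-1,1]\times\{0\}$, and
\[
\int_{B_{1}}|\nabla u_{n}|^{p}\,dx=1,\qquad \int_{B_{\varrho}}|\nabla u_{n}|^{p}\,dx>(C_{0}\varrho)^{2}.
\]

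First I would extract a compact subsequence. Since $\Sigma_{n}$ is Hausdorff-close to the full diameter $P$, for $n$ large $\Sigma_{n}\cap\partial B_{s}\neq\emptyset$ for every $s\in[1/2,1]$, so Corollary~\ref{cor: 2.11} applied on $B_{1}$ provides a uniform bound $\|u_{n}\|_{L^{p}(B_{1})}\le C\|\nabla u_{n}\|_{L^{p}(B_{1})}=C$, hence $(u_{n})$ is bounded in $W^{1,p}(B_{1})$. Up to a subsequence $u_{n}\rightharpoonup u$ weakly in $W^{1,p}(B_{1})$ and strongly in $L^{p}(B_{1})$ by Rellich. Using the Hausdorff convergence $\Sigma_{n}\to P$ together with the $p$-quasicontinuity and the stability of $W^{1,p}_{0}$ under Mosco convergence (see Remark~\ref{rem: 2.14}), one checks that $u=0$ $p$-q.e.\ on $P$.

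Next I would identify the limit equation. For any compact $K\Subset B_{1}\setminus P$ and $n$ large, $\Sigma_{n}\cap K=\emptyset$, so $u_{n}$ is a $p$-harmonic function on a neighbourhood of $K$ with $W^{1,p}$ norm bounded uniformly in $n$. The interior $C^{1,\alpha}$ regularity theory of DiBenedetto--Tolksdorf then supplies uniform local $C^{1,\alpha}$ bounds for $u_{n}$ on $K$, and therefore $u_{n}\to u$ in $C^{1}_{\loc}(B_{1}\setminus P)$. In particular $u$ is a weak solution of the $p$-Laplace equation in $B_{1}\setminus P$, vanishing $p$-q.e.\ on $P$. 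Corollary~\ref{cor: 7.6} applied to $u$ yields
\[
\int_{B_{\varrho}}|\nabla u|^{p}\,dx\le C_{0}\varrho^{2}\int_{B_{1}}|\nabla u|^{p}\,dx\le C_{0}\varrho^{2}.
\]

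The main obstacle is upgrading this bound to the $u_{n}$, i.e.\ showing that $|\nabla u_{n}|^{p}$ does not concentrate on $P\cap\overline{B}_{\varrho}$ in the weak-$*$ limit. The plan is to follow the Mosco/$\gamma_{p}$-convergence argument used in the proof of Theorem~\ref{thm: 2.13} (cf.\ \cite{Bucur}): after decomposing $u_{n}$ into its $p$-harmonic extension of $u_{n}|_{\partial B_{1}}$ and a correction vanishing on $\Sigma_{n}$, I would exploit the standard monotonicity inequality
\[
\int_{B_{1}}\bigl(|\nabla u_{n}|^{p-2}\nabla u_{n}-|\nabla u|^{p-2}\nabla u\bigr)\cdot\nabla(u_{n}-u)\,\eta^{p}\,dx\longrightarrow 0,
\]
for a suitable cutoff $\eta$ supported in $B_{3/4}$ with $\eta\equiv 1$ on $B_{2\varrho}$, handling the tube near $P$ via the uniform Lipschitz bound afforded by Lemma~\ref{lem: 7.4} and the $L^{p}$-strong convergence of $u_{n}$ near $P$. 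This gives $\nabla u_{n}\to\nabla u$ strongly in $L^{p}(B_{3/4})$, hence $\int_{B_{\varrho}}|\nabla u_{n}|^{p}\,dx\to\int_{B_{\varrho}}|\nabla u|^{p}\,dx\le C_{0}\varrho^{2}$. Since $C_{0}>2$ we have $C_{0}\varrho^{2}<C_{0}^{2}\varrho^{2}=(C_{0}\varrho)^{2}$, contradicting the standing assumption $\int_{B_{\varrho}}|\nabla u_{n}|^{p}\,dx>(C_{0}\varrho)^{2}$ for large $n$. This closes the argument and yields the desired $\varrho_{0}=\varrho_{0}(\varrho,p)$.
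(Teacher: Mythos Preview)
Your overall architecture (contradiction, rescale to $B_{1}$ with $P=[-1,1]\times\{0\}$, normalise, extract a weak limit, apply Corollary~\ref{cor: 7.6}) matches the paper exactly, and your identification of the limit as $p$-harmonic in $B_{1}\setminus P$ via interior $C^{1,\alpha}$ estimates on compacta away from $P$ is a legitimate alternative to the paper's citation of \cite{Bucur}.

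The gap is in your strong-convergence step. The monotonicity inequality you write down requires $(u_{n}-u)\eta^{p}$ to be an admissible test function, but it is not: $u$ does not vanish $p$-q.e.\ on $\Sigma_{n}$, so $(u_{n}-u)\eta^{p}\notin W^{1,p}_{0}(B_{1}\setminus\Sigma_{n})$ and you cannot insert it into the weak formulation for $u_{n}$; symmetrically $u_{n}$ does not vanish on $P$. The ``decomposition of $u_{n}$ into a $p$-harmonic extension plus a correction'' does not help here because the operator is nonlinear. Your appeal to Lemma~\ref{lem: 7.4} for a ``uniform Lipschitz bound'' is also misplaced: that lemma concerns solutions vanishing on the \emph{line} $P$, i.e.\ the limit $u$, and says nothing about $|\nabla u_{n}|$ in a tube around $\Sigma_{n}$, which is precisely where energy concentration is feared.

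The paper closes this gap by a different, purely variational device. Let $\mu$ be a weak-$*$ limit of $|\nabla v_{n}|^{p}\,dx$ and pick $t_{0}\in(1/2,3/4)$ with $\mu(\partial B_{t_{0}})=0$. By the Mosco convergence $W^{1,p}_{0}(B_{1}\setminus\Sigma_{n})\to W^{1,p}_{0}(B_{1}\setminus P)$ there is a sequence $\tilde v_{n}\in W^{1,p}_{0}(B_{1}\setminus\Sigma_{n})$ with $\tilde v_{n}\to \chi v$ strongly in $W^{1,p}$ (for a cutoff $\chi\equiv1$ on $B_{t_{0}}$). For $\eta_{\delta}\in C^{\infty}_{c}(B_{t_{0}})$ with $\eta_{\delta}=1$ on $B_{t_{0}-\delta}$, the function $w_{n}=\eta_{\delta}\tilde v_{n}+(1-\eta_{\delta})v_{n}$ vanishes $p$-q.e.\ on $\Sigma_{n}$ and agrees with $v_{n}$ outside $B_{t_{0}}$; since $v_{n}$ is the energy minimiser with its own boundary data (Theorem~\ref{thm: 2.2}), $\int_{B_{t_{0}}}|\nabla v_{n}|^{p}\le\int_{B_{t_{0}}}|\nabla w_{n}|^{p}$. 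Expanding the right-hand side and sending $n\to\infty$, then $\delta\to0$ (using $\mu(\partial B_{t_{0}})=0$), then $\varepsilon\to0$, gives $\limsup_{n}\int_{B_{t_{0}}}|\nabla v_{n}|^{p}\le\int_{B_{t_{0}}}|\nabla v|^{p}$ and hence strong convergence of $\nabla v_{n}$ in $L^{p}(B_{t_{0}})$. This is the missing ingredient you need; once you have it, your concluding contradiction goes through verbatim.
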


\begin{proof} Since the $p$-Laplacian is invariant under scalings, rotations and translations, it is not restrictive to assume that $B_{r}(x_{0})=B_{1}$ and $P\cap \overline{B}_{r}(x_{0})=[-1,1]\times \{0\}$. For the sake of contradiction, suppose that  for some $\varrho \in (0,1/2]$ there exist sequences $(\varepsilon_{n})_{n},$ $(\s_{n})_{n}$ and $(u_{n})_{n}$ such that $\varepsilon_{n}\downarrow 0$ as $n \to +\infty$; $\s_{n}$ is closed, $(\s_{n}\cap B_{1})\cup \partial B_{1}$ is connected, $d_{H}(\s_{n}\cap \overline{B}_{1}, [-1,1]\times \{0\})\leq \varepsilon_{n}$ and hence
\begin{equation}
d_{H}({\s}_{n}\cap \overline{B}_{1}, [-1,1]\times \{0\})\to 0 \,\ \text{as}\,\ n\to +\infty; \label{7.9}
\end{equation}
$u_{n}$ is a weak solution in $B_{1}\backslash \s_{n},\,\ u_{n}=0$ $p$-q.e. on $\s_{n}\cap \overline{B}_{1}$ and  
\begin{equation}
\int_{B_{\varrho}}|\nabla u_{n}|^{p}\diff x>(C_{0}\varrho)^{2}\int_{B_{1}}|\nabla u_{n}|^{p}\diff x. \label{7.10} 
\end{equation} 
Thus for any $n$ we can define
\begin{equation}
v_{n}(x)=\frac{u_{n}(x)}{\bigl(\displaystyle\int_{B_{1}}|\nabla u_{n}|^{p}\diff x\bigr)^{\frac{1}{p}}}. \label{7.11}
\end{equation}
Clearly $v_{n}=0$ $p$-q.e. on $\s_{n}\cap \overline{B}_{1}$ and 
\begin{equation}
\int_{B_{1}}|\nabla v_{n}|^{p}\diff x=1. \label{7.12}
\end{equation}
By (\ref{7.9}) and by the fact that $({\s}_{n} \cap B_{1})\cup \partial B_{1}$ is connected, there is a constant $\widetilde{C}>0$ (independent of $n$) such that for any $n$ large enough we have
\begin{equation*}
{\rm Cap}_{p}({\s}_{n}\cap \overline{B}_{1})\geq \widetilde{C}.
\end{equation*}
Then, using the above estimate together with Proposition \ref{prop: 2.7} and with (\ref{7.12}), we conclude that the sequence $(v_{n})_{n}$ is bounded in $W^{1,p}(B_{1})$. Hence, up to a subsequence still denoted by the same index, we have
\begin{align}
v_{n} & \rightharpoonup v \,\ \text{in} \,\ W^{1,p}(B_{1}) \label{7.13} \\ 
v_{n} & \to v \,\ \text{in} \,\ L^{p}(B_{1}), \label{7.14}
\end{align}
for some $v \in W^{1,p}(B_{1})$.

Let us now show that $v=0$ $p$-q.e. on $[-1,1]\times \{0\}$. For any $t\in (0,1)$ we fix $\psi \in C^{1}_{0}(B_{1}),\,\ \psi=1$ on $\overline{B}_{t}$ and $0\leq \psi \leq 1$. Since $(\s_{n}\cap B_{1})\cup \partial B_{1}$ is connected for all $n \in \mathbb{N}$ and $d_{H}(\s_{n}\cap \overline{B}_{1}, [-1,1]\times\{0\}) \to 0$, as $n\to +\infty$, it follows (see \cite{Bucur}) that the sequence of Sobolev spaces $W^{1,p}_{0}(B_{1}\backslash \s_{n})$ converges in the sense of Mosco to $W^{1,p}_{0}(B_{1}\backslash ([-1,1]\times~\{0\}))$. Note that by (\ref{7.13}), $v_{n}\psi \rightharpoonup v\psi$\,\ in\,\ $W^{1,p}(\mathbb{R}^{2})$ and using the definition of limit in the sense of Mosco, we deduce that $v\psi \in W^{1,p}_{0}(B_{1}\backslash ([-1,1]\times \{0\}))$. This implies that $v=0$ $p$-q.e. on $[-t,t]\times \{0\}$. As $t\in (0,1)$ was arbitrarily chosen, we deduce that $v=0$ $p$-q.e. on $[-1,1]\times \{0\}$.

We claim that  $v$ is a weak solution of the $p$-Laplace equation in $B_1\setminus ([-1,1]\times \{0\})$. Notice that, in contrary to the linear case, it is not so clear how  to pass to the limit in the weak formulation using only the weak convergence of $\nabla v_n$ to $\nabla v$ in $L^p(B_{1})$. But one can argue exactly as in  the proof of \cite[Proposition 3.7]{Bucur} to get that $|\nabla v_n|^{p-2}\nabla v_n$ weakly converges to $|\nabla v|^{p-2}\nabla v$ in $L^{p'}(B_{1})$, and this is enough to pass to the limit in the weak formulation. We refer to \cite{Bucur} for further details.

We now  want to prove the strong convergence of $\nabla v_n$ to $\nabla v$ in $L^p(B_{1/2})$. Since for all $n$ we have that $\int_{B_{1}}|\nabla v_{n}|^p \diff x=1$, we may assume that the sequence $|\nabla v_n|^p \diff x$ of probability measures over $B_{1}$  weakly* converges (in the duality with $C_{0}(B_{1})$) to some finite Borel measure $\mu$ over $B_1$. Then we select some $t_{0}\in (1/2,3/4)$  such that $\mu(\partial B_{t_{0}})=0$. Such $t_{0}$ exists, since otherwise $\mu(\partial B_{t})>0$ for all $t\in (1/2, 3/4)$ and therefore we can find a positive integer number $j$ and an uncountable set of indices $A \subset (1/2, 3/4)$ such that for all $t \in A$ we have that $\mu(\partial B_{t})>1/j$ that leads to a contradiction with the fact that $\mu(B_{1})<+\infty$.

From the weak convergence of $\nabla v_n$ in $L^p$ we only need to prove that  $\|\nabla v_{n}\|_{L^{p}(B_{t_{0}};\mathbb{R}^{2})}$ tends to $\|\nabla v\|_{L^{p}(B_{t_{0}};\mathbb{R}^{2})}$. We already have, still by weak convergence,
$$\int_{B_{t_{0}}}|\nabla v|^{p}\diff x \leq \liminf_{n\to +\infty} \int_{B_{t_{0}}}|\nabla v_n|^{p}\diff x,$$
thus it remains to prove the reverse inequality, with a limsup. For this purpose we shall use the minimality of $v_n$.

Let $\chi$ be smooth cut-off function equal to 1 on $B_{t_0}$ and zero outside of $B_{3/4}$, and consider the function $\chi v \in W^{1,p}_{0}(B_{1}\backslash ([-1,1]\times \{0\}))$. By the definition of convergence in the sense of Mosco, it follows that there is a sequence $(\tilde{v}_{n}) \subset W^{1,p}_{0}(B_{1}\backslash \s_{n})$ converging to $\chi v$ strongly in $W^{1,p}(B_{1})$.

Now, fix an arbitrary $\delta \in (0,t_0-1/2)$, and let $\eta_\delta \in C^\infty_c(B_{t_0})$ be smooth cut-off function satisfying
$$\eta_\delta = 1  \text{ on } B_{t_0-\delta}, \quad \quad |\nabla \eta_\delta|\leq \frac{C}{\delta}.$$

Then we define

$$w_{n}=  \eta_\delta \tilde{v}_{n} +(1-\eta_\delta)v_{n}.$$

In particular, $w_n =0$ $p$-q.e. on $\Sigma_n$ and $w_n=v_n$ outside $B_{t_0}$. By the minimality of $v_n$ (see Theorem~\ref{thm: 2.2}) we infer
\begin{eqnarray}
\int_{B_{t_0}}|\nabla v_n|^p \, \diff x \leq \int_{B_{t_0}} |\nabla w_n|^p \,\diff x. \label{ttt}
\end{eqnarray}

Recalling that for any $\varepsilon>0$ there is a constant $c_{\varepsilon}>0$ such that for all nonnegative real numbers $a,b,$
\[
(a+b)^{p} \leq c_{\varepsilon} a^{p} + (1+\varepsilon)b^{p},
\]
computing $\nabla w_{n}$ and using \eqref{ttt} we obtain the following chain of estimates
\begin{align*}
\int_{B_{t_0}}|\nabla v_{n}|^{p}\diff x & \leq \int_{B_{t_0}} |\eta_\delta \nabla \tilde{v}_{n}+ (1-\eta_\delta)\nabla v_{n} +\nabla \eta_\delta (\tilde{v}_{n}-v_{n})|^{p} \diff x\\
& \leq c(\varepsilon)\int_{B_{t_0}}|\nabla \eta_\delta|^{p}|\tilde{v}_{n}-v_{n}|^{p}\diff x \\
& \qquad \qquad \qquad \qquad + (1+\varepsilon) \int_{B_{t_0}}|\eta_\delta \nabla \tilde{v}_{n} + (1-\eta_\delta) \nabla v_{n}|^{p}\diff x\\
& \leq c(\varepsilon,\delta)\int_{B_{t_0}}|\tilde{v}_{n}-v_{n}|^{p}\diff x + (1+\varepsilon)\int_{B_{t_0}}(1- \eta_\delta)|\nabla v_{n}|^{p}\diff x\\ 
& \qquad  \qquad \qquad \qquad + (1+\varepsilon)\int_{B_{t_0}}\eta_\delta |\nabla \tilde{v}_{n}|^{p}\diff x.
\end{align*}
Notice that since $|\nabla v_{n}|^{p}\diff x$ weakly* converges to $\mu$ (in the duality with $C_{0}(B_{1})$) and since $\mu(\partial B_{t_{0}})=0$, we obtain that $\int_{B_{t_{0}}}|\nabla v_{n}|^{p}\diff x$ tends to $\mu(B_{t_{0}})$ as $n\to +\infty$ (it is easy to see by taking sequences $(g_{m})_{m},\, (h_{m})_{m} \subset C_{0}(B_{1})$ such that $g_{m}\downarrow \mathds{1}_{\overline{B}_{t_{0}}}$, $h_{m} \uparrow \mathds{1}_{B_{t_{0}}}$ and by using the definition of the weak* convergence of measures).
Passing to the limsup, from the strong convergence in $L^p(B_{t_0})$ of both $v_n$ and $\tilde v_n$ to $v$  we get 
$$\int_{B_{t_0}}|\tilde{v}_{n}-v_{n}|^{p}\diff x \to 0,$$
thus
$$\limsup_{n\to +\infty} \int_{B_{t_0}}|\nabla v_{n}|^{p}\diff x \leq (1+\varepsilon)\mu(\overline{B}_{t_0}\setminus B_{t_0-\delta}) +(1+\varepsilon) \int_{B_{t_0}}   |\nabla v|^p \diff x.$$
Letting now $\delta$ tend to $0+$ and using the fact that $\mu(\partial B_{t_0})=0$ we get
$$\limsup_{n\to + \infty} \int_{B_{t_0}}|\nabla v_{n}|^{p}\diff x \leq (1+\varepsilon) \int_{B_{t_0}}   |\nabla v|^p \diff x,$$
and we finally conclude by letting $\varepsilon$ tend to $0+$ to get
$$\limsup_{n\to + \infty} \int_{B_{t_0}}|\nabla v_{n}|^{p}\diff x \leq   \int_{B_{t_0}}   |\nabla v|^p \diff x,$$
which proves the strong convergence of $\nabla v_n$ to $\nabla v$ in $L^p(B_{t_0})$.

Using (\ref{7.10}) and (\ref{7.11}) and passing to the limit we therefore arrive at
\begin{equation}
\int_{B_{\varrho}}|\nabla v|^{p} \diff x \geq (C_{0}\varrho)^{2}. \label{7.23}
\end{equation}
On the other hand, by Corollary~\ref{cor: 7.6} applied with $u=v$ and by (\ref{7.12}) and (\ref{7.13}) we get
\[
\int_{B_{\varrho}}|\nabla v|^{p}\diff x \leq C_{0}\varrho^{2},
\]
which leads to a contradiction with (\ref{7.23}), since $\varrho>0$ and $C_{0}>2$, concluding the proof. 
\end{proof}
Now we would like to treat the second member $f$. For that purpose we shall use the following lemma  (see \cite[Lemma 2.2]{ABC}), which will allow us to control the difference between the potential $u_{\Sigma}$ and its Dirichlet replacement on a ball with a crack. 
\begin{lemma}[\cite{ABC}] \label{lem: 7.10} Let $U$ be an open set in $\mathbb{R}^2, N\geq 2$, and let $u_{1}, u_{2} \in W^{1,p}(U)$. If $2\leq p<+\infty$, then:
\begin{equation}
\int_{U}|\nabla u_{1}-\nabla u_{2}|^{p}\diff x \leq c_{0}\int_{U}\langle |\nabla u_{1}|^{p-2}\nabla u_{1}-|\nabla u_{2}|^{p-2}\nabla u_{2}, \nabla u_{1}-\nabla u_{2}\rangle \diff x, \label{7.25}
\end{equation}
where $c_{0}$ depends only on $p$.\\ If $1<p< 2$, then:
\begin{equation} \label{7.26}
\begin{split}
\bigl(\int_{U}|\nabla u_{1}-\nabla u_{2}|^{p}\diff x\bigr)^{\frac{2}{p}} & \leq K(u_{1},u_{2}) \int_{U}\langle |\nabla u_{1}|^{p-2}\nabla u_{1}-|\nabla u_{2}|^{p-2}\nabla u_{2}, \nabla u_{1}-\nabla u_{2}\rangle\diff x,
\end{split}
\end{equation}
where $K(u_{1},u_{2})$ stands for:
\begin{align*}
K(u_{1},u_{2})=2\bigl(\int_{U}|\nabla u_{1}|^{p}\diff x + \int_{U}|\nabla u_{2}|^{p}\diff x\bigr)^{\frac{2-p}{p}}.
\end{align*}
\end{lemma}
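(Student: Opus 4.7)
The plan is to reduce both inequalities to well-known pointwise vector inequalities for the map $a \mapsto |a|^{p-2}a$ and then integrate, using H\"{o}lder's inequality for the subquadratic case. Writing $V(a)=|a|^{p-2}a$, the key pointwise facts (valid for all $a,b\in \mathbb{R}^{2}$ with $(a,b)\neq (0,0)$, and with constants depending only on $p$) are
\begin{equation*}
\langle V(a)-V(b), a-b\rangle \geq c_{p}|a-b|^{p} \quad \text{if } p\geq 2,
\end{equation*}
\begin{equation*}
\langle V(a)-V(b), a-b\rangle \geq c_{p} (|a|+|b|)^{p-2}|a-b|^{2} \quad \text{if } 1<p<2.
\end{equation*}
These are classical and follow from the strict monotonicity of $V$; one can prove them by writing $V(a)-V(b)=\int_{0}^{1} DV(b+t(a-b))(a-b)\,\diff t$ and estimating the eigenvalues of $DV$, which are of order $|z|^{p-2}$. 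I would first record these two facts and then apply them pointwise with $a=\nabla u_{1}(x)$, $b=\nabla u_{2}(x)$.

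For $p\geq 2$, inequality \eqref{7.25} follows directly by integrating the first pointwise inequality over $U$, with $c_{0}=1/c_{p}$. This is a one-line argument once the pointwise bound is in hand.

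For $1<p<2$, the second pointwise inequality gives
\begin{equation*}
|\nabla u_{1}-\nabla u_{2}|^{2} \leq c_{p}^{-1}(|\nabla u_{1}|+|\nabla u_{2}|)^{2-p}\langle V(\nabla u_{1})-V(\nabla u_{2}), \nabla u_{1}-\nabla u_{2}\rangle,
\end{equation*}
and I would then use the identity $|\nabla u_{1}-\nabla u_{2}|^{p}=|\nabla u_{1}-\nabla u_{2}|^{p}\cdot (|\nabla u_{1}|+|\nabla u_{2}|)^{(2-p)p/2}\cdot (|\nabla u_{1}|+|\nabla u_{2}|)^{-(2-p)p/2}$ together with H\"{o}lder's inequality with conjugate exponents $2/p$ and $2/(2-p)$ to write
\begin{equation*}
\int_{U}|\nabla u_{1}-\nabla u_{2}|^{p}\diff x \leq \Bigl(\int_{U}\frac{|\nabla u_{1}-\nabla u_{2}|^{2}}{(|\nabla u_{1}|+|\nabla u_{2}|)^{2-p}}\diff x\Bigr)^{p/2}\Bigl(\int_{U}(|\nabla u_{1}|+|\nabla u_{2}|)^{p}\diff x\Bigr)^{(2-p)/2}.
\end{equation*}
Raising to the power $2/p$ and inserting the pointwise estimate above into the first factor yields \eqref{7.26}, after bounding $(|\nabla u_{1}|+|\nabla u_{2}|)^{p}$ by $2^{p-1}(|\nabla u_{1}|^{p}+|\nabla u_{2}|^{p})$ to produce the stated form of $K(u_{1},u_{2})$.

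The only genuinely delicate point is the derivation of the pointwise inequalities themselves, especially the subquadratic one, where some care is needed because $DV$ is singular at the origin; the standard trick is to regularize by $(\varepsilon^{2}+|z|^{2})^{(p-2)/2}$ and pass to the limit. Since the paper cites \cite{ABC}, I would simply quote these pointwise bounds and present the H\"{o}lder argument above; no other ingredient is needed.
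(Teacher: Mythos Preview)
The paper does not prove this lemma at all; it simply quotes it from \cite{ABC} (Lemma~2.2) and uses it as a black box. Your argument is the standard proof and is correct: the pointwise monotonicity inequalities for $a\mapsto|a|^{p-2}a$ together with the H\"older splitting you wrote down are exactly how this is done. The only cosmetic mismatch is that your constant in the subquadratic case comes out as $c_{p}^{-1}\,2^{(p-1)(2-p)/p}$ rather than the exact factor~$2$ appearing in $K(u_{1},u_{2})$; since every subsequent use of the lemma in the paper only needs a constant depending on $p$, this is harmless. You might also remark that on the set $\{|\nabla u_{1}|+|\nabla u_{2}|=0\}$ the integrand in your weighted integral is interpreted as zero, which is automatic since $\nabla u_{1}-\nabla u_{2}$ vanishes there as well.
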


Now we can control  the difference between a weak solution of the $p$-Poisson equation and its Dirichlet replacement on a ball with a crack.

\begin{lemma} \label{lem: 7.11}  Let $p\in (1, +\infty)$ and $f \in L^{q}(B_{r_{1}}(x_{0}))$ with $q> q_{0}$, where $q_{0}$ is defined in~(\ref{(0.1)}), and let $\s$ be a closed arcwise connected set in $\mathbb{R}^{2}$ and $0<2 r_{0}\leq r_{1}\leq 1$ satisfy
\begin{equation*}
\s \cap B_{r_{0}}(x_{0})\neq \emptyset,\,\ \s \backslash B_{r_{1}}(x_{0}) \neq \emptyset \,\  \text{and} \,\ B_{r_{1}}(x_{0})\backslash \Sigma \neq \emptyset. 
\end{equation*}
Let $u \in W^{1,p}(B_{r_{1}}(x_{0})),$ $u=0$ $p$-q.e. on $\s\cap \overline{B}_{r_{1}}(x_{0})$ be the solution of the $p$-Poisson equation $-\Delta_{p}v = f \,\ \text{in}\,\ B_{r_{1}}(x_{0})\backslash \s$
in the weak sense, which means that 
\begin{equation}
\int_{B_{r_{1}}(x_{0})}|\nabla u|^{p-2}\nabla u \nabla \varphi \diff x = \int_{B_{r_{1}}(x_{0})} f\varphi \diff x \,\ \text{for all}\,\ \varphi \in W^{1,p}_{0}(B_{r_{1}}(x_{0})\backslash \s). \label{wf p-Poisson}
\end{equation}
Let $w \in W^{1,p}(B_{r_{1}}(x_{0})),$ $w=0$ $p$-q.e. on $\s\cap \overline{B}_{r_{1}}(x_{0})$ be the solution of the following $p$-Laplace equation
\begin{equation*}
\begin{cases}
-\Delta_{p} v &=\,\  0  \,\ \text{in}\,\ B_{r_{1}}(x_{0})\backslash \s \\
\qquad v & =\,\ u \ \text{on}\,\ \partial B_{r_{1}}(x_{0})\cup (\s\cap B_{r_{1}}(x_{0})),
\end{cases}
\end{equation*}
in the weak sense, which means that $w - u \in W^{1,p}_{0}(B_{r_{1}}(x_{0})\backslash \s)$ and
\begin{equation}
\int_{B_{r_{1}}(x_{0})}|\nabla w|^{p-2}\nabla w \nabla \varphi \diff x = 0 \,\ \text{for all}\,\ \varphi \in W^{1,p}_{0}(B_{r_{1}}(x_{0})\backslash \s). \label{wf p-Laplace}
\end{equation}
If $2\leq p<+\infty$, then:
\begin{equation}
\int_{B_{r_{1}}(x_{0})}|\nabla u-\nabla w|^{p}\diff x \leq Cr^{2+p^{\prime}-\frac{2p^{\prime}}{q}}_{1}, \label{Estimate 4.27}
\end{equation}
where $C=C(p, q_{0}, q, \|f\|_{q})>0$.\\
If $1<p <2$, then:
\begin{equation}
\int_{B_{r_{1}}(x_{0})}|\nabla u- \nabla w|^{p}\diff x \leq C (K(u, u))^{p} (r^{p-1}_{1})^{2+p^{\prime}-\frac{2p^{\prime}}{q}}, \label{7.31}
\end{equation}
where $C=C(p, q_{0}, q, \|f\|_{q})>0$ and $K(\cdot, \cdot)$ as in Lemma~\ref{lem: 7.10} with $U=B_{r_{1}}(x_{0})$.

\end{lemma}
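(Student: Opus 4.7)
The plan is to compare $u$ and $w$ via the standard duality trick: the function $u - w$ lies in $W^{1,p}_{0}(B_{r_{1}}(x_{0})\backslash \s)$ and is therefore an admissible test function for both (\ref{wf p-Poisson}) and (\ref{wf p-Laplace}). Subtracting the two weak formulations with $\varphi = u - w$ would yield
\[
I := \int_{B_{r_{1}}(x_{0})}\langle |\nabla u|^{p-2}\nabla u-|\nabla w|^{p-2}\nabla w,\nabla u - \nabla w\rangle \diff x = \int_{B_{r_{1}}(x_{0})} f(u-w)\diff x.
\]
The monotonicity inequalities of Lemma~\ref{lem: 7.10} would then convert the left-hand side into the quantity we want to control: for $p \geq 2$ into $c_{0}^{-1}\|\nabla u - \nabla w\|_{L^{p}}^{p}$, and for $1 < p < 2$ into $K(u,w)^{-1}\|\nabla u - \nabla w\|_{L^{p}}^{2}$.

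The heart of the argument will be to estimate the right-hand side. First I would observe that $u-w$ vanishes $p$-q.e. on $\Sigma \cap \overline{B}_{r_{1}}(x_{0})$ and, by the arc-connectedness of $\s$ together with $\s \cap B_{r_{0}}(x_{0}) \neq \emptyset$ and $\s \setminus B_{r_{1}}(x_{0}) \neq \emptyset$, the set $\s$ meets $\partial B_{s}(x_{0})$ for every $s \in [r_{0}, r_{1}]$. Corollary~\ref{cor: 2.11}, applied after the rescaling $y = (x - x_{0})/r_{1}$, then provides a Poincar\'e inequality on $B_{r_{1}}(x_{0})$ for $u-w$, and combining it with the Sobolev embedding exactly as in the derivation of (\ref{(E10)})--(\ref{(E20)}) in the proof of Lemma~\ref{lem: 7.2} one obtains
\[
\|u - w\|_{L^{q_{0}^{\prime}}(B_{r_{1}}(x_{0}))} \leq C\, r_{1}^{\beta}\,\|\nabla(u-w)\|_{L^{p}(B_{r_{1}}(x_{0}))},
\]
with $\beta$ as in (\ref{(E20)}). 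H\"older's inequality applied to $\int f(u-w)\diff x$, bounded by $|B_{r_{1}}|^{1/q_{0}-1/q}\|f\|_{q}\|u-w\|_{L^{q_{0}^{\prime}}}$, together with the above would then give
\[
I \leq C\, r_{1}^{2(1/q_{0}-1/q)+\beta}\|\nabla(u-w)\|_{L^{p}} = C\, r_{1}^{3 - 2/p - 2/q}\,\|\nabla(u-w)\|_{L^{p}}.
\]

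Finally I would combine this with Lemma~\ref{lem: 7.10}. For $p \geq 2$ it gives $\|\nabla(u-w)\|_{L^{p}}^{p-1} \leq C\, r_{1}^{3 - 2/p - 2/q}$, and the elementary identity $p(3 - 2/p - 2/q)/(p-1) = p^{\prime}(3 - 2/p - 2/q) = 2 + p^{\prime} - 2p^{\prime}/q$ yields the claimed bound (\ref{Estimate 4.27}). For $1 < p < 2$ one instead arrives at $\|\nabla(u-w)\|_{L^{p}} \leq C\, K(u,w)\, r_{1}^{3 - 2/p - 2/q}$; since $w$ is by Theorem~\ref{thm: 2.2} the $p$-Dirichlet minimizer with boundary data $u$, one has $\|\nabla w\|_{L^{p}} \leq \|\nabla u\|_{L^{p}}$, whence $K(u,w) \leq K(u,u)$, and raising to the $p$-th power while using the identity $p(3 - 2/p - 2/q) = (p-1)(2 + p^{\prime} - 2p^{\prime}/q)$ yields (\ref{7.31}). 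The main technical point, and the one demanding the most care, is to keep the scaling in the Poincar\'e--Sobolev step consistent so that the exponents of $r_{1}$ land on the desired values; this is also precisely the step where the strict hypothesis $q > q_{0}$ becomes indispensable, since only for $q > q_{0}$ is the resulting power of $r_{1}$ strictly positive.
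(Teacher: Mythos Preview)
Your proposal is correct and follows essentially the same route as the paper's own proof: test both weak formulations with $u-w$, subtract, invoke the monotonicity inequalities of Lemma~\ref{lem: 7.10}, and bound $\int f(u-w)$ via H\"older together with the Poincar\'e--Sobolev estimate coming from Corollary~\ref{cor: 2.11} (just as in (\ref{(E10)})--(\ref{(E20)})), then close with the $K(u,w)\leq K(u,u)$ observation in the case $1<p<2$. The only cosmetic difference is that Corollary~\ref{cor: 2.11} is already stated at scale $r$, so no explicit rescaling is needed.
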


\begin{proof} Every ball in this proof is centered at $x_{0}$. For convenience, let us define $z= u- w$. Since $z=0$ $p$-q.e. on $\Sigma \cap \overline{B}_{r_{1}}$, by Corollary~\ref{cor: 2.11} and the fact that $r_{1}\leq 1$, there is a constant $C=C(p)>0$ such that
\begin{equation}
\|z\|_{W^{1,p}(B_{r_{1}})}\leq C \|\nabla z\|_{L^{p}(B_{r_{1}})}. \label {P}
\end{equation}
Then, using the Sobolev embeddings (see \cite[Theorem 7.26]{PDE}) together with (\ref{P}), we deduce that there is a constant $\widetilde{C}=\widetilde{C}(p, q_{0})>0$ such that
\begin{equation} \label{S}
\|z\|_{L^{q^{\prime}_{0}}(B_{r_{1}})} \leq \widetilde{C} r^{\alpha}_{1}\|\nabla z\|_{L^{p}(B_{r_{1}})},
\end{equation}
where
\begin{equation} \label{Ex S}
\alpha=1-\frac{2}{p} \,\  \text{if} \,\ 2<p<+\infty , \,\ \,\ \alpha=\frac{2}{q^{\prime}_{0}} \,\ \text{if} \,\ p=2, \,\ \,\ \alpha=0 \,\ \text{if} \,\  1<p<2,
\end{equation}
in particular, in the case $2<p<+\infty$ we have used that $z(\xi)=0$ for some $\xi \in \s \cap B_{r_{1}}$ yielding the following: for all $x \in B_{r_{1}}$ one has $|z(x)|=|z(x)-z(\xi)|\leq C_{0}(2r_{1})^{1-\frac{2}{p}}\|z\|_{W^{1,p}(B_{r_{1}})}$ for some $C_{0}=C_{0}(p)>0$. Let us consider the next two cases. \\
\text{Case 1:} $2\leq p<+\infty$. Using (\ref{7.25}) and the fact that $z$ is a test function for (\ref{wf p-Poisson}) and (\ref{wf p-Laplace}), we get 
\begin{align*}
\int_{B_{r_{1}}}|\nabla z|^{p}\diff x & \leq c_{0}\int_{B_{r_{1}}}\langle|\nabla u|^{p-2}\nabla u -|\nabla w|^{p-2}\nabla w, \nabla z \rangle\diff x \\
& = c_{0} \int_{B_{r_{1}}}f z \diff x
\end{align*}
where $c_{0}=c_{0}(p)>0$. Applying H\"older's inequality to the right-hand side of the latter formula and using (\ref{S}), we obtain
\begin{align*}
\int_{B_{r_{1}}}|\nabla z|^{p}\diff x &\leq c_{0} \|f\|_{L^{q_{0}}(B_{r_{1}})}\|z\|_{L^{q^{\prime}_{0}}(B_{r_{1}})} \leq c_{0}|B_{r_{1}}|^{\frac{1}{q_{0}}-\frac{1}{q}}\|f\|_{L^{q}(B_{r_{1}})}\|z\|_{L^{q^{\prime}_{0}}(B_{r_{1}})}\\
& \leq C r^{2(\frac{1}{q_{0}}-\frac{1}{q})+\alpha}_{1} \Bigl(\int_{B_{r_{1}}}|\nabla z|^{p}\diff x\Bigr)^{\frac{1}{p}}
\end{align*}
for some $C=(p, q_{0}, q, \|f\|_{q})>0$. Therefore,
\begin{align*}
\int_{B_{r_{1}}}|\nabla z|^{p}\diff x & \leq C^{p^{\prime}}r^{2p^{\prime}(\frac{1}{q_{0}}-\frac{1}{q})+p^{\prime}\alpha}_{1}
\end{align*}
and carefully calculating $(2p^{\prime}(\frac{1}{q_{0}}-\frac{1}{q})+p^{\prime}\alpha)$ where $\alpha$ is defined in (\ref{Ex S}), one gets (\ref{Estimate 4.27}). \\
\text{Case 2:} Let $1<p< 2$. Using (\ref{7.26}), and the fact that $z$ is a test function for (\ref{wf p-Poisson}) and (\ref{wf p-Laplace}), we get
\begin{align*}
\bigl(\int_{B_{r_{1}}}|\nabla z|^{p}\diff x\bigr)^{\frac{2}{p}} & \leq K(u, w) \int_{B_{r_{1}}} \langle |\nabla u|^{p-2}\nabla u-|\nabla w|^{p-2}\nabla w, \nabla z \rangle \diff x\\
& = K(u,w)\int_{B_{r_{1}}}f z \diff x.
\end{align*}
Next, by using H\"older's inequality and then (\ref{S}), we obtain
\begin{equation*}
\begin{split}
\bigl(\int_{B_{r_{1}}}|\nabla z|^{p}\diff x\bigr)^{\frac{2}{p}} & \leq K(u, w) \|f\|_{L^{q_{0}}(B_{r_{1}})} \|z\|_{L^{q^{\prime}_{0}}(B_{r_{1}})}\\&  \leq K(u,w)|B_{r_{1}}|^{\frac{1}{q_{0}}-\frac{1}{q}}\|f\|_{L^{q}(B_{r_{1}})}\|z\|_{L^{q^{\prime}_{0}}(B_{r_{1}})}\\
& \leq C  K(u, w)r^{\frac{2}{q_{0}}-\frac{2}{q}}_{1}\bigl(\int_{B_{r_{1}}}|\nabla z|^{p}\diff x\bigr)^{\frac{1}{p}}\\
& \leq C K(u, u) r^{\frac{2}{q_{0}}-\frac{2}{q}}_{1} \bigr(\int_{B_{r_{1}}}|\nabla z|^{p}\diff x\bigr)^{\frac{1}{p}},
\end{split}
\end{equation*}
for some $C=C(p, q_{0}, q, \|f\|_{q})>0$, where the last estimate comes from the fact that $w$ minimizes the energy $\int_{B_{r_{1}}}|\nabla v|^{p}\diff x$ among all $v$ satisfying $v-u \in W^{1,p}_{0}(B_{r_{1}}\backslash \Sigma)$ (see Theorem~{\ref{thm: 2.2}}) and $u$ is a competitor for $w$. Therefore,
\begin{align*}
\int_{B_{r_{1}}}|\nabla z|^{p}\diff x &\leq C^{p} (K(u,u))^{p} r^{\frac{2p}{q_{0}}-\frac{2p}{q}}_{1}\\
&= C^{p} (K(u,u))^{p} r_{1}^{3p-2-\frac{2p}{q}}=C^{p} (K(u,u))^{p} (r_{1}^{p-1})^{2+p^{\prime}-\frac{2p^{\prime}}{q}}
\end{align*}
{that yields (\ref{7.31}).}
\end{proof}

Gathering together Lemma~\ref{lem: 7.7} and Lemma~\ref{lem: 7.11} we arrive at the following decay estimate for $u_{\Sigma}$. Notice that in the following statement the definition of $\alpha(q)$ also depends on $p$, but we decided to not mention it explicitly to lighten the notation.

\begin{lemma} \label{lem: 7.11P} Let $p\in (1, +\infty)$ and $f \in L^{q}(\Omega)$ with $q> q_{0}$, where $q_{0}$ is defined in (\ref{(0.1)}). Then we can find $a\in (0,1/2)$, $\varepsilon_0 \in (0,a)$  and $C=C(p,q_{0},q,\|f\|_q, |\Omega|)>0$ such that the following holds.  Let $\s\subset \overline{\Omega}$ be a closed arcwise connected set. Let $0<2 r_{0}\leq r_{1}\leq 1$ satisfy $B_{r_{1}}(x_{0})\subset \Omega$,
	\begin{equation*}
	\s \cap B_{r_{0}}(x_{0})\neq \emptyset\,\ \text{and} \,\ \s \backslash B_{r_{1}}(x_{0}) \neq \emptyset, 
	\end{equation*}
	and  assume that there is an affine line $P$, passing through $x_{0}$, such that 
	\begin{eqnarray}
	d_{H}(\s\cap \overline{B}_{r_1}(x_{0}), P\cap \overline{B}_{r_1}(x_{0})) \leq \varepsilon_{0}r_1. \label{distance0}
	\end{eqnarray}
	Then
	\begin{eqnarray}
	\frac{1}{ar_1}\int_{B_{ar_1}(x_0)} |\nabla u_{\s}|^p \diff x \leq \frac{1}{2} \left( \frac{1}{r_1}\int_{B_{r_1}(x_0)} |\nabla u_{\s}|^p \diff x \right)+ C r_1^{\alpha(q)}
	\end{eqnarray}
	where 
	\begin{eqnarray}
	\alpha(q)=
	\left\{
	\begin{array}{cc}
	1+p^{\prime}-\frac{2p^{\prime}}{q} & \text{ if }  \; 2\leq p<+\infty \\
	3(p-1)-\frac{2p}{q} & \text{ if } \; 1<p <2.
	\end{array}
	\right. \label{defalphap}
	\end{eqnarray}
\end{lemma}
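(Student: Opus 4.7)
The strategy is to compare $u_{\Sigma}$ on the ball $B_{r_{1}}(x_{0})$ with its \emph{$p$-harmonic replacement} $w$, i.e.\ the unique function in $W^{1,p}(B_{r_{1}}(x_{0}))$ solving $-\Delta_{p} w=0$ weakly in $B_{r_{1}}(x_{0})\setminus \Sigma$ with $w-u_{\Sigma}\in W^{1,p}_{0}(B_{r_{1}}(x_{0})\setminus\Sigma)$, and then to apply Lemma~\ref{lem: 7.7} (geometric decay for $p$-harmonic functions vanishing on a near-flat set) to $w$, while controlling the discrepancy $\nabla u_{\Sigma}-\nabla w$ via Lemma~\ref{lem: 7.11}. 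This bypasses the $p$-monotonicity formula, which in the present setting is too weak.

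First, I would fix $a\in (0,1/2)$ small enough that $2^{p-1}(C_{0}a)^{2}\leq a/2$, with $C_{0}$ the constant of Corollary~\ref{cor: 7.6}, and then set $\varepsilon_{0}:=\min\{\varrho_{0}(a),\,a/2\}$, where $\varrho_{0}(a)$ is the flatness threshold supplied by Lemma~\ref{lem: 7.7} for $\varrho=a$. To invoke Lemma~\ref{lem: 7.7} legitimately, I must check that $(\Sigma\cap B_{r_{1}}(x_{0}))\cup \partial B_{r_{1}}(x_{0})$ is connected: this is a direct consequence of the arcwise connectedness of $\Sigma$ together with the hypotheses $\Sigma\cap B_{r_{0}}(x_{0})\not=\emptyset$ and $\Sigma\setminus B_{r_{1}}(x_{0})\not=\emptyset$, since any arc in $\Sigma$ joining an interior point of $B_{r_{0}}(x_{0})$ to a point outside $B_{r_{1}}(x_{0})$ necessarily meets $\partial B_{r_{1}}(x_{0})$.

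Next, $w$ being $p$-harmonic in $B_{r_{1}}(x_{0})\setminus\Sigma$ and vanishing $p$-q.e.\ on $\Sigma\cap \overline{B}_{r_{1}}(x_{0})$, Lemma~\ref{lem: 7.7} under the flatness \eqref{distance0} gives
\begin{equation*}
\int_{B_{ar_{1}}(x_{0})}|\nabla w|^{p}\diff x \leq (C_{0}a)^{2}\int_{B_{r_{1}}(x_{0})}|\nabla w|^{p}\diff x \leq (C_{0}a)^{2}\int_{B_{r_{1}}(x_{0})}|\nabla u_{\Sigma}|^{p}\diff x,
\end{equation*}
the last step exploiting the $p$-energy minimality of $w$ (Theorem~\ref{thm: 2.2}) with $u_{\Sigma}$ as competitor. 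Using the elementary bound $|\nabla u_{\Sigma}|^{p}\leq 2^{p-1}(|\nabla w|^{p}+|\nabla u_{\Sigma}-\nabla w|^{p})$, integrating over $B_{ar_{1}}(x_{0})$, dividing by $ar_{1}$, and invoking the choice of $a$ to absorb $(C_{0}a)^{2}2^{p-1}/a\leq 1/2$, I obtain
\begin{equation*}
\frac{1}{ar_{1}}\int_{B_{ar_{1}}(x_{0})}|\nabla u_{\Sigma}|^{p}\diff x \leq \frac{1}{2 r_{1}}\int_{B_{r_{1}}(x_{0})}|\nabla u_{\Sigma}|^{p}\diff x +\frac{2^{p-1}}{ar_{1}}\int_{B_{r_{1}}(x_{0})}|\nabla u_{\Sigma}-\nabla w|^{p}\diff x.
\end{equation*}

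It remains to show that the error term is at most $Cr_{1}^{\alpha(q)}$ via Lemma~\ref{lem: 7.11}. For $p\geq 2$ this is direct arithmetic: the estimate $Cr_{1}^{2+p'-2p'/q}$ divided by $r_{1}$ yields $r_{1}^{1+p'-2p'/q}=r_{1}^{\alpha(q)}$. For $1<p<2$, Lemma~\ref{lem: 7.11} instead produces $(K(u_{\Sigma},u_{\Sigma}))^{p}r_{1}^{(p-1)(2+p'-2p'/q)}$; a short computation, using $(p-1)p'=p$, gives $(p-1)(2+p'-2p'/q)-1=3(p-1)-2p/q=\alpha(q)$. The step I expect to require the most care is absorbing $(K(u_{\Sigma},u_{\Sigma}))^{p}$ into the universal constant, since $K(u_{\Sigma},u_{\Sigma})$ depends on $\int |\nabla u_{\Sigma}|^{p}\diff x$. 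This is exactly what Proposition~\ref{prop: 2.12} provides: it bounds $\int_{\Omega}|\nabla u_{\Sigma}|^{p}\diff x$ uniformly in $\Sigma$ in terms of $p,q_{0},\|f\|_{q_{0}},|\Omega|$, and via $\|f\|_{q_{0}}\leq |\Omega|^{1/q_{0}-1/q}\|f\|_{q}$ the resulting constant is of the claimed form $C=C(p,q_{0},q,\|f\|_{q},|\Omega|)$, completing the proof.
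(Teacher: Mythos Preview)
Your proof is correct and follows essentially the same approach as the paper: introduce the $p$-harmonic replacement $w$, apply Lemma~\ref{lem: 7.7} to $w$ after choosing $a=2^{-p}C_{0}^{-2}$ and $\varepsilon_{0}=\varrho_{0}(a)$, use the elementary splitting $|\nabla u_{\Sigma}|^{p}\leq 2^{p-1}(|\nabla w|^{p}+|\nabla u_{\Sigma}-\nabla w|^{p})$ together with the minimality of $w$, and control the error via Lemma~\ref{lem: 7.11} and Proposition~\ref{prop: 2.12}. Your explicit verification of the connectedness hypothesis for Lemma~\ref{lem: 7.7} is a point the paper leaves implicit.
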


\begin{proof} Let $w \in W^{1,p}(B_{r_{1}}(x_{0})),\,\ w=0$ $p$-q.e. on $\s\cap \overline{B}_{r_{1}}(x_{0})$ be the Dirichlet replacement of $u_{\s}$, i.e., the solution of the following $p$-Laplace equation
	\begin{equation*}
	\begin{cases}
	-\Delta_{p} u &=\,\  0  \,\ \text{in}\,\ B_{r_{1}}(x_{0})\backslash \s \\
	\qquad u & =\,\ u_{\s} \ \text{on}\,\ \partial B_{r_{1}}(x_{0})\cup (\s\cap B_{r_{1}}(x_{0})),
	\end{cases}
	\end{equation*}
	in the weak sense, which means that $w - u_{\s} \in W^{1,p}_{0}(B_{r_{1}}(x_{0})\backslash \s)$ and
	\begin{equation}
	\int_{B_{r_{1}}(x_{0})}|\nabla w|^{p-2}\nabla w \nabla \varphi \diff x = 0 \,\ \text{for all}\,\ \varphi \in W^{1,p}_{0}(B_{r_{1}}(x_{0})\backslash \s). \label{7.29}
	\end{equation}
	Let $K(\cdot, \cdot)$ be as in Lemma~\ref{lem: 7.10} with $U=B_{r_{1}}(x_{0})$. Using  (\ref{2.4}) and H\"older's inequality, it is easy to see that
			\begin{equation} \label{estimate for K}
			K(u_{\Sigma}, u_{\Sigma})\leq C_{1}
			\end{equation}
    for some $C_{1}=C_{1}(p, q_{0}, q, \|f\|_{q}, |\Omega|)>0$. Then applying Lemma~\ref{lem: 7.11} and using (\ref{estimate for K}), we know that:
	\begin{equation}
	\int_{B_{r_{1}}(x_{0})}|\nabla u_{\s}-\nabla w|^{p}\diff x \leq Cr_1^{1+\alpha(q)}, \label{7.30}
	\end{equation}
	where $C=C(p, q_{0}, q, \|f\|_{q}, |\Omega|)>0$ and $\alpha(q)$ is defined in \eqref{defalphap}. 
	Now let $C_{0}=C_{0}(p)$ be the constant of Corollary~\ref{cor: 7.6}, and let $a=2^{-p}C_{0}^{-2}$. For every $p\in (1,+\infty)$ the constant $a$ is fixed.   We  can apply Lemma~\ref{lem: 7.7} with $r=r_{1}$ and $\varrho=a$ to the function $w$.  We  then obtain some $\varrho_0 \in (0,a)$ which defines our $\varepsilon_0:=\varrho_0$ such that under the condition \eqref{distance0} it holds	
	\begin{align*}
	\frac{1}{a}\int_{B_{ar_1}(x_{0})} |\nabla w|^{p}\diff x \leq C_{0}^{2}a \int_{B_{r_1}(x_{0})} |\nabla w|^{p}\diff x \leq 2^{-p}\int_{B_{r_1}(x_{0})} |\nabla w|^{p}\diff x.
	\end{align*} 
	Hereinafter in this proof, $C$ denotes a positive constant that can depend only on $p,\,q_{0},\,q$, $\|f\|_{q},\, |\Omega|$ and can be different from line to line. Now we use the elementary inequality $(c+d)^p\leq 2^{p-1}(c^p+d^p)$ to write 
	
	\begin{eqnarray}
	\frac{1}{a}\int_{B_{ar_1}(x_0)} |\nabla u_{\s}|^p \diff x &\leq & \frac{2^{p-1}}{a} \int_{B_{ar_1}(x_0)} |\nabla w|^p \diff x + \frac{2^{p-1}}{a} \int_{B_{ar_1}(x_0)} |\nabla u_{\s}-\nabla w|^p \diff x \notag \\
	&\leq & \frac{1}{2}  \int_{B_{r_1}(x_0)} |\nabla w|^p \diff x + \frac{2^{p-1}}{a} \int_{B_{r_1}(x_0)} |\nabla u_{\s}-\nabla w|^p \diff x \notag \\
	&\leq &  \frac{1}{2}  \int_{B_{r_1}(x_0)} |\nabla w|^p \diff x + Cr_1^{1+\alpha(q)} \notag \\
	&\leq & \frac{1}{2}  \int_{B_{r_1}(x_0)} |\nabla u_{\s}|^p \diff x + Cr_1^{1+\alpha(q)}, \notag
	\end{eqnarray}
	where we have used that $w$ minimizes the $p$-energy in $B_{r_1}(x_{0})$ with its own trace and $u_{\s}$ is a competitor. The proof of the lemma follows by dividing the resulting inequality by $r_1$.
\end{proof}

Finally, by iterating the last lemma in a sequence of balls $\{B_{a^{l}r_{1}}(x_{0})\}$, we obtain the following main decay behavior of the $p$-energy under flatness control.

\begin{lemma}\label{Good_decay} Let $p\in (1, +\infty)$, $f \in L^{q}(\Omega)$ with  $q> q_1$, where $q_1$ is defined in (\ref{qrestrict}). Then there exists   $\varepsilon_0 \in (0,1/2)$,  $C=C(p,q_{0},q,\|f\|_q, |\Omega|),\, \overline{r}, \, b>0$ such that the following holds.    Let $\s\subset \overline{\Omega}$ be a closed arcwise connected set. Assume that $0<2r_{0}\leq r_{1}\leq \overline{r}$,  $B_{r_{1}}(x_{0}) \subset \Omega$ and that for all  $r \in [r_{0}, r_{1}]$ there is an affine line $P=P(r)$,  passing through $x_{0}$, such that $d_{H}(\s\cap \overline{B}_{r}(x_{0}), P\cap \overline{B}_{r}(x_{0})) \leq \varepsilon_0 r$. Assume also that $\Sigma \setminus B_{r_1}(x_0)\not = \emptyset$. Then for every $r\in[r_{0}, r_{1}]$,
	\begin{equation}
	\int_{B_{r}(x_{0})}|\nabla u_\Sigma|^{p}\diff x \leq C \bigl(\frac{r}{r_{1}}\big)^{1+b}  \int_{B_{r_{1}}(x_{0})}|\nabla u_\Sigma|^{p}\diff x + Cr^{1+b}. \label{7.24}
	\end{equation}
\end{lemma}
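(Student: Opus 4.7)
The plan is to iterate the one-step decay of Lemma~\ref{lem: 7.11P} along the geometric sequence of radii $r_k := a^k r_1$, $k=0,1,\dots$, where $a \in (0,1/2)$ and $\varepsilon_{0}$ are the constants supplied by that lemma. For each $k$ with $r_{k+1} \geq r_0$, I will apply Lemma~\ref{lem: 7.11P} inside the ball $B_{r_k}(x_0)$: the flatness hypothesis of the present lemma delivers $d_H(\s \cap \overline{B}_{r_k}(x_0), P(r_k)\cap\overline{B}_{r_k}(x_0)) \leq \varepsilon_{0} r_k$ at every admissible scale; flatness at the next scale $r_{k+1}$ (with $P(r_{k+1})$ passing through $x_0$) forces $\s \cap B_{r_{k+1}}(x_0) \neq \emptyset$; and the arcwise connectedness of $\s$, combined with $\s \setminus B_{r_1}(x_0) \neq \emptyset$, forces $\s \setminus B_{r_k}(x_0) \neq \emptyset$, since any arc of $\s$ joining an interior point to a point outside $B_{r_1}(x_0)$ must cross $\partial B_{r_k}(x_0)$. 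Taking $\overline{r}\leq 1$ enforces $r_1 \leq 1$ as required. Lemma~\ref{lem: 7.11P} therefore yields the recursion
\[
E(r_{k+1}) \leq \tfrac{1}{2} E(r_k) + C r_k^{\alpha(q)}, \qquad E(\rho) := \frac{1}{\rho}\int_{B_{\rho}(x_0)}|\nabla u_{\s}|^{p}\diff x,
\]
with $\alpha(q)$ as in \eqref{defalphap} and $C$ having the dependencies claimed in the lemma.

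A direct computation from \eqref{defalphap} shows that $\alpha(q) > 0$ if and only if $q > q_1$, so the integrability assumption is exactly what makes the recursion contractive with a positive power of $r$. I then fix $b \in \bigl(0,\min\{\alpha(q),\ \log 2/\log(1/a)\}\bigr)$, which allows me to compare both $2^{-1}$ and $a^{\alpha(q)}$ with $a^{b}$. Iterating the recursion starting from $k=0$ gives, for every admissible $k$,
\[
E(r_k) \leq 2^{-k} E(r_1) + C \sum_{j=0}^{k-1} 2^{-(k-1-j)} (a^{j} r_1)^{\alpha(q)}.
\]
The bound $2^{-k} \leq (a^{k})^{b} = (r_k/r_1)^{b}$ takes care of the first term; the second term is estimated by a standard geometric-sum argument, splitting into the three sub-cases according to whether $2 a^{\alpha(q)}$ is smaller than, equal to, or larger than $1$. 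In every case the sum is dominated by a constant multiple of $r_k^{b}$, so $E(r_k) \leq C(r_k/r_1)^{b} E(r_1) + C r_k^{b}$.

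To pass from the discrete scale $\{r_k\}$ to an arbitrary $r \in [r_0, r_1]$, I choose $k$ with $r_{k+1} \leq r \leq r_k$ and use the monotonicity of $\rho \mapsto \int_{B_{\rho}(x_0)}|\nabla u_{\s}|^{p}\diff x$ together with the comparability $r_k \asymp r_{k+1} \asymp r$ to transfer the discrete estimate at the price of enlarging $C$. Multiplying the resulting inequality for $E(r)$ by $r$ and rewriting $r \cdot E(r_1) = (r/r_1)\int_{B_{r_1}(x_0)}|\nabla u_{\s}|^{p}\diff x$ produces exactly \eqref{7.24}. The only delicate point is the choice of $b$ together with the small arithmetic check $\alpha(q)>0 \Leftrightarrow q>q_1$; everything else is a routine geometric iteration built on top of the already established one-step decay.
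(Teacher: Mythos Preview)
Your proposal is correct and follows essentially the same approach as the paper: both iterate the one-step decay of Lemma~\ref{lem: 7.11P} along the scales $a^k r_1$, then interpolate to an arbitrary $r\in[r_0,r_1]$. The only cosmetic difference is that the paper packages the summation of error terms into a clean induction via the inequality $\tfrac{1}{2}t^{b}+t^{\alpha(q)}\le (at)^{b}$ (valid for a specific choice of $b$ and $t\le \overline{r}$), whereas you write out the geometric sum explicitly and bound it by cases; both are standard and equivalent.
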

\begin{proof} Let  $a\in (0,1/2)$, $\varepsilon_0 \in (0,a)$  and $C=C(p,q_{0},q,\|f\|_q, |\Omega|)>0$ be the constants given by Lemma~\ref{lem: 7.11P}. Under the assumptions of Lemma~\ref{Good_decay}, we can apply Lemma~\ref{lem: 7.11P} in all the balls $B_{a^lr_1}(x_0)$, $l\in \{0,...,k\}$ with $k$ for which $a^{k+1}r_1<r_0\leq a^{k}r_{1}$. Notice that the definition of $q_1$ and the assumption $q>q_1$ have been made in order to guarantee that $\alpha(q)>0$, where $\alpha(q)$ is defined in \eqref{defalphap}.
			Let us now define 
			$$b= \min\left(\frac{\alpha(q)}{2},  \frac{\ln(3/4)}{\ln(a)} \right)  \;,\quad \bar r =  \left(\frac{1}{4}\right)^{\frac{1}{b}}.$$
			We can easily check that for all $t \in (0, \bar r]$, 
			
			\begin{eqnarray}
			\frac{1}{2}t^{b} + t^{\alpha(q)} \leq (at)^{b},  \label{convex}
			\end{eqnarray}
			because since $0<2 b \leq \alpha(q)$ and $\bar r<1$,
			$$\frac{1}{2}t^{b} + t^{\alpha(q)} \leq \frac{1}{2}t^{b} + \bar r^{b} t^{b} \leq \frac{3}{4}t^{b} \leq (at)^{b}.$$
			
	Let us now define $\Psi(r)=\frac{1}{r}\int_{B_r(x_0)}|\nabla u_{\s}|^p \diff x$
	and  prove by induction  that for all $l\in \{0,...,k\}$, 
	\begin{eqnarray}
	\Psi(a^{l} r_1) &\leq& \frac{1}{2^{l}} \Psi(r_1) + C (a^{l} r_1)^{b}. \label{induction00}
	\end{eqnarray}
Clearly (\ref{induction00}) holds for $l= 0$, assume that (\ref{induction00}) holds for some $l\in \{0,...,k-1\}$. Then applying   Lemma~\ref{lem: 7.11P}, we get 
	$$\Psi(a^{l+1}r_1)\leq \frac{1}{2} \Psi(a^l r_1) + C (a^l r_1)^{\alpha(q)}.$$
	By the induction hypothesis it comes
	$$\Psi(a^{l+1}r_1)\leq \frac{1}{2}\Bigl(\frac{1}{2^{l}}\Psi( r_1)  + C(a^{l}r_{1})^{b}\Bigr)+ C(a^l r_1)^{\alpha(q)},$$
	and thanks to (\ref{convex}), we finally conclude that 
	$$\Psi(a^{l+1}r_1)\leq \frac{1}{2^{l+1}} \Psi( r_1)  +  C(a^{l+1} r_1)^{b},$$
	and \eqref{induction00} is proved. Now let $r\in [r_0,r_1]$ and $l\in \{0,...,k\}$ be such that $a^{l+1}r_1< r\leq a^lr_1$. Then 
	\begin{eqnarray}
	\Psi(r) \leq   \frac{1}{a}\Psi(a^l r_1)& \leq &  \frac{1}{a}\frac{1}{2^l}\Psi(r_1) + \frac{C}{a}(a^{l}r_1)^{b} \notag \\
	&\leq &  \frac{2}{a} (a^{l+1})^{b} \Psi(r_1)  + C' (a^{l+1}r_{1})^{b} \notag \\
	&\leq & C'' \left(\frac{r}{r_1}\right)^{b} \Psi(r_1) + C'' r^{b} \notag
	\end{eqnarray}
	where $C''=C''(a,p,q_{0},q,\|f\|_{q},|\Omega|)>0$. Notice that although $C''$ depends on $a$, however, for every $p\in (1,+\infty)$ we can fix $a$, and thus, we can assume that $C''$ can depend only on $p, q_{0}, q, \|f\|_{q}$ and $|\Omega|$.  This achieves the proof.
\end{proof}

\section{Absence of loops}

\begin{theorem} \label{thm: 8.1} Let $\Omega$ be a bounded open set in $\mathbb{R}^{2}$ and $p \in (1, +\infty)$, and let $f \in L^{q}(\Omega)$  with $q>q_1$, defined in \eqref{qrestrict}. Then every solution $\s$ of the penalized Problem~\ref{problemMain} contains no closed curves (i.e., homeomorphic images of a circumference $S^{1}$), hence $\mathbb{R}^{2}\backslash \s$ is connected.
\end{theorem}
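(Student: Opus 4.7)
My plan is to argue by contradiction. Suppose $\Sigma$ contains a simple closed curve $\gamma$. Since Theorem~\ref{thm: 5.3} gives Ahlfors regularity of $\Sigma$ (so that $\mathcal{H}^{1}(\Sigma)<\infty$ and $\Sigma$ is $1$-rectifiable), $\mathcal{H}^{1}$-a.e.\ point of $\gamma\cap\Omega$ is a tangent point of $\Sigma$ at which $\Sigma$ is non-branching. I would fix such a point $x_{0}$ with $\overline{B}_{r_{1}}(x_{0})\subset\Omega$, for which the tangent line $P$ exists and, after shrinking $r_{1}$ if needed,
\[
d_{H}(\Sigma\cap\overline{B}_{r}(x_{0}),\,P\cap\overline{B}_{r}(x_{0}))\le \varepsilon_{0}r\qquad\text{for every }r\in(0,r_{1}],
\]
where $\varepsilon_{0}$ is the constant from Lemma~\ref{Good_decay}.

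For $r\in(0,r_{1}/2]$ the natural competitor is $\Sigma_{r}:=\Sigma\setminus B_{r}(x_{0})$. Because $\gamma$ is a loop through $x_{0}$ and $x_{0}$ is non-branching, removing the sub-arc $\Sigma\cap B_{r}(x_{0})$ only cuts $\gamma$ into a single arc, without disconnecting the remainder of $\Sigma$: any other piece of $\Sigma$ attached to $\gamma$ outside $B_{r}(x_{0})$ stays linked to that arc. Hence $\Sigma_{r}\in\mathcal{K}(\Omega)$ is admissible, and by the elementary bound~\eqref{A} the length gain satisfies
\[
\mathcal{H}^{1}(\Sigma)-\mathcal{H}^{1}(\Sigma_{r})=\mathcal{H}^{1}(\Sigma\cap B_{r}(x_{0}))\ge r.
\]

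To bound the compliance increase, I would combine Lemma~\ref{lem: 7.1} (applied with $\Sigma^{\prime}=\Sigma_{r}$, $r_{0}=r$, $r_{1}=2r$) with the Poincar\'{e}-type estimate of Lemma~\ref{lem: 7.2}; although $\Sigma_{r}\cap B_{r}(x_{0})=\emptyset$, one still has $\Sigma_{r}\cap\partial B_{s}(x_{0})=\Sigma\cap\partial B_{s}(x_{0})\neq\emptyset$ for every $s\in[r,2r]$, so Corollary~\ref{cor: 2.11} remains available for $u_{\Sigma_{r}}$ in $B_{2r}(x_{0})$. This produces
\[
C_{p}(\Sigma_{r})-C_{p}(\Sigma)=E_{p}(u_{\Sigma})-E_{p}(u_{\Sigma_{r}})\le C\int_{B_{2r}(x_{0})}|\nabla u_{\Sigma_{r}}|^{p}\diff x+Cr^{2+p^{\prime}-2p^{\prime}/q},
\]
and the assumption $q>q_{1}$ forces the exponent $2+p^{\prime}-2p^{\prime}/q$ to exceed~$1$.

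The decisive step, which I expect to be the main technical obstacle, is to upgrade the right-hand side to $o(r)$ by establishing a decay estimate $\int_{B_{2r}(x_{0})}|\nabla u_{\Sigma_{r}}|^{p}\diff x\le Cr^{1+b}$ for some $b>0$. A direct application of Lemma~\ref{Good_decay} to $\Sigma_{r}$ around $x_{0}$ is not allowed, because the hole at $B_{r}(x_{0})$ forces $\beta_{\Sigma_{r}}(x_{0},r)\ge 1/2$. Nevertheless, $\Sigma_{r}\cap\overline{B}_{2r}(x_{0})$ still lies inside the $\varepsilon_{0}r$-tubular neighborhood of $P$, and I would exploit this one-sided flatness by either (a) re-running the Dirichlet-replacement/compactness scheme behind Lemmas~\ref{lem: 7.7}--\ref{Good_decay} in an annular setting, or (b) invoking the $\omega^{\tau}_{\Sigma}$-decay alluded to in the introduction with the auxiliary flat competitor $\widetilde{\Sigma}_{r}:=(\Sigma\setminus B_{r}(x_{0}))\cup[a,b]$, where $\{a,b\}=\Sigma\cap\partial B_{r}(x_{0})$, and then comparing $u_{\Sigma_{r}}$ to $u_{\widetilde{\Sigma}_{r}}$ via a further energy comparison. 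Once such a decay is secured, one obtains for $r$ small enough
\[
\mathcal{F}_{\lambda,p}(\Sigma_{r})-\mathcal{F}_{\lambda,p}(\Sigma)\le Cr^{1+b}+Cr^{2+p^{\prime}-2p^{\prime}/q}-\lambda r<0,
\]
contradicting the minimality of $\Sigma$. Hence $\Sigma$ contains no simple closed curve, and being a closed connected planar set without loops, its complement $\mathbb{R}^{2}\setminus\Sigma$ is connected.
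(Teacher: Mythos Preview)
Your overall strategy matches the paper's, but there are two gaps, one minor and one central.

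\medskip
\textbf{Connectedness of the competitor.} Your claim that $\Sigma_{r}=\Sigma\setminus B_{r}(x_{0})$ stays connected because $x_{0}$ is ``non-branching'' is not rigorous: at a generic tangent point of a $1$-rectifiable set, no such local structure is guaranteed. The paper avoids this by invoking the noncut-point lemma (Lemma~\ref{lem: 8.2}): for $\mathcal{H}^{1}$-a.e.\ $x_{0}\in\Gamma$ there exist relatively open connected sets $D_{n}\subset\Sigma$ with $x_{0}\in D_{n}$, $r_{n}:=\diam D_{n}\to 0$, and $\Sigma_{n}:=\Sigma\setminus D_{n}$ still connected. One removes $D_{n}$, not a ball. (Also, you invoke Theorem~\ref{thm: 5.3}, which assumes a Lipschitz domain not hypothesised here; but $\mathcal{H}^{1}(\Sigma)<\infty$ already follows from $\mathcal{F}_{\lambda,p}(\Sigma)\le\mathcal{F}_{\lambda,p}(\emptyset)$.)

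\medskip
\textbf{The energy decay.} You correctly identify the obstacle---the hole ruins flatness of the competitor at scale $r$---but the fixes you propose (annular compactness, auxiliary segments) are far more elaborate than needed. The paper's resolution is a one-line scale shift: since $D_{n}\subset\overline{B}_{r_{n}}(x_{0})$, for any $r\ge 2r_{n}/\varepsilon_{0}$ one has
\[
d_{H}(\Sigma_{n}\cap\overline{B}_{r}(x_{0}),P\cap\overline{B}_{r}(x_{0}))
\le d_{H}(\Sigma_{n}\cap\overline{B}_{r},\Sigma\cap\overline{B}_{r})+d_{H}(\Sigma\cap\overline{B}_{r},P\cap\overline{B}_{r})
\le r_{n}+\tfrac{\varepsilon_{0}r}{2}\le \varepsilon_{0}r,
\]
so Lemma~\ref{Good_decay} applies to $\Sigma_{n}$ on the whole interval $[2r_{n}/\varepsilon_{0},t]$. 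Evaluating at the left endpoint and using \eqref{2.4} gives
\[
\int_{B_{2r_{n}}(x_{0})}|\nabla u_{\Sigma_{n}}|^{p}\diff x
\le \int_{B_{2r_{n}/\varepsilon_{0}}(x_{0})}|\nabla u_{\Sigma_{n}}|^{p}\diff x
\le C r_{n}^{1+b},
\]
which is exactly the $o(r_{n})$ bound you need. The point is that you never need flatness of the competitor at the scale of the hole---only at scales larger by a fixed factor $2/\varepsilon_{0}$, where the hole becomes invisible in the Hausdorff distance. With this, your final contradiction goes through verbatim.
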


The next lemma which was also used several times earlier in the literature, will be used in the proof of Theorem~\ref{thm: 8.1}.
\begin{lemma} \label{lem: 8.2} Let $\s$ be a closed connected set in $\mathbb{R}^{2}$, containing a simple closed curve $\Gamma$ and such that $\mathcal{H}^{1}(\s)<+\infty$. Then $\mathcal{H}^{1}$-a.e. point $x\in \Gamma$ is such that 
\begin{itemize}
\item ``noncut" : there is a sequence of (relatively) open sets $D_{n}\subset \s$ satisfying
\begin{enumerate}[label=(\roman*)]
\item $x \in D_{n}$ for all sufficiently large $n$;
\item $\s \backslash D_{n}$ are connected for all $n$;
\item $\diam D_{n} \searrow 0$ as $n\to +\infty$;
\item $D_{n}$ are connected for all $n$.
\end{enumerate}
\item ``flatness" : there exists the `` tangent" line $P$ to $\s$ at x in the sense that $x \in P$ and 
\[\frac{d_{H}(\s \cap \overline{B}_{r}(x), P\cap \overline{B}_{r}(x))}{r} \underset{r\to 0+}{\to 0}. \]
\end{itemize}
\end{lemma}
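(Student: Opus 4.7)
The two bulleted conclusions are essentially independent and both rest on the classical structure of closed connected planar sets of finite $\mathcal{H}^1$-measure. My plan is to first invoke rectifiability of $\Sigma$ (which follows from $\Sigma$ closed and connected with $\mathcal{H}^1(\Sigma)<+\infty$; see e.g.\ Falconer's monograph or \cite{David}) and then separately handle the two properties by throwing away, in each case, an $\mathcal{H}^1$-null subset of $\Gamma$.

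\textbf{Flatness.} Since $\Sigma$ is 1-rectifiable, at $\mathcal{H}^1$-a.e.\ $x\in\Sigma$ there exists an approximate tangent line $P\ni x$. The approximate tangency already gives the half of the Hausdorff distance corresponding to $\sup_{y\in \Sigma\cap \overline{B}_r(x)}\dist(y,P)/r\to 0$. For the reverse bound $\sup_{z\in P\cap \overline{B}_r(x)}\dist(z,\Sigma)/r\to 0$ I would use the connectedness of $\Sigma$: if some points of $P\cap \overline{B}_r(x)$ stayed at distance $\geq \eta r$ from $\Sigma$ along a subsequence $r\to 0$, projecting $\Sigma\cap \overline{B}_r(x)$ onto $P$ would produce a relatively open "gap" in $P$, forcing $\Sigma\cap \overline{B}_r(x)$ to split, which contradicts the fact (true for a.e.\ point of a connected continuum of finite length) that $\mathcal{H}^1(\Sigma\cap B_r(x))\geq r$ holds on both sides of $x$ in the tangent direction. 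This is exactly the standard argument used in the Mumford–Shah literature.

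\textbf{Noncut.} The strategy is to pick $x\in\Gamma$ that is a \emph{simple} point of $\Sigma$, meaning there exists $r_x>0$ with $\Sigma\cap B_{r_x}(x)=\Gamma\cap B_{r_x}(x)$ and this intersection is a single Jordan sub-arc of $\Gamma$ passing through $x$. For such an $x$, choose a sequence $r_n\downarrow 0$ for which $\Sigma$ meets $\partial B_{r_n}(x)$ in exactly two points (possible by Sard-type reasoning on the Lipschitz parameterization of $\Gamma$ restricted to a neighbourhood of $x$), and set $D_n:=\Sigma\cap B_{r_n}(x)$. Then $D_n$ is a relatively open Jordan sub-arc of $\Gamma$, so it is connected with $\diam D_n\to 0$; its complement $\Sigma\setminus D_n$ is the union of the closed Jordan arc $\Gamma\setminus D_n$ (which is connected since $\Gamma$ is a Jordan curve) together with every other arcwise component of $\Sigma\setminus \Gamma$, each of which, by choice of $r_n$, attaches to $\Gamma$ at points outside $B_{r_x}(x)\supset D_n$, hence stays connected to $\Gamma\setminus D_n$.

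\textbf{Main obstacle and its resolution.} The crucial point is to show that $\mathcal{H}^1$-a.e.\ $x\in\Gamma$ is a simple point in the above sense. Equivalently, letting $E=\Gamma\cap \overline{\Sigma\setminus\Gamma}$ be the set of attachment points of extra branches, I must show $\mathcal{H}^1(E)=0$. This is the delicate step and it follows from the structure theory of continua of finite length: $\Sigma\setminus\Gamma$ has at most countably many connected components $(U_i)_i$, each $\overline{U_i}$ is a continuum of finite length, and a classical argument (see e.g.\ \cite[Chapter 30]{David}) shows that each $\overline{U_i}\cap\Gamma$ is at most countable, so $E$ is countable and in particular $\mathcal{H}^1$-null. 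Discarding $E$, the $\mathcal{H}^1$-null set of non-rectifiable points of $\Sigma$, and the $\mathcal{H}^1$-null set of points of $\Gamma$ where the Lipschitz parameterization fails to be differentiable with non-zero derivative, yields a full-measure subset of $\Gamma$ on which both the flatness and the noncut conclusions hold simultaneously.
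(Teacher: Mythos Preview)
Your flatness argument is essentially the standard one and matches the paper's route (which simply cites the analogous proposition in the literature).

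The noncut argument, however, has a genuine gap. Your key claim that $E=\Gamma\cap\overline{\Sigma\setminus\Gamma}$ is countable (hence $\mathcal H^1$-null) is false. Take $\Gamma$ to be the unit circle, pick a countable dense subset $\{x_n\}\subset\Gamma$, and attach at each $x_n$ a radial segment of length $2^{-n}$. The resulting $\Sigma$ is closed, connected, and has $\mathcal H^1(\Sigma)<+\infty$; yet $\overline{\Sigma\setminus\Gamma}\supset\Gamma$, so $E=\Gamma$ and \emph{no} point of $\Gamma$ is ``simple'' in your sense. The mistake in your reasoning is the identification of $\overline{\bigcup_i U_i}$ with $\bigcup_i\overline{U_i}$: even when each $\overline{U_i}\cap\Gamma$ is finite, accumulation of the components $U_i$ can force $E$ to be all of $\Gamma$.

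The paper avoids this by citing two structural lemmas. First, \cite[Lemma~5.6]{Paolini-Stepanov} gives that $\mathcal H^1$-a.e.\ $x\in\Gamma$ is a \emph{noncut} point of $\Sigma$, meaning merely that $\Sigma\setminus\{x\}$ is connected---a property far weaker than your ``simple point'' and which does hold at every non-attachment point in the counterexample above. Second, \cite[Lemma~6.1]{Buttazzo-Stepanov} shows that from the noncut property alone one can manufacture the connected relatively open sets $D_n$ with $\Sigma\setminus D_n$ connected and $\operatorname{diam}D_n\to0$; in the counterexample these $D_n$ are small arcs of $\Gamma$ together with all the (short) segments attached along them, not just arcs of $\Gamma$. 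If you want a self-contained argument, you should aim for this weaker noncut property and build the $D_n$ accordingly, rather than trying to reduce to points where $\Sigma$ locally coincides with $\Gamma$.
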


\begin{proof} By \cite[Lemma 5.6]{Paolini-Stepanov}, $\mathcal{H}^{1}$-a.e. point $x\in \Gamma$ is a noncut point for $\s$ (i.e., a point such that $\s\backslash \{x\}$ is connected). Then, by \cite[Lemma 6.1]{Buttazzo-Stepanov}, it follows that for every noncut point there are connected neighborhoods that can be cut leaving the set connected, so $(i)$-$(iv)$ are satisfied for a suitable sequence $D_{n}$. For the proof of the second assertion we refer to \cite[Proposition 2.2]{Approx}.
\end{proof}

\begin{proof}[Proof of Theorem \ref{thm: 8.1}] Assume by contradiction that for some $\lambda>0$ a minimizer $\s$ of $\mathcal{F}_{\lambda,p}$ over closed connected proper subsets of $\overline{\Omega}$ contains a simple closed curve $\Gamma\subset \s$. Notice that there is no a relatively open subset in $\Sigma$ contained in both $\Gamma$ and $\partial \Omega$, because otherwise by Lemma~\ref{lem: 8.2} there would be a relatively open subset $D\subset \Sigma$ such that $D \subset \partial \Omega$ and $\Sigma\backslash D$ would remain connected but observing that in this case $u_{\Sigma\backslash D}=u_{\Sigma}$ and $\mathcal{H}^{1}(D)>0$, we would obtain a contradiction with the optimality of $\Sigma$. Thus  by Lemma~\ref{lem: 8.2}, there is a point $x_{0} \in \Gamma \cap \Omega$ which is a noncut point for $\Sigma$ and such that $\s$ is differentiable at $x_{0}$. Therefore there exist the sets $D_{n} \subset \s$ and an affine line $P$ as in Lemma~\ref{lem: 8.2}. Let $\varepsilon_{0}, C, \overline{r}, b$   be the constants of Lemma~\ref{Good_decay} and let $B_{t_{0}}(x_{0})\subset \Omega$ with $t_{0}<\min\{\overline{r}, \diam(\Sigma)/2\}$.  We denote $r_{n}:=\diam D_{n}$ so that $D_{n}\subset \s\cap \overline{B}_{r_{n}}(x_{0})$. The flatness of $\s$ at $x_{0}$ implies that for any given $\varepsilon>0$ there is $t=t(\varepsilon) \in (0, t_{0}]$ such that 
\begin{equation}
d_{H}(\s\cap \overline{B}_{r}(x_{0}), P\cap \overline{B}_{r}(x_{0}))\leq \varepsilon r \,\ \text{for all}\,\ r\in (0,t]. \label{8.8}
\end{equation}
For every $n$ let us define $\s_{n}:=\s\backslash D_{n}$, which by Lemma~\ref{lem: 8.2} remains closed and connected. 
 We fix  $\varepsilon = \frac{\varepsilon_0}{2}$. Our aim is to  apply  Lemma~\ref{Good_decay} to $\Sigma_n$ but we have to control the Hausdorff distance between $\Sigma_n$ and a line in $\overline{B}_{r}(x_{0})$. We already know that $\Sigma$ is $\varepsilon  r$-close to $P$ in $\overline{B}_r(x_0)$ for all $r\leq t$.  Thus, if 
$r_n \leq \frac{\varepsilon_{0} r}{2} $ we can compute
\begin{align*}
& \, d_{H}(\Sigma_{n}\cap \overline{B}_{r}(x_{0}), P\cap \overline{B}_{r}(x_{0})) \\
\;\, \leq &\, d_{H}(\Sigma_{n}\cap \overline{B}_{r}(x_{0}), \s\cap \overline{B}_{r}(x_{0}))+ d_{H}(\Sigma\cap \overline{B}_{r}(x_{0}), P\cap \overline{B}_{r}(x_{0})) \\
\;\, \leq &\, r_{n} +  \frac{\varepsilon_{0}r}{2} \leq \frac{\varepsilon_{0}r}{2} +\frac{\varepsilon_{0}r}{2} =  \varepsilon_{0} r.
\end{align*}
We can therefore apply Lemma~\ref{Good_decay} to $\Sigma_n$, for the interval $ [\frac{2 r_{n}}{\varepsilon_{0}}, t]$, provided that $\frac{2r_{n}}{\varepsilon_{0}}\leq \frac{t}{2}$, which says that 
\begin{equation*} 
\int_{B_{r}(x_{0})}|\nabla u_{\Sigma_n}|^{p}\diff x \leq C \bigl(\frac{r}{t}\big)^{1+b}  \int_{B_{t}(x_{0})}|\nabla u_{\Sigma_n}|^{p}\diff x + Cr^{1+b}\,\ \text{for every}\,\  r\in\biggl[\frac{2 r_{n}}{\varepsilon_0}, t\biggr]. 
\end{equation*}
Hereinafter in this proof, $C$ denotes a positive constant that does not depend on $r_{n}$ and can be different
from line to line. Next, for $r=\frac{2 r_{n}}{\varepsilon_0}$, using also \eqref{2.4} it comes
\begin{equation}
\int_{B_{\frac{2 r_{n}}{\varepsilon_0}}(x_{0})}|\nabla u_{\Sigma_n}|^{p}\diff x \leq Cr_n^{1+b}, \notag
\end{equation}
for all $n$ such that $\frac{2 r_{n}}{\varepsilon_{0}}\leq \frac{t}{2}$. Remember that the exponent $b$ given by Lemma~\ref{Good_decay} is positive provided $q>q_1$, where 
\begin{eqnarray}
q_1=
\left\{
\begin{array}{cl}
\frac{2p}{2p-1} & \text{ if } 2 \leq p<+\infty \\
\frac{2p}{3p-3} & \text{ if } 1<p < 2
\end{array}
\right.   \notag
\end{eqnarray}
which is one of our assumptions.

Now by the fact that $\s$ is a minimizer and $\s_{n}$ is a competitor for $\s$ we get the following
\begin{align*} 
0&\leq \mathcal{F}_{\lambda,p}({\s}_{n})- \mathcal{F}_{\lambda,p}(\s)  \leq E_{p}(u_{\s})-E_{p}(u_{\s_{n}}) - \lambda r_{n}\\
& \leq C \int_{B_{2r_{n}}(x_{0})}|\nabla u_{\s_{n}}|^{p}\diff x + Cr^{2+p^{\prime}-\frac{2p'}{q}}_{n} -\lambda r_{n} \,\ (\text{by Corollary~\ref{cor: 7.3}})\\
&\leq C \int_{B_{\frac{2r_{n}}{\varepsilon_{0}}}(x_{0})}|\nabla u_{\Sigma_{n}}|^{p}\diff x  + Cr^{2+p^{\prime}-\frac{2p'}{q}}_{n} -\lambda r_{n}  \\
&\leq  Cr_n^{1+b} +Cr^{2+p^{\prime}-\frac{2p'}{q}}_{n} -\lambda r_{n}. 
\end{align*}
Notice that 

$$2+p'-\frac{2p'}{q}>1 \Leftrightarrow  q> \frac{2p}{2p-1},$$
which is always true under the assumption $q>q_1$. Therefore, letting $n$ tend to $+\infty$, we arrive to a contradiction.

This proves that every minimizer  $\s$ of Poblem 1.1 contains no closed curves. In order to prove the last assertion in Theorem~\ref{thm: 8.1}, we use theorem II.5 of \cite[§ 61]{topologie}, stating that if $D\subset \mathbb{R}^{2}$ is a bounded connected set with locally connected boundary, then there is a simple closed curve $S\subset \partial D$. If $\mathbb{R}^{2}\backslash \s$ were disconnected, then there would exist a bounded connected component $D$ of $\mathbb{R}^{2}\backslash \Sigma$ such that $\partial D \subset \s$, and hence $\Sigma$ would contain a simple closed curve, contrary to what we proved before.\end{proof}


\section{Proof of a $C^{1,\alpha}$ regularity}
In this section, we shall prove that every solution $\Sigma$ of Problem~\ref{problemMain} is  locally $C^{1,\alpha}$ regular at $\mathcal{H}^{1}$ a.e. point $x \in \Sigma \cap \Omega$.

Throughout this section, $\Omega$ will denote an open bounded subset in $\mathbb{R}^{2}$. Recall that $\mathcal{K}(\Omega)$ is the class of all closed connected proper subsets of $\overline{\Omega}$.

The factor $\lambda$ in the statement of Problem~\ref{problemMain} affects the shape of an optimal set minimizing the functional $\mathcal{F}_{\lambda,p}$ over $\mathcal{K}(\Omega)$, and according to Proposition~\ref{prop: 2.16}, we know that  there exists $\lambda_{0}=\lambda_{0}(p, f, \Omega)>0$  such that if $\lambda \in (0, \lambda_{0}]$, then each minimizer $\s$ of the functional $\mathcal{F}_{\lambda, p}$ over $\mathcal{K}(\Omega)$ has positive one-dimensional Hausdorff measure. Throughout this section, we shall assume that $\lambda=\lambda_{0}=1$ for simplicity. Of course, this is not restrictive regarding to the regularity theory.

\subsection{Control of the defect of minimality when $\Sigma$ is flat}
For any closed set $\Sigma \subset \mathbb{R}^{2}$, any point $x \in \mathbb{R}^{2}$ and any radius $r>0$ we denote by $\beta_{\s}(x,r)$ the flatness of $\s$ in $\overline{B}_{r}(x)$ defined through
\begin{equation*}
\beta_{\s}(x,r)=\inf_{P\ni x} \frac{1}{r}d_{H}(\s\cap \overline{B}_{r}(x), P\cap \overline{B}_{r}(x)),
\end{equation*}
where the infimum is taken over the set of all affine lines $P$ passing through $x$. Notice that if $\beta_{\s}(x,r)<+\infty$, then it is easy to prove that  the infimum above is actually the minimum. Furthermore, it is easy to see that in this case $\beta_{\Sigma}(x,r) \in [0, \sqrt{2}]$ and $\beta_{\Sigma}(x,r)=\sqrt{2}$ if and only if $\Sigma \cap \overline{B}_{r}(x)$ is a point on the circle $\partial B_{r}(x)$.

\begin{prop}\label{Inequality} Let $\s \subset \mathbb{R}^{2}$ be a closed set, $x \in \mathbb{R}^{2}$, $r>0$ and $\kappa \in (0,1)$. If $\beta_{\s}(x, \kappa r)<+\infty$, then 
\begin{equation}
\beta_{\s}(x, \kappa r)\leq \frac{2}{\kappa}\beta_{\s}(x,r). \label{9.1}
\end{equation}
\end{prop}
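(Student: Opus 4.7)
The strategy is to use the affine line attaining the infimum in $\beta_\Sigma(x,r)$ as a competitor in the definition of $\beta_\Sigma(x,\kappa r)$, and to show that the resulting Hausdorff distance at the smaller scale is at most $2r\beta_\Sigma(x,r)$; the inequality then follows by dividing by $\kappa r$. One may assume $\beta := \beta_\Sigma(x,r) < +\infty$ (otherwise the inequality is vacuous) and, by the attainment of the infimum recalled just before the statement, fix an affine line $P\ni x$ with $d_H(\Sigma\cap \overline{B}_r(x), P\cap \overline{B}_r(x)) = r\beta$. Since $P$ is a valid competitor for $\beta_\Sigma(x,\kappa r)$, it suffices to prove
\[
d_H(\Sigma\cap \overline{B}_{\kappa r}(x), P\cap \overline{B}_{\kappa r}(x))\leq 2r\beta,
\]
which I would do by bounding the two directional suprema defining the Hausdorff distance separately.

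The supremum $\sup_{y\in \Sigma\cap \overline{B}_{\kappa r}(x)} \operatorname{dist}(y, P\cap \overline{B}_{\kappa r}(x))$ is easy: the orthogonal projection $\pi_P(y)$ onto $P$ satisfies $|\pi_P(y)-x|\leq|y-x|\leq \kappa r$ by Pythagoras (using $x\in P$), so $\pi_P(y) \in P\cap \overline{B}_{\kappa r}(x)$, while $|y - \pi_P(y)| = \operatorname{dist}(y,P) \leq r\beta$ by the flatness at the outer scale. This already yields the bound $r\beta$, well below the target $2r\beta$.

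The reverse supremum $\sup_{z\in P\cap\overline{B}_{\kappa r}(x)}\operatorname{dist}(z,\Sigma\cap\overline{B}_{\kappa r}(x))$ is the delicate one. Fix $z\in P\cap\overline{B}_{\kappa r}(x)$ and set $\rho:=|z-x|\in[0,\kappa r]$. If $\beta\geq\kappa$, the hypothesis $\beta_\Sigma(x,\kappa r)<+\infty$ guarantees some $y^*\in\Sigma\cap\overline{B}_{\kappa r}(x)$, and the triangle inequality gives $|y^*-z|\leq|y^*-x|+|x-z|\leq 2\kappa r\leq 2r\beta$. If $\beta<\kappa$, the key trick is to apply the flatness at scale $r$ not at $z$ itself but at the interpolated point
\[
z_t := x + t(z-x),\qquad t := \min\bigl(1,\,(\kappa r - r\beta)/\rho\bigr)\in[0,1]
\]
(the case $\rho=0$ being trivial), which lies in $P\cap\overline{B}_{\kappa r}(x)$ and is deep enough inside the smaller ball that any $y_t\in \Sigma\cap\overline{B}_r(x)$ with $|y_t-z_t|\leq r\beta$ automatically satisfies $|y_t - x|\leq r\beta + t\rho \leq \kappa r$, i.e.\ $y_t\in\Sigma\cap\overline{B}_{\kappa r}(x)$. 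The same choice of $t$ also forces $|y_t - z|\leq r\beta + (1-t)\rho \leq 2r\beta$, where the last inequality uses $\rho\leq\kappa r$ to bound $(1-t)\rho\leq r\beta$. The careful tuning of the interpolation parameter $t$, simultaneously keeping the retracted target inside the smaller ball and close enough to $z$, is the one nontrivial ingredient in the argument.
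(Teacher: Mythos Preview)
Your proof is correct. Both you and the paper follow the same high-level strategy: take the line $P$ realizing $\beta_\Sigma(x,r)$ and use it as a competitor at scale $\kappa r$, then bound the two directions of the Hausdorff distance separately. The $\Sigma\to P$ direction is handled identically (orthogonal projection). The difference lies in the $P\to\Sigma$ direction: the paper argues indirectly, taking the worst point $x_0\in P\cap\overline{B}_{\kappa r}(x)$ at distance $r_0$ from $\Sigma\cap\overline{B}_{\kappa r}(x)$ and exhibiting an empty ball of radius $r_0/2$ centered on $P$ inside $B_{\kappa r}(x)$, which forces the Hausdorff distance at the larger scale to be at least $r_0/2$. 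Your argument is instead direct and constructive: for each $z\in P\cap\overline{B}_{\kappa r}(x)$ you retract along $P$ to a point $z_t$ deep enough inside the smaller ball that the $\Sigma$-point provided by the outer-scale flatness is automatically trapped in $\overline{B}_{\kappa r}(x)$, while the retraction distance is itself at most $r\beta$. Your route avoids the slightly delicate geometric containment claim the paper makes about the half-ball, at the cost of the bookkeeping in the choice of $t$; both are short and either is adequate for the result.
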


\begin{proof} Since $\beta_{\Sigma}(x, \kappa r)<+\infty$,  $\beta_{\Sigma}(x, \kappa r)$ and $\beta_{\Sigma}(x,r)$ belong to $[0,\sqrt{2}]$. Notice that if $\beta_{\Sigma}(x,r) \geq \frac{\kappa \sqrt{2}}{2}$, then (\ref{9.1}) becomes trivial. Now let $P$ be an affine line realizing the infimum in the definition of $\beta_{\s}(x,r)$. Then, because $\Sigma \cap \overline{B}_{\kappa r}(x) \subset \Sigma \cap \overline{B}_{r}(x)$, the following inequality holds
\begin{equation}
\max_{y \in \Sigma \cap \overline{B}_{r}(x)} \dist(y, P \cap \overline{B}_{r}(x)) \geq \max_{y \in  \Sigma \cap \overline{B}_{\kappa r}(x)} \dist(y,  P \cap \overline{B}_{\kappa r}(x)). \label{Ineq 7.2}
\end{equation}
Let $x_{0} \in P\cap \overline{B}_{\kappa r}(x)$ be a point such that $$r_{0}:=\dist(x_{0}, \Sigma \cap \overline{B}_{\kappa r}(x))=\max_{y\in P\cap \overline{B}_{\kappa r}(x)} \dist(y, \Sigma \cap \overline{B}_{\kappa r}(x)).$$ We now distinguish two cases.\\
Case 1: $r_{0}=0$. By (\ref{Ineq 7.2}) and by the definition of the Hausdorff distance, it follows that $ d_{H}(\Sigma \cap \overline{B}_{r}(x), P \cap \overline{B}_{r}(x)) \geq d_{H}(\Sigma \cap \overline{B}_{\kappa r}(x), P \cap \overline{B}_{\kappa r}(x))$. Thus
\begin{align*}
\frac{1}{\kappa}\beta_{\Sigma} (x, r) &=\frac{1}{\kappa r} d_{H}(\Sigma \cap \overline{B}_{r}(x), P \cap \overline{B}_{r}(x))\\ &\geq \frac{1}{\kappa r} d_{H}(\Sigma \cap \overline{B}_{\kappa r}(x), P \cap \overline{B}_{\kappa r}(x))\geq  \beta_{\Sigma}(x, \kappa r)
\end{align*}
and therefore in this case (\ref{9.1}) holds. \\
Case 2: $r_{0}>0$. Since $\beta_{\Sigma}(x, \kappa r) <+\infty$, namely $\Sigma \cap \overline{B}_{\kappa r}(x) \neq  \emptyset$, by the definitions of $x_{0}$ and $r_{0}$, we get that $r_{0}\leq |x_{0}-x|+\kappa r$, because $B_{\kappa r}(x) \subset B_{|x_{0}-x|+\kappa r}(x_{0})$. Then, there is a point $x_{1} \in \partial B_{r_{0}}(x_{0}) \cap P\cap \overline{B}_{\kappa r}(x)$, because otherwise $r_{0}$ would be greater than $|x_{0}-x|+\kappa r$. Setting $\widetilde{x}=x_{0}+\frac{1}{2}(x_{1}-x_{0}) \in P\cap \overline{B}_{\kappa r}(x)$, we observe the following: $|\widetilde{x}-x_{0}|=\frac{r_{0}}{2}$ and $B_{\frac{r_{0}}{2}}(\widetilde{x})\subset B_{\kappa r}(x) \cap B_{r_{0}}(x_{0})$. This, again by the definitions of $x_{0}$ and $r_{0}$, implies that $B_{\frac{r_{0}}{2}}(\widetilde{x})\cap \Sigma =\emptyset$ and therefore 
\begin{equation}
\max_{y \in P\cap \overline{B}_{r}(x)} \dist(y, \Sigma \cap \overline{B}_{r}(x)) \geq \frac{r_{0}}{2}. \label{Ineq 7.3}
\end{equation}
By (\ref{Ineq 7.2}), (\ref{Ineq 7.3}) and by the definition of the Hausdorff distance, we deduce the following inequality $$2d_{H}(\Sigma \cap \overline{B}_{r}(x), P\cap \overline{B}_{r}(x)) \geq d_{H}(\Sigma \cap \overline{B}_{\kappa r}(x), P \cap \overline{B}_{\kappa r}(x)),$$ leading to (\ref{9.1}).
\end{proof}

We  now introduce the following definition of the  local energy.

\begin{defn} \textit{Let $\s\in \mathcal{K}(\Omega)$ and let $\tau \in [0, \sqrt{2}]$. For any $x_{0} \in \mathbb{R}^{2}$ and any $r>0$ we define}
\begin{equation}
w^{\tau}_{\s}(x_{0},r)=\sup_{\substack{\s^{\prime} \in \mathcal{K}(\Omega),\, \s^{\prime}\Delta \s \subset \overline{B}_{r}(x_{0}) \\ \mathcal{H}^{1}(\Sigma^{\prime})\leq 100 \mathcal{H}^{1}(\Sigma),\, \beta_{\s^{\prime}}(x_{0},r)\leq \tau}} \frac{1}{r} \int_{B_{r}(x_{0})}|\nabla u_{\s^{\prime}}|^{p}\diff x. \label{9.5}
\end{equation}
\end{defn}

\begin{rem}\label{remark C} Let $\Sigma \subset \overline{\Omega}$ be closed and arcwise connected and let $\Sigma^{\prime}$ be an admissible set for the problem (\ref{9.5}). Assume that $\Sigma \backslash \overline{B}_{r}(x_{0})$ contains a sequence of points $(y_{n})_{n}$ converging to some point $y \in \partial B_{r}(x_{0})$. Then $y\in \Sigma^{\prime}$, since  $\Sigma \backslash \overline{B}_{r}(x_{0})=\Sigma^{\prime} \backslash \overline{B}_{r}(x_{0})$ and $\Sigma^{\prime}$ is closed.
\end{rem}

\begin{rem} \label{rem: 9.3} Assume that $\Sigma \subset \overline{\Omega}$ is closed and arcwise connected,  $\tau\in (0,\sqrt{2}]$ and $\beta_{\s}(x_{0}, r_{1})\leq\varepsilon$ with $\varepsilon \in (0, \frac{\tau}{2}]$. Then, for all   $r\in [\frac{2\varepsilon r_{1}}{\tau},r_{1}]$, there is a solution for problem~(\ref{9.5}). Indeed, using (\ref{9.1}), we deduce that $ \beta_{\s}(x_{0}, r) \leq \tau$ for all $r\in [\frac{2\varepsilon r_{1}}{\tau},r_{1}]$ and hence $\s$ is an admissible set in (\ref{9.5}). Thus, due to Proposition~\ref{prop: 2.12}, $w^{\tau}_{\Sigma}(x_{0},r)\in [0,+\infty)$. We can then conclude by use of the direct method in the  Calculus of Variations, standard compactness results and the fact that $\mathcal{H}^{1}$ is lower semicontinuous with respect to the topology generated by the Hausdorff distance.
\end{rem}

In order to establish a decay for $w^{\tau}_{\Sigma}$, we need the following geometrical result.

\begin{prop} \label{prop: 9.4} Let $\Sigma \subset \overline{\Omega}$ be closed and arcwise connected, $x \in \overline{\Om}$ and  $\tau \in (0,\frac{1}{2}]$, and let $\beta_{\Sigma}(x,r_{1})\leq \varepsilon$ for some $\varepsilon \in (0, \frac{\tau}{2}]$. In addition, assume that $\Sigma \backslash \overline{B}_{r_{1}}(x)\neq \emptyset$. If $r \in [\frac{2\varepsilon r_{1}}{\tau}, r_{1}]$, then for any closed arcwise connected set $\Sigma^{\prime}\subset \overline{\Omega}$ such that $\s^{\prime} \Delta \s \subset \overline{B}_{r}(x)$ and $\beta_{\s^{\prime}}(x, r) \leq \tau$ it holds
\begin{enumerate}[label=(\roman*)]
\item 
\begin{equation}
\beta_{\s^{\prime}}(x, r_{1}) \leq \frac{5\tau r}{r_{1}} + \varepsilon. \label{9.7}
\end{equation}
\item 
\begin{equation}
\beta_{\s^{\prime}}(x, s) \leq 6\tau\,\ \text{for all}\,\ s \in [r, r_{1}]. \label{9.8}
\end{equation}
\end{enumerate}
\end{prop}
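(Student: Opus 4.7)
My plan is to prove both parts by using the affine line $P$ realizing $\beta_{\Sigma}(x, r_1) \leq \varepsilon$ as a test line for $\beta_{\Sigma'}$ at scale $r_1$ (for (i)) and at scale $s \in [r, r_1]$ (for (ii)). The key intermediate quantity is the angle $\alpha$ between $P$ and a line $P'$ realizing $\beta_{\Sigma'}(x, r) \leq \tau$; I will establish $\sin \alpha \leq \tau + \varepsilon r_1/r \leq 3\tau/2$, using the hypothesis $r \geq 2\varepsilon r_1/\tau$.

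To obtain the angle bound, I will produce a point $y \in \Sigma \cap \Sigma' \cap \partial B_r(x)$. Its existence follows from arcwise connectedness of $\Sigma'$: on the one hand $\dist(x, \Sigma' \cap \overline{B}_r(x)) \leq \tau r$ because $x \in P'$, and on the other hand $\Sigma' \setminus \overline{B}_{r_1}(x) \supset \Sigma \setminus \overline{B}_{r_1}(x) \neq \emptyset$ since $\Sigma' = \Sigma$ outside $\overline{B}_r(x)$. An arc in $\Sigma'$ joining these two regions must cross $\partial B_r(x)$, and at the last time it exits $\overline{B}_r(x)$, the arc continues through $\Sigma' \setminus \overline{B}_r(x) = \Sigma \setminus \overline{B}_r(x)$, so the exit point $y$ is a limit of points of $\Sigma$ and thus belongs to $\Sigma$ by closedness. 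Then $\dist(y, P) \leq \varepsilon r_1$ (from $y \in \Sigma \cap \overline{B}_{r_1}(x)$) and $\dist(y, P') \leq \tau r$ (from $y \in \Sigma' \cap \overline{B}_r(x)$), and since $|y - x| = r$ these convert into angular bounds from $x$ that sum via the triangle inequality to give $\sin \alpha \leq \varepsilon r_1/r + \tau$.

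With the angle bound in hand, both Hausdorff-distance directions follow by a case split. A point $z \in \Sigma' \cap \overline{B}_s(x)$ is either in $\overline{B}_r(x)$, where it lies within $\tau r$ of $P'$ and hence within $\tau r + r \sin \alpha \leq 5\tau r/2$ of $P$, or in $\overline{B}_s(x) \setminus \overline{B}_r(x)$, where it coincides with a point of $\Sigma \cap \overline{B}_{r_1}(x)$ and lies within $\varepsilon r_1 \leq \tau r/2$ of $P$. Conversely, for $z \in P \cap \overline{B}_s(x)$ with $|z - x| \leq r$, I combine the angle bound with the $P'$-flatness of $\Sigma'$ to locate a point of $\Sigma' \cap \overline{B}_r(x)$ within $5\tau r/2$ of $z$. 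For $z \in P \cap \overline{B}_s(x)$ with $|z - x| > r$, I use $\beta_{\Sigma}(x, r_1) \leq \varepsilon$ to produce $z' \in \Sigma \cap \overline{B}_{r_1}(x)$ with $|z - z'| \leq \varepsilon r_1$, and split on the location of $z'$: if $r < |z' - x| \leq s$ then $z' \in \Sigma' \cap \overline{B}_s(x)$; if $|z' - x| \leq r$ then $|z - x| \leq r + \varepsilon r_1$ and I reduce to the previous case via the radial projection $\bar z = x + r(z-x)/|z-x| \in P \cap \partial B_r(x)$; and (only relevant for (ii) at scale $s < r_1$) if $|z' - x| > s$ then $|z-x|$ lies within $\varepsilon r_1$ of $s$, and I instead apply the flatness of $\Sigma$ at the substitute target $t'v \in P$ with $t' = s - \varepsilon r_1$, whose $\Sigma$-approximant is safely inside $\overline{B}_s(x)$. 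This last case split is the most delicate part, and I expect it to be the main obstacle. Collecting constants using $\varepsilon r_1 \leq \tau r/2 \leq \tau s/2$ yields $d_H(\Sigma' \cap \overline{B}_{r_1}(x), P \cap \overline{B}_{r_1}(x)) \leq 5\tau r/2 + \varepsilon r_1 \leq 5\tau r + \varepsilon r_1$, proving (i), and $d_H(\Sigma' \cap \overline{B}_s(x), P \cap \overline{B}_s(x)) \leq (7/2)\tau s \leq 6\tau s$, proving (ii).
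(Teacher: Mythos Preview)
Your approach is correct and reaches the stated bounds, but it differs structurally from the paper's argument. The paper avoids your case analysis by passing through $\Sigma$ rather than testing directly against the line at scale $r_1$: for each $s\in[r,r_1]$ it writes
\[
d_H(\Sigma'\cap \overline{B}_s, P_s\cap \overline{B}_s) \le d_H(\Sigma'\cap \overline{B}_s, \Sigma\cap \overline{B}_s) + d_H(\Sigma\cap \overline{B}_s, P_s\cap \overline{B}_s),
\]
where $P_s$ realizes $\beta_\Sigma(x,s)$ (controlled via Proposition~\ref{Inequality}). Since $\Sigma'=\Sigma$ outside $\overline{B}_r$, the first term is at most $d_H(\Sigma'\cap \overline{B}_r, \Sigma\cap \overline{B}_r)$, which the paper then bounds by inserting \emph{two} auxiliary lines at scale $r$ (one realizing $\beta_\Sigma(x,r)$, one realizing $\beta_{\Sigma'}(x,r)$) and estimating their angle through the common boundary point $y\in\Sigma\cap\Sigma'\cap\partial B_r$, exactly the mechanism you use. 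Your route is more self-contained—it works with the single line $P$ at scale $r_1$ throughout and never invokes Proposition~\ref{Inequality}—at the price of handling the two Hausdorff directions and the inside/outside $\overline{B}_r$ split by hand.

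One small point to tighten: in your last sub-case ($|z'-x|>s$), the $\Sigma$-approximant $\tilde z'$ of the substitute target $x+t'v$ does lie in $\overline{B}_s$, but it may still land inside $\overline{B}_r$, where membership in $\Sigma'$ is not automatic. When that happens one has $s\le r+2\varepsilon r_1$, hence $|z-x|\le r+2\varepsilon r_1$, and you can again fall back on the radial projection $\bar z\in P\cap\partial B_r$; the resulting bound $2\varepsilon r_1+\tfrac{5}{2}\tau r\le \tfrac{7}{2}\tau s$ matches your final constant, so the fix is free.
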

\begin{proof} Every ball in this proof is centered at $x$.  Using  (\ref{9.1}), we deduce that 
\begin{equation}
\beta_{\s}(x,t) \leq \tau \,\ \text{for all}\,\ t \in [2\varepsilon r_{1}/\tau, r_{1}].\label{9.9}
\end{equation}
Let $P_{1}, \, P$ and $P^{\prime}$ realize the infimum, respectively, in the definitions of $\beta_{\Sigma}(x,r_{1}),\, \beta_{\Sigma}(x,r)$ and $\beta_{\Sigma^{\prime}}(x,r)$. By (\ref{9.9}),
\begin{equation}
d_{H}(\s \cap \overline{B}_{r}, P\cap \overline{B}_{r}) \leq \tau r. \label{9.10}
\end{equation}
On the other hand,
\begin{align*}
d_{H}(\Sigma^{\prime} \cap \overline{B}_{r_{1}}, P_{1}\cap \overline{B}_{r_{1}}) & \leq d_{H}(\Sigma^{\prime} \cap \overline{B}_{r_{1}}, \s \cap \overline{B}_{r_{1}}) + d_{H}(\s \cap \overline{B}_{r_{1}}, P_{1}\cap \overline{B}_{r_{1}}) \\
& \leq d_{H}(\Sigma^{\prime} \cap \overline{B}_{r}, \s \cap \overline{B}_{r}) + \varepsilon r_{1}, \numberthis \label{9.11}
\end{align*}
where the latter inequality comes because $\Sigma^{\prime} \Delta \s \subset \overline{B}_{r}$ and $\beta_{\s}(x,r_{1})\leq \varepsilon$. In addition, 
\begin{align*}
d_{H}(\Sigma^{\prime} \cap \overline{B}_{r}, \s \cap \overline{B}_{r}) & \leq d_{H}(\Sigma^{\prime} \cap \overline{B}_{r}, P^{\prime} \cap \overline{B}_{r}) + d_{H}(P \cap \overline{B}_{r}, P^{\prime} \cap \overline{B}_{r}) \\
& \qquad \qquad + d_{H}(\s \cap \overline{B}_{r}, P \cap \overline{B}_{r})\\
&\leq 2\tau r + d_{H}(P \cap \overline{B}_{r}, P^{\prime} \cap \overline{B}_{r}),\numberthis \label{Eq. 7.15}
\end{align*}
where we have used (\ref{9.10}) and the assumption $\beta_{\Sigma^{\prime}}(x,r) \leq \tau$. Notice that, since $\Sigma \cap B_{r}\neq \emptyset$, $\Sigma \backslash \overline{B}_{r_{1}} \neq \emptyset$ and $\Sigma$ is arcwise connected, there is a sequence $(x_{n})_{n}$ with $x_{n} \in \Sigma\backslash \overline{B}_{r}$ converging to some point $y \in \partial B_{r}$. By Remark~\ref{remark C}, $y\in \Sigma^{\prime}\cap \partial B_{r}$ and, defining $$W:=\partial B_{r}\cap \{z: \dist(z, P)\leq \beta_{\Sigma}(x,r) r\}\,\ \text{and}\,\ W^{\prime}:=\partial B_{r} \cap \{z: \dist(z, P^{\prime}) \leq \beta_{\Sigma^{\prime}}(x,r) r\},$$ it holds $y\in W\cap W^{\prime}$. This implies the following estimate
\begin{align*}
d_{H}(P\cap \overline{B}_{r}, P^{\prime} \cap \overline{B}_{r})& \leq (\arcsin(\beta_{\Sigma}(x,r))+ \arcsin(\beta_{\Sigma^{\prime}}(x,r)))r \\ & \leq 2\arcsin(\tau)r \leq 3\tau r, \numberthis  \label{9.12}
\end{align*}
where we have used (\ref{9.9}), the assumption $\beta_{\Sigma^{\prime}}(x,r) \leq \tau$ and the fact that $\arcsin(t)\leq \frac{3t}{2}$ if $t\in [0,\frac{1}{2}]$.
By (\ref{Eq. 7.15}) and (\ref{9.12}),
\[
d_{H}(\Sigma^{\prime} \cap \overline{B}_{r}, \s \cap \overline{B}_{r}) \leq 5\tau r. \numberthis\label{9.13}
\]
This together with (\ref{9.11}) gives the following
\begin{align*}
d_{H}(\Sigma^{\prime} \cap \overline{B}_{r_{1}}, P_{1} \cap \overline{B}_{r_{1}}) & \leq 5\tau r + \varepsilon r_{1}.
\end{align*}
Thus, we have proved $(i)$. Now let $s \in [r, r_{1}]$ and let $P_{s}$ be the line realizing the infimum in the definition of $\beta_{\s}(x,s)$. As in the proof of $(i)$ we get
\begin{align*}
d_{H}(\Sigma^{\prime} \cap \overline{B}_{s}, P_{s} \cap \overline{B}_{s}) & \leq d_{H}(\Sigma^{\prime} \cap \overline{B}_{s}, \s \cap \overline{B}_{s}) + d_{H}(\s \cap \overline{B}_{s}, P_{s} \cap \overline{B}_{s}) \\
& \leq d_{H}(\Sigma^{\prime} \cap \overline{B}_{r}, \s \cap \overline{B}_{r}) + d_{H}(\s \cap \overline{B}_{s}, P_{s} \cap \overline{B}_{s}).
\end{align*}
Then, by (\ref{9.9}) and (\ref{9.13}), we deduce that
\begin{align*}
d_{H}(\Sigma^{\prime} \cap \overline{B}_{s}, P_{s} \cap \overline{B}_{s}) & \leq 5\tau r + \tau s\\
& \leq 6 \tau s,
\end{align*}
thus concluding the proof.
\end{proof}

In the next proposition we establish a decay for $w^{\tau}_{\Sigma}(x,\cdot)$, provided that $\beta_{\Sigma}(x,\cdot)$ is small enough.

\begin{prop} \label{prop: 9.6} Let $p\in (1,+\infty)$ and $f \in L^{q}(\Omega)$ with $q> q_1$, where $q_1$ is defined in~\eqref{qrestrict}. Let $\varepsilon_0\in(0,1/2),\, b,\, \overline{r}, \, C>0$ be the constants of Lemma~\ref{Good_decay}. Let $\s \subset \overline{\Omega}$ be closed and connected, $\mathcal{H}^{1}(\Sigma)<+\infty$ and let $B_{r_{1}}(x_{0})\subset \Omega$ with $r_{1}\in (0, \min\{\overline{r},\diam(\Sigma)/2\})$. Suppose that $\tau \in (0, \frac{\varepsilon_{0}}{6}]$ and 
\begin{equation*}
\beta_{\s}(x_{0}, r_{1}) \leq \varepsilon
\end{equation*}
for some $\varepsilon \in (0, \frac{\tau}{20}]$. Then, for all $r \in [\frac{2\varepsilon r_{1}}{\tau}, \frac{r_{1}}{10}]$,
\begin{equation}
w^{\tau}_{\s}(x_{0}, r) \leq C \left(\frac{r}{r_{1}}\right)^{b} w^{\tau}_{\s}(x_{0}, r_{1}) + Cr^{b}. \label{9.14}
\end{equation}

\end{prop}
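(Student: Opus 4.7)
Fix any admissible competitor $\Sigma'$ in the definition of $w^{\tau}_{\Sigma}(x_{0},r)$. The idea is to show that $\Sigma'$ meets all the hypotheses of Lemma~\ref{Good_decay} on the pair of scales $(r,r_{1})$ and, at the same time, remains an admissible competitor in the definition of $w^{\tau}_{\Sigma}(x_{0},r_{1})$. Lemma~\ref{Good_decay} applied to $\Sigma'$ will then control $\int_{B_{r}(x_{0})}|\nabla u_{\Sigma'}|^{p}\diff x$ by $\int_{B_{r_{1}}(x_{0})}|\nabla u_{\Sigma'}|^{p}\diff x$ plus a $Cr^{1+b}$ error, and the admissibility at scale $r_{1}$ will let us replace the latter by $r_{1}\,w^{\tau}_{\Sigma}(x_{0},r_{1})$; passing to the supremum over $\Sigma'$ closes the argument.

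To carry this out I would first invoke Proposition~\ref{prop: 9.4}. Its hypotheses are met: $\Sigma$ is closed and connected with $\mathcal{H}^{1}(\Sigma)<+\infty$, hence arcwise connected by Remark~\ref{remark 3.1}; $\Sigma\setminus\overline{B}_{r_{1}}(x_{0})\neq\emptyset$ because $r_{1}<\diam(\Sigma)/2$; $\tau\in(0,1/2]$ and $\varepsilon\in(0,\tau/2]$; and our range $r\in[\tfrac{2\varepsilon r_{1}}{\tau},\tfrac{r_{1}}{10}]$ is contained in $[\tfrac{2\varepsilon r_{1}}{\tau},r_{1}]$. Item (ii) of Proposition~\ref{prop: 9.4} then gives
\begin{equation*}
\beta_{\Sigma'}(x_{0},s)\leq 6\tau\leq\varepsilon_{0}\qquad\text{for every }s\in[r,r_{1}],
\end{equation*}
while item (i), together with $r\leq r_{1}/10$ and $\varepsilon\leq\tau/20$, produces
\begin{equation*}
\beta_{\Sigma'}(x_{0},r_{1})\leq\frac{5\tau r}{r_{1}}+\varepsilon\leq\frac{\tau}{2}+\frac{\tau}{20}\leq\tau.
\end{equation*}
Combined with $\Sigma'\Delta\Sigma\subset\overline{B}_{r}(x_{0})\subset\overline{B}_{r_{1}}(x_{0})$ and $\mathcal{H}^{1}(\Sigma')\leq 100\,\mathcal{H}^{1}(\Sigma)$, this shows $\Sigma'$ is admissible in the definition of $w^{\tau}_{\Sigma}(x_{0},r_{1})$, and hence
\begin{equation*}
\frac{1}{r_{1}}\int_{B_{r_{1}}(x_{0})}|\nabla u_{\Sigma'}|^{p}\diff x\leq w^{\tau}_{\Sigma}(x_{0},r_{1}).
\end{equation*}

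Next I would apply Lemma~\ref{Good_decay} to $\Sigma'$ itself, with upper scale $r_{1}$ and lower scale $r$. Its hypotheses are satisfied: $\Sigma'\in\mathcal{K}(\Omega)$ is closed arcwise connected (Remark~\ref{remark 3.1} again, using $\mathcal{H}^{1}(\Sigma')<+\infty$); $B_{r_{1}}(x_{0})\subset\Omega$ and $r_{1}\leq\overline{r}$ come from the assumptions of Proposition~\ref{prop: 9.6}; $2r\leq r_{1}$ since $r\leq r_{1}/10$; the flatness $\beta_{\Sigma'}(x_{0},s)\leq\varepsilon_{0}$ for every $s\in[r,r_{1}]$ was just established; and $\Sigma'\setminus B_{r_{1}}(x_{0})=\Sigma\setminus B_{r_{1}}(x_{0})\neq\emptyset$, thanks to $\Sigma'\Delta\Sigma\subset\overline{B}_{r}(x_{0})\subset B_{r_{1}}(x_{0})$ and $r_{1}<\diam(\Sigma)/2$. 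Lemma~\ref{Good_decay} thus delivers
\begin{equation*}
\int_{B_{r}(x_{0})}|\nabla u_{\Sigma'}|^{p}\diff x\leq C\left(\frac{r}{r_{1}}\right)^{1+b}\int_{B_{r_{1}}(x_{0})}|\nabla u_{\Sigma'}|^{p}\diff x+Cr^{1+b}.
\end{equation*}
Dividing by $r$, inserting the bound from the previous paragraph and taking the supremum over all admissible $\Sigma'$ at scale $r$ produces \eqref{9.14}. The only genuinely delicate point is the dual admissibility of $\Sigma'$ at both scales $r$ and $r_{1}$ together with the propagation of flatness to the intermediate scales, and this is exactly what Proposition~\ref{prop: 9.4} was designed to provide; everything else amounts to bookkeeping of constants and exponents.
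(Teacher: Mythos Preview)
Your proof is correct and follows essentially the same route as the paper: both arguments pivot on Proposition~\ref{prop: 9.4} to propagate flatness of the competitor to all intermediate scales (so that Lemma~\ref{Good_decay} applies) and to ensure $\beta_{\Sigma'}(x_{0},r_{1})\leq\tau$ (so that $\Sigma'$ is admissible at scale $r_{1}$). The only cosmetic difference is that the paper first invokes Remark~\ref{rem: 9.3} to pick a maximizer $\Sigma_{r}$ and works with it directly, whereas you handle an arbitrary admissible $\Sigma'$ and take the supremum at the end; these are equivalent.
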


\begin{proof} Every ball in this proof is centered at $x_{0}$. By Remark~\ref{remark 3.1}, $\Sigma$ is arcwise connected. From Remark~\ref{rem: 9.3} it follows that there is $\Sigma_{r}\subset \overline{\Omega}$ realizing the supremum in the definition of $w^{\tau}_{\Sigma}(x_{0},r)$ which, by Remark~\ref{remark 3.1}, is arcwise connected. In addition, according to Proposition~\ref{prop: 9.4},
\[
\beta_{\Sigma_{r}}(x_{0}, r_{1}) \leq \tau \,\ \text{and} \,\ \beta_{\Sigma_{r}}(x_{0}, s) \leq \varepsilon_{0} \,\ \text{for all}\,\ s\in [r, r_{1}].
\]
This allows us to apply Lemma~\ref{Good_decay} to $u_{\s_r}$, which yields

\begin{align*}
w^{\tau}_{\s}(x_{0}, r) &=\frac{1}{r}\int_{B_{r}}|\nabla u_{\Sigma_{r}}|^{p}\diff x \\
&\leq C \bigl(\frac{r}{r_{1}}\bigr)^{b}\frac{1}{r_1}\int_{B_{r_{1}}}|\nabla u_{\Sigma_{r}}|^{p}\diff x + Cr^{b}\\ 
& \leq  C  \bigl(\frac{r}{r_{1}}\bigr)^{b} w^{\tau}_{\s}(x_{0},r_{1}) + Cr^{b}.
\end{align*}
 Notice that  to obtain the last inequality we have used the definition of $w^{\tau}_{\s}(x_{0},r_{1})$ and the fact that  $\beta_{\Sigma_{r}}(x_{0}, r_{1}) \leq \tau$. 
\end{proof}

Now we control a defect of minimality via $w^{\tau}_{\Sigma}$.

\begin{prop} \label{Defofminimality} Let $p\in (1,+\infty)$ and $f \in L^{q}(\Omega)$ with $q>q_{1}$, where $q_{1}$ is defined in~(\ref{qrestrict}), and let $\varepsilon_{0}\in (0,1/2),\ b,\, \overline{r}>0$ be the constants of Lemma~\ref{Good_decay}. Let $\s \subset \overline{\Omega}$ be closed and connected, $\mathcal{H}^{1}(\Sigma)<+\infty$ and $B_{r_{1}}(x_{0})\subset \Omega$ with $r_{1} \in (0, \min\{\overline{r},\diam(\s)/2\})$. Suppose that $\tau \in (0, \frac{\varepsilon_{0}}{6}]$  and 
			\begin{equation*}
			\beta_{\s}(x_{0}, r_{1}) \leq \varepsilon
			\end{equation*}
			for some $\varepsilon \in (0, \frac{\tau}{20}]$. Then there is a constant $C>0$, possibly depending only on $p, q_{0}, q,  \|f\|_{q}, |\Omega|$, such that if $r \in [\frac{2\varepsilon r_{1}}{\tau}, \frac{r_{1}}{10}]$, then for any closed connected set $\Sigma^{\prime} \subset \overline{\Omega}$ satisfying $\Sigma^{\prime} \Delta \Sigma \subset \overline{B}_{r}(x_{0})$,\,  $\mathcal{H}^{1}(\Sigma^{\prime}) \leq 100 \mathcal{H}^{1}(\Sigma)$ and $\beta_{\Sigma^{\prime}}(x_{0},r) \leq \tau$, 
			\begin{equation}
			E_{p}(u_{\s})-E_{p}(u_{\Sigma^{\prime}})\leq Cr \Bigl(\frac{r}{r_{1}}\Bigr)^{b} w^{\tau}_{\s}(x_{0},r_{1}) + Cr^{1+b}. \label{9.16}
			\end{equation}

\end{prop}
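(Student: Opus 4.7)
The plan is to bound the defect of minimality using three successive ingredients: Corollary~\ref{cor: 7.3}, Lemma~\ref{Good_decay} applied to $u_{\Sigma'}$, and the definition of $w^\tau_\Sigma(x_0,r_1)$. The key step that makes everything fit together is Proposition~\ref{prop: 9.4}, which will propagate the flatness of $\Sigma'$ from the scale $r$ (where it is given as a hypothesis) up to the scale $r_1$.

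I will begin by checking admissibility of $\Sigma'$ in Corollary~\ref{cor: 7.3}. Since $\mathcal{H}^{1}(\Sigma')\le 100\mathcal{H}^{1}(\Sigma)<+\infty$, Remark~\ref{remark 3.1} implies that both $\Sigma$ and $\Sigma'$ are arcwise connected; the flatness $\beta_{\Sigma'}(x_0,r)\le\tau<1$ will yield $\Sigma'\cap B_{r}(x_0)\ne\emptyset$; and $r_1<\diam(\Sigma)/2$ together with arcwise connectedness will give $\Sigma'\setminus B_{r_1}(x_0)\ne\emptyset$. With these in hand, applying Corollary~\ref{cor: 7.3} with the choice (in its notation) $r_0=r$, $r_1=r_1$ and free variable equal to $r$ (allowed because $r\le r_1/10\le r_1/2$) will give
\begin{equation*}
E_{p}(u_\Sigma)-E_{p}(u_{\Sigma'})\le C\int_{B_{2r}(x_0)}|\nabla u_{\Sigma'}|^{p}\diff x+Cr^{2+p'-2p'/q}.
\end{equation*}

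Next, I will invoke Proposition~\ref{prop: 9.4} to propagate the flatness of $\Sigma'$. Part (i) will give
\begin{equation*}
\beta_{\Sigma'}(x_0,r_1)\le\frac{5\tau r}{r_1}+\varepsilon\le\frac{\tau}{2}+\frac{\tau}{20}<\tau,
\end{equation*}
so that $\Sigma'$ becomes admissible in the definition of $w^\tau_\Sigma(x_0,r_1)$, yielding $\int_{B_{r_1}(x_0)}|\nabla u_{\Sigma'}|^{p}\diff x\le r_1\, w^\tau_\Sigma(x_0,r_1)$. Part (ii) will give $\beta_{\Sigma'}(x_0,s)\le 6\tau\le\varepsilon_0$ for every $s\in[r,r_1]$, which is exactly the flatness hypothesis needed to apply Lemma~\ref{Good_decay} to $u_{\Sigma'}$ on the interval $[r,r_1]$ (the remaining hypotheses $B_{r_1}(x_0)\subset\Omega$, $r_1\le\overline{r}$ and $\Sigma'\setminus B_{r_1}(x_0)\ne\emptyset$ being immediate). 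Using the lemma at the single radius $s=2r\in[r,r_1]$ and combining with the previous bound will then give
\begin{equation*}
\int_{B_{2r}(x_0)}|\nabla u_{\Sigma'}|^{p}\diff x\le C\Bigl(\frac{r}{r_1}\Bigr)^{1+b}r_1\,w^\tau_\Sigma(x_0,r_1)+Cr^{1+b}=Cr\Bigl(\frac{r}{r_1}\Bigr)^{b}w^\tau_\Sigma(x_0,r_1)+Cr^{1+b}.
\end{equation*}

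Substituting this into the Corollary estimate will produce (\ref{9.16}) modulo the residual error $Cr^{2+p'-2p'/q}$, which is absorbed into $Cr^{1+b}$ because the choice of $b$ in Lemma~\ref{Good_decay} enforces $b\le\alpha(q)/2$ and the inequality $\alpha(q)/2\le 1+p'-2p'/q$ can be verified by a direct computation using $q>q_1$. The only real difficulty is the propagation of flatness from scale $r$ to scales up to $r_1$, which is exactly what Proposition~\ref{prop: 9.4} provides; the numerical constraints $\tau\le\varepsilon_0/6$, $\varepsilon\le\tau/20$ and $r\le r_1/10$ in the statement have been calibrated precisely so that $6\tau\le\varepsilon_0$ and $5\tau r/r_1+\varepsilon\le\tau$.
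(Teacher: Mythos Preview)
Your proof is correct and follows essentially the same approach as the paper's: apply Corollary~\ref{cor: 7.3} to bound the defect, invoke Proposition~\ref{prop: 9.4} to propagate the flatness of $\Sigma'$ from scale $r$ up to all scales in $[r,r_1]$, use Lemma~\ref{Good_decay} on $u_{\Sigma'}$ at radius $2r$, and finish via the definition of $w^\tau_\Sigma(x_0,r_1)$. You are in fact slightly more explicit than the paper in verifying the hypotheses of Corollary~\ref{cor: 7.3} and in justifying why the residual term $r^{2+p'-2p'/q}$ is dominated by $r^{1+b}$.
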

\begin{proof} Every ball in this proof is centered at $x_{0}$. By Remark~\ref{remark 3.1}, $\Sigma$ and $\Sigma^{\prime}$ are arcwise connected and by Corollary~\ref{cor: 7.3},
	\begin{equation}
	E_{p}(u_{\s})-E_{p}(u_{\s^{\prime}})\leq C\int_{B_{2r}}|\nabla u_{\s^{\prime}}|^{p}\diff x + Cr^{2+p^{\prime}-\frac{2p^{\prime}}{q}}, \label{9.17}
	\end{equation}
	where $C= C(p, q_{0}, q, \|f\|_{q})>0$. On the other hand, by Proposition~\ref{prop: 9.4}, 
	\[
	\beta_{\Sigma^{\prime}}(x_{0}, r_{1}) \leq \tau \,\ \text{and} \,\ \beta_{\Sigma^{\prime}}(x_{0}, s) \leq \varepsilon_{0} \,\ \text{for all}\,\ s\in [r, r_{1}].
	\]
	This allows to apply Lemma~\ref{Good_decay} to $u_{\Sigma^{\prime}}$ and obtain that
	\begin{equation} \label{Estimate 6.17}
	\int_{B_{2r}}|\nabla u_{\Sigma^{\prime}}|^{p}\diff x \leq C\Bigl(\frac{2r}{r_{1}}\Bigr)^{1+b}\int_{B_{r_{1}}}|\nabla u_{\Sigma^{\prime}}|^{p}\diff x + C(2r)^{1+b},
	\end{equation}
	where $C=C(p, q_{0}, q, \|f\|_{q}, |\Omega|)>0$. Hereinafter in this proof, $C$ denotes a positive constant that can depend only on $p,\, q_{0}$, $q,\, \|f\|_{q},\, |\Omega|$ and can be different from line to line. Using (\ref{9.17}), (\ref{Estimate 6.17}) and the fact that $r^{2+p^{\prime}-\frac{2p^{\prime}}{q}} < r^{1+b}$ (because $r<1$, $b < 1+p^{\prime}-2{p^{\prime}}/q$), we deduce the following chain of estimates
	\begin{align*}
	E_{p}(u_{\s})-E_{p}(u_{\s^{\prime}}) & \leq C\Bigl(\frac{r}{r_{1}}\Bigr)^{1+b}\int_{B_{r_{1}}}|\nabla u_{\Sigma^{\prime}}|^{p}\diff x + Cr^{1+b} \\
	& \leq Cr\Bigl(\frac{r}{r_{1}}\Bigr)^{b}\frac{1}{r_{1}}\int_{B_{r_{1}}}|\nabla u_{\s^{\prime}}|^{p}\diff x +Cr^{1+b} \\
	& \leq Cr\Bigl(\frac{r}{r_{1}}\Bigr)^{b}w^{\tau}_{\Sigma}(x_{0},r_{1}) + Cr^{1+b}, 
	\end{align*}
	where to obtain the last estimate we have used the  definition of $w^{\tau}_{\Sigma}(x_{0},r_{1})$ and the fact that $\beta_{\Sigma^{\prime}}(x_{0},r_{1})\leq \tau$.
\end{proof}

\subsection{Density control}

\begin{prop} \label{prop: 9.8} Let $p\in (1,+\infty)$ and $f \in L^{q}(\Omega)$ with $q > q_1$, where $q_1$ is defined in~(\ref{qrestrict}), and let $\varepsilon_{0} \in (0,1/2),\, b,\,  \overline{r}>0$ be the constants of Lemma~\ref{Good_decay} and $C>0$ be the constant of Proposition~\ref{Defofminimality}.  Let $\s \subset \overline{\Omega}$ be a solution of Problem~\ref{problemMain}, $\tau\in(0, \frac{\varepsilon_{0}}{6}]$, $x_{0} \in \Sigma$ and $0<r_{1}<\min\{\overline{r},\diam(\s)/2 \}$ be such that   $B_{r_{1}}(x_{0}) \subset \Omega$. Then the following assertions hold. 
\begin{enumerate}[label=(\roman*)]
\item If 
\begin{equation} \label{9.18}
\beta_{\s}(x_{0}, r_{1})\leq \varepsilon
\end{equation}
for some $\varepsilon \in (0, \frac{\tau^{2}}{400}]$,  then for all $r \in [\frac{\tau r_{1}}{4}, \frac{r_{1}}{10}]$, 
\begin{equation} \label{9.19}
\mathcal{H}^{1}(\s \cap B_{r}(x_{0})) \leq 2r + 5\beta_{\Sigma}(x_{0}, r) r+  Cr\bigl(\frac{r}{r_{1}}\bigr) ^{b}w^{\tau}_{\s}(x_{0}, r_{1})+ C r^{1+b}.
\end{equation}
\item Assume, in addition, that the estimate
\begin{equation} 
w^{\tau}_{\Sigma}(x_{0}, r_{1}) + r_1^{b} \leq \frac {\tau}{300C} \label{9.20}
\end{equation}
is valid. Then 
\begin{equation}
\mathcal{H}^{1}(\{s \in [\tau r_{1}/4, \tau r_{1}/2]: \# \s \cap \partial B_{s}(x_{0})= 2 \}) > \tau r_{1}/5. \label{9.21}
\end{equation}
\item Let $(\ref{9.18})$ and $(\ref{9.20})$ hold and $r \in [\tau r_{1}/4, \tau r_{1}/2]$ be such that $\# \s \cap \partial B_{r}(x_{0})=2$. Then 
\begin{enumerate}[label=(iii-\arabic*)]
\item the two points of $\s \cap \partial B_{r}(x_{0})$ belong to two different connected components of $\partial B_{r}(x_{0})\cap \{y: dist(y,P_{0})\leq \beta_{\s}(x_{0},r)r\}$, where $P_{0}$ is a line realizing the infimum in the definition of $\beta_{\s}(x_{0},r)$.
\item $\s \cap \overline{B}_{r}(x_{0})$ is arcwise connected.
\item If $\{z_{1}, z_{2}\}= \s \cap \partial  B_{r}(x_{0})$, then 
\begin{equation} \label{9.22}
\begin{split}
\mathcal{H}^{1}(\s\cap B_{r}(x_{0}))\leq |z_{2} -z_{1}| +  Cr\bigl(\frac{r}{r_{1}}\bigr)^{b}w^{\tau}_{\Sigma}(x_{0}, r_{1})+Cr^{1+b}.
\end{split}
\end{equation} 
\end{enumerate}
\end{enumerate}
\end{prop}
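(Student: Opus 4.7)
For \emph{(i)}, the plan is to use the explicit competitor $\Sigma^{\prime}=(\Sigma\setminus B_r(x_0))\cup[a,b]\cup W$, where $[a,b]=P_0\cap\overline{B}_r(x_0)$ is the diameter along an affine line $P_0$ realizing $\beta_\Sigma(x_0,r)$ and $W=\partial B_r(x_0)\cap\{y:\dist(y,P_0)\le\beta_\Sigma(x_0,r)r\}$ is the union of the two boundary arcs in which $\Sigma\cap\partial B_r(x_0)$ is forced to lie by flatness. Proposition~\ref{Inequality} applied at scale $r/r_1\ge\tau/4$ together with $\varepsilon\le\tau^2/400$ gives $\beta_\Sigma(x_0,r)\le\tau/50$, so $\beta_{\Sigma^{\prime}}(x_0,r)\le\tau$; moreover $\mathcal{H}^{1}(\Sigma^{\prime})\le 100\mathcal{H}^{1}(\Sigma)$ is automatic (only length of order $r$ is added) and arcwise connectedness of $\Sigma^{\prime}$ holds because $W$ contains every exit point of $\Sigma$ from $B_r(x_0)$ and $[a,b]$ links the two arcs of $W$ across the center. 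Minimality of $\Sigma$ yields $\mathcal{H}^{1}(\Sigma\cap B_r(x_0))-\mathcal{H}^{1}([a,b]\cup W)\le E_p(u_\Sigma)-E_p(u_{\Sigma^{\prime}})$; then bounding $\mathcal{H}^{1}([a,b]\cup W)\le 2r+4r\arcsin(\beta_\Sigma(x_0,r))\le 2r+5\beta_\Sigma(x_0,r)r$ and controlling the energy gap by Proposition~\ref{Defofminimality} delivers \eqref{9.19}.

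For \emph{(ii)}, I will combine \emph{(i)} at $r=\tau r_1/2$ with a coarea-type lower bound. The upper bound \eqref{9.19} together with \eqref{9.20} gives $\mathcal{H}^{1}(\Sigma\cap B_{\tau r_1/2}(x_0))\le\tau r_1+c\tau^2r_1$ for an explicit small $c$. On the other hand, the coarea inequality $\mathcal{H}^{1}(\Sigma\cap B_{\tau r_1/2}(x_0))\ge\int_0^{\tau r_1/2}\#(\Sigma\cap\partial B_s(x_0))\,ds$ will be combined with the claim that for a.e.\ $s\in[2\varepsilon r_1/\tau,\tau r_1/2]$ one has $\#(\Sigma\cap\partial B_s(x_0))\ge 2$: indeed, Proposition~\ref{Inequality} gives $\beta_\Sigma(x_0,s)\le\tau$, and the arcs of $\Sigma$ joining $x_0$ to the two points of $\Sigma$ that, by $\beta_\Sigma(x_0,r_1)\le\varepsilon$, lie within $\varepsilon r_1$ of the two far endpoints of $P_0\cap\overline{B}_{r_1}(x_0)$ must cross $\partial B_s(x_0)$ in the two distinct components of $\partial B_s(x_0)\cap\{\dist(\cdot,P_0)\le\tau s\}$. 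This leads to a lower bound $\ge\tau r_1-o(\tau r_1)+|B|$, where $B:=\{s\in[\tau r_1/4,\tau r_1/2]:\#(\Sigma\cap\partial B_s(x_0))\ge 3\}$; comparing with the upper bound gives $|B|<\tau r_1/20$, so $|A|\ge\tau r_1/4-|B|>\tau r_1/5$, which is \eqref{9.21}.

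For \emph{(iii-1)} I will reuse the geometric argument of the previous paragraph to identify $z_1,z_2$ as lying in the two distinct arc components of $\partial B_r(x_0)\cap\{y:\dist(y,P_0)\le\beta_\Sigma(x_0,r)r\}$, since each $z_i$ is reached by a sub-arc of $\Sigma$ heading towards one of the two far endpoints of $P_0\cap\overline{B}_{r_1}(x_0)$. For \emph{(iii-2)} I will invoke Theorem~\ref{thm: 8.1} (absence of loops), which endows $\Sigma$ with a topological tree structure and produces a \emph{unique} simple arc $\gamma\subset\Sigma$ from $z_1$ to $z_2$. Since $\gamma\cap\partial B_r(x_0)\subset\{z_1,z_2\}$, the arc lies either entirely in $\overline{B}_r(x_0)$ or entirely in its complement; the latter is excluded because the portion of $\gamma$ in $\overline{B}_{r_1}(x_0)$ would be confined to the thin strip $\{\dist(\cdot,P_0)\le\varepsilon r_1\}\cap(\overline{B}_{r_1}(x_0)\setminus B_r(x_0))$, whose two connected components are separated by \emph{(iii-1)}, while any excursion of $\gamma$ outside $\overline{B}_{r_1}(x_0)$ can be replaced in a competitor by the much shorter inside segment $[z_1,z_2]$, saving length of order $\pi r-|z_1-z_2|>0$ and contradicting minimality. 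For \emph{(iii-3)} I will use $\Sigma^{\prime\prime}=(\Sigma\setminus B_r(x_0))\cup[z_1,z_2]$, which is closed and arcwise connected (thanks to \emph{(iii-2)}, since the segment $[z_1,z_2]$ reconnects the branches of $\Sigma\setminus B_r(x_0)$ touching $z_1$ and $z_2$), satisfies $\beta_{\Sigma^{\prime\prime}}(x_0,r)\le\tau$ because $[z_1,z_2]$ is $O(\beta_\Sigma(x_0,r)r)$-close to $P_0$, and $\mathcal{H}^{1}(\Sigma^{\prime\prime})\le 100\mathcal{H}^{1}(\Sigma)$; minimality then gives $\mathcal{H}^{1}(\Sigma\cap B_r(x_0))\le|z_1-z_2|+(E_p(u_\Sigma)-E_p(u_{\Sigma^{\prime\prime}}))$, and Proposition~\ref{Defofminimality} controls the last difference to produce \eqref{9.22}.

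The hardest step will clearly be \emph{(iii-2)}: the tree structure coming from Theorem~\ref{thm: 8.1} must be combined with the thin-strip geometry and a delicate competitor argument to rule out the possibility that the simple arc from $z_1$ to $z_2$ inside $\Sigma$ detours outside $\overline{B}_r(x_0)$, in particular via an excursion that leaves $\overline{B}_{r_1}(x_0)$ and returns on the opposite side.
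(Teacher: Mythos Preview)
Your arguments for (i) and (iii-3) are correct and essentially match the paper's.

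The real gap is in (ii) and (iii-1), where you rely on a purely geometric claim: that arcs in $\Sigma$ running from $x_0$ towards points near each of the two far endpoints of $P_0\cap\overline{B}_{r_1}(x_0)$ must cross $\partial B_s(x_0)$ in the two distinct components of the narrow strip around $P_0$. This does not follow from flatness alone. The hypothesis $\beta_\Sigma(x_0,r_1)\le\varepsilon$ only confines $\Sigma$ to the $\varepsilon r_1$-strip \emph{inside} $\overline{B}_{r_1}(x_0)$; an arc in $\Sigma$ may exit $B_{r_1}(x_0)$ through one side of the strip, travel uncontrolled outside, and re-enter through the other side, so that all its crossings of $\partial B_s(x_0)$ lie in a single component. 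You must use minimality. The paper does so via competitors: if $\#\Sigma\cap\partial B_s(x_0)=1$ for some $s\in[\tau r_1/299,\tau r_1/2]$, then $\Sigma':=\Sigma\setminus B_s(x_0)$ is closed and connected, and comparison yields $s\le\mathcal{H}^1(\Sigma\cap B_s(x_0))\le C r_1\bigl(w^\tau_\Sigma(x_0,r_1)+r_1^{b}\bigr)\le\tau r_1/300$, a contradiction. This bounds the set $E_1:=\{s:\#\Sigma\cap\partial B_s=1\}$, after which your coarea comparison finishes (ii). For (iii-1), if both $z_1,z_2$ lie in a single arc $V$ of $\partial B_r(x_0)\cap\{\dist(\cdot,P_0)\le\beta_\Sigma(x_0,r)r\}$, then $\Sigma':=(\Sigma\setminus B_r(x_0))\cup V$ is a connected competitor with $\mathcal{H}^1(V)\le(5/2)\beta_\Sigma(x_0,r)r$, and comparison forces $\mathcal{H}^1(\Sigma\cap B_r(x_0))<\tau r_1/4\le r$, again a contradiction.

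For (iii-2), your route through Theorem~\ref{thm: 8.1} and a competitor replacing ``excursions'' of $\gamma$ is much heavier than needed, and your sketch leaves unclear why the proposed competitor stays connected and why the claimed length saving of order $\pi r-|z_1-z_2|$ is correct or dominates the energy penalty. The paper instead quotes \cite[Lemma~5.13]{opt}: when $\#\Sigma\cap\partial B_r(x_0)=2$, if $\Sigma\cap\overline{B}_r(x_0)$ is not arcwise connected then $\Sigma\setminus B_r(x_0)$ \emph{is}. Taking $\Sigma':=\Sigma\setminus B_r(x_0)$ as competitor then gives $\mathcal{H}^1(\Sigma\cap B_r(x_0))\le\tau r_1/300$, contradicting $\mathcal{H}^1(\Sigma\cap B_r(x_0))\ge r\ge\tau r_1/4$.
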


\begin{rem}\label{rem: 9.9}  Following \cite{opt}, if the situation of item $(iii$-1) occurs, we say that the two points lie ``on both sides''.
\end{rem}

\begin{proof} Step 1. We first prove $(i)$.   By (\ref{9.1}) and (\ref{9.18}), for all $r \in [\frac{\tau r_{1}}{4}, \frac{r_{1}}{10}]$,
\begin{equation}
\beta_{\Sigma}(x_{0},r) \leq \frac{8}{\tau}\beta_{\Sigma}(x_{0}, r_{1})\leq \frac{\tau}{50}. \label{9.23}
\end{equation}
Fix an arbitrary $r \in [\frac{\tau r_{1}}{4}, \frac{r_{1}}{10}]$. Let $P_{0}$ realize the infimum in the definition of $\beta_{\s}(x_{0}, r)$ and let $\xi_{1}$ and $\xi_{2}$ be the two points of $\partial B_{r}(x_{0}) \cap P_{0}$. Define $W$ and $\Sigma^{\prime}$ by 
\[
W:=\partial B_{r}(x_{0}) \cap \{y: dist(y, P_{0})\leq \beta_{\s}(x_{0}, r)r \}, \,\ \,\  \Sigma^{\prime}:=(\s \backslash B_{r}(x_{0}))\cup W \cup [\xi_{1}, \xi_{2}]. 
\] 
Then, $\s^{\prime}\in \mathcal{K}(\Omega),\, \Sigma \Delta \Sigma^{\prime} \subset \overline{B}_{r}(x_{0})$ and from (\ref{9.23}) it follows that  $\beta_{\Sigma^{\prime}}(x_{0}, r) \leq \frac{\tau}{50}.$ Furthermore, since $\Sigma$ is arcwise connected, compact and $r<\diam(\Sigma)/20$, it follows that $\mathcal{H}^{1}(\Sigma)>  20 r$ and then $\mathcal{H}^{1}(\Sigma^{\prime})<100 \mathcal{H}^{1}(\Sigma)$. Since $\Sigma^{\prime}$ is a competitor, 
\begin{equation*}
\mathcal{H}^{1}(\s) \leq \mathcal{H}^{1}(\Sigma^{\prime}) + E_{p}(u_{\s}) - E_{p}(u_{\Sigma^{\prime}})
\end{equation*}
and then, using Proposition~\ref{Defofminimality}, we get
\begin{eqnarray}
\mathcal{H}^{1}(\s \cap B_{r}(x_{0})) &\leq&  2r + \mathcal{H}^{1}(W) + E_{p}(u_{\s}) - E_{p}(u_{\s^{\prime}}) \notag \\
& \leq& 2r +\mathcal{H}^{1}(W) + Cr\bigl(\frac{r}{r_{1}}\bigr)^{b}w^{\tau}_{\Sigma}(x_{0}, r_{1}) + C r^{1+b}. \label{final1}
\end{eqnarray}
On the other hand, since $\arcsin^{\prime}(t)\leq\frac{10}{\sqrt{99}}$ for all $t \in [0,\frac{1}{10}]$ and by (\ref{9.23}), $\beta_{\Sigma}(x_{0}, r)<\frac{1}{10},$
\begin{equation}
\mathcal{H}^{1}(W) \leq 4r\arcsin(\beta_{\Sigma}(x_{0}, r))\leq 5r \beta_{\s}(x_{0}, r). \label{Estimate 7.29}
\end{equation}
Combining \eqref{final1} and (\ref{Estimate 7.29}), we deduce $(i)$. \\
Step 2. We prove now $(ii)$. Let us consider the next three sets
\begin{equation*}
\begin{split}
E_{1} & := \{ s \in (0, \tau r_{1}/2]: \# \s \cap \partial B_{s}(x_{0}) = 1 \}, \,\ E_{2}:= \{ s \in (0, \tau r_{1}/2] : \# \s \cap \partial B_{s}(x_{0}) = 2 \}, \\
E_{3} & := \{s \in (0, \tau r_{1}/2]: \# \s \cap \partial B_{s} (x_{0}) \geq 3 \}.
\end{split}
\end{equation*}
We claim that  either $E_{1}=\emptyset$ or $E_{1}\subset  (0, \tau r_{1}/299).$ For the sake of contradiction, assume that there is  $s \in [\tau r_{1}/299, \tau r_{1}/2]$ such that $\#\s \cap \partial B_{s}(x_{0}) = 1 $. Then the set
\[
\Sigma^{\prime}=\Sigma \backslash B_{s}(x_{0}),
\]
  would be arcwise connected, $\Sigma^{\prime} \Delta \s \subset B_{s}(x_{0})$, $\mathcal{H}^{1}(\Sigma^{\prime})<\mathcal{H}^{1}(\Sigma)$ and
  \begin{equation} \label{conditions}
   \beta_{\Sigma^{\prime}}(x_{0},r_{1})\leq \tau/ 2 + \varepsilon <\tau.
  \end{equation}  
  Since $\Sigma^{\prime}$ is a competitor, $\mathcal{H}^{1}(\Sigma) \leq \mathcal{H}^{1}(\Sigma^{\prime}) + E_{p}(u_{\Sigma})-E_{p}(u_{\Sigma^{\prime}})$. It also holds the estimate $s \leq \mathcal{H}^{1}(\Sigma \cap B_{s}(x_{0}))$, because $s<\diam(\Sigma)/2$, $x_{0}\in \s$ and $\Sigma$ is arcwise connected. Thus
\[
s \leq \mathcal{H}^{1}(\Sigma\cap B_{s}(x_{0})) \leq E_{p}(u_{\Sigma})-E_{p}(u_{\Sigma^{\prime}}) \numberthis.\label{9.26}
\]
Notice that, by assumption, the estimate (\ref{9.16}) holds with $C$, but looking closer at the proof of Proposition~\ref{Defofminimality}, we observe that (\ref{7.5}) in Corollary~\ref{cor: 7.3} also holds with $C$. Then, using (\ref{9.26}), Corollary~\ref{cor: 7.3} and the fact that $r_{1}^{2+p^{\prime}-\frac{2p^{\prime}}{q}} < r_{1}^{1+b}$ (because $r_{1}<1$ and $b<1+p^{\prime}-\frac{2p^{\prime}}{q}$), we obtain the following chain of estimates
\begin{align*}
s \leq \mathcal{H}^{1}(\Sigma\cap B_{s}(x_{0})) &\leq E_{p}(u_{\Sigma})-E_{p}(u_{\Sigma^{\prime}}) \leq C \int_{B_{2s}(x_{0})}|\nabla u_{\Sigma^{\prime}}|^{p}\diff x + C s^{2+p^{\prime}-\frac{2p^{\prime}}{q}}\\
& \leq C \int_{B_{r_{1}}(x_{0})}|\nabla u_{\Sigma^{\prime}}|^{p}\diff x + C r^{1+b}_{1} \\
& \leq C r_{1} w^{\tau}_{\s}(x_{0}, r_{1}) + C r_{1} ^{1+b} \,\ \text{(by (\ref{conditions}) and the definition of $w^{\tau}_{\s}(x_{0}, r_{1})$)}\\
& \leq \tau r_{1}/300\,\ \text{(by (\ref{9.20}))},
\end{align*}
that leads to a contradiction with the fact that  $s\geq\tau r_{1}/299$. Thus, either $E_{1}=\emptyset$, or
\begin{equation}
E_{1}\subset (0,\tau r_{1}/299).\label{9.27}
\end{equation}
Next, using Eilenberg inequality (see \cite[2.10.25]{Federer}), we obtain
\begin{equation}
\mathcal{H}^{1}(\Sigma \cap B_{\tau r_{1}/2}(x_{0})) \geq \int^{\tau r_{1}/2}_{0}  \# \s \cap \partial B_{s}(x_{0}) \diff s.\label{9.28}
\end{equation}
On the other hand, using (\ref{9.19}) with $r=\tau r_{1}/2$, (\ref{9.20}), the fact that 
\[
\beta_{\Sigma}(x_{0}, \tau r_{1}/2) \leq 4\varepsilon /\tau \leq \tau/100\] and the fact that $\tau\leq \varepsilon_{0}/6$, we get
\begin{equation}
\mathcal{H}^{1}(\Sigma \cap B_{\tau r_{1}/2}(x_{0})) < \tau r_{1} + \tau r_{1}/150.\label{9.29}
\end{equation}
Then, using (\ref{9.27})-(\ref{9.29}), we obtain
\begin{align*}
\tau r_{1} + \tau r_{1} /150 & >\mathcal{H}^{1} (E_{1}) + 2 \mathcal{H}^{1} (E_{2}) + 3 \mathcal{H}^{1}(E_{3}) \\
& = \mathcal{H}^{1}(E_{1}) + 2 (\tau r_{1}/2 - \mathcal{H}^{1}(E_{1}) - \mathcal{H}^{1}(E_{3})) + 3 \mathcal{H}^{1}(E_{3})\\
& = - \mathcal{H}^{1}(E_{1}) + \tau r_{1} + \mathcal{H}^{1}(E_{3}) \\
& \geq   -\tau r_{1}/299+ \tau r_{1} + \mathcal{H}^{1}(E_{3}),
\end{align*}
this yields
\begin{equation}
\mathcal{H}^{1}(E_{3}) <  \tau r_{1}/99.\label{9.30}
\end{equation} Using (\ref{9.27}) and (\ref{9.30}), we deduce that
\begin{equation*}
\mathcal{H}^{1}(E_{2}\cap [\tau r_{1}/4, \tau r_{1}/2])  > \tau r_{1}/5,
\end{equation*}
thereby proving $(ii)$.
\\
Step 3. We prove $(iii)$. Let $r \in E_{2} \cap [\tau r_{1}/4, \tau r_{1}/2]$. Assume that $(iii$-1) does not hold for $r$. Then we can take as a competitor the set 
\begin{equation*}
\Sigma^{\prime}=\s \backslash B_{r}(x_{0}) \cup V,
\end{equation*}
where $V$ is the connected component of $\partial B_{r}(x_{0}) \cap \{y:dist(y, P_{0})\leq \beta_{\s}(x_{0}, r)r\}$ such that $\s \cap \partial B_{r}(x_{0}) \subset V$.
So, $\mathcal{H}^{1}(\s \cap B_{r}(x_{0})) \leq \mathcal{H}^{1}(V) + E_{p}(u_{\s}) - E_{p}(u_{\s^{\prime}})$. Arguing as in the proof of the fact that $E_{1}\subset (0, \tau r_{1}/299)$ in Step 2, we deduce that
\begin{equation*}
E_{p}(u_{\s}) - E_{p}(u_{\s^{\prime}})  \leq \tau r_{1}/300. 
\end{equation*}
On the other hand, as in Step 1 we have that 
\[
\mathcal{H}^{1}(V)\leq (5/2)r\beta_{\s}(x_{0},r) \leq \tau^{2} r_{1}/40.
\]
But then 
\[
\mathcal{H}^{1}(\Sigma \cap B_{r}(x_{0})) <  \tau r_{1}/150
\]
that leads to a contradiction because $\mathcal{H}^{1}(\s \cap B_{r}(x_{0})) \geq r \geq \tau r_{1}/4$, therefore $(iii$-1) holds. Next, assume that $\s \cap \overline{B}_{r}(x_{0})$ is not arcwise connected. Then, from \cite[Lemma 5.13]{opt}, it follows that $\s \backslash B_{r}(x_{0})$ is arcwise connected. Thus, taking the set $\s^{\prime}=\s \backslash B_{r}(x_{0})$ as a competitor, we get
\[
\mathcal{H}^{1}(\Sigma \cap B_{r}(x_{0})) \leq \tau r_{1}/300,
\]
that leads to a contradiction with the fact that $\mathcal{H}^{1}(\s \cap B_{r}(x_{0})) \geq \tau r_{1}/4$. So $(iii$-2) holds. Since $\s \cap \partial B_{r}(x_{0})= \{z_{1}, z_{2}\}$, where $z_{1}, z_{2}$ lie ``on both sides" and $[z_{1},z_{2}]$ is sufficiently close, in $\overline{B}_{r}(x_{0})$ and in the Hausdorff distance, to a diameter of $\overline{B}_{r}(x_{0})$, we observe that the set $\s \backslash B_{r}(x_{0}) \cup [z_{1}, z_{2}]$ is a competitor for $\s$ and  (\ref{9.22}) holds. This proves $(iii)$ and concludes the proof.
\end{proof}

\begin{subsection}{Control of the flatness}
We recall the following standard height estimate (see \cite[Lemma 5.14]{opt}), which we shall use so as to establish a control on $\beta_{\s}$ via $w^{\tau}_{\s}$.
\begin{lemma}\label{lem: 9.11} Let $\Gamma$ be an arc in $\overline{B}_{r}(x_{0})$ satisfying $\beta_{\Gamma}(x_{0}, r) \leq 1/10$, and which connects two points $\xi_{1},\, \xi_{2} \in \partial B_{r}(x_{0})$ lying on ``both sides" (as defined in Remark~\ref{rem: 9.9}). Then
\begin{equation}
\max_{y \in \Gamma}\dist(y, [\xi_{1}, \xi_{2}]) \leq (2r (\mathcal{H}^{1}(\Gamma) - |\xi_{2}-\xi_{1}|))^{\frac{1}{2}}.\label{9.31}
\end{equation}
\end{lemma}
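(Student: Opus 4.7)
The plan is to prove a pointwise version: for every $y\in\Gamma$,
\begin{equation*}
\dist(y,[\xi_{1},\xi_{2}])^{2}\leq 2r\bigl(\mathcal{H}^{1}(\Gamma)-|\xi_{2}-\xi_{1}|\bigr),
\end{equation*}
from which \eqref{9.31} follows by taking the supremum over $y\in\Gamma$. First I would use that $\Gamma$ is an arc to split it at $y$ into two subarcs $\Gamma_{1}$ (from $\xi_{1}$ to $y$) and $\Gamma_{2}$ (from $y$ to $\xi_{2}$). Since the length of each $\Gamma_{i}$ bounds the Euclidean distance between its endpoints, this yields
\begin{equation*}
\mathcal{H}^{1}(\Gamma)=\mathcal{H}^{1}(\Gamma_{1})+\mathcal{H}^{1}(\Gamma_{2})\geq |y-\xi_{1}|+|y-\xi_{2}|.
\end{equation*}
Writing $c=|\xi_{1}-\xi_{2}|$ and $h=\dist(y,[\xi_{1},\xi_{2}])$, it therefore suffices to establish the elementary metric inequality $h^{2}\leq 2r\bigl(|y-\xi_{1}|+|y-\xi_{2}|-c\bigr)$.

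Next I would split into two cases according to the position of the orthogonal projection $q$ of $y$ onto the affine line through $\xi_{1}$ and $\xi_{2}$. If $q\in[\xi_{1},\xi_{2}]$, I set $\alpha=|q-\xi_{1}|$ and $\beta=|q-\xi_{2}|$, so that $\alpha+\beta=c$ and Pythagoras gives $|y-\xi_{i}|=\sqrt{|q-\xi_{i}|^{2}+h^{2}}$. Using the identity $\sqrt{t^{2}+h^{2}}-t=h^{2}/(\sqrt{t^{2}+h^{2}}+t)$ together with $\sqrt{t^{2}+h^{2}}=|y-\xi_{i}|\leq 2r$ (as $y,\xi_{i}\in\overline{B}_{r}(x_{0})$), each of the two square-root differences is bounded below by $h^{2}/(4r)$; summing gives $|y-\xi_{1}|+|y-\xi_{2}|-c\geq h^{2}/(2r)$, which is the desired inequality.

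In the remaining case $q$ lies strictly outside $[\xi_{1},\xi_{2}]$; suppose without loss of generality that it is beyond $\xi_{1}$ (the other case being symmetric). Then $h=|y-\xi_{1}|$ and the angle $\angle y\xi_{1}\xi_{2}$ is obtuse, so the law of cosines yields $|y-\xi_{2}|^{2}\geq h^{2}+c^{2}\geq c^{2}$. Hence $|y-\xi_{1}|+|y-\xi_{2}|-c\geq h$, and since $h\leq 2r$ (the segment $[\xi_{1},\xi_{2}]$ lying in $\overline{B}_{r}(x_{0})$), multiplying by $2r\geq h$ produces the claimed bound.

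I do not anticipate any serious obstacle: the argument is purely metric and reduces to Pythagoras, the law of cosines, and the fact that diameters in $\overline{B}_{r}(x_{0})$ are bounded by $2r$. The flatness hypothesis $\beta_{\Gamma}(x_{0},r)\leq 1/10$ and the ``both sides'' condition do not actually enter the proof; they presumably exist only to guarantee that the geometric configuration of interest (the segment $[\xi_{1},\xi_{2}]$ being close to a diameter of $\overline{B}_{r}(x_{0})$) is in force in the applications that invoke the lemma.
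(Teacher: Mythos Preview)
Your argument is correct. The paper itself does not prove Lemma~\ref{lem: 9.11}; it simply recalls the statement and refers to \cite[Lemma~5.14]{opt}. Your elementary metric proof---splitting $\Gamma$ at $y$, using $\mathcal{H}^{1}(\Gamma)\geq |y-\xi_{1}|+|y-\xi_{2}|$, and then bounding the excess $|y-\xi_{1}|+|y-\xi_{2}|-|\xi_{1}-\xi_{2}|$ from below by $h^{2}/(2r)$ via the rationalized identity $\sqrt{t^{2}+h^{2}}-t=h^{2}/(\sqrt{t^{2}+h^{2}}+t)$ and the diameter bound---is exactly the kind of argument one finds in the cited reference. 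Your observation that neither the flatness bound $\beta_{\Gamma}(x_{0},r)\leq 1/10$ nor the ``both sides'' condition is actually needed for the inequality is also correct: those hypotheses ensure the geometric configuration relevant to the applications (Proposition~\ref{prop: 9.13}), but the height estimate itself holds for any arc in $\overline{B}_{r}(x_{0})$ joining two boundary points.
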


In the next proposition we show that if $\beta_{\s}$ and $w_{\s}^{\tau}$ are small enough on some fixed scale, then they stay small on smaller scales.

\begin{prop}\label{prop: 9.13} Let $p \in (1,+\infty)$ and $f \in L^{q}(\Om)$  with $q> q_{1}$, where $q_{1}$ is defined in~(\ref{qrestrict}). Let $\s \subset \overline{\Omega}$ be a solution of Problem~\ref{problemMain}. Then there exist constants $0<\varepsilon_{1}<\varepsilon_{2}$ and $a, b, r_{0}>0$, and a constant $C=C(p, q_{0}, q, \|f\|_{q}, |\Omega|)>0$ such that whenever $x \in \s$ and $0<r<r_{0}$ satisfy ${B}_{r}(x) \subset \Omega$, 
	\begin{equation} 
	w^{\tau}_{\s}(x, r) \leq \varepsilon_{1}\,\ \text{and}\,\ \beta_{\s}(x,r) \leq \varepsilon_{2}\label{9.32}
	\end{equation}
	then
	\begin{enumerate}[label=(\roman*)]
		\item 
		\begin{equation}
				\beta_{\s}(x, ar) \leq C(w^{\tau}_{\s}(x, r))^{\frac{1}{2}} + C r^{\frac{b}{2}}.\label{9.33}
				\end{equation}
				
			\item 
			\begin{equation} \label{Sigma estimate}
			w^{\tau}_{\Sigma}(x, ar)\leq \frac{1}{2} w^{\tau}_{\Sigma}(x,r)+ C(ar)^{b}.
			\end{equation}
		\item For any $n \in \mathbb{N}$,
		\begin{equation}
		w^{\tau}_{\s}(x, a^{n}r)\leq \varepsilon_{1}\,\ \text{and}\,\ \beta_{\s}(x, a^{n}r) \leq \varepsilon_{2}.\label{9.34}
		\end{equation}
	\end{enumerate}
\end{prop}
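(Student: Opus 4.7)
The plan is to prove (i) using the density estimate of Proposition~\ref{prop: 9.8} combined with the height estimate Lemma~\ref{lem: 9.11}, to prove (ii) as a direct application of Proposition~\ref{prop: 9.6}, and then to obtain (iii) by a joint induction on the two quantities.

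First I would fix the constants. Set $\tau = \varepsilon_{0}/6$ and let $a\in (0,\min\{\tau/4,1/10\})$ be small enough that the constant $C$ from Proposition~\ref{prop: 9.6} satisfies $Ca^{b}\leq 1/2$. Then select $\varepsilon_{2}\in (0,\tau^{2}/400]$ and afterwards $\varepsilon_{1}>0$ small enough so that simultaneously $\varepsilon_{1}\leq \tau/(600C)$ (so that condition (\ref{9.20}) is available for $r_{1}=r$), $2\varepsilon_{2}/\tau \leq a$ (so that Proposition~\ref{prop: 9.6} applies at the ratio $a$), and $C\varepsilon_{1}^{1/2}\leq \varepsilon_{2}/2$. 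Finally pick $r_{0}>0$ small enough to guarantee $Cr_{0}^{b/2}\leq \varepsilon_{2}/2$, $Cr_{0}^{b}\leq \varepsilon_{1}/2$ and $Cr_{0}^{b}\leq \tau/(600C)$, and also $r_{0}<\min\{\overline{r},\diam(\s)/2\}$.

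For \textbf{part (i)}, under (\ref{9.32}) I would invoke Proposition~\ref{prop: 9.8}(ii) to extract a radius $s\in [\tau r/4,\tau r/2]$ with $\#\,\s\cap \partial B_{s}(x)=2$. By Proposition~\ref{prop: 9.8}(iii), the two points $\{z_{1},z_{2}\}=\s\cap \partial B_{s}(x)$ lie on both sides of the optimal line $P_{0}$ for $\beta_{\s}(x,s)$, the set $\Gamma:=\s\cap \overline{B}_{s}(x)$ is an arcwise connected continuum joining $z_{1}$ to $z_{2}$, and the length estimate (\ref{9.22}) gives
\[
\mathcal{H}^{1}(\Gamma)-|z_{2}-z_{1}|\leq Cs\bigl(s/r\bigr)^{b}w^{\tau}_{\s}(x,r)+Cs^{1+b}.
\]
Since $\beta_{\s}(x,s)\leq (2/\tau)\cdot 4\beta_{\s}(x,r)\leq 1/10$ by Proposition~\ref{Inequality} and $\varepsilon_{2}$ is small, Lemma~\ref{lem: 9.11} applies and yields
\[
\max_{y\in \Gamma}\dist(y,[z_{1},z_{2}])\leq \bigl(2s(\mathcal{H}^{1}(\Gamma)-|z_{2}-z_{1}|)\bigr)^{1/2}\leq Cs\bigl((s/r)^{b/2}(w^{\tau}_{\s}(x,r))^{1/2}+s^{b/2}\bigr).
\]
Because $z_{1},z_{2}$ lie within $\beta_{\s}(x,s)\,s$ of the line $P_{0}\ni x$, the chord $[z_{1},z_{2}]$ is within $Cs\beta_{\s}(x,s)$ of the diameter $P^{*}$ of $\overline{B}_{s}(x)$ parallel to it and passing through $x$, so one obtains $\beta_{\s}(x,s)\leq C(w^{\tau}_{\s}(x,r))^{1/2}+Cr^{b/2}+\tfrac{1}{2}\beta_{\s}(x,s)$, whence $\beta_{\s}(x,s)\leq C(w^{\tau}_{\s}(x,r))^{1/2}+Cr^{b/2}$. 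Then (\ref{9.1}) transfers this from the scale $s\in[\tau r/4,\tau r/2]$ down to the scale $ar\leq \tau r/4$, proving (\ref{9.33}).

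For \textbf{part (ii)}, the hypothesis $\beta_{\s}(x,r)\leq \varepsilon_{2}\leq \tau/20$ together with $a\in [2\varepsilon_{2}/\tau,1/10]$ makes Proposition~\ref{prop: 9.6} directly applicable with $r_{1}=r$ and the smaller scale equal to $ar$, giving
\[
w^{\tau}_{\s}(x,ar)\leq Ca^{b}\,w^{\tau}_{\s}(x,r)+C(ar)^{b}\leq \tfrac{1}{2}w^{\tau}_{\s}(x,r)+C(ar)^{b},
\]
by the choice $Ca^{b}\leq 1/2$. For \textbf{part (iii)}, I argue by induction on $n$. The case $n=0$ is the hypothesis. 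If (\ref{9.34}) holds at step $n$, apply (i) at the scale $a^{n}r$ to obtain $\beta_{\s}(x,a^{n+1}r)\leq C\varepsilon_{1}^{1/2}+C(a^{n}r)^{b/2}\leq \varepsilon_{2}/2+\varepsilon_{2}/2=\varepsilon_{2}$, and apply (ii) at the same scale to get $w^{\tau}_{\s}(x,a^{n+1}r)\leq \varepsilon_{1}/2+C(a^{n+1}r)^{b}\leq \varepsilon_{1}/2+\varepsilon_{1}/2=\varepsilon_{1}$. Both bounds at step $n+1$ follow.

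The most delicate step will be the geometric argument in (i): the height estimate of Lemma~\ref{lem: 9.11} measures the distance of $\Gamma$ from the chord $[z_{1},z_{2}]$, whereas $\beta_{\s}(x,\cdot)$ is defined with respect to lines through $x$. One must carefully use the current flatness $\beta_{\s}(x,s)$ to replace this chord by a parallel diameter through $x$ without losing the quantitative gain supplied by the density estimate; the absorption argument sketched above is what closes the bound. Everything else reduces to a careful bookkeeping of the constants so that the iteration in (iii) propagates.
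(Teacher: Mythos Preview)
Your overall strategy matches the paper's exactly: part (i) via Proposition~\ref{prop: 9.8} together with Lemma~\ref{lem: 9.11}, part (ii) as a direct application of Proposition~\ref{prop: 9.6} with $a$ chosen so that $Ca^{b}\le 1/2$, and part (iii) by joint induction. The bookkeeping of constants is also essentially the same.

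The absorption argument you propose for the ``delicate step'' in (i), however, does not close. You assert that passing from the chord $[z_{1},z_{2}]$ to the parallel line $P^{*}\ni x$ costs only $\tfrac{1}{2}\beta_{\s}(x,s)$, but in fact $\dist(x,[z_{1},z_{2}])$ is only bounded by $(1+o(1))\,\beta_{\s}(x,s)\,s$: in coordinates with $x=0$ and $P_{0}$ horizontal one has $z_{i}=(a_{i},b_{i})$ with $|b_{i}|\le \beta_{\s}(x,s)s$, $a_{1}\approx -s$, $a_{2}\approx s$, and the distance from the origin to the line through $z_{1},z_{2}$ equals $|a_{1}b_{2}-a_{2}b_{1}|/|z_{2}-z_{1}|\approx \beta_{\s}(x,s)\,s$. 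The resulting inequality $\beta_{\s}(x,s)\le (\text{good terms})+(1+o(1))\beta_{\s}(x,s)$ yields nothing. The paper avoids this entirely by using the hypothesis $x\in\s$: since $x\in\s\cap\overline{B}_{s}(x)$, the height bound already gives $\dist(x,[z_{1},z_{2}])\le (\tau r L)^{1/2}+L$, and then the parallel line $\widetilde P\ni x$ satisfies $d_{H}([z_{1},z_{2}],\widetilde P\cap\overline{B}_{s}(x))\le 2\bigl((\tau r L)^{1/2}+L\bigr)$ via an $\arcsin$ estimate. No absorption is needed, and (\ref{9.33}) follows directly.

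A smaller technical point: Lemma~\ref{lem: 9.11} is stated for an \emph{arc}, whereas $\s\cap\overline{B}_{s}(x)$ is only arcwise connected. The paper first selects an arc $\Gamma\subset\s\cap\overline{B}_{s}(x)$ from $z_{1}$ to $z_{2}$, applies the lemma to $\Gamma$, and controls the leftover $\s\cap\overline{B}_{s}(x)\setminus\Gamma$ by its total length, which is at most $L=\mathcal{H}^{1}(\s\cap B_{s}(x))-|z_{2}-z_{1}|$.
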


\begin{proof} Let $\varepsilon_{0} \in (0,1/2),\,b,\, \overline{r}>0$ and $C=C(p, q_{0}, q, \|f\|_{q}, |\Omega|)>0$ be the constants of Lemma~\ref{Good_decay}. Fix $\tau \in (0, \frac{\varepsilon_{0}}{6}]$ and a constant $C_{1}$ such that the estimate (\ref{9.22}) holds with $C_{1}$. Without loss of generality, assume that $C<C_{1}$. We now define 
	\[
	a:=\min\biggr\{\frac{\tau}{4},\Bigl(\frac{1}{2C}\Bigr)^{\frac{1}{b}}\biggl\},\,\ \varepsilon_{2}:=\frac{a \tau}{100},\,\ \varepsilon_{1}:=\biggl(\frac{a \varepsilon_{2}}{50 C_{1}}\biggr)^{2},\,\ C^{\prime}:=\frac{24C_{1}}{a}.
	\]
	Fix $r_{0} \in (0, \min\{\overline{r}, \diam(\Sigma)/2\})$ such that 
	\begin{equation}
			C^{\prime} r_{0}^{\frac{b}{2}} \leq \frac{\varepsilon_{1}}{2} \label{9.35}
			\end{equation}
and hence
	\begin{equation}
			C^{\prime} r^{b}_{0} \leq \frac{\varepsilon_{1}}{2}\label{9.36}
			\end{equation}
because $r_{0}<1$.
	Let us prove $(i)$. Applying Proposition~\ref{prop: 9.8} with $r_{1}=r$ and $\varepsilon=\varepsilon_{2}\leq \frac{\tau^{2}}{400}$, we deduce that there is $s \in [\tau r/4, \tau r/2]$ such that $ \s \cap \partial B_{s}(x)=\{z_{1}, z_{2}\}$, $z_{1}$ and $z_{2}$ lie on ``both sides" (see Remark~\ref{rem: 9.9}). Fix such $s$. Then, by Proposition~\ref{prop: 9.8} $(iii$-3), we get 
			\begin{align*}
			\mathcal{H}^{1}(\s \cap B_{s}(x))& \leq |z_{1}-z_{2}| + C_{1}s\Bigl(\frac{s}{r}\Bigr)^{b} w^{\tau}_{\s}(x, r) + C_{1}s^{1+b} \\
			& := |z_{1} - z_{2}| + L. 
			\end{align*}
	Let $\Gamma \subset \s \cap \overline{B}_{s}(x)$ be an arc connecting $z_{1}$ with $z_{2}$. Then, using Lemma~\ref{lem: 9.11}, we obtain 
	\begin{align*}
	\max_{y \in \Gamma}\dist(y, [z_{1}, z_{2}]) & \leq (2s(\mathcal{H}^{1}(\Gamma)- |z_{1}-z_{2}|))^{\frac{1}{2}} \leq (\tau rL)^{\frac{1}{2}}. 
	\end{align*}
	Since $\s \cap \overline{B}_{s}(x)$ is arcwise connected, $\Sigma \cap \partial B_{s}(x)=\{z_{1},z_{2}\}$ and $\mathcal{H}^{1}(\Gamma)\geq |z_{1}-z_{2}|$, then
	\begin{align*}
	\sup_{y\in \s \cap \overline{B}_{s}(x)\backslash(B_{s}(x) \cap \Gamma)} \dist (y, \Gamma) \leq \mathcal{H}^{1}(\s \cap B_{s}(x) \backslash \Gamma) & \leq \mathcal{H}^{1}(\s \cap B_{s}(x)) - |z_{1} -z_{2}| = L.
	\end{align*}
	Thus
	\begin{equation}
	\max_{y \in \s \cap \overline{B}_{s}(x)} \dist(y, [z_{1}, z_{2}]) \leq (\tau rL)^{\frac{1}{2}}+L.\label{9.37}
	\end{equation}
	Notice that since $\s \cap \overline{B}_{s}(x)$  is arcwise connected and $\s $ escape $\overline{B}_{s}(x)$ either through $z_{1}$ or $z_{2}$, then (\ref{9.37}) yields the following estimate
	\begin{equation}
	d_{H}(\s \cap \overline{B}_{s}(x), [z_{1}, z_{2}]) \leq (\tau rL)^{\frac{1}{2}}+L.\label{9.38}
	\end{equation}
	Let $\widetilde{P}$ be the line passing through $x$ and collinear to $[z_{1},z_{2}]$. Using the fact that $\dist(x, [z_{1},z_{2}])\leq (\tau rL)^{\frac{1}{2}}+L$, we get
	\begin{equation}
	d_{H}([z_{1},z_{2}], \widetilde{P} \cap \overline{B}_{s}(x)) \leq \arcsin(((\tau rL)^{\frac{1}{2}}+L)/s)s < 2((\tau rL)^{\frac{1}{2}}+L), \label{Equation 7.43}
	\end{equation}
	where the latter estimate holds because $((\tau rL)^{\frac{1}{2}}+L)/s<\frac{1}{10}$. Using (\ref{9.38}) together with (\ref{Equation 7.43}), we obtain that
	\begin{equation*}
	d_{H}(\s\cap \overline{B}_{s}(x), \widetilde{P} \cap \overline{B}_{s}(x)) \leq 3((\tau rL)^{\frac{1}{2}}+L)
	\end{equation*}
	and hence $\beta_{\s}(x,s) \leq \frac{3}{s}((\tau rL)^{\frac{1}{2}}+L).$ If $ar=\kappa s$, then $\frac{2}{\kappa}\leq \frac{\tau}{a}$ because $s \leq \frac{\tau r}{2}$ and, thanks to (\ref{9.1}),
			\begin{align*}
			\beta_{\s}(x, ar) & \leq \frac{\tau}{a} \beta_{\s}(x,s) \leq \frac{12}{a r}((\tau rL)^{\frac{1}{2}}+L). \numberthis\label{9.39}
			\end{align*}
	On the other hand,
	\begin{align*}
			(\tau rL)^{\frac{1}{2}} \leq (C_{1} r^{2}w^{\tau}_{\s}(x,r) + C_{1} r^{2+b})^{\frac{1}{2}} \leq C_{1} r (w^{\tau}_{\s}(x, r))^{\frac{1}{2}} + C_{1} r^{1+\frac{b}{2}} \numberthis \label{9.40}
			\end{align*}
	and, moreover, 
	\begin{align*}
	L= C_{1}s\Bigl(\frac{s}{r}\Bigr)^{b} w^{\tau}_{\s}(x, r)+ C_{1}s^{1+b} \leq C_{1} r (w^{\tau}_{\s}(x, r))^{\frac{1}{2}} + C_{1} r^{1+\frac{b}{2}}, \numberthis \label{9.41}
	\end{align*}
	where we have used that $w^{\tau}_{\s}(x,r)<1$, $0<s<r<1$ and that $b>0$.
	By (\ref{9.39})-(\ref{9.41}),
	\[
	\beta_{\s}(x,ar) \leq C^{\prime}(w^{\tau}_{\s}(x, r))^{\frac{1}{2}} + C^{\prime}r^{\frac{b}{2}},
	\]
	with $C^{\prime}=\frac{24  C_{1}}{a}$, that shows $(i)$. Furthermore, using (\ref{9.32}) and (\ref{9.35}), we get 
	\[
	\beta_{\s}(x, ar) \leq C^{\prime}(\varepsilon_{1})^{\frac{1}{2}}+ C^{\prime} r_{0}^{\frac{b}{2}} < \varepsilon_{2}.
	\]  Then, applying Proposition~\ref{prop: 9.6} with $r_{1}=r$, $\varepsilon=\varepsilon_{2}$ and also noting that $\frac{2\varepsilon}{\tau} < a$, namely $ar \in (\frac{2\varepsilon r}{\tau}, \frac{r}{10})$, we deduce that 
	\begin{equation*}
	w^{\tau}_{\s}(x, ar) \leq C a^{b} w^{\tau}_{\s}(x, r) + C (ar)^{b} \leq \frac{1}{2} w^{\tau}_{\s}(x, r) + C^{\prime} (ar)^{b} \leq \frac{\varepsilon_{1}}{2} + \frac{\varepsilon_{1}}{2} = \varepsilon_{1},
	\end{equation*}
	where we have used that $a\leq \Bigl(\frac{1}{2C}\Bigr)^{\frac{1}{b}}$, the fact that $C<C^{\prime}$ and (\ref{9.36}).
	Notice that we have proved $(ii)$ and the fact that $\beta_{\Sigma}(x, ar)\leq \varepsilon_{2}$ and $w^{\tau}_{\Sigma}(x, ar)\leq \varepsilon_{1}$. Next, using (\ref{9.32}) with $ar$ instead of $r$, we get: $\beta_{\s}(x, a^{2} r) \leq \varepsilon_{2}$ and $w^{\tau}_{\Sigma}(x,a^{2}r)\leq \varepsilon_{1}$. Thus, iterating, we observe that for all $n \in \mathbb{N}$ the following holds
	\begin{equation*}
	w^{\tau}_{\s} (x, a^{n} r) \leq \varepsilon_{1} \,\ \text{and}\,\ \beta_{\s}(x, a^{n} r) \leq \varepsilon_{2},
	\end{equation*}
	that shows $(iii)$. This concludes the proof.
\end{proof}
Now we are ready to prove that if $\beta_{\s}(x, r) + w^{\tau}_{\s}(x,r)$ falls below a critical threshold $\delta_{0}>0$ for   $x \in \s \cap \Omega$ and sufficiently small $r>0$, then $\beta_{\s}(x,r) \leq Cr^{\alpha}$ for some $\alpha \in (0,1)$, that leads to a $C^{1,\alpha}$ regularity.


\begin{prop}\label{prop: 9.14} Let $p \in (1,+\infty)$ and $f \in L^{q}(\Om)$  with $q> q_{1}$, where $q_{1}$ is defined in~(\ref{qrestrict}), and let $a\in (0,1/10)$ be the constant of Proposition~\ref{prop: 9.13}. Let $\Sigma$ be a solution of Problem~\ref{problemMain}. Then there exists $\alpha>0$ and $\overline{r}_{0},\, \delta_{0}>0$ such that if $x \in \s$ and $0<r_{0}<\overline{r}_{0}$ satisfy $B_{r_{0}}(x) \subset \Omega$,
	\begin{equation}
	\beta_{\s}(x,r_{0}) + w^{\tau}_{\s}(x,r_{0}) \leq \delta_{0},
	\end{equation}
	then
	\begin{equation} \label{Control beta_sigma}
	\beta_{\s}(x, r) \leq C r^{\alpha} \,\ \text{for all}\,\ r \in (0, ar_{0}),
	\end{equation}
	where $C$ is a positive constant, possibly depending only on $p, q_{0}, q, \|f\|_{q},|\Omega|$ and $r_{0}$.
\end{prop}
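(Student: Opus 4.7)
The strategy is to iterate the two decay estimates of Proposition~\ref{prop: 9.13} along the geometric sequence of radii $r_n := a^n r_0$, and then transport the resulting decay of $\beta_\Sigma$ to arbitrary radii via Proposition~\ref{Inequality}. First I would choose $\delta_0 \leq \min(\varepsilon_1,\varepsilon_2)$ and $\bar r_0$ below the threshold $r_0$ appearing in Proposition~\ref{prop: 9.13}, so that the hypotheses $w^\tau_\Sigma(x,r_0)\leq \varepsilon_1$ and $\beta_\Sigma(x,r_0)\leq \varepsilon_2$ are met under the assumption $\beta_\Sigma(x,r_0)+w^\tau_\Sigma(x,r_0)\leq \delta_0$. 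Then item $(iii)$ of Proposition~\ref{prop: 9.13} yields $w^\tau_\Sigma(x,r_n)\leq \varepsilon_1$ and $\beta_\Sigma(x,r_n)\leq \varepsilon_2$ for every $n\in\mathbb{N}$, so items $(i)$ and $(ii)$ of that proposition are available at every scale $r_n$.

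Next, writing $w_n := w^\tau_\Sigma(x,r_n)$, item $(ii)$ gives the linear recurrence
\[
w_{n+1}\leq \tfrac12 w_n + C\,r_{n+1}^b,
\]
and an elementary geometric-series computation produces $w_n \leq C_1 \rho^n$ for any fixed $\rho\in (\max(1/2,a^b),1)$, with $C_1$ depending on $w_0,\,r_0$ and the fixed constants. Injecting this into item $(i)$ at scale $r_n$,
\[
\beta_\Sigma(x,r_{n+1})\leq C\,w_n^{1/2} + C\,r_n^{b/2}\leq C_2\,\tilde\rho^{\,n},\qquad \tilde\rho:=\max(\sqrt\rho,\,a^{b/2})\in(0,1).
\]

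Finally I would choose $\alpha>0$ small enough so that $\tilde\rho \leq a^\alpha$ (i.e.\ $\alpha\leq \log(1/\tilde\rho)/\log(1/a)$), noting that $\alpha$ depends only on $p,q_0,q,\|f\|_q,|\Omega|$ through $a,b,\rho$. For an arbitrary $r\in (0,ar_0)$, pick $n\geq 1$ with $r_{n+1}\leq r\leq r_n$ and apply Proposition~\ref{Inequality} with $\kappa=r/r_n \in [a,1]$:
\[
\beta_\Sigma(x,r)\leq \tfrac{2}{\kappa}\beta_\Sigma(x,r_n)\leq \tfrac{2}{a}\,C_2\,\tilde\rho^{\,n-1}\leq C(r_0)\,r^\alpha,
\]
where the last inequality uses $a^n r_0 \leq r/a$, hence $\tilde\rho^{\,n-1}\leq a^{\alpha(n-1)}\leq (r/(ar_0))^\alpha/a^\alpha$. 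This proves~\eqref{Control beta_sigma}. The main technical content has already been packaged inside Proposition~\ref{prop: 9.13}; the only obstacle here is bookkeeping, namely balancing the two different decay rates coming from $(i)$ and $(ii)$ into a single $\tilde\rho$ and verifying that Proposition~\ref{Inequality} is strong enough to promote the geometric decay at the radii $r_n$ to a power-law bound at all $r\in(0,ar_0)$.
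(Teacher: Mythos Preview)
Your proposal is correct and follows essentially the same strategy as the paper: iterate items $(i)$--$(iii)$ of Proposition~\ref{prop: 9.13} along the geometric sequence $r_n=a^nr_0$ to obtain geometric decay of $w^\tau_\Sigma$ and then of $\beta_\Sigma$, and finally pass to arbitrary radii. The only cosmetic difference is in the last step: the paper first upgrades the discrete decay of $w^\tau_\Sigma$ to a continuous power bound $w^\tau_\Sigma(x,r)\lesssim r^\gamma$ (via the elementary quasi-monotonicity $w^\tau_\Sigma(x,r)\leq a^{-1}w^\tau_\Sigma(x,a^lr_0)$) and then applies item~$(i)$ at every $r$, whereas you first obtain the decay of $\beta_\Sigma$ at the discrete scales and then interpolate using Proposition~\ref{Inequality}. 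Both routes are equivalent bookkeeping.
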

\begin{proof} Let $\varepsilon_{1},\, b,\, r_{0}, \, C>0$ be as in Proposition~\ref{prop: 9.13}. Next, we define 
	\[
	\delta_{0}:=\frac{a\varepsilon_{1}}{4},\,\ \gamma:=\min \biggl\{\frac{b}{2}, \frac{\ln(3/4)}{\ln(a)}\biggr\}, \,\ \overline{r}_{0}:=\Biggl\{r_{0}, \biggl(\frac{1}{4}\biggr)^{\frac{1}{\gamma}}\Biggr\}.
	\]
	It is easy to check that for all $t \in (0, \overline{r}_{0}]$,
			\begin{equation} \label{Good inequality}
			\frac{1}{2}t^{\gamma}+t^{b}\leq (at)^{\gamma},
			\end{equation} 
			because since $0<2\gamma \leq b$ and $\overline{r}_{0}<1$,
			\[
			\frac{1}{2}t^{\gamma}+t^{b}\leq \frac{1}{2}t^{\gamma}+\overline{r}_{0}^{\gamma}t^{\gamma} \leq \frac{3}{4}t^{\gamma} \leq (at)^{\gamma}.
			\]
			Now let $r_{0}$ be a radius given in the statement, $r_{0}<\overline{r}_{0}$. Let us show by induction that for all $n \in \mathbb{N}$,
	\begin{equation}
	w^{\tau}_{\s}(x,a^{n}r_{0}) \leq \frac{1}{2^{n}}w^{\tau}_{\s}(x,r_{0}) + C(a^{n+1}r_{0})^{\gamma}.\label{9.46}
	\end{equation}
	Obviously, (\ref{9.46}) holds for $n=0$. Suppose (\ref{9.46}) holds for some $n \in \mathbb{N}$. Notice that by (\ref{9.34}), $\beta_{\Sigma}(x, a^{n}r_{0}) + w^{\tau}_{\Sigma}(x, a^{n}r_{0})\leq \delta_{0}$. Then, applying (\ref{Sigma estimate}) with $r=a^{n}r_{0}$, we get
	\begin{align*}
	w^{\tau}_{\s}(x,a^{n+1}r_{0}) & \leq \frac{1}{2}w^{\tau}_{\s}(x, a^{n}r_{0}) + C(a^{n+1}r_{0})^{b}\\
	& \leq \frac{1}{2^{n+1}}w^{\tau}_{\s}(x, r_{0}) + \frac{C}{2}(a^{n+1}r_{0})^{\gamma} + C(a^{n+1}r_{0})^{b}\,\ \text{(by induction)} \\
	& \leq \frac{1}{2^{n+1}}w^{\tau}_{\s}(x,r_{0}) + C(a^{n+2}r_{0})^{\gamma}\,\ (\text{by (\ref{Good inequality})}),
	\end{align*}
	that shows (\ref{9.46}). Now let $r \in (0, r_{0})$ and let $l \in \mathbb{N}$ be such that $a^{l+1}r_{0}<r \leq a^{l}r_{0}$. Then we deduce that
	\begin{align*}
	w^{\tau}_{\s}(x, r) & \leq \frac{1}{a} w^{\tau}_{\s}(x, a^{l}r_{0}) \leq \frac{1}{a}\frac{1}{2^{l}} w^{\tau}_{\s}(x, r_{0}) + \frac{C}{a}(a^{l+1}r_{0})^{\gamma}\\
	&\leq \frac{2}{a}\frac{1}{2^{l+1}}w^{\tau}_{\Sigma}(x, r_{0}) + \frac{C}{a}r^{\gamma} \\
	& \leq \frac{1}{2^{l+1}}\frac{\varepsilon_{1}}{2} + \frac{C}{a}r^{\gamma} \,\ \text{(since $w^{\tau}_{\s}(x,r_{0}) \leq \frac{a\varepsilon_{1}}{4}$)}\\
	& \leq a^{\gamma(l+1)}\frac{\varepsilon_{1}}{2} + \frac{C}{a}r^{\gamma} \leq \frac{\varepsilon_{1}}{2} \bigl(\frac{r}{r_{0}}\bigr)^{\gamma} +\frac{C}{a}r^{\gamma}.
	\end{align*}
	Thus, for all $r\in (0, r_{0})$,
	\begin{equation}
	w^{\tau}_{\s}(x, r) \leq \frac{\varepsilon_{1}}{2}\bigl(\frac{r}{r_{0}}\bigr)^{\gamma} + \frac{C}{a}r^{\gamma}. \label{9.47}
	\end{equation}
	By (\ref{9.33}) and (\ref{9.47}), for all $r \in (0, r_{0})$, 
	\begin{align*}
	\beta_{\s}(x, ar) \leq C(w^{\tau}_{\s}(x, r))^{\frac{1}{2}} + C r^{\frac{b}{2}}\leq C \bigl(\frac{\varepsilon_{1}}{2}\bigl(\frac{r}{ r_{0}}\bigr)^{\gamma} + \frac{C}{a}r^{\gamma}\bigr)^{\frac{1}{2}} + C r^{\frac{b}{2}} \leq \widetilde{C} r^{\alpha},
	\end{align*}
	where $\alpha=\frac{\gamma}{2}$ and $\widetilde{C}$ is a positive constant, possibly depending only on $p$, $q_{0}$, $q$, $\|f\|_{q}$, $a$, $|\Omega|$ and $r_{0}$. Therefore, $\beta_{\s}(x, r) \leq C^{\prime}r^{\alpha}$ for all $r \in (0, a r_{0})$ with $C^{\prime}=\widetilde{C}/a^{\alpha}$. Notice that although $C^{\prime}$ depends on $a$, however, for any given $p \in (1,+\infty)$ we can fix $a$, and thus, we can assume that $C^{\prime}$ depends only on $p, q_{0}, q, \|f\|_{q}, |\Omega|$ and $r_{0}$.  This concludes the proof.
\end{proof}

\begin{cor} \label{cor: 9.15} \textit{Let $\s$ be a solution of Problem~\ref{problemMain} and $a,\,\alpha,\, \overline{r}_{0},\, \delta_{0}>0$ be as in the statement of Proposition~\ref{prop: 9.14}. Let $x_{0} \in \s$ and $0<r_{0}<\overline{r}_{0}$ be such that $B_{r_{0}}(x_{0}) \subset \Omega$ and 
		\begin{equation*}
		\beta_{\s}(x_{0}, r_{0}) + w^{\tau}_{\s}(x_{0}, r_{0}) \leq \varepsilon_{0}
		\end{equation*}
		with $\varepsilon_{0} := \frac{a\delta_{0}}{120}$. Then for any $y \in \s \cap B_{\frac{a r_{0}}{10}}(x_{0})$ and for any $r \in (0, \frac{ar_{0}}{20})$ we have that $\beta_{\s}(y,r) \leq C r^{\alpha}$, where $C=C(p,q_{0},q,\|f\|_{q},|\Omega|,r_{0})>0$. In particular, there exists $t_{0}\in (0,1)$, only depending on $C,\,a,\, r_{0}$ and $\alpha$, such that $\s \cap \overline{B}_{t_{0}}(x_{0})$ is a $C^{1,\alpha}$ regular curve.}
\end{cor}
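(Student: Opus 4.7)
The plan is to apply Proposition~\ref{prop: 9.14} at each point $y\in \Sigma\cap B_{ar_{0}/10}(x_{0})$, with initial radius $s=r_{0}/20$; its conclusion then produces $\beta_{\Sigma}(y,r)\leq Cr^{\alpha}$ for every $r\in(0,as)=(0,ar_{0}/20)$, which is exactly the range claimed. The inclusion $B_{r_{0}/20}(y)\subset B_{r_{0}/20+ar_{0}/10}(x_{0})\subset B_{r_{0}}(x_{0})\subset \Omega$ is immediate, and $r_{0}/20<\overline{r}_{0}$, so the main task reduces to verifying
\[
\beta_{\Sigma}(y,\,r_{0}/20)+w^{\tau}_{\Sigma}(y,\,r_{0}/20)\leq \delta_{0}
\]
for each such $y$.

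For the flatness estimate, let $P$ be an affine line through $x_{0}$ realizing $\beta_{\Sigma}(x_{0},r_{0})\leq \varepsilon_{0}$, and let $P_{y}$ be the line through $y$ parallel to $P$. Since $y\in \Sigma\cap \overline{B}_{r_{0}}(x_{0})$ we have $\dist(y,P)\leq \varepsilon_{0}r_{0}$, so $P$ and $P_{y}$ lie within $\varepsilon_{0}r_{0}$ of each other everywhere. For any $z\in \Sigma\cap \overline{B}_{r_{0}/20}(y)\subset \Sigma\cap \overline{B}_{r_{0}}(x_{0})$ one has $\dist(z,P)\leq \varepsilon_{0}r_{0}$ and hence $\dist(z,P_{y})\leq 2\varepsilon_{0}r_{0}$; combined with the standard reverse Hausdorff bound this yields $\beta_{\Sigma}(y,r_{0}/20)\leq 40\varepsilon_{0}=a\delta_{0}/3$.

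For the energy estimate, I will show that every admissible competitor $\Sigma'$ for $w^{\tau}_{\Sigma}(y,r_{0}/20)$ is also admissible for $w^{\tau}_{\Sigma}(x_{0},r_{0})$; enlarging the integration domain then gives $w^{\tau}_{\Sigma}(y,r_{0}/20)\leq 20\,w^{\tau}_{\Sigma}(x_{0},r_{0})\leq 20\varepsilon_{0}=a\delta_{0}/6$. The symmetric-difference inclusion and the $\mathcal{H}^{1}$-bound are automatic; the delicate point is to verify $\beta_{\Sigma'}(x_{0},r_{0})\leq \tau$. For this, let $P'$ be a line realizing $\beta_{\Sigma'}(y,r_{0}/20)\leq \tau$ and let $\widetilde{P}$ be an appropriately chosen line through $x_{0}$ (close to $P$ in direction). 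The key geometric observation is that $\Sigma$ crosses $\partial B_{r_{0}/20}(y)$ in two approximately antipodal points which, by closedness of $\Sigma'$, also belong to $\Sigma'$, and are simultaneously $\varepsilon_{0}r_{0}$-close to $P$ and $(\tau r_{0}/20)$-close to $P'$. Comparing these two approximations controls the direction of $P'$ relative to that of $P$. Tracking the small constants, using $\varepsilon_{0}=a\delta_{0}/120$ together with $a\leq \tau/4$ from the construction of $a$ in Proposition~\ref{prop: 9.13}, one obtains $\beta_{\Sigma'}(x_{0},r_{0})\leq \tau$ outside the modification ball (where $\Sigma'=\Sigma$) via the estimate $\dist(z,\widetilde{P})\leq \dist(z,P)+r_{0}|\sin\theta|$, and inside the modification ball via $\dist(z,\widetilde{P})\leq \dist(z,P')+\dist(x_{0},P')$, with $\dist(x_{0},P')\leq |y-x_{0}|\leq ar_{0}/10$. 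This careful weighing of small constants is the main technical obstacle of the proof.

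Summing, $\beta_{\Sigma}(y,r_{0}/20)+w^{\tau}_{\Sigma}(y,r_{0}/20)\leq a\delta_{0}/2<\delta_{0}$, and Proposition~\ref{prop: 9.14} applied at $y$ with initial radius $r_{0}/20$ produces $\beta_{\Sigma}(y,r)\leq Cr^{\alpha}$ for every $r\in(0,ar_{0}/20)$, the constant $C$ depending on $r_{0}$ through the initial scale used in that application. For the ``in particular'' part, the uniform $\beta$-decay at every point of $\Sigma\cap B_{ar_{0}/10}(x_{0})$, together with the arcwise connectedness of $\Sigma$ (Remark~\ref{remark 3.1}) and the absence of loops (Theorem~\ref{thm: 8.1}), permits a standard Reifenberg-type parametrization of $\Sigma\cap \overline{B}_{t_{0}}(x_{0})$ by arclength as a simple arc whose tangent vector varies in a $C^{\alpha}$-H\"older continuous fashion, yielding a $C^{1,\alpha}$ regular curve for $t_{0}>0$ small enough depending only on $C$, $a$, $r_{0}$, and $\alpha$.
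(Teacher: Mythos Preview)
Your proposal is correct and follows the same route as the paper: take initial radius $r_{0}/20$ at $y$, bound $\beta_{\Sigma}(y,r_{0}/20)$ via the line parallel to $P$ through $y$, bound $w^{\tau}_{\Sigma}(y,r_{0}/20)$ by $20\,w^{\tau}_{\Sigma}(x_{0},r_{0})$, and then apply Proposition~\ref{prop: 9.14}. You are in fact more careful than the paper on one point---you explicitly discuss the admissibility condition $\beta_{\Sigma'}(x_{0},r_{0})\leq \tau$ needed for the energy comparison, which the paper's proof asserts without comment---and for the final $C^{1,\alpha}$ conclusion the paper simply cites \cite[Lemma~6.4]{Babadjian-Iurlano-Lemenant} while you sketch the standard Reifenberg-type parametrization.
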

\begin{proof}[Proof of Corollary~\ref{cor: 9.15}] Recall that $a \in (0,1/10)$. Let $y \in \s \cap B_{\frac{a r_{0}}{10}}(x_{0})$ and let $P_{0}$ realize the infimum in the definition of $\beta_{\s}(x_{0}, r_{0})$. Since $d_{H}(\s \cap \overline{B}_{\frac{ r_{0}}{20}}(y), P_{0} \cap \overline{B}_{\frac{ r_{0}}{20}}(y))~\leq~3\varepsilon_{0}r_{0}$, $\beta_{\s}(y, \frac{r_{0}}{20}) \leq 200 \varepsilon_{0} <\frac{\delta_{0}}{2}.$ Moreover, if $\s^{\prime}$ realizes the supremum in the definition of $w^{\tau}_{\s}(y, \frac{r_{0}}{20})$, then 
	\begin{align*}
	w^{\tau}_{\s}(y, \tfrac{ r_{0}}{20})  = \frac{20}{ r_{0}} \int_{B_{\frac{r_{0}}{20}}(y)} |\nabla u_{\s^{\prime}}|^{p} \diff x \leq \frac{20}{ r_{0}} \int_{B_{r_{0}}(x_{0})} |\nabla u_{\s^{\prime}}|^{p} \diff x \leq 20 w^{\tau}_{\s}(x_{0}, r_{0}) < \frac{\delta_{0}}{6}
	\end{align*}
	and hence
	\begin{equation*}
	\beta_{\s}(y, \tfrac{r_{0}}{20}) + w^{\tau}_{\s}(y, \tfrac{r_{0}}{20}) < \delta_{0}.
	\end{equation*}
	Then by Proposition~\ref{prop: 9.14}, there exists a constant $C=C(p,q_{0},q,\|f\|_{q},|\Omega|, r_{0})>0$ such that $\beta_{\s}(y, r) \leq Cr^{\alpha}$ for all $r \in (0, \frac{a r_{0}}{20})$. Since $y \in \s \cap B_{\frac{ar_{0}}{10}}(x_{0})$ was arbitrarily chosen in $\s \cap B_{\frac{a r_{0}}{10}}(x_{0})$, there exists $t_{0} \in (0, \frac{a r_{0}}{10})$ such that $\s \cap \overline{B}_{t_{0}}(x_{0})$ is a $C^{1,\alpha}$ regular curve (see e.g. \cite[Lemma 6.4]{Babadjian-Iurlano-Lemenant}).
\end{proof}

Now we prove that locally $\s\cap \Omega$ is a $C^{1,\alpha}$ regular curve outside a set with zero $\mathcal{H}^{1}$-measure.

\begin{proof}[Proof of Theorem~\ref{thm: 9.16}] Let $\varepsilon_{0} \in (0,1/2),\, b,\,  \overline{r},\, C>0$ be the constants of Lemma~\ref{Good_decay} and let $\tau \in (0, \frac{\varepsilon_{0}}{6})$. Since closed connected sets with finite length are rectifiable, then (see e.g. \cite[Proposition 2.2]{Approx}) for $\mathcal{H}^{1}$-a.e. point $x$ in $\s \cap \Om$ there is the affine line $T_{x}$, passing through $x$, such that
	\begin{equation}
	\frac{d_{H}(\s \cap \overline{B}_{r}(x), T_{x} \cap \overline{B}_{r}(x))}{r} \underset{r \to 0+}{\to} 0. \label{9.48}
	\end{equation}
	Let $x \in \s \cap \Om$ be such a point. Then by (\ref{9.48}),
	\begin{equation}
	\beta_{\s}(x, r) \underset{r \to 0+}{\to 0}. \label{9.49}
	\end{equation}
	We claim that $w^{\tau}_{\s}(x, r)$ tends to zero, as $r\to 0+$. Indeed, by (\ref{9.49}), for any $\varepsilon \in (0, \varepsilon_{0})$ there is $t_{\varepsilon} \in (0, \overline{r})$ such that 
	\begin{equation}
	\beta_{\s}(x, r) \leq \varepsilon \,\ \text{for all} \,\ r \in (0, t_{\varepsilon}]. \label{9.50}
	\end{equation}
	We assume that $B_{t_{\varepsilon}}(x) \subset \Omega$ and $t_{\varepsilon}<\diam(\Sigma)/2$. Then by Proposition~\ref{prop: 9.6}, for all  $r\in (0, t_{\varepsilon}/10]$,
	\begin{equation}
	w^{\tau}_{\Sigma}(x, r) \leq C\Bigl(\frac{r}{t_{\varepsilon}}\Bigr)^{b}w^{\tau}_{\Sigma}(x,t_{
	\varepsilon}) + Cr^{b}. \label{9.51}
	\end{equation}
	On the other hand, by Remark~\ref{rem: 9.3} and by Proposition~\ref{prop: 2.12}, $w^{\tau}_{\Sigma}(x,t_{\varepsilon})\in [0, +\infty)$. Then, letting $r$ tend to $0+$ in (\ref{9.51}), we get
	\begin{equation}
	w^{\tau}_{\Sigma}(x,r) \underset{r \to 0+}{\to 0}. \label{9.52}
	\end{equation}
	By (\ref{9.49}) and (\ref{9.52}),
	\[
	\beta_{\s}(x,r) + w^{\tau}_{\s}(x,r) \underset{r \to 0+}{\to} 0.
	\]
	This together with Corollary~\ref{cor: 9.15} concludes the proof.
\end{proof}

\end{subsection}
\section{Remark about singular points}
We shall say that a set $K\subset \mathbb{R}^{2}$ is a cross passing through a point $x \in \mathbb{R}^{2}$ if $K$ consists of two mutually perpendicular affine lines passing through $x$. For convenience, we denote the cross $(\{0\}\times \mathbb{R})\cup (\mathbb{R}\times \{0\})$ passing through the origin by $K_{0}$. 

In this section, we prove that every solution $\Sigma$ of Problem~\ref{problemMain} cannot contain quadruple points inside $\Omega$, namely, there is no point $x \in \Sigma \cap \Omega$ such that for some fairly small radius $r>0$ the set $\Sigma \cap \overline{B}_{r}(x)$ is a union of four distinct $C^{1}$ arcs, each of which meets at point $x$ exactly one of the other three at an angle of $180^{\circ}$ degrees, and each of the other two at an angle of $90^{\circ}$ degrees.

We start by proving the following lemma.
	\begin{lemma}\label{reflection cross} Let $p \in (1,+\infty)$. Then there is a constant $C=C(p)>0$ such that for all $u \in W^{1,p}(B_{1})$, $u=0$ $p$-q.e. on $ K_{0}\cap \overline{B}_{1}$ being a weak solution of the $p$-Laplace equation in $B_{1}\backslash K_{0}$, 
		\[
		\ess_{B_{1/2}} |\nabla u|^{p} \leq C \int_{B_{1}}|\nabla u|^{p}\diff x.
		\]
	\end{lemma}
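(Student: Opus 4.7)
The strategy is to reduce to Lemma~\ref{lem: 7.4} via a Schwarz reflection across the horizontal axis. The decisive observation is that reflecting $u|_{B^+}$ (where $B^\pm = B_1 \cap \{\pm x_2 > 0\}$) oddly across that axis produces a function which vanishes $p$-q.e.\ on the \emph{whole} vertical segment $\{0\}\times[-1,1]$ --- the original vanishing on $\{0\}\times[0,1]$ together with its mirror image on $\{0\}\times[-1,0]$ --- thereby turning the cross $K_0$ into a single line crack handled by Lemma~\ref{lem: 7.4} after a $90^\circ$ rotation.

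Concretely, I will introduce
\[
\tilde u(x_1,x_2) = \begin{cases} u(x_1,x_2), & x_2 \geq 0, \\ -u(x_1,-x_2), & x_2 < 0, \end{cases}
\qquad
\overline u(x_1,x_2) = \begin{cases} -u(x_1,-x_2), & x_2 \geq 0, \\ u(x_1,x_2), & x_2 < 0, \end{cases}
\]
and check that (i) $\tilde u, \overline u \in W^{1,p}(B_1)$ and both vanish $p$-q.e.\ on $\{0\}\times[-1,1]$ (via Remark~\ref{rem: 2.8} and the hypothesis $u=0$ $p$-q.e.\ on $K_0$), and (ii) each is a weak solution of the $p$-Laplace equation in $B_1 \setminus (\{0\}\times[-1,1])$. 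For (ii) I proceed exactly as in the proof of Lemma~\ref{lem: 7.4}: for $\varphi \in C^\infty_0(B_1 \setminus (\{0\}\times[-1,1]))$, splitting $\int_{B_1}|\nabla\tilde u|^{p-2}\nabla\tilde u\cdot\nabla\varphi\diff x$ into its $B^\pm$ pieces and changing variables $(x_1,x_2)\mapsto(x_1,-x_2)$ on $B^-$ reduces the whole expression to $\int_{B^+}|\nabla u|^{p-2}\nabla u\cdot\nabla\psi\diff x$ with $\psi(x_1,x_2)=\varphi(x_1,x_2)-\varphi(x_1,-x_2)$. By construction $\psi=0$ on the horizontal axis, and because $\varphi\equiv 0$ on the entire vertical segment $\{0\}\times[-1,1]$, we also get $\psi=0$ on $\{0\}\times[0,1]$; hence $\psi\in W^{1,p}_0(B^+\setminus(\{0\}\times[0,1]))$, and the integral vanishes because $u$ is a weak solution on the two upper open quadrants of $B_1$.

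Finally, Lemma~\ref{lem: 7.4} applied (after a $90^\circ$ rotation) to $\tilde u$ and $\overline u$ yields
\[
\ess_{B_{1/2}} |\nabla \tilde u|^p \leq C\int_{B_1}|\nabla\tilde u|^p \diff x = 2C\int_{B^+}|\nabla u|^p\diff x,
\]
together with the analogous bound for $\overline u$; summing them, and using $u=\tilde u$ on $B^+$ and $u=\overline u$ on $B^-$ together with $|[-1,1]\times\{0\}|=0$, produces the required estimate $\ess_{B_{1/2}}|\nabla u|^p \leq 2C\int_{B_1}|\nabla u|^p\diff x$. The delicate point I anticipate is the admissibility check of $\psi$ in step~(ii): the restriction $u|_{B^+}$ has its \emph{own} crack along $\{0\}\times[0,1]$, so $\psi$ must vanish there --- and this is precisely why the target slit $B_1 \setminus (\{0\}\times[-1,1])$ for $\tilde u$ (and the corresponding support condition on $\varphi$) is exactly the right one, no weaker hypothesis being sufficient to close the argument.
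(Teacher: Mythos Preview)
Your proof is correct and follows essentially the same double-reflection strategy as the paper: the paper performs both odd reflections explicitly (first across the horizontal axis, then across the vertical) to obtain four $p$-harmonic functions in all of $B_1$ and then invokes DiBenedetto's gradient bound, whereas you perform only the first reflection and then invoke Lemma~\ref{lem: 7.4} (rotated by $90^\circ$), which internally carries out the second reflection. This is a legitimate and slightly more economical packaging of the same argument.
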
 
	\begin{proof} To simplify the notation, we denote the sets $B_{1}\cap \{x_{2}\geq 0\},\,\ B_{1}\cap \{x_{2}\leq 0\}$, $B_{1}\cap \{x_{1}\geq 0\},\,\ B_{1}\cap \{x_{1}\leq 0\},\,\ B_{1}\cap \{x_{1}\leq 0\}\cap \{x_{2} \geq 0\},\,\ B_{1}\cap \{x_{1}\geq 0\}\cap \{x_{2} \geq 0\}$, $B_{1}\cap \{x_{1} \geq 0\} \cap \{x_{2}\leq 0\}, \,\ B_{1} \cap \{x_{1}\leq 0\}\cap \{x_{2}\leq 0\},$ respectively by $B_{N},$ $B_{S},$ $B_{E},$ $B_{W},$ $B_{NW},$ $B_{NE},$ $B_{SE},$ $B_{SW}$. Next, reproducing the arguments of the proof of Lemma~\ref{lem: 7.4}, we observe that the Sobolev functions $\widetilde{u}, \overline{u} \in W^{1,p}(B_{1})$ defined by
		\[
		\widetilde{u}(x_{1},x_{2})=\begin{cases}                    
		u(x_{1},x_{2}) &\text{if}\,\ (x_{1},x_{2}) \in B_{N}\\
		-u(x_{1},-x_{2}) &\text{if} \,\ (x_{1},x_{2}) \in B_{S},
		\end{cases} \,\ \,\ 
		\overline{u}(x_{1},x_{2})=\begin{cases}
		-u(x_{1}, -x_{2}) &\text{if}\,\ (x_{1},x_{2}) \in B_{N}\\
		u(x_{1},x_{2}) &\text{if}\,\ (x_{1},x_{2}) \in B_{S} 
		\end{cases}
		\] 
		are weak solutions of the $p$-Laplace equations in $B_{1}\backslash (\{0\}\times [-1,1])$ vanishing $p$-q.e. on $\{0\}\times [-1,1]$, and, in addition, the Sobolev functions $\widetilde{v}, \widetilde{w}, \overline{v}, \overline{w} \in W^{1,p}(B_{1})$ defined by
		\[
		\widetilde{v}(x_{1},x_{2})=\begin{cases}
		\widetilde{u}(x_{1},x_{2}) &\text{if}\,\ (x_{1},x_{2}) \in B_{W}\\
		-\widetilde{u}(-x_{1},x_{2}) &\text{if}\,\ (x_{1},x_{2}) \in B_{E},
		\end{cases} \,\ \,\
		\widetilde{w}(x_{1},x_{2})=\begin{cases}
		-\widetilde{u}(-x_{1},x_{2}) &\text{if}\,\ (x_{1},x_{2})\in B_{W}\\
		\widetilde{u}(x_{1},x_{2}) &\text{if}\,\ (x_{1},x_{2})\in B_{E},
		\end{cases}
		\]
		\[
		\overline{v}(x_{1},x_{2})=\begin{cases}
		\overline{u}(x_{1},x_{2}) &\text{if}\,\ (x_{1},x_{2}) \in B_{W}\\
		-\overline{u}(-x_{1},x_{2}) &\text{if}\,\ (x_{1},x_{2}) \in B_{E},
		\end{cases} \,\ \,\
		\overline{w}(x_{1},x_{2})=\begin{cases}
		-\overline{u}(-x_{1},x_{2}) &\text{if}\,\ (x_{1},x_{2})\in B_{W}\\
		\overline{u}(x_{1},x_{2}) &\text{if}\,\ (x_{1},x_{2}) \in B_{E}
		\end{cases}
		\]
		are weak solutions of the $p$-Laplace equations in $B_{1}$. Thus, by \cite[Proposition 3.3]{DIBENEDETTO1983827}, there is $C=C(p)>0$ such that for each $\zeta \in \{\widetilde{v}, \widetilde{w}, \overline{v}, \overline{w}\},$
		\[
		\ess_{B_{1/2}} |\nabla \zeta|^{p}\leq C \int_{B_{1}} |\nabla \zeta|^{p}\diff x,
		\]
		which implies that for each $U \in \{int(B_{NW}), int(B_{NE}), int(B_{SE}), int(B_{SW})\},$
		\[
		\ess_{U\cap B_{1/2}} |\nabla u|^{p} \leq 4C\int_{U} |\nabla u|^{p}\diff x. 
		\]
		Thus, we can conclude that 
		\[
		\ess_{B_{1/2}} |\nabla u|^{p} \leq 4C \int_{B_{1}} |\nabla u|^{p}\diff x,
		\]
		which completes the proof.
	\end{proof}
	The following lemma says that if $\Sigma \subset \overline{\Omega}$ is a closed arcwise connected set, $0<2r_{0}\leq r_{1}$, $r_{1}$ is sufficiently small, $B_{r_{1}}(x_{0})\subset \Omega$ and for each $r \in [r_{0},r_{1}]$ there exists a cross $K$ passing through $x_{0}$ such that $\Sigma$ is close enough, in $\overline{B}_{r}(x_{0})$ and in the Hausdorff distance, to $K\cap \overline{B}_{r}(x_{0})$, then the energy $[r_{0},r_{1}]\ni r \mapsto \frac{1}{r}\int_{B_{r}(x_{0})} |\nabla u_{\Sigma}|^{p}\diff x$ decays no slower than $Cr^{b}$ for some $b \in (0,1)$ and $C>0$.
	\begin{lemma}\label{decay cross}
		Let $p\in (1, +\infty)$, $f \in L^{q}(\Omega)$ with  $q> q_1$, where $q_1$ is defined in (\ref{qrestrict}). Then there exists   $\varepsilon_0 \in (0,1/100)$,  $C=C(p,q_{0},q,\|f\|_q, |\Omega|),\, \overline{r}, \, b>0$ such that the following holds.    Let $\s\subset \overline{\Omega}$ be a closed arcwise connected set. Assume that $0<2r_{0}\leq r_{1}\leq \overline{r}$,  $B_{r_{1}}(x_{0}) \subset \Omega$ and that for all  $r \in [r_{0}, r_{1}]$ there is a cross $K=K(r)$,  passing through $x_{0}$, such that $d_{H}(\s\cap \overline{B}_{r}(x_{0}), K\cap \overline{B}_{r}(x_{0})) \leq \varepsilon_0 r$. Assume also that $\Sigma \setminus B_{r_1}(x_0)\not = \emptyset$. Then for every $r\in[r_{0}, r_{1}]$,
		\begin{equation*}
		\int_{B_{r}(x_{0})}|\nabla u_\Sigma|^{p}\diff x \leq C \bigl(\frac{r}{r_{1}}\big)^{1+b}  \int_{B_{r_{1}}(x_{0})}|\nabla u_\Sigma|^{p}\diff x + Cr^{1+b}. 
		\end{equation*}
	\end{lemma}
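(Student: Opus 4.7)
The plan is to mirror the proof strategy of Lemma~\ref{Good_decay}, replacing the role of an affine line by that of a cross throughout, and replacing the reflection argument of Lemma~\ref{lem: 7.4} by the fourfold reflection argument already established in Lemma~\ref{reflection cross}. As in the line case, the starting point is a decay estimate for a $p$-harmonic function vanishing on the cross. Namely, using Lemma~\ref{reflection cross} in place of Lemma~\ref{lem: 7.4} in the proof of Corollary~\ref{cor: 7.6}, we obtain a constant $\widetilde{C}_{0}=\widetilde{C}_{0}(p)>2$ such that every weak solution $u$ of the $p$-Laplace equation in $B_{1}\backslash K_{0}$ with $u=0$ $p$-q.e.\ on $K_{0}\cap \overline{B}_{1}$ satisfies
\[
\int_{B_{r}} |\nabla u|^{p}\diff x \leq \widetilde{C}_{0} r^{2} \int_{B_{1}} |\nabla u|^{p}\diff x \quad \text{for all } r\in (0,1/2].
\]

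The next step is to upgrade this reflection-based estimate to sets $\Sigma$ which are only close (in the Hausdorff distance) to a cross, by a compactness argument analogous to Lemma~\ref{lem: 7.7}. I would argue by contradiction: assume a sequence of closed sets $\Sigma_{n}\subset\mathbb{R}^{2}$ with $(\Sigma_{n}\cap B_{1})\cup \partial B_{1}$ connected and $d_{H}(\Sigma_{n}\cap \overline{B}_{1},K_{n}\cap \overline{B}_{1})\to 0$ for some crosses $K_{n}$ (which up to rotation we may take centered at $0$ and converging to $K_{0}$), together with $p$-harmonic $u_{n}$ in $B_{1}\setminus \Sigma_{n}$ vanishing on $\Sigma_{n}$, that violates the announced decay. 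After normalizing so that $\int_{B_{1}}|\nabla v_{n}|^{p}\diff x=1$, the sequence $(v_{n})$ is bounded in $W^{1,p}(B_{1})$ (the lower bound on ${\rm Cap}_{p}(\Sigma_{n}\cap \overline{B}_{1})$ combined with Proposition~\ref{prop: 2.7} yields a Poincar\'e-type estimate). One then extracts a limit $v$, proves that $v=0$ $p$-q.e.\ on $K_{0}\cap \overline{B}_{1}$ by Mosco convergence of the spaces $W^{1,p}_{0}(B_{1}\setminus \Sigma_{n})$ to $W^{1,p}_{0}(B_{1}\setminus K_{0})$, shows that $v$ is $p$-harmonic in $B_{1}\setminus K_{0}$ exactly as in the proof of Lemma~\ref{lem: 7.7} (weak convergence of $|\nabla v_{n}|^{p-2}\nabla v_{n}$ following \cite{Bucur}), and finally promotes the weak convergence of $\nabla v_{n}$ to strong convergence in $L^{p}(B_{t_{0}})$ for some $t_{0}\in (1/2,3/4)$ by the same cutoff/minimality argument. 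This gives $v$ violating the decay obtained in the preceding paragraph, a contradiction. As a consequence, for every $\varrho \in (0,1/2]$ there exists $\varrho_{0}=\varrho_{0}(p,\varrho)>0$ such that whenever $d_{H}(\Sigma\cap \overline{B}_{r}(x_{0}),K\cap \overline{B}_{r}(x_{0}))\leq \varrho_{0}r$ for some cross $K\ni x_{0}$, the $p$-harmonic function $w$ vanishing on $\Sigma$ in $B_{r}(x_{0})$ satisfies $\int_{B_{\varrho r}(x_{0})}|\nabla w|^{p}\diff x \leq (\widetilde{C}_{0}\varrho)^{2}\int_{B_{r}(x_{0})}|\nabla w|^{p}\diff x$.

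With this compactness estimate in hand, the rest of the argument is a transcription of the proofs of Lemma~\ref{lem: 7.11P} and Lemma~\ref{Good_decay} with no essential modification. Replacing $u_{\Sigma}$ by its Dirichlet replacement $w$ in $B_{r_{1}}(x_{0})$ and applying Lemma~\ref{lem: 7.11} yields the same control
\[
\int_{B_{r_{1}}(x_{0})}|\nabla u_{\Sigma}-\nabla w|^{p}\diff x \leq C r_{1}^{1+\alpha(q)},
\]
with $\alpha(q)$ defined in \eqref{defalphap}, and this estimate does not see whether $\Sigma$ is close to a line or to a cross. Choosing $a:=2^{-p}\widetilde{C}_{0}^{-2}$ and combining the two ingredients via the elementary $(c+d)^{p}\leq 2^{p-1}(c^{p}+d^{p})$ exactly as in Lemma~\ref{lem: 7.11P} produces a one-step decay
\[
\frac{1}{ar_{1}}\int_{B_{ar_{1}}(x_{0})}|\nabla u_{\Sigma}|^{p}\diff x \leq \frac{1}{2}\Bigl(\frac{1}{r_{1}}\int_{B_{r_{1}}(x_{0})}|\nabla u_{\Sigma}|^{p}\diff x\Bigr) + Cr_{1}^{\alpha(q)},
\]
valid under the cross-flatness condition at scale $r_{1}$ with threshold $\varepsilon_{0}:=\varrho_{0}$. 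The iteration in $B_{a^{l}r_{1}}(x_{0})$ for $l\in\{0,\dots,k\}$ with $a^{k+1}r_{1}<r_{0}\leq a^{k}r_{1}$, using the same inequality $\tfrac{1}{2}t^{b}+t^{\alpha(q)}\leq (at)^{b}$ with $b:=\min\bigl(\alpha(q)/2,\ln(3/4)/\ln(a)\bigr)$ and $\overline r:=(1/4)^{1/b}$, then yields the stated estimate \eqref{7.24} verbatim.

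The main obstacle is the compactness step, specifically the Mosco convergence $W^{1,p}_{0}(B_{1}\setminus \Sigma_{n})\to W^{1,p}_{0}(B_{1}\setminus K_{0})$. In the line case this was immediate from the result in \cite{Bucur} since the limit set $[-1,1]\times\{0\}$ is connected. Here $K_{0}$ has a singular point at the origin, but since each $\Sigma_{n}$ is assumed to make $(\Sigma_{n}\cap B_{1})\cup \partial B_{1}$ connected and converges to a closed connected set of positive $p$-capacity, the Mosco convergence still holds (the origin has zero $p$-capacity for $p\in(1,2]$ by Theorem~\ref{thm: 2.4}, and for $p>2$ it carries positive capacity but the spaces are unaffected because singletons do not alter the Sobolev space). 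Everything else, including the strong $L^{p}(B_{t_{0}})$ convergence via a cutoff between the recovery sequence and $v_{n}$, the reliance on $\mu(\partial B_{t_{0}})=0$ for a suitable $t_{0}\in (1/2,3/4)$, and the appeal to Theorem~\ref{thm: 2.2}, goes through unchanged.
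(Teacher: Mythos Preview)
Your proposal is correct and follows exactly the approach the paper takes: reproduce the proofs of Lemma~\ref{lem: 7.7}, Lemma~\ref{lem: 7.11P} and Lemma~\ref{Good_decay} with the line replaced by a cross, invoking Lemma~\ref{reflection cross} in place of Lemma~\ref{lem: 7.4}. The paper's own proof is in fact a single sentence to this effect, so you have spelled out more detail than the authors do.

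One small remark: your discussion of the ``main obstacle'' is over-thought and slightly confused. The Mosco convergence in the compactness step does not hinge on any special treatment of the singular point of $K_{0}$; the cross $K_{0}\cap\overline{B}_{1}$ is a closed connected set, and since each $(\Sigma_{n}\cap B_{1})\cup\partial B_{1}$ is connected, the result from \cite{Bucur} applies directly, just as in the line case. Your aside that for $p>2$ ``singletons do not alter the Sobolev space'' is incorrect (they have positive capacity by Remark~\ref{rem: 2.6}) and in any case irrelevant, so it is best dropped.
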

	\begin{proof}
		The proof follows by reproducing the proofs of Lemma~\ref{lem: 7.7}, Lemma~\ref{lem: 7.11P} and Lemma~\ref{Good_decay} with a minor modification, namely, replacing the affine line by a cross in the proofs of these lemmas, such a reproduction is possible thanks to Lemma~\ref{reflection cross}.
	\end{proof} 
	\begin{prop}\label{remark about quadruple points} Let $\Omega \subset \mathbb{R}^{2}$ be a bounded open set, $p \in (1,+\infty)$ and $f \in L^{q_{1}}(\Omega)$ with $q>q_{1}$ defined in (\ref{qrestrict}). Then every solution $\Sigma$ of Problem~\ref{problemMain} cannot contain quadruple points in $\Omega$. 
	\end{prop}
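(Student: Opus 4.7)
The plan is to argue by contradiction. Assume that some minimizer $\Sigma$ of Problem~\ref{problemMain} admits a quadruple point $x_{0}\in\Sigma\cap\Omega$. Then the four $C^{1}$ arcs meeting at $x_{0}$ with $90^{\circ}$ angles define a cross $K$ centered at $x_{0}$ such that for every $\varepsilon>0$ one can find $r_{\varepsilon}>0$ with $\overline{B}_{r_{\varepsilon}}(x_{0})\subset\Omega$ and
\[
d_{H}(\Sigma\cap\overline{B}_{s}(x_{0}), K\cap\overline{B}_{s}(x_{0}))\leq\varepsilon s\quad\text{for all }s\in(0,r_{\varepsilon}].
\]
I fix $\varepsilon$ smaller than the threshold $\varepsilon_{0}$ of Lemma~\ref{decay cross} (to be shrunk further later). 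Iterating Lemma~\ref{decay cross} dyadically, as was done for Lemma~\ref{Good_decay}, and invoking Proposition~\ref{prop: 2.12} to bound the starting energy, I obtain constants $b,C>0$ and $\overline{r}\in(0,r_{\varepsilon}/2]$ such that
\[
\int_{B_{r}(x_{0})}|\nabla u_{\Sigma}|^{p}\diff x\leq Cr^{1+b}\quad\text{for all }r\in(0,\overline{r}].
\]

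For each small $r>0$ I would build a competitor $\Sigma_{r}'$ as follows. Let $\xi_{1}^{r},\ldots,\xi_{4}^{r}$ be the first exit points of the four arcs from $\overline{B}_{r}(x_{0})$; by $C^{1}$ regularity they lie within $C\varepsilon r$ of the four vertices of $K\cap\partial B_{r}(x_{0})$. Let $T_{r}\subset\overline{B}_{r}(x_{0})$ be a Steiner tree joining the $\xi_{i}^{r}$, chosen to sit in a $C\varepsilon r$-neighborhood of $K$. Since the Steiner tree through the exact cross vertices $\pm re_{1},\pm re_{2}$ has length $\sqrt{2}(1+\sqrt{3})r<4r$, by continuity $\mathcal{H}^{1}(T_{r})\leq\sqrt{2}(1+\sqrt{3})r+C\varepsilon r$, whereas each $C^{1}$ arc contributes length $\geq(1-C\varepsilon)r$ inside $B_{r}(x_{0})$. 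Thus $\Sigma_{r}':=(\Sigma\setminus B_{r}(x_{0}))\cup T_{r}$ is a closed arcwise connected proper subset of $\overline{\Omega}$, with $\Sigma_{r}'\Delta\Sigma\subset\overline{B}_{r}(x_{0})$, and for $\varepsilon$ small enough
\[
\mathcal{H}^{1}(\Sigma)-\mathcal{H}^{1}(\Sigma_{r}')\geq\delta r
\]
for some $\delta>0$ independent of $r$.

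To close the argument I use Corollary~\ref{cor: 7.3} to bound
\[
E_{p}(u_{\Sigma})-E_{p}(u_{\Sigma_{r}'})\leq C\int_{B_{2r}(x_{0})}|\nabla u_{\Sigma_{r}'}|^{p}\diff x+Cr^{2+p^{\prime}-\frac{2p^{\prime}}{q}}.
\]
By construction $d_{H}(\Sigma_{r}'\cap\overline{B}_{s}(x_{0}),K\cap\overline{B}_{s}(x_{0}))\leq C\varepsilon s$ for all $s\in[2r,\overline{r}]$, so (after possibly shrinking $\varepsilon$) Lemma~\ref{decay cross} applied to $\Sigma_{r}'$ yields $\int_{B_{2r}(x_{0})}|\nabla u_{\Sigma_{r}'}|^{p}\diff x\leq Cr^{1+b}$. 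The minimality of $\Sigma$ gives $\lambda(\mathcal{H}^{1}(\Sigma)-\mathcal{H}^{1}(\Sigma_{r}'))\leq E_{p}(u_{\Sigma})-E_{p}(u_{\Sigma_{r}'})$, hence
\[
\lambda\delta r\leq Cr^{1+b}+Cr^{2+p^{\prime}-\frac{2p^{\prime}}{q}}.
\]
The assumption $q>q_{1}$ ensures both $b>0$ and $2+p^{\prime}-2p^{\prime}/q>1$, so letting $r\to 0^{+}$ produces the desired contradiction.

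The main obstacle will be the explicit geometric construction of $T_{r}$: it must simultaneously realize a strictly linear length gain over the cross configuration (which forces the two-Steiner-point topology with $120^{\circ}$ angles rather than any simple spanning tree) and stay $C\varepsilon r$-close to $K$, so that Lemma~\ref{decay cross} applies to $\Sigma_{r}'$ on all scales $[2r,\overline{r}]$. The arcwise connectedness of $\Sigma_{r}'$ then follows from the connectedness of $\Sigma$, since every connected component of $\Sigma\setminus B_{r}(x_{0})$ must contain at least one of the $\xi_{i}^{r}$ in its closure, and $T_{r}$ joins these four points.
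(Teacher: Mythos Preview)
Your overall strategy matches the paper's: assume a quadruple point, build a competitor by replacing the cross-like portion of $\Sigma$ in a small ball by a Steiner tree (which is strictly shorter than the cross by a factor $\sqrt{2}(1+\sqrt{3})/4<1$), and use the decay estimate of Lemma~\ref{decay cross} together with Corollary~\ref{cor: 7.3} to show that the energy defect is $o(r)$, contradicting minimality. The variant of connecting the Steiner tree directly to the four exit points $\xi_i^r$ (rather than to the exact cross vertices plus ``wall arcs'' on $\partial B_r(x_0)$, as the paper does) is fine and arguably cleaner.

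There is, however, a genuine geometric error in what you flag as ``the main obstacle''. You require $T_r$ to sit in a $C\varepsilon r$-neighborhood of the cross $K$ \emph{and} to be $\delta r$ shorter than the cross. These two requirements are incompatible: any connected set in a $C\varepsilon r$-tube around $K\cap\overline B_r(x_0)$ that joins all four endpoints must traverse (essentially) each of the four arms and therefore has length at least $4r-O(\varepsilon r)$. The actual Steiner tree for the four cross vertices has the two-Steiner-point ``H'' topology and deviates from $K$ by an amount comparable to $r$, not $\varepsilon r$. Consequently your claim that $d_H(\Sigma_r'\cap\overline B_s(x_0),K\cap\overline B_s(x_0))\leq C\varepsilon s$ for all $s\in[2r,\overline r]$ is false at the bottom of that range.

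The fix is exactly what the paper does: do not ask $T_r$ to be close to $K$; simply observe that $\Sigma_r'\Delta\Sigma\subset\overline B_r(x_0)$, hence
\[
d_H(\Sigma_r'\cap\overline B_s(x_0),K\cap\overline B_s(x_0))\leq d_H(\Sigma\cap\overline B_s(x_0),K\cap\overline B_s(x_0))+r\leq \tfrac{\varepsilon_0}{2}s+r\leq \varepsilon_0 s
\]
for all $s\in[2r/\varepsilon_0,\overline r]$. Applying Lemma~\ref{decay cross} to $\Sigma_r'$ on this range and evaluating at $s=2r/\varepsilon_0$ still gives $\int_{B_{2r}(x_0)}|\nabla u_{\Sigma_r'}|^p\diff x\leq C_{\varepsilon_0}\,r^{1+b}$, which is all you need. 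With this correction your argument goes through.
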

	\begin{proof}
		Assume by contradiction that for some $\lambda>0$ a minimizer $\Sigma$ of Problem~\ref{problemMain} contains a quadruple point $x_{0}\in \Sigma \cap \Omega$. Let $\varepsilon_{0}, b ,\overline{r}, C$ be the constants of Lemma~\ref{decay cross} and let $B_{t_{0}}(x_{0})\subset \Omega $ with $t_{0}<\min\{\overline{r}, \diam(\Sigma)/2\}$. Without loss of generality, we can assume that the set $\Sigma \cap \overline{B}_{t_{0}}(x_{0})$ consists of exactly four distinct $C^{1}$ arcs, each of which meets at point $x_{0}$ exactly one of the other three at an angle of $180^{\circ}$ degrees, and each of the other two at an angle of $90^{\circ}$ degrees. Then there exists a cross $K$ passing through $x_{0}$ such that for each $\varepsilon>0$ there exists $\delta=\delta(\varepsilon) \in (0, t_{0}]$ such that for all $r \in (0,\delta]$,
		\begin{equation}\label{distance to cross}
		d_{H}(\Sigma \cap \overline{B}_{r}(x_{0}), K\cap \overline{B}_{r}(x_{0}))\leq \varepsilon r.
		\end{equation}
		We fix $\varepsilon=\frac{\varepsilon_{0}}{2}$ and a sequence of decreasing radii $(r_{n})_{n\in \mathbb{N}}$ with $r_{0}<\delta=\delta\bigl(\frac{\varepsilon_{0}}{2}\bigr)$. Following \cite{BOS}, for each $n\in \mathbb{N}$, we define the set $D_{n}=K\cap \partial B_{r_{n}}(x_{0})$ which consists of exactly four points. Denote by $S_{4}(D_{n})\subset \overline{B}_{r_{n}}(x_{0})$ a closed set of minimum $\mathcal{H}^{1}$-measure in the ball $\overline{B}_{r_{n}}(x_{0})$ which connects the all four points of $D_{n}$ (as in \cite{BOS}, we shall call it \textit{Steiner connection} of these points); see Figure~\ref{steiner connection}.
		\begin{figure}[H]
			\centering
			\includegraphics[width=0.3\textwidth]{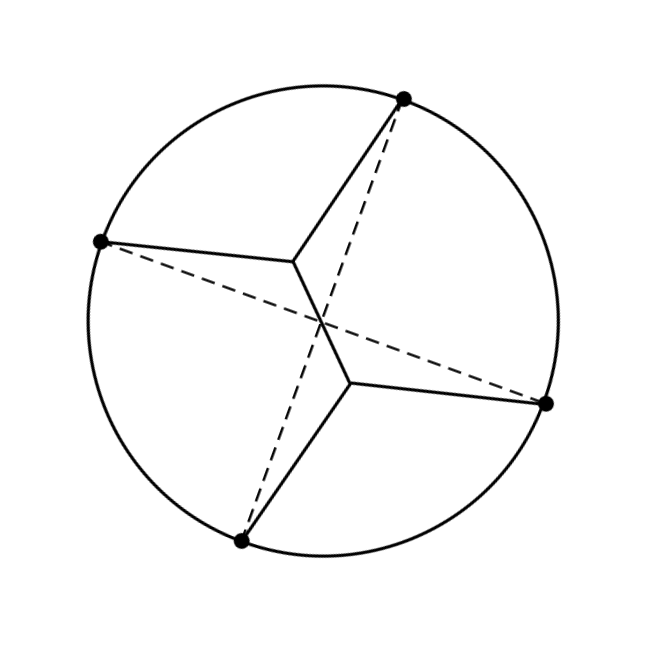}
			\caption{Steiner connection of the vertices of a square.}
			\label{steiner connection}
		\end{figure}
		Next, for each $n\in \mathbb{N}$, we can define a competitor $\Sigma_{n}$ by
		\[
		\Sigma_{n}=(\Sigma \backslash B_{r_{n}}(x_{0})) \cup (\partial B_{r_{n}}(x_{0})\cap \{y: \dist(y, K)\leq d_{H}(\Sigma\cap \overline{B}_{r_{n}}(x_{0}), K\cap \overline{B}_{r_{n}}(x_{0}))\}) \cup S_{4}(D_{n});
		\]
		see Figure~\ref{figure 6.2}.
		\begin{figure}[H]
			\centering
			\includegraphics[width=0.5\textwidth]{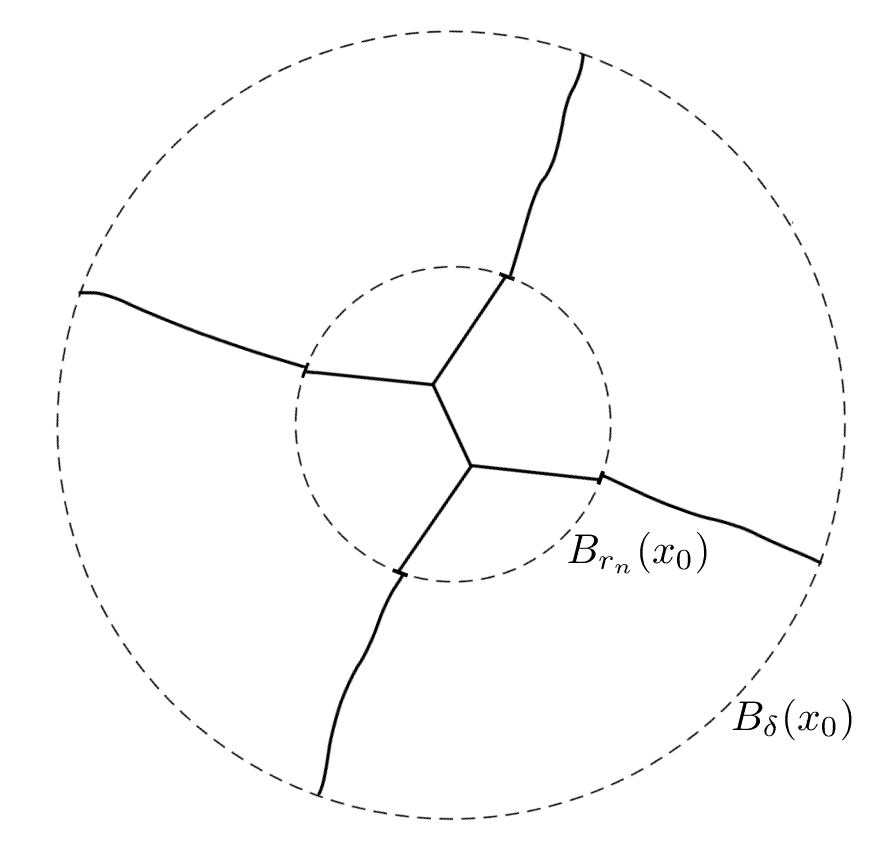}
			\caption{The set $\Sigma_{n}\cap B_{\delta}(x_{0})$.}
			\label{figure 6.2}
		\end{figure}
		Due to the condition (\ref{distance to cross}), each of the four arcs in 
		\[
		\partial B_{r_{n}}(x_{0})\cap \{y: \dist(y, K)\leq d_{H}(\Sigma\cap \overline{B}_{r_{n}}(x_{0}), K\cap \overline{B}_{r_{n}}(x_{0}))\} 
		\]
		has $\mathcal{H}^{1}$-measure less than or equal to $2\arcsin\bigl(\frac{\varepsilon_{0}}{2}\bigr)r_{n}.$ On the other hand,
		\[
		\mathcal{H}^{1}(\Sigma \cap B_{r_{n}}(x_{0}))\geq 4r_{n} \,\ \text{and} \,\ \mathcal{H}^{1}(S_{4}(D_{n}))=\sqrt{2}(\sqrt{3}+1)r_{n},
		\]
		where we have used that $\mathcal{H}^{1}(S_{4}(D_{n}))=\mathcal{H}^{1}(S_{4}(K_{0}\cap \partial B_{1}))r_{n}=\sqrt{2}(\sqrt{3}+1)r_{n}$. Observing that $\varepsilon_{0}\in (0, 1/100)$, $2\arcsin\bigl(\frac{\varepsilon_{0}}{2}\bigr)\leq 2\varepsilon_{0}$ (since $\arcsin(t)\leq 2t$ for all $t\in [0,1/10]$) and $\sqrt{2}(\sqrt{3}+1)\approx 3.86$, we can conclude that there is a constant $\widetilde{C}>0$ independent of $n$ such that for each $n\in \mathbb{N}$,
		\begin{equation}\label{condition}
		\mathcal{H}^{1}(\Sigma \cap \overline{B}_{r_{n}}(x_{0})) -\mathcal{H}^{1}(\Sigma_{n}\cap \overline{B}_{r_{n}}(x_{0})) \geq \widetilde{C} r_{n}.
		\end{equation}
		Now we want to apply Lemma~\ref{decay cross} to $\Sigma_{n}$. If $r_{n}\leq \frac{\varepsilon_{0}r}{2}$ and $r\in (0, \delta]$, then
		\begin{align*}
		& \, d_{H}(\Sigma_{n}\cap \overline{B}_{r}(x_{0}), K\cap \overline{B}_{r}(x_{0})) \\
		\;\, \leq &\, d_{H}(\Sigma_{n}\cap \overline{B}_{r}(x_{0}), \s\cap \overline{B}_{r}(x_{0}))+ d_{H}(\Sigma\cap \overline{B}_{r}(x_{0}), K\cap \overline{B}_{r}(x_{0})) \\
		\;\, \leq &\, r_{n} +  \frac{\varepsilon_{0}r}{2} \leq \frac{\varepsilon_{0}r}{2} +\frac{\varepsilon_{0}r}{2} =  \varepsilon_{0} r,
		\end{align*}
		where we have used (\ref{distance to cross}). So we can apply Lemma~\ref{decay cross} to $\Sigma_{n}$, for the interval $[\frac{2r_{n}}{\varepsilon_{0}}, \delta]$, provided that $\frac{2r_{n}}{\varepsilon_{0}} \leq \frac{\delta}{2}$,  and we obtain that 
		\begin{equation*} 
		\int_{B_{r}(x_{0})}|\nabla u_{\Sigma_n}|^{p}\diff x \leq C \bigl(\frac{r}{\delta}\big)^{1+b}  \int_{B_{\delta}(x_{0})}|\nabla u_{\Sigma_n}|^{p}\diff x + Cr^{1+b}\,\ \text{for every}\,\  r\in\biggl[\frac{2 r_{n}}{\varepsilon_0}, \delta\biggr]. 
		\end{equation*}
		Hereinafter in this proof, $C$ denotes a positive constant that does not depend on $r_{n}$ and can be different from line to line. Thus, applying the above estimate for $r=\frac{2r_{n}}{\varepsilon_{0}}$ and using (\ref{2.4}), we have
		\begin{equation}\label{estimate of the integral}
		\int_{B_{\frac{2r_{n}}{\varepsilon_{0}}}(x_{0})}|\nabla u_{\Sigma_{n}}|^{p}\diff x \leq C r^{1+b}_{n}
		\end{equation}
		for all $n\in \mathbb{N}$ such that $\frac{2r_{n}}{\varepsilon_{0}} \leq \frac{\delta}{2}$. Recall that the exponent $b$ given by Lemma~\ref{decay cross} is positive provided $q>q_{1}$. Now, using the fact that $\Sigma$ is a minimizer and $\Sigma_{n}$ is a competitor for $\Sigma$, the estimate (\ref{condition}), Corollary~\ref{cor: 7.3} and the estimate (\ref{estimate of the integral}),  we deduce the following
		\begin{align*} 
		0&\leq \mathcal{F}_{\lambda,p}({\s}_{n})- \mathcal{F}_{\lambda,p}(\s)  \leq E_{p}(u_{\s})-E_{p}(u_{\s_{n}}) - \lambda \widetilde{C}r_{n}\\
		& \leq C \int_{B_{2r_{n}}(x_{0})}|\nabla u_{\s_{n}}|^{p}\diff x + Cr^{2+p^{\prime}-\frac{2p'}{q}}_{n} -\lambda \widetilde{C} r_{n}  \\
		&\leq C \int_{B_{\frac{2r_{n}}{\varepsilon_{0}}}(x_{0})}|\nabla u_{\Sigma_{n}}|^{p}\diff x  + Cr^{2+p^{\prime}-\frac{2p'}{q}}_{n} -\lambda\widetilde{C} r_{n}  \\
		&\leq  Cr_n^{1+b} +Cr^{2+p^{\prime}-\frac{2p'}{q}}_{n} -\lambda \widetilde{C} r_{n} 
		\end{align*}
		for all $n\in\mathbb{N}$ such that $\frac{2r_{n}}{\varepsilon_{0}} \leq \frac{\delta}{2}$. Notice that $2+p^{\prime}-\frac{2p^{\prime}}{q}>1$ if and only if $q>\frac{2p}{2p-1}$, which is fulfilled under the assumption $q>q_{1}$. Finally, letting $n$ tend to $+\infty$, we arrive to a contradiction. This completes the proof of Proposition~\ref{remark about quadruple points}.
	\end{proof}


\section{Acknowledgments}
We would like to warmly thank the anonymous referees for carefully reading, checking and commenting on our paper. This work was partially supported by the project ANR-18-CE40-0013 SHAPO financed by the French Agence Nationale de la Recherche (ANR).


\appendix
\section{Auxiliary results}
In the next lemma we prove the integration by parts formula for a weak solution of the $p$-Poisson equation.
\begin{lemma} Let $U$ be a bounded open set in ${\color{black}\mathbb{R}^N}, N\geq 2$ and $p \in (1, +\infty)$, and let $f\in L^{q}(U)$ with $q=\frac{Np}{Np-N+p}$ if $1<p<N$,\,\ $q>1$ if $p=N$ and $q=1$ if $p>N$. Let $u$ be the solution of the Dirichlet problem
\begin{equation}
\begin{cases}
-\Delta_{p}u & =\,\ f \,\ \text{in}\,\ U \\  \label{A1}
\qquad u & =\,\ 0 \,\ \text{on}\,\ \partial U,
\end{cases}
\end{equation}
which means $u \in W^{1,p}_{0}(U)$ and 
\begin{equation}
\int_{U} |\nabla u|^{p-2}\nabla u \nabla \varphi \diff x= \int_{U} f \varphi \diff x \,\ \text{for all}\,\ \varphi \in W^{1,p}_{0}(U).  \label{A2}
\end{equation}
Then for every $x_{0} \in {\color{black}\mathbb{R}^2}$ and a.e. $r>0$ we have
\begin{equation*}
\int_{B_{r}(x_{0})} |\nabla u|^{p}\diff x = \int_{\partial B_{r}(x_{0})} u |\nabla u|^{p-2}\nabla u \cdot {\color{black}\nu} \diff \mathcal{H}^{N-1} + \int_{B_{r}(x_{0})} fu \diff x,
\end{equation*}
where $\nu$ stands for the outward pointing unit normal vector to $\partial B_{r}(x_{0})$.
\end{lemma}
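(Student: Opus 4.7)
The plan is to plug a radial cutoff multiplied by $u$ into the weak formulation and pass to the limit. For $\varepsilon>0$, let $\eta_\varepsilon\colon [0,+\infty)\to [0,1]$ be the piecewise linear function equal to $1$ on $[0,r-\varepsilon]$, equal to $0$ on $[r,+\infty)$, and affine on $[r-\varepsilon,r]$. Set $\chi_\varepsilon(x)=\eta_\varepsilon(|x-x_0|)$. Since $u$, extended by $0$ outside $U$, belongs to $W^{1,p}(\mathbb{R}^N)$, and $\chi_\varepsilon$ is bounded and Lipschitz, the product $u\chi_\varepsilon$ lies in $W^{1,p}_0(U)$ and is admissible as test function in \eqref{A2}. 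Expanding the gradient yields
\begin{equation*}
\int_U |\nabla u|^p \chi_\varepsilon \diff x + \int_U u\,|\nabla u|^{p-2}\nabla u \cdot \nabla \chi_\varepsilon \diff x = \int_U f u \chi_\varepsilon \diff x.
\end{equation*}

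Next, I would identify the middle term via the coarea formula. Since $\nabla\chi_\varepsilon(x)=-\tfrac{1}{\varepsilon}\nu(x)\,\mathds{1}_{B_r(x_0)\setminus B_{r-\varepsilon}(x_0)}(x)$ with $\nu(x)=(x-x_0)/|x-x_0|$, setting
\begin{equation*}
g(\rho) := \int_{\partial B_\rho(x_0)} u\,|\nabla u|^{p-2}\nabla u \cdot \nu \diff\mathcal{H}^{N-1},
\end{equation*}
the coarea formula gives
\begin{equation*}
\int_U u\,|\nabla u|^{p-2}\nabla u \cdot \nabla \chi_\varepsilon \diff x = -\frac{1}{\varepsilon}\int_{r-\varepsilon}^{r} g(\rho)\diff\rho.
\end{equation*}
The vector field $u\,|\nabla u|^{p-2}\nabla u$ belongs to $L^1_{\loc}(\mathbb{R}^N;\mathbb{R}^N)$ because $|\nabla u|^{p-1}\in L^{p'}_{\loc}$ and $u\in L^{p}_{\loc}$; Fubini in polar coordinates then shows that $g\in L^1_{\loc}([0,+\infty))$, so by Lebesgue's differentiation theorem $\tfrac{1}{\varepsilon}\int_{r-\varepsilon}^{r} g(\rho)\diff\rho \to g(r)$ for a.e.\ $r>0$.

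For the remaining two terms, note $0\le \chi_\varepsilon\le \mathds{1}_{B_r(x_0)}$ and $\chi_\varepsilon\to \mathds{1}_{B_r(x_0)}$ pointwise, so dominated convergence yields
\begin{equation*}
\int_U |\nabla u|^p \chi_\varepsilon \diff x \to \int_{B_r(x_0)} |\nabla u|^p \diff x, \qquad \int_U f u \chi_\varepsilon \diff x \to \int_{B_r(x_0)} f u \diff x,
\end{equation*}
provided $|\nabla u|^p, fu \in L^1(U)$. The first is immediate from $u\in W^{1,p}_0(U)$; for the second, the integrability exponent $q$ in the hypothesis is precisely chosen so that $q'$ lies in the Sobolev range of $W^{1,p}_0(U)$: if $1<p<N$ then $q'=p^\ast$ and $u\in L^{p^\ast}(U)$; if $p=N$ then $q'<\infty$ and $u\in L^{q'}(U)$ by Trudinger/Moser or simple Sobolev; if $p>N$ then $u\in L^\infty(U)$ by Morrey. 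In every case H\"older's inequality yields $fu\in L^1(U)$. Combining all three limits along a sequence $\varepsilon_n\downarrow 0$ and for $r$ outside a Lebesgue-null set delivers the desired identity.

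\textbf{Main obstacle.} The only subtle point is giving a pointwise meaning to the boundary integral over $\partial B_r(x_0)$, since the field $u\,|\nabla u|^{p-2}\nabla u$ is only an $L^1_{\loc}$ object. This is the reason the identity can only be asserted for \emph{a.e.} $r>0$, and it is resolved exactly by the Fubini/coarea plus Lebesgue differentiation argument described above; all other steps reduce to dominated convergence once integrability of $fu$ is checked from the hypothesis on $q$.
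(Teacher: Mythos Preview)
Your proposal is correct and follows essentially the same approach as the paper: both multiply $u$ by the same piecewise-linear radial cutoff, insert the product into the weak formulation, pass to the limit by dominated convergence in the two volume terms, and use the coarea/polar-coordinates representation together with Lebesgue differentiation (equivalently, absolute continuity of the antiderivative) to handle the boundary term for a.e.\ $r$. Your discussion of why $fu\in L^1(U)$ via the Sobolev embeddings is a welcome detail that the paper leaves implicit.
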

\begin{proof} Every ball in this proof is centered at $x_{0}$. We extend $u$ be zero outside $U$ to an element that belongs to $W^{1,p}({\color{black}\mathbb{R}^N})$.  Let us fix an arbitrary $\varepsilon \in (0,r)$ and define
\begin{equation*}
g_{\varepsilon}(t)=
\begin{cases}
1 & \text{if}\,\ \,\  t \in [0,r-\varepsilon]\\
-\frac{1}{\varepsilon}(t-r) & \text{if}\,\ \,\ t \in [r-\varepsilon, r]\\
 0 & \text{if} \,\ \,\   t \in [r, +\infty).
\end{cases}
\end{equation*}
Since $g_{\varepsilon} \in \lip(\mathbb{R}^{+})$, it is clear that the function $\varphi(x):=g_{\varepsilon}(|x-x_{0}|)u$ is an element of $W^{1,p}_{0}(U)$. Thus using the function $\varphi$ as a test function in the weak version of the $p$-Poisson equation which defines $u$, we get
\begin{align*}
\int_{U} |\nabla u|^{p} g_{\varepsilon}(|x-x_{0}|) \diff x + & \int_{U} u |\nabla u|^{p-2}\nabla u \cdot g^{\prime}_{\varepsilon}(|x-x_{0}|) \frac{x-x_{0}}{|x-x_{0}|}\diff x \\ & \qquad= \int_{U} f u g_{\varepsilon}(|x-x_{0}|) \diff x.
\end{align*}
Letting $\varepsilon$ tend  $0+$, we have
\begin{equation} \label{A.3}
\begin{split}
\int_{U} |\nabla u|^{p} g_{\varepsilon}(|x-x_{0}|) \diff x & \to \int_{B_{r}} |\nabla u|^{p}\diff x \\
\int_{U} f u g_{\varepsilon}(|x-x_{0}|) \diff x & \to  \int_{B_{r}} f u \diff x.
\end{split}
\end{equation}
On the other hand, using the integration in the polar coordinates system (see \cite[3.4.4]{Evans}), which is the special case of the coarea formula, we get
\begin{align*}
\int_{U} u |\nabla u|^{p-2}\nabla u \cdot g^{\prime}_{\varepsilon}(|x-x_{0}|)& \frac{x-x_{0}}{|x-x_{0}|}\diff x = -\frac{1}{\varepsilon}\int_{B_{r}\backslash \overline{B}_{r-\varepsilon}} u |\nabla u|^{p-2}\nabla u \cdot \frac{x-x_{0}}{|x-x_{0}|} \diff x \\
& = -\frac{1}{\varepsilon} \int^{r}_{r-\varepsilon} \diff \rho \int_{\partial B_{\rho}} u |\nabla u|^{p-2} \nabla u \cdot \frac{x-x_{0}}{\rho}\diff \mathcal{H}^{N-1}(x) \\
& \to -\int_{\partial B_{r}} u|\nabla u|^{p-2} \nabla u \cdot {\color{black}\nu} \diff \mathcal{H}^{N-1}, \numberthis \label{A.4}
\end{align*}
as $\varepsilon \to 0+$, for a.e. $r>0$, because since $u \in W^{1,p}({\color{black}\mathbb{R}^N})$, the function
\[
r\in (0,+\infty) \mapsto \Psi(r):= \int^{r}_{0} \diff \rho \int_{\partial B_{\rho}} u |\nabla u|^{p-2} \nabla u \cdot \nu \diff \mathcal{H}^{N-1}
\]
is absolutely continuous on every compact subinterval of $(0,+\infty)$ and hence for a.e. $r>0$ there is $\Psi^{\prime}(r)=\int_{\partial B_{r}} u |\nabla u|^{p-2} \nabla u \cdot \nu \diff \mathcal{H}^{N-1}$ and $\Psi^{\prime}\in L^{1}(0, r)$. By (\ref{A.3}) and (\ref{A.4}) we deduce the desired formula.
\end{proof}

The following lemma on the global boundedness of weak solutions of the $p$-Poisson equation, that we prove here for the reader's convenience, is the refined version of the classical result \cite[Theorem 8.15]{PDE}. 
\begin{lemma} \label{lem: A.2} Let $U$ be a bounded open set in $\mathbb{R}^{N},\, N\geq 2$ and $p\in (1,+\infty)$, and let $f\in L^{q}(U)$ with $q>\frac{N}{p}$ if $p \in (1,N]$ and $q=1$ if $p>N$. Let $u$ be the weak solution of the equation (\ref{A1}). Then there exists a constant $C=C(N,p,q,\|f\|_{q},|U|)>0$ such that
\begin{equation}
\|u\|_{L^{\infty}(\mathbb{R}^{N})} \leq C. \label{A.5}
\end{equation}
\end{lemma}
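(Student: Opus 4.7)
The plan is to use a classical Stampacchia/De Giorgi truncation argument, treating the three ranges $p>N$, $p=N$, $1<p<N$ separately. Since $u\in W^{1,p}_{0}(U)$ extends by zero to an element of $W^{1,p}(\mathbb{R}^{N})$, one has $\|u\|_{L^\infty(\mathbb{R}^N)}=\|u\|_{L^\infty(U)}$, and it suffices to bound $u$ from above (replacing $u$ by $-u$ and $f$ by $-f$ yields the lower bound).

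For $p>N$, the conclusion is immediate from Morrey's embedding $W^{1,p}_{0}(U)\hookrightarrow L^{\infty}(\mathbb{R}^N)$ together with the a priori energy estimate~\eqref{2.4} in Proposition~\ref{prop: 2.12}, which already controls $\|\nabla u\|_{L^p(U)}$ by a constant depending only on $N,p,q_0,\|f\|_{q_0},|U|$, hence ultimately on $N,p,q,\|f\|_{q},|U|$ through H\"older's inequality.

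For $1<p\leq N$, fix $k>0$, let $A_{k}=\{x\in U: u(x)>k\}$, and use the truncation $\varphi=(u-k)_{+}\in W^{1,p}_{0}(U)$ as a test function in \eqref{A2}, which yields
\[
\int_{A_k}|\nabla u|^{p}\diff x=\int_{A_k} f(u-k)_{+}\diff x.
\]
When $1<p<N$, I apply the Sobolev inequality to $(u-k)_{+}$ to bound $\|(u-k)_{+}\|_{L^{p^{*}}}^{p}$ by $C\int_{A_k}|\nabla u|^{p}\diff x$, where $p^{*}=Np/(N-p)$. Then H\"older on the right-hand side gives
\[
\int_{A_k}f(u-k)_{+}\diff x\leq \|f\|_{L^{q}(U)}\,\|(u-k)_{+}\|_{L^{p^{*}}(A_k)}\,|A_k|^{1-\frac{1}{q}-\frac{1}{p^{*}}},
\]
which is legitimate precisely because the condition $q>N/p$ implies $q'<p^{*}$. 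Combining the two estimates and dividing by $\|(u-k)_{+}\|_{L^{p^{*}}(A_k)}$ produces an inequality of the form
\[
\|(u-k)_{+}\|_{L^{p^{*}}(A_k)}\leq C\,\|f\|_{L^q}^{1/(p-1)}\,|A_k|^{\gamma},\qquad \gamma=\tfrac{1}{p-1}\Bigl(1-\tfrac{1}{q}-\tfrac{1}{p^{*}}\Bigr).
\]
Using $(h-k)|A_h|^{1/p^{*}}\leq \|(u-k)_{+}\|_{L^{p^{*}}(A_k)}$ for $h>k$, I obtain $|A_h|\leq C(h-k)^{-p^{*}}|A_k|^{\gamma p^{*}}$. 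A short computation shows that $q>N/p$ is equivalent to $\gamma p^{*}>1$, so Stampacchia's lemma (see e.g.\ \cite[Lemma~B.1]{Kinderlehrer-Stampacchia}) applies and yields $|A_{k_{0}}|=0$ for some explicit $k_{0}$ depending only on $N,p,q,\|f\|_{q},|U|$. When $p=N$, the same iteration works after replacing $p^{*}$ by any fixed $s$ satisfying $\max(q',p)<s<\infty$, using the embedding $W^{1,N}_{0}(U)\hookrightarrow L^{s}(U)$; the assumption $q>1$ guarantees that such $s$ exists and that the resulting exponent $\gamma s$ exceeds $1$.

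The main technical obstacle is to verify that the algebraic condition required by Stampacchia's lemma, namely $\gamma p^{*}>1$ (resp.\ $\gamma s>1$), is exactly equivalent to the hypothesis $q>N/p$ (resp.\ $q>1$ when $p=N$); the elementary identity $(p-1)(N-p)\geq 0$ on $1<p\leq N$ is what makes the bookkeeping work out cleanly. Once Stampacchia's lemma is applied, one reads off $\|u\|_{L^{\infty}(U)}\leq k_{0}$ with the claimed dependence of the constant.
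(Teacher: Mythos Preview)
Your argument is correct and takes a genuinely different route from the paper. The paper proves the case $p\in(1,N]$ by a Moser iteration: it tests the equation with (a truncated version of) $|u^{+}+k|^{p(\beta-1)}u^{+}$ for increasing $\beta$, obtains a reverse H\"older-type inequality $\|w\|_{\beta\chi q^{*}}\le (C\beta)^{1/\beta}\|w\|_{\beta q^{*}}$ with $\chi>1$, and iterates along the geometric sequence $\beta=\chi^{m}$ to pass to $L^{\infty}$. You instead run the De~Giorgi/Stampacchia level-set scheme: test with $(u-k)_{+}$, derive a decay inequality $|A_{h}|\le C(h-k)^{-p^{*}}|A_{k}|^{\gamma p^{*}}$ for the superlevel sets, and invoke Stampacchia's iteration lemma. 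Both are textbook methods for this kind of statement and the hypothesis $q>N/p$ enters at the same place (to make the key exponent strictly bigger than $1$). Your approach has the advantage that the dependence of the final bound $k_{0}$ on the data is slightly more transparent; the Moser route has the advantage of giving intermediate $L^{s}$ bounds along the way. Two minor remarks: your closing comment about ``the elementary identity $(p-1)(N-p)\ge 0$'' is a bit cryptic---the actual verification that $\gamma p^{*}>1\Leftrightarrow q>N/p$ is the direct computation you already indicated, and that inequality plays no special role; and the citation you give for Stampacchia's lemma is not in the paper's bibliography, so if this is to be inserted you should either add the reference or state the lemma inline.
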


\begin{proof} We assume that $\|f\|_{q}\neq 0$, because otherwise $u=0$ and (\ref{A.5}) holds. Recall that we can extend $u$ by zero outside $U$ to an element that belongs to $W^{1,p}(\mathbb{R}^{N})$ and we shall use the same notation for this extension as for the function $u$. If $p>N$, then by \cite[Theorem 7.10]{PDE} and since $u=0$ on $\mathbb{R}^{N}\backslash U$, there exists $C=C(N,p, |U|)>0$ such that  
\begin{equation}
\|u\|_{L^{\infty}(\mathbb{R}^{N})} \leq C \|\nabla u\|_{L^{p}(\mathbb{R}^{N})}. \label{A.6}
\end{equation}
Using $u$ as the test function in the equation (\ref{A2}), we get
\begin{align*}
\int_{\mathbb{R}^{N}}|\nabla u|^{p}\diff x & = \int_{\mathbb{R}^{N}}fu\diff x \\
& \leq C\int_{U}|f|\diff x \Bigl( \int_{\mathbb{R}^{N}} |\nabla u|^{p}\diff x \Bigr)^{1/p}\,\ (\text{by (\ref{A.6})})
\end{align*}
and then \[ \int_{\mathbb{R}^{N}} |\nabla u|^{p}\diff x \leq C^{p^{\prime}} \|f\|_{1}^{p^{\prime}} \] that together with (\ref{A.6}) implies  (\ref{A.5}).

Now let $p\in (1,N]$ and let $k=\|f\|_{q}$. For $\beta \geq 1$ and $b>k$, define the function $H \in C^{1}([k, +\infty))$ by setting $H(s)=s^{\beta}- k^{\beta}$ if $s\in [k, b]$ and for $s\geq b$ define $H$ to be linear. Next, we set $w=u^{+} + k$ and take 
$$v=G(w)=\int^{w}_{k}|H^{\prime}(s)|^{p}\diff s $$ in the equality (\ref{A2}). By the chain rule, \cite[Theorem 7.8]{PDE}, $v$ is a legitimate test function in (\ref{A2}) and on substitution we obtain
\begin{equation*}
\int_{U}|\nabla w|^{p} G^{\prime}(w)\diff x = \int_{U}f G(w)\diff x.
\end{equation*}
Observing that $|\nabla w|^{p}G^{\prime}(w)=|\nabla H(w)|^{p}$ and $G(t)\leq tG^{\prime}(t)$, and by using H\"{o}lder's inequality, we get
\begin{align*}
\int_{U}|\nabla H(w)|^{p}\diff x & \leq  \int_{U}\frac{k^{p-1}}{k^{p-1}} |f| w G^{\prime}(w) \diff x \leq  \int_{U} \frac{1}{k^{p-1}} |f| w^{p} G^{\prime}(w) \diff x \\
& = \int_{U}\frac{1}{k^{p-1}}|f||w H^{\prime}(w)|^{p} \diff x  \\ &\leq \Bigl(\int_{U}\frac{1}{k^{(p-1)q}} |f|^{q} \diff x \Bigr)^{\frac{1}{q}} \Bigl(\int_{U}|w H^{\prime}(w)|^{\frac{pq}{q-1}} \diff x \Bigr)^{\frac{q-1}{q}}
\end{align*}
and then 
\begin{equation}
\|\nabla H(w)\|_{p} \leq C_{0} \|w H^{\prime}(w) \|_{pq/(q-1)} \label{A.7}
\end{equation}
with $C_{0}=C_{0}(p,\|f\|_{q})>0$. Since $H(w) \in W^{1,p}_{0}(U)$, we may apply the Sobolev inequality \cite[Theorem 7.10]{PDE} to get
\begin{equation}
\|H(w)\|_{\hat{N}p/(\hat{N}-p)} \leq \hat{C} \|\nabla H(w)\|_{p} \label{A.8}
\end{equation}
where $\hat{N}=N$, $\hat{C}=\hat{C}(N,p)>0$ if $N>p$ and $N<\hat{N}<qp$, $\hat{C}=\hat{C}(N,p,|U|)>0$ if $N=p$. By (\ref{A.7}) and (\ref{A.8}),
\begin{equation*}
\|H(w)\|_{\hat{N}p/(\hat{N}-p)}\leq C \|w H^{\prime}(w)\|_{pq/(q-1)} 
\end{equation*}
where $C=C(N, p, |U|)>0$. Recalling the definition of $H$ and letting $b$ tend to $+\infty$ in the latter estimate, we deduce that for all $\beta \geq 1$ the inclusion $w \in L^{\frac{\beta pq}{q-1}}(U)$ implies the stronger inclusion, $w \in L^{\frac{\beta \hat{N}p}{\hat{N}-p}}(U)$ (since $\hat{N}<qp$). Thus, setting
$q^{*}=pq/(q-1)$, and $\chi = \hat{N}(q-1)/q(\hat{N}-p)>1$, we obtain 
\begin{equation}
 \|w\|_{\beta \chi q^{*}} \leq (C \beta)^{\frac{1}{\beta}} \|w\|_{\beta q^{*}}. \label{A.9}
\end{equation}
 Let us take $\beta= \chi^{m},\,\ m\in \mathbb{N},\, m\geq 1,$ so that by (\ref{A.9}),
 \begin{align*}
 \|w\|_{\chi^{m+1} q^{*}} & \leq \prod^{m}_{i=0} (C \chi^{i})^{\chi^{-i}}\|w\|_{q^{*}} \\ 
 & \leq C^{\sigma} \chi^{\tau} \|w\|_{q*}, \,\ \sigma=\chi/(\chi-1), \,\ \tau=\chi/(\chi-1)^{2}.
 \end{align*}
 Letting $m$ tend to $+\infty$, we obtain 
 \begin{equation}
 \|w\|_{\infty} \leq C^{\sigma} \chi^{\tau} \|w\|_{q*}. \label{A.10}
 \end{equation}
Hereinafter in this proof, $C$ denotes a positive constant that can depend only on $N,\, p,\, q$, $|U|$ and can change from line to line. Notice that since $q^{*}<\hat{N}p/(\hat{N}-p)$ and since $u \in W^{1,p}_{0}(U)$, using again the Sobolev inequality \cite[Theorem 7.10]{PDE}, we get 
\begin{equation}
\|u^{+}\|_{q^{*}}\leq C \|\nabla u^{+}\|_{p}. \label{A.11}
 \end{equation}
Thus, observing that $\|w\|_{q^{*}}=\|u^{+}+k\|_{q^{*}} \leq \|u^{+}\|_{q^{*}} + k|U|^{1/q^{*}}$ and using (\ref{A.10}) and (\ref{A.11}), 
\begin{equation} \label{A.12}
 \|u^{+}\|_{\infty} \leq C \|\nabla u^{+}\|_{p} + Ck.
 \end{equation}
 Now, using $u^{+}$ as the test function in equation (\ref{A2}), we get
 \begin{align*}
 \|\nabla u^{+}\|_{p}&= \Bigl(\int_{U} f u^{+}\diff x\Bigr)^{1/p} \leq Ck^{1/p} \|u^{+}\|^{1/p}_{\infty}.
 \end{align*}
 This together with (\ref{A.12}) yields
 \begin{align*}
 \|u^{+}\|_{\infty} \leq C k^{1/p} \|u^{+}\|^{1/p}_{\infty} +Ck 
 \end{align*}
 and then by Young's inequality,
 \begin{align*}
 \|u^{+}\|_{\infty} \leq \frac{1}{p}\|u^{+}\|_{\infty} + Ck^{\frac{1}{p-1}}+ Ck. \numberthis \label{A.13}
 \end{align*}
Therefore $$\|u^{+}\|_{\infty} \leq A$$ where $A=A(N,p,q,\|f\|_{q},|U|)>0$. Observing that the same estimate can be obtained by replacing $u^{+}$ with $u^{-}$, we recover (\ref{A.5}).
\end{proof} 

The next result is classical, however, we could not find a precise reference in the exact
following form and thus we provide the complete proof for the reader’s convenience.
\begin{lemma} \label{lem: A.3} Let $U$ be a bounded open set in $\mathbb{R}^{N},\, N\geq 2$  and $p\in (1,+\infty)$, and let $f\in L^{q}(U)$ with $q=\frac{Np}{Np-N+p}$ if $1<p<N$,\,\ $q>1$ if $p=N$ and $q=1$ if $p>N$. Let $u$ be the weak solution of the equation (\ref{A1}). Then $\sigma=|\nabla u|^{p-2}\nabla u$ solves the problem
\[
\min_{\sigma \in L^{p^{\prime}}(U; \mathbb{R}^{N})} \biggl\{ \frac{1}{p^{\prime}} \int_{U} |\sigma|^{p^{\prime}} \diff x : - div(\sigma)=f \,\ \text{in}\,\ \mathcal{D}^{\prime}(U) \biggr\}.
\]
Moreover, the following equality holds
\begin{equation}
\max_{w\in W^{1,p}_{0}(U)}\biggl\{\int_{U}fw\diff x - \frac{1}{p}\int_{U}|\nabla w|^{p}\diff x\biggr\}=\min_{\sigma \in L^{p^{\prime}}(U; \mathbb{R}^{N})}\biggl\{\frac{1}{p^{\prime}}\int_{U} |\sigma|^{p^{\prime}}\diff x : -div(\sigma)=f \,\ \text{in} \,\ \mathcal{D}^{\prime}(U)\biggr\}. \label{A.14}
\end{equation}
\end{lemma}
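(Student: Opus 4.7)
The approach is based on classical convex duality between the $p$-energy and its Fenchel conjugate, combined with Young's inequality applied pointwise to $\sigma\cdot\nabla w$. First, I would observe that the primal maximization problem on the left-hand side of \eqref{A.14} is solved precisely by the unique weak solution $u$ of \eqref{A1}: indeed, since $u$ minimizes the convex functional $E_p$ on $W^{1,p}_0(U)$, the quantity $\int_U fw - \frac{1}{p}\int_U|\nabla w|^p = -E_p(w)$ is maximized at $w=u$. Plugging $u$ back in as test function in \eqref{A2} gives the crucial identity $\int_U |\nabla u|^p = \int_U fu$, whence the maximum value reduces to $\frac{1}{p'}\int_U |\nabla u|^p$. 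Setting $\sigma_0 := |\nabla u|^{p-2}\nabla u$, one has $|\sigma_0|^{p'} = |\nabla u|^{(p-1)p'} = |\nabla u|^p$, so the maximum equals $\frac{1}{p'}\int_U |\sigma_0|^{p'}$. Moreover, rewriting \eqref{A2} as $\int_U \sigma_0 \cdot \nabla \varphi = \int_U f\varphi$ for all $\varphi \in C^{\infty}_c(U) \subset W^{1,p}_0(U)$ shows that $\sigma_0$ is an admissible competitor for the right-hand side minimization problem.

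Next, I would establish weak duality (the inequality $\max \leq \min$). Fix any admissible $\sigma \in L^{p'}(U;\mathbb{R}^N)$ satisfying $-\mathrm{div}(\sigma) = f$ in $\mathcal{D}'(U)$, and any $w \in W^{1,p}_0(U)$. By density of $C^{\infty}_c(U)$ in $W^{1,p}_0(U)$, together with the fact that $\sigma \in L^{p'}$ and $\nabla w \in L^p$, the identity $\int_U \sigma \cdot \nabla w = \int_U fw$ extends from test functions to all $w \in W^{1,p}_0(U)$. Then Young's inequality applied to $|\sigma||\nabla w|$ yields
\begin{equation*}
\int_U fw\,dx = \int_U \sigma\cdot\nabla w\,dx \leq \frac{1}{p}\int_U|\nabla w|^p\,dx + \frac{1}{p'}\int_U|\sigma|^{p'}\,dx,
\end{equation*}
which rearranges to $\int_U fw - \frac{1}{p}\int_U|\nabla w|^p \leq \frac{1}{p'}\int_U|\sigma|^{p'}$. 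Taking supremum in $w$ and infimum in $\sigma$ gives $\mathrm{LHS} \leq \mathrm{RHS}$ in \eqref{A.14}.

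Finally, to upgrade weak duality into equality (and simultaneously identify $\sigma_0$ as the minimizer), I would exhibit a matched pair saturating Young's inequality. Taking $w=u$ and $\sigma=\sigma_0$, one has $\sigma_0\cdot\nabla u = |\nabla u|^p = |\sigma_0|^{p'}$, which is exactly the equality case in Young's inequality with $ab \leq a^p/p + b^{p'}/p'$. Thus $-E_p(u) = \frac{1}{p'}\int_U|\sigma_0|^{p'}$, and since LHS $=-E_p(u)$ and RHS $\leq \frac{1}{p'}\int_U|\sigma_0|^{p'}$ by admissibility, the chain collapses into equality. This simultaneously proves \eqref{A.14} and shows that $\sigma_0 = |\nabla u|^{p-2}\nabla u$ attains the minimum in the dual problem. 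Uniqueness of this minimizer follows from strict convexity of $|\cdot|^{p'}$. The only mildly delicate step is the density argument extending $\int \sigma \cdot \nabla w = \int fw$ from $C^{\infty}_c(U)$ to $W^{1,p}_0(U)$ for merely distributional $\sigma$, but this is straightforward since $\sigma \in L^{p'}$ makes the bilinear form continuous on $W^{1,p}_0(U)$.
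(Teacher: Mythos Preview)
Your proof is correct and follows essentially the same strategy as the paper: identify $u$ as the primal maximizer with value $\tfrac{1}{p'}\int_U|\nabla u|^p$, observe that $\sigma_0=|\nabla u|^{p-2}\nabla u$ is admissible with the same value, and establish weak duality to close the gap. The only cosmetic difference is that the paper derives weak duality by writing $\tfrac{1}{p}\int|\nabla v|^p=\max_\sigma\bigl(\int\nabla v\cdot\sigma-\tfrac{1}{p'}\int|\sigma|^{p'}\bigr)$ and then invoking $\max\min\le\inf\sup$, whereas you apply Young's inequality directly to $\sigma\cdot\nabla w$ after extending the divergence constraint by density; these are two packagings of the same pointwise convex duality.
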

\begin{proof} Thanks to the Sobolev inequalities (see \cite[Theorem 7.10]{PDE}), the functional 
\[
W^{1,p}_{0}(U)\ni w \mapsto \int_{U}fw\diff x - \frac{1}{p}\int_{U}|\nabla w|^{p}\diff x
\]
is well defined and it is classical that it admits a unique maximizer which is the weak solution of the equation (\ref{A1}), that is $u$. For a given Sobolev function $v\in W^{1,p}(U)$ let us now show that
\begin{equation}
\frac{1}{p}\int_{U}|\nabla v|^{p}\diff x= \max_{\sigma \in L^{p^{\prime}}(U; \mathbb{R}^{N})}  \label{A.15}
\int_{U} \nabla v\cdot\sigma \diff x-  \frac{1}{p^{\prime}} \int_{U} |\sigma|^{p^{\prime}} \diff x :=\max_{\sigma \in L^{p^{\prime}}(U; \mathbb{R}^{N})}\Psi(v, \sigma)
\end{equation}
and the maximum is reached at $\widetilde{\sigma}=|\nabla v|^{p-2}\nabla v$. By the fact that $\widetilde{\sigma}$ is a competitor,
\begin{equation}
\sup_{\sigma \in L^{p^{\prime}}(U; \mathbb{R}^{N})} \Psi(v,\sigma) \geq \Psi(v,\widetilde{\sigma})= \frac{1}{p}\int_{U} |\nabla v|^{p}\diff x. \label{A.16}
\end{equation}
Since for any $\sigma \in L^{p^{\prime}}(U; \mathbb{R}^{N})$, using H\"{o}lder's inequality, one has
\begin{equation}
\Psi(v,\sigma)  \leq \biggl(\int_{U} |\nabla v|^{p} \diff x \biggr)^{\frac{1}{p}}\biggl(\int_{U}|\sigma|^{p^{\prime}} \diff x \biggr)^{\frac{1}{p^{\prime}}}- \displaystyle \frac{1}{p^{\prime}} \int_{U} |\sigma|^{p^{\prime}} \diff x \label{A.17}
\end{equation}
and since the maximum of the function $g(t)= \|\nabla v\|_{L^{p}(U)}t^{\frac{1}{p^{\prime}}}-\frac{1}{p^{\prime}}t, \,\ t \in [0; +\infty)$ is reached at the point $t_{max}=\|\nabla v\|^{p}_{L^{p}(U)}$, 
\begin{equation}
\sup_{\sigma \in L^{p^{\prime}}(U; \mathbb{R}^{N})} \Psi(v,\sigma) \leq \|\nabla v\|_{L^{p}(U)}\cdot \|\nabla v\|_{L^{p}(U)}^{\frac{p}{p^{\prime}}}-\frac{1}{p^{\prime}}\|\nabla v\|^{p}_{L^{p}(U)} = \frac{1}{p}\int_{U}|\nabla v|^{p} \diff x. \label{A.18}
\end{equation}
By (\ref{A.16}) and (\ref{A.18}), we deduce (\ref{A.15}). Thus  we have that
\begin{equation*}
\max_{w\in W^{1,p}_{0}(U)} \biggl\{\int_{U}fw\diff x - \frac{1}{p}\int_{U}|\nabla w|^{p}\diff x\biggr\} = \max_{w\in W^{1,p}_{0}(U)} \min_{\sigma \in L^{p^{\prime}}(U; \mathbb{R}^{N})}\biggl\{\int_{U}fw\diff x - \Psi(w, \sigma)\biggr\}.
\end{equation*}
Now we want to exchange the max and min in the above formula. Clearly,
\begin{equation*}
\begin{split}
\max_{w\in W^{1,p}_{0}(U)} \min_{\sigma \in L^{p^{\prime}}(U; \mathbb{R}^{N})}\biggl\{\int_{U}fw\diff x - \Psi(w, \sigma)\biggr\} &\leq \inf_{\sigma \in L^{p^{\prime}}(U; \mathbb{R}^{N})}\sup_{w\in W^{1,p}_{0}(U)} \biggl\{ \int_{U}fw \diff x -\Psi(w,\sigma)\biggr\} \\ &=\inf_{\sigma\in D}\frac{1}{p^{\prime}} \int_{U} |\sigma|^{p^{\prime}}\diff x, 
\end{split}
\end{equation*}
where $D$ stands for the space of $\sigma\in  L^{p^{\prime}}(U; \mathbb{R}^{N})$ satisfying 
\begin{equation*}
\int_{U} \sigma \cdot \nabla \phi  \diff x= \int_{U} f\phi \diff x \text{ for all $\phi \in C^{\infty}_{0}(U)$},
\end{equation*}
otherwise the supremum in $w$ would be $+\infty$. This implies that
\[
\max_{w\in W^{1,p}_{0}(U)}\biggl\{\int_{U}fw\diff x - \frac{1}{p}\int_{U}|\nabla w|^{p}\diff x\biggr\} \leq \inf_{\sigma \in D} \frac{1}{p^{\prime}}\int_{U}|\sigma|^{p^{\prime}}\diff x.
\]
We observe that the optimality condition (\ref{A2}) on $ u$ yields $|\nabla u|^{p-2}\nabla u \in D$ and then
 \begin{align*}
 \frac{1}{p^{\prime}} \int_{U} |\nabla u|^{p} \diff x= \max_{w\in W^{1,p}_{0}(U)}\biggl\{\int_{U}fw\diff x - \frac{1}{p}\int_{U}|\nabla w|^{p}\diff x\biggr\} &\leq \inf_{\sigma \in D} \frac{1}{p^{\prime}} \int_{U} |\sigma|^{p^{\prime}} \diff x \\ &\leq  \frac{1}{p^{\prime}} \int_{U} |\nabla u|^{p} \diff x.
 \end{align*}
Therefore (\ref{A.14}) holds and $\sigma=|\nabla u|^{p-2}\nabla u$ is the minimizer.
\end{proof}


\bibliography{bib1}
\bibliographystyle{plain}

\end{document}